\numberwithin{equation}{section}
\theoremstyle{plain} 
\newtheorem{theorem}[equation]{Theorem}
\newtheorem{lemma}[equation]{Lemma}
\newtheorem{proposition}[equation]{Proposition}
\newtheorem{corollary}[equation]{Corollary}
\newcommand{\argforcustom}{}
\theoremstyle{newtextthm}
\newtheorem{helperforcustom}[equation]{\argforcustom}
\newtheorem*{helperforcustomstar}{\argforcustom}
\newenvironment{customstar}[1]{\renewcommand{\argforcustom}{#1}\begin{helperforcustomstar}}{\end{helperforcustomstar}}
\theoremstyle{definition}
\newtheorem{definition}[equation]{Definition}
\newtheorem{example}[equation]{Example}
\newtheorem{remark}[equation]{Remark}
\newtheorem*{remark*}{Remark}
\newcommand{\defining}[1]{{\emph{#1}}}
\newcommand{\definedas}{:=}
\newcommand{\itemref}[1]{(\ref{#1})}
\newcommand{\integers}{{\mathbb{Z}}}
\newcommand{\naturals}{{\mathbb{N}}}
\newcommand{\complexes}{{\mathbb{C}}}
\newcommand{\field}{{\mathbb{F}}}
\newcommand{\Uofp}{\U(p)}
\newcommand{\SUofp}{{\SU(p)}}
\newcommand{\Borel}{\SLUpper}
\newcommand{\BorelSUp}{\SLUpper}
\newcommand{\collection}{{\mathscr{C}}}
\newcommand{\rescen}{{\rm{R}}}
\def\doCal#1{%
\ifx#1\doAllCalEnd\def\doAllCal{\relax}\else%
 \expandafter\edef\csname#1cal\endcsname{{\noexpand\mathcal #1}}\fi}
\def\doAllCal#1{\doCal#1\doAllCal}
\def\doBar#1{%
\ifx#1\doAllBarEnd\def\doAllBar{\relax}\else%
 \expandafter\edef\csname#1bar\endcsname{{\noexpand\overline{#1}}}\fi}
\def\doAllBar#1{\doBar#1\doAllBar}
\def\doWiggle#1{%
\ifx#1\doAllWiggleEnd\def\doAllWiggle{\relax}\else%
 \expandafter\edef\csname#1wiggle\endcsname{{\noexpand\tilde{#1}}}\fi}
\def\doAllWiggle#1{\doWiggle#1\doAllWiggle}
\newcommand{\phiwiggle}{\strut\widetilde{\phi}}
\newcommand{\psiwiggle}{\strut\widetilde{\psi}}
\newcommand{\iotawiggle}{\tilde{\iota}}
\def\doBold#1{%
\ifx#1\doAllBoldEnd\def\doAllBold{\relax}\else%
 \expandafter\edef\csname#1bold\endcsname{{\noexpand\bf #1}}\fi}
\def\doAllBold#1{\doBold#1\doAllBold}
\def\doBbb#1{%
\ifx#1\doAllBbbEnd\def\doAllBbb{\relax}\else%
 \expandafter\edef\csname#1bb\endcsname{{\noexpand\mathbb{#1}}}\fi}
\def\doAllBbb#1{\doBbb#1\doAllBbb}
\newcommand{\Largestrut}{\mbox{{\Large\strut}}}
\newcommand{\largestrut}{\mbox{{\large\strut}}}
\newcommand{\switchmargin}{
\if@reversemargin
\normalmarginpar
\else
\reversemarginpar
\fi
}
\definecolor{llgray}{RGB}{230,230,230}
\definecolor{lblue}{RGB}{217,249,237}
\newcommand{\highlight}[1]{\ifmmode{\text{\sethlcolor{llgray}\hl{$#1$}}}\else{\sethlcolor{llgray}\hl{#1}}\fi}
\newcommand{\highlighteva}[1]{\ifmmode{\text{\sethlcolor{llgray}\hl{$#1$}}}\else{\sethlcolor{lblue}\hl{#1}}\fi}
\DeclareMathOperator{\Aff}{Aff}
\DeclareMathOperator{\Aut}{Aut}
\DeclareMathOperator{\B}{B}
\DeclareMathOperator{\BAut}{BAut}
\DeclareMathOperator{\BU}{BU}
\DeclareMathOperator{\BSU}{BSU}
\DeclareMathOperator{\BSO}{BSO}
\newcommand{\colim}{\operatorname{colim}\,}
\DeclareMathOperator{\GL}{GL}
\newcommand{\hocolim}{\operatorname{hocolim}}
\DeclareMathOperator{\Hom}{Hom}
\DeclareMathOperator{\ob}{ob}
\newcommand{\Id}{\mathrm{Id}}
\DeclareMathOperator{\im}{im}
\DeclareMathOperator{\mmod}{mod}
\DeclareMathOperator{\Out}{Out}
\DeclareMathOperator{\size}{size}
\DeclareMathOperator{\SL}{SL}
\DeclareMathOperator{\SU}{SU}
\DeclareMathOperator{\U}{U}
\newcommand{\epi}{\twoheadrightarrow}
\newcommand{\pdash}{$p$\kern1.3pt-}
\newcommand{\whatever}{\text{--}}
\newcommand{\Fcenrad}{\Fcal^{cr}}
\newcommand{\Lcenrad}{\Lcal^{cr}}
\newcommand{\Zpinfinity}{\integers/{p}^{\infty}}
\newcommand{\Zphat}{\widehat{\integers}_p}
\newcommand{\Linkwiggle}{\til{\Lcal}}
\newcommand{\twobytwo}[4]{
   \left(\begin{array}{cc}
   #1 & #2\\
   #3 & #4
   \end{array}\right)}
\newcommand{\Gammabold}{\mathbf{\Gamma}}
\newcommand{\inclusion}[2]{\iota\mkern1.5mu_{#1}^{#2}}
\newcommand{\subgroupeq}{\subseteq}
\newcommand{\subgroupneq}{\subsetneq}
\newcommand{\subgroup}{\subset}
\newcommand{\Slominska}{S{\l}omi\'{n}ska}
\newcommand{\Aguade}{Aguad\'e}
\newcommand{\AZ}{\Aguade--Zabrodsky }
\newcommand{\sd}{{\bar{s}d}}
\newcommand{\TboldSUp}{\Tbold_{\SU(p)}}
\newcommand{\SboldSUp}{\Sbold_{\SU(p)}}
\newcommand{\GammaSUp}{\Gamma}
\newcommand{\GLtwoFp}{\GL_2\!\field_p}
\newcommand{\SLtwoFp}{\SL_2\!\field_p}
\newcommand{\SLtwoFtwo}{\SL_2\!\field_2}
\DeclareMathOperator{\UTriangular}{\Ucal}
\newcommand{\GLUpper}{\UTriangular(\GLtwoFp)}
\newcommand{\SLUpper}{\UTriangular(\SLtwoFp)}
\newcommand{\SLUppertwo}{\UTriangular(\SLtwoFtwo)}
\newcommand{\xlongrightarrow}[1]{\xrightarrow{\,#1\,}}
\newcommand\restr[2]{{
  \left.\kern-\nulldelimiterspace 
  #1 
  \vphantom{\big|} 
  \right|_{#2} 
  }}
\newcommand{\suchthat}[1]{\left|\, #1 \right. }
\newcommand{\realize}[1]{\left|#1 \right|}
\newcommand{\pcomplete}[1]{{#1}_{p}^{\wedge}}
\renewcommand{\bar}{\widebar} 
\definecolor{llgray}{RGB}{230,230,230}
\newcommand{\Z}{\integers}
\renewcommand{\L}{\Lcal}
\providecommand{\isom}{\cong}
\newcommand{\x}{\times}
\newcommand{\Sp}{\operatorname{Sp}}
\newcommand{\ints}{\cap}
\newcommand{\into}{\hookrightarrow}
\newcommand{\bto}{\leftarrow}
\newcommand{\hteq}{\simeq}
\renewcommand{\isom}{\cong}
\newcommand{\til}{\widetilde}
\newcommand{\Top}{\operatorname{Top}}
\newcommand{\widebar}[1]{{\overline{#1}}}
\newcommand{\F}{\mathcal{F}}
\newcommand{\longleftrightarrows}{\xymatrix@1@C=16pt{
\ar@<0.4ex>[r] & \ar@<0.4ex>[l]
}}
\renewcommand{\atop}[1]{{\let\scriptstyle\textstyle\let\scriptscriptstyle\scriptstyle\substack{#1}}}
\newcommand{\fusion}{$\Fcal$}
\newcommand{\AbstractDecompThmText}{Let $(S,\Fcal, \Lcal)$ be a \pdash local compact group. 
There is a functor
$\delta\colon \sd\Fcenrad\longrightarrow \Top$ with an
equivalence
$\hocolim_{\sd(\Fcenrad)} \delta \longrightarrow \left|\Lcal\right|$
and a natural homotopy equivalence
$\BAut_{\Lcal}(\Pbb)\longrightarrow \delta([\Pbb])$ for each chain $\Pbb$. 
Further, the group
$\Aut_{\Lcal}(\Pbb)$ is a virtually discrete \pdash toral group.
}
\newcommand{\NormalizerDecompLieText}{
Let $G$ be a compact Lie group and let $(S, \Fcal, \Lcal)$ be the associated \pdash local compact group.
 If $P\in\Fcenrad$, let $\Pbold$ denote its closure in~$G$. 
If $\Pbb=(P_0\subgroupneq\ldots\subgroupneq P_k)$ is a proper chain of subgroups in~$\Fcenrad$, then there is a natural weak mod~$p$ equivalence 
\begin{equation*}    \label{eq: BAut_G}  
\BAut_{\Lcal}(\Pbb)
\simeq
    \B\!\big(\ints_i N_G(\Pbold_i)\big).
\end{equation*}
If in addition $\pi_0G$ is a \pdash group, then  
the functor $\Fcenrad\rightarrow\Rcal$ given by $P\mapsto\Pbold$ induces an isomorphism of posets $\sd\Fcenrad\cong\sd\Rcal$.
}
\newcommand{\SUDecompositionTheoremText}{
Let $\SLUpper$ denote the group of upper triangular matrices 
in $\SLtwoFp$, let $T$ denote the chosen maximal discrete \pdash torus of~$\SUofp$, and let $\Gamma$ denote the extra-special \pdash group of order $p^3$ and exponent~$p$. 
\begin{enumerate}
\item 
For odd primes, the homotopy pushout of the diagram below is mod~$p$ equivalent to~$\BSU(p)$:
\[
\xymatrix@C=5pt{
\B\!\left(\GammaSUp\rtimes \BorelSUp\right)
     \ar[d]\ar[rr] 
&&
\B\!\left(\strut T\rtimes\Sigma_p\right)
\\
\B\!\left(\strut \GammaSUp\rtimes \SLtwoFp \right) 
.
}
\]
\item \cite[Thm.~4.1]{DMW1} 
The homotopy pushout of the diagram below is mod~$2$ equivalent to~$\BSU(2)$: 
\[
\xymatrix@C=5pt{
\B Q_{16} 
 \ar[rr]\ar[d] 
 &&   \B\!\left(\strut T\rtimes\Sigma_2\right)
\\\B O_{48},
}
\]
where $Q_{16}$ and $O_{48}$ denote the quaternionic group of order~$16$ and the binary octahedral group of order~$48$, respectively. 
\end{enumerate}
}
\newcommand{\AZtheoremdiagram}{
\xymatrix@C=5pt{
\B\!\big(\Gamma\rtimes\GLUpper\big)
      \ar[d]\ar[rr]
   &&\B\left(T\rtimes G\right)
\\
\B\!\left(\Gamma\rtimes\GLtwoFp\right)
}
}
\newcommand{\AZTheoremText}{
Let $X$ denote one of the \AZ \pdash compact groups 
$X_{12}$ (with $p=3$),
$X_{29}$ (with $p=5$),
$X_{31}$ (with $p=5$), 
or $X_{34}$ (with $p=7$).
Let $T\cong (\Zpinfinity)^{p-1}$ denote the maximal discrete \pdash torus 
in the associated fusion system, and let $G$ be the Weyl group associated to~$X$. 
The homotopy pushout of the diagram
\[
\AZtheoremdiagram
\]
is homotopy equivalent to the nerve of the linking system associated to~$X$, and mod~$p$ equivalent to $BX$ itself. } 
\begin{document}

\title{Normalizer decompositions of \pdash local compact groups }

\author{Eva Belmont}
\address{Department of Mathematics, University of California San Diego, La Jolla, CA, USA}
\email{ebelmont@ucsd.edu}

\author{Nat\`alia Castellana}
\address{Departament de Matem\`atiques, Universitat Aut\`onoma de Barcelona, and Centre de Recerca Matemàtica, Barcelona, Spain}
\email{natalia@mat.uab.cat}

\author{Jelena Grbi\'{c}}
\address{School of Mathematical Sciences, University of Southampton, Southampton, UK }
\email{j.grbic@soton.ac.uk}

\author{Kathryn Lesh}
\address{Department of Mathematics, Union College, Schenectady NY, USA}
\email{leshk@union.edu}

\author{Michelle Strumila}
\address{School of Mathematics, Monash University, Clayton, Victoria, Australia}
\email{michelle.strumila@gmail.com}

\subjclass{MSC 2020: Primary 55R35; Secondary 57T10.}

\keywords{Keywords: homotopy theory, fusion system, classifying space, Lie group, p-local compact group, homology decomposition}

\begin{abstract}
We give a normalizer decomposition for a \pdash local compact group $(S,\Fcal,\Lcal)$ that describes $\realize{\Lcal}$ as a homotopy colimit indexed over a finite poset. Our work generalizes the normalizer decompositions for finite groups due to Dwyer, for \pdash local finite groups due to Libman, and for compact Lie groups in separate work due to Libman. Our approach gives a result in the Lie group case that avoids topological subtleties with Quillen's Theorem~A,
because we work with discrete groups.
We compute the normalizer decomposition for the \pdash completed classifying spaces of
$\Uofp$ and $\SUofp$ and for the \pdash compact groups of \Aguade\  and Zabrodsky. 
\end{abstract}

\maketitle

\markboth{\sc{Belmont, Castellana, Grbi\'{c}, Lesh, and Strumila}}
{\sc{Normalizer decompositions}}

\section{Introduction}

For a finite group~$G$, a prime~$p$, and a suitable collection 
$\collection$ of subgroups of~$G$, Dwyer~\cite{Dwyer-Homology} gave a systematic approach to three homotopy colimit decompositions for~$\pcomplete{BG}$. 
Two of them, the centralizer decomposition 
and the subgroup decomposition,
are indexed by fusion  and orbit categories whose objects are conjugacy classes of subgroups in~$\collection$. The third is the normalizer decomposition,
\[
\pcomplete{BG} \hteq 
     \pcomplete{\big[\hocolim_{\,\Pbb\in\Ical}\delta \big]},
\]
which is indexed over the finite poset $\Ical$ whose objects are
$G$-conjugacy classes of chains 
$\Pbb = (H_0\subgroupneq \ldots \subgroupneq H_k)$ for $H_i\in
\collection$ and $\delta(\Pbb ) \simeq B(\ints_i N_G H_i)$.

There are two more general contexts that are relevant to our work. First, the homotopy colimit decompositions were studied for compact Lie groups. Jackowski and McClure 
\cite[Thm.~1.3]{jackowski-mcclure} had given a centralizer
decomposition for compact Lie groups with respect to the collection of elementary abelian \pdash subgroups. Jackowski, McClure and Oliver described subgroup decompositions. \Slominska\  \cite{slominska-webb-conj} gave a normalizer decomposition with this
collection. Libman \cite[Thm.~C]{Libman-Minami} gave centralizer, subgroup,
and normalizer decompositions with the collection of abelian \pdash subgroups and
the collection of \pdash radical subgroups.

The second generalization of interest comes by replacing finite groups by \emph{\pdash local finite groups}. These objects are triples $(S,\Fcal,\Lcal)$, where $S$ is a finite \pdash group, and $\Fcal$ and $\Lcal$ are categories encoding data that mimics conjugations. 
One can define the classifying space 
$\pcomplete{\realize{\Lcal}}$ of a \pdash local finite
group, and it behaves similarly to the \pdash completed classifying space of a group.
Every finite group gives rise to a \pdash local finite group, but not every
\pdash local finite group comes from a group.
Libman \cite{Libman-normalizer} proved the existence of a normalizer decomposition for classifying spaces of \pdash local finite groups.

We work with \emph{\pdash local compact groups} (Definition~\ref{definition: p-local compact group}), which generalize 
compact Lie groups in the same way that \pdash local
finite groups generalize finite groups. 
The theory, developed in~\cite{BLO-Discrete}, is in spirit analogous to that of \pdash local finite groups, 
but with new challenges because of the non-finite context. The analogue of a finite \pdash group is a 
discrete \pdash toral group, namely an extension of a discrete torus $(\Z/p^\infty)^r$ by a
finite \pdash group. Studying Lie groups by way of their associated \pdash local
compact groups has the advantage of reducing to discrete (as opposed to
topological) groups. Broto, Levi, and Oliver prove a subgroup
decomposition for \pdash
local compact groups~(\cite[Prop.~4.6]{BLO-Discrete}, \cite[Thm.~B]{LL-ExistenceL}).

Our first contribution is to adapt Libman's work \cite{Libman-normalizer} on \pdash local finite groups
to construct a normalizer decomposition for \pdash local compact groups. One advantage of the normalizer decomposition of a \pdash local compact group 
over the centralizer or
subgroup decompositions is that the normalizer decomposition is indexed over a finite poset. In the statement below, the notation $\Fcenrad$ refers to the subcategory of $\Fcal$ whose objects are $\Fcal$-centric and $\Fcal$-radical subgroups. The finite poset $\sd\Fcenrad$ is defined in Definition~\ref{def:sdbar}. 

\begin{customstar}
{Theorem~\ref{theorem: abstract decomposition theorem}}
\AbstractDecompThmText
\end{customstar}

This statement is largely formal, as is most of the proof, but there are interesting challenges in computing the decomposition in cases of interest. Our first class of examples is given by \pdash local compact groups arising from compact Lie groups.
Theorem~\ref{theorem: abstract decomposition theorem} takes place fully in the world of discrete \pdash toral groups. To relate our underlying theory to 
the usual context of normalizers in compact Lie groups, we use our previous work~\cite{WIT-normalizers} to rephrase the functor values in the resulting homotopy colimit as mod~$p$ equivalent to classifying spaces of group-theoretic normalizers. In comparison to~\cite{Libman-Minami},
this approach gives a more formal proof of the basic
decomposition result---essentially analogous to the finite case---because we do not have to address the topological issues in applying Quillen's Theorem~A. Instead, the topological issues can be 
neatly packaged into the functor values and understood on a uniform basis \cite{WIT-normalizers}. 

 Let $\Rcal$ denote the collection of \pdash toral subgroups of a compact Lie group $G$ that are both \pdash centric and \pdash stubborn in~$G$, and let $\sd\Rcal$ be the poset of $G$-conjugacy classes of chains
of proper inclusions of subgroups in~$\Rcal$.

\begin{customstar}{Theorem~\ref{theorem: NormalizerDecompLie}}
\NormalizerDecompLieText 
\end{customstar}

In concert with Theorem~\ref{theorem: abstract decomposition theorem}, Theorem~\ref{theorem: NormalizerDecompLie} tells us that when $\pi_0 G$ is a \pdash group, we can compute a normalizer decomposition
for $\pcomplete{BG}$ over a poset indexed by chains of \pdash centric and \pdash stubborn subgroups of~$G$, with values that are mod~$p$ equivalent to intersections of normalizers of those subgroups. 
(See Remark~\ref{remark: pi0 hypothesis} regarding the $\pi_0$ hypothesis.)
In Section~\ref{sec:U(p) SU(p)}, we compute the decomposition explicitly in the cases $\U(p)$ and $\SU(p)$, 
expressing the classifying spaces of these groups as mod~$p$ equivalent to a homotopy pushout diagram.
We believe these decompositions are new for odd primes. In the case 
$p=2$ we recover the theorem of Dwyer,
Miller, and Wilkerson \cite{DMW1}, who gave mod~$2$ homotopy pushout
decompositions of $\BSU(2)$ and $\BSO(3)$ using an ad hoc method.

\begin{customstar}
{Theorem~\ref{theorem: SU(p) decomposition}}
\SUDecompositionTheoremText
\end{customstar}

The result for $\Uofp$ is found in Theorem~\ref{theorem: U(p) decomposition}. 

For our second class of examples, we turn to \pdash compact groups, another generalization of classifying spaces of Lie groups introduced by Dwyer and
Wilkerson~\cite{dwyer-wilkerson}, which arose in the study of cohomology rings of loop spaces. Every \pdash compact group has an associated \pdash local compact
group $(S,\F,\L)$~\cite[Thm.~10.7]{BLO-Discrete}, but not every \pdash local compact group arises in this way. 
The classification of \pdash compact
groups in \cite{Andersen-Grodal-2compact, AGMV-pcompact} builds these spaces out of compact Lie groups and a collection of exotic examples. 

We apply Theorem~\ref{theorem: abstract decomposition theorem} to the \AZ \pdash compact groups, which are
closely related to $\pcomplete{\BSU(p)}$.
They were 
first constructed in \cite{aguade-modular} to have cohomology realizing certain
invariants of polynomial rings. Our result is a 
homotopy pushout diagram for these
spaces.

\begin{customstar}
{Theorem~\ref{theorem: pushout for AZ}}
\AZTheoremText
\end{customstar}

Comparing to Theorem~\ref{theorem: SU(p) decomposition}, above, we see that $\SLtwoFp$ is replaced by~$\GLtwoFp$, and $\Sigma_p$ (the Weyl group for~$\SUofp$) is replaced by~$G$ (the new, enlarged Weyl group for~$X$).

\subsection*{Organization} 
In Section~\ref{section: fusion systems} we
review the properties of fusion and linking systems in the setting of discrete \pdash local compact groups. In
Section~\ref{sec:abstract-result} we establish the general normalizer decomposition for a \pdash local compact group (Theorem~\ref{theorem: abstract decomposition theorem}), and we show that the spaces involved are classifying spaces of virtually discrete \pdash toral groups.
In Section~\ref{sec:Lie2}, we turn to Lie groups 
and prove Theorem~\ref{theorem: NormalizerDecompLie}.
The main issue to be addressed is the calculation of automorphisms in the linking system associated to a Lie group. The problem is that the model of the linking system associated to $G$ is not directly related to the transporter system. 
In Section~\ref{section: boring group theory} we 
prepare for application of the normalizer decompositions to $\Uofp$ and $\SUofp$ by separating out some group-theoretic calculations. In Section~\ref{sec:U(p) SU(p)} we use those calculations, in conjunction with Theorem~\ref{theorem: NormalizerDecompLie}, to
give normalizer decompositions of $\Uofp$ and~$\SUofp$. 
Finally, in Section~\ref{sec:AZ} we leverage the results of Section~\ref{sec:U(p) SU(p)} to give normalizer decompositions for the ``exotic" \AZ spaces.

\subsection*{Acknowledgements.}
The first author was supported by NSF grant  DMS-2204357 and by an AWM-NSF mentoring travel grant to work with the fourth author.
The second author was partially supported by Spanish State Research Agency project PID2020-116481GB-I00, 
the Severo Ochoa and María de Maeztu Program for Centers and Units of Excellence in R$\&$D (CEX2020-001084-M), and the CERCA Programme/Generalitat de Catalunya. The first, second and fourth authors acknowledge the support of the program ``Higher algebraic structures in algebra, topology and geometry" at the Mittag-Leffler Institute in Spring~2022. 

We thank the organizers of the Women in Topology III workshop, where this work was begun, as well as the Hausdorff Research Institute for Mathematics, where the workshop was held. 
The Women in Topology III workshop was supported by NSF grant DMS-1901795, the AWM ADVANCE grant NSF HRD-1500481, and Foundation Compositio Mathematica. The fifth author was also supported for the workshop by a Cheryl Praeger Travel Grant.

We are extremely grateful to Dave Benson for tutorials on group theory regarding Sections \ref{section: boring group theory} and~\ref{sec:U(p) SU(p)}. Without this help, the results would have been much less tidily packaged. Needless to say, any remaining errors are our own. 

\section{Fusion systems and linking systems}
\label{section: fusion systems}

We give a brief overview of the notion of a \pdash local compact group given in the work of Broto, Levi and Oliver \cite{BLO-Discrete}. Recall that a \defining{\pdash toral group} is an extension of a torus, $\left(S^1\right)^r$, by a finite \pdash group. We work with a discrete version of \pdash toral groups. As usual, let $\Zpinfinity$ be the union of
the cyclic \pdash groups $\integers/p^n$ under the standard inclusions.

\begin{definition}\label{def:fusion_system}
A \defining{discrete \pdash toral group} is a group $P$ given by an extension
\[
1\longrightarrow \left(\Zpinfinity\right)^r \longrightarrow P\longrightarrow \pi_{0}P\longrightarrow 1,
\]
where $r$ is a nonnegative integer and $\pi_{0}P$ is a finite \pdash group.
The \defining{identity component} of $P$ is 
$P_{0}\definedas \left(\Zpinfinity\right)^r$, and 
we call $r$ the \defining{rank} of~$P$. 
We call $\pi_{0}P$ the \defining{set of components} of~$P$.
\end{definition}

\noindent Note that the identity component of a discrete \pdash toral group is well defined because it is the characteristic subgroup consisting of infinitely \pdash divisible elements.

\begin{definition}
\label{newdefinition:subgroup ordering}
We define $\size(P)$ of a discrete \pdash toral group $P$ as the pair $\size (P)=(r, c)$, where $r$ is the rank of $P$ and $c$ is the order of $\pi_0 P$, equipped with the 
lexicographic ordering (see \cite[A.5]{CLN}).
\end{definition}

\begin{lemma}\label{newlemma:order-is-height}
If $P\to P'$ is a monomorphism of discrete \pdash toral groups, then $\size(P)\leq \size(P')$, with equality if and only if $P\to P'$ is an isomorphism.
\end{lemma}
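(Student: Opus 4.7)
The plan is to analyze the effect of a monomorphism $\iota\colon P \hookrightarrow P'$ separately on identity components and on component groups, exploiting the canonical short exact sequences $1 \to P_0 \to P \to \pi_0 P \to 1$ for both. As the text notes immediately after Definition~\ref{def:fusion_system}, the identity component is characteristic (being the subgroup of infinitely \pdash divisible elements), so $\iota$ restricts to a monomorphism $\iota_0\colon P_0 \hookrightarrow P'_0$ between discrete tori and descends to an induced homomorphism $\bar{\iota}\colon \pi_0 P \to \pi_0 P'$ of finite \pdash groups. The argument will then compare the pair $(r,c)$ in the lexicographic order in the obvious order: first ranks, then components.

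First I would show $\rank(P) \le \rank(P')$ by restricting $\iota_0$ to $p$-torsion, which yields an $\field_p$-linear injection $(\integers/p)^r \hookrightarrow (\integers/p)^{r'}$ and hence the desired inequality on ranks. If $r < r'$, the lexicographic comparison of pairs gives $\size(P) < \size(P')$ immediately, and we are done.

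In the critical case $r = r'$, the key step is to promote $\iota_0$ to an isomorphism. Since $(\Zpinfinity)^r$ is divisible, hence an injective $\integers$-module, $\iota_0(P_0)$ splits off as a direct summand of $P'_0$; divisible abelian \pdash groups are classified up to isomorphism by rank, so the complement, of rank $r - r = 0$, must be trivial, forcing $\iota_0$ to be an isomorphism. A five-lemma argument applied to the comparison of the extensions then makes $\bar{\iota}$ injective, giving $c \le c'$, and so $\size(P)\le \size(P')$. If moreover $c = c'$, then $\bar{\iota}$ is an injection of finite groups of equal order, hence an isomorphism, and a second application of the five lemma makes $\iota$ itself an isomorphism.

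I do not anticipate serious obstacles; the whole argument is a formal consequence of the extension structure. The only slightly delicate step is the promotion of $\iota_0$ to an isomorphism in the equal-rank case, which uses divisibility/injectivity of abelian \pdash groups and their classification by rank. This is exactly the kind of elementary fact that makes the discrete setting pleasant to work with and avoids any appeal to the topology of tori.
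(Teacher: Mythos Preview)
Your argument is correct. The paper does not give its own proof of this lemma but simply defers to \cite[after Definition 1.1]{BLO-Discrete}; what you have written is essentially the standard elementary argument one finds there, so there is nothing further to compare.
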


\begin{proof}
See \cite[after Definition 1.1]{BLO-Discrete}.
\end{proof}

Given two discrete \pdash toral groups $P$ and~$Q$, let $\Hom (P,Q)$
denote the set of group homomorphisms from $P$ to~$Q$. 
If $P$ and $Q$ are subgroups
of a larger group~$S$, then $\Hom_S(P,Q)$ denotes
the set of those homomorphisms (necessarily monomorphisms) induced by conjugation by
elements of~$S$.

The following definition is a straightforward generalization of the definition of fusion systems over finite \pdash groups (see \cite{BLO-Finite}).

\begin{definition}\cite[Defn.~2.1]{BLO-Discrete}
\label{definition: SaturatedFusion}
A \defining{fusion system} $\F$ over a discrete \pdash toral group $S$ is a
subcategory of the category of groups, defined as follows. The objects of
$\Fcal$ are all of the subgroups of~$S$. The morphism sets $\Hom_{\F}(P,Q)$
contain only group monomorphisms, and satisfy the following conditions.
\begin{itemize}
\item[(a)] $ \Hom_S(P,Q) \subseteq \Hom_{\F}(P,Q)$ for all $ P,Q\subgroupeq S $. In particular, all subgroup inclusions are in~$\Fcal$.
\item[(b)] Every morphism in $\Fcal$ factors as the composite of an isomorphism in $\Fcal$ followed by a subgroup inclusion.
\end{itemize}
\end{definition}

The same language of ``outer automorphisms" is used for fusion systems as for groups. In particular, just as
$\Out_{S}(P)\definedas\Aut_{S}(P)/\Aut_{P}(P)$, we define
$\Out_{\Fcal}(P)\definedas\Aut_{\Fcal}(P)/\Aut_{P}(P)$. 
In addition, we say that two subgroups $P,P'$ of~$S$ are 
\defining{$\Fcal$-conjugate} if there is an isomorphism $P\cong P'$ in~$\Fcal$. 

In order for a fusion system to have good properties and model conjugacy relations among \pdash subgroups of a group, it must  satisfy an extra set of axioms, for ``saturation." The definition is given in \cite[\S2]{BLO-Discrete}, and we refer the reader to this source, as the definition is fairly long and technical, and we do not need to use any of the details. 

\begin{example}      \label{example: fusion system for G}
We recall the fusion system
$\Fcal_S(G)$ that arises from a compact Lie group~$G$
(\cite[\S9]{BLO-Discrete}). 
Fix a choice of maximal torus~$\Tbold\subgroupeq G$. Let $W\definedas N_G(\Tbold)/\Tbold$ denote the Weyl group, and fix 
a Sylow \pdash subgroup $W_{p}\subgroupeq W$. Let $\Sbold$ denote the inverse image of $W_{p}$ in~$N_G(\Tbold)$. Then $\Sbold$ is a maximal \pdash toral subgroup of~$G$, unique up 
to $G$-conjugacy, and given by an extension
\[
1\longrightarrow \Tbold\longrightarrow\Sbold\longrightarrow W_{p}\longrightarrow 1.
\]
A maximal \emph{discrete} \pdash toral subgroup of $G$ is obtained by taking 
a maximal discrete \pdash toral subgroup $S$ of~$\Sbold$. All such choices are conjugate in~$\Sbold$ (\cite[proof of Prop.~9.3]{BLO-Discrete}), so $S$ necessarily contains the (unique) maximal discrete \pdash toral subgroup $T$ of~$\Tbold$, giving an extension
\[
1\longrightarrow T \longrightarrow S \longrightarrow W_{p}\longrightarrow 1.
\]
The \defining{fusion system of~$G$}, denoted $\Fcal_S(G)$, has as its object set all subgroups of~$S$, and for $ P,Q \subgroupeq S$, the morphisms are
$\Hom _{\Fcal_S(G)}(P,Q)\definedas \Hom _G(P,Q)=N_G(P,Q)/C_G{P}$.
\end{example}

The fusion system associated to a compact Lie group has the right technical property to be tractable. 

\begin{proposition}\cite[Prop.~8.3]{BLO-Discrete}
\label{proposition:out-finite}
If $G$ is a compact Lie group with maximal discrete \pdash toral subgroup~$S$, then
the fusion system $\Fcal_S(G)$ is saturated.
\end{proposition}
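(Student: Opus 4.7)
Since the statement is \cite[Prop.~8.3]{BLO-Discrete}, the plan is to outline the strategy used there rather than give an independent argument. One must verify the saturation axioms for $\Fcal = \Fcal_S(G)$: a Sylow-type condition on $S$, an extension axiom for $\Fcal$-morphisms, and a continuity axiom for ascending unions of discrete \pdash toral subgroups (the latter is specific to the compact setting).

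First I would address the Sylow-type axiom. The key input is that every discrete \pdash toral subgroup $P\subgroupeq G$ is $G$-conjugate into some maximal discrete \pdash toral subgroup, and all such subgroups are $G$-conjugate to $S$ (cf.\ Example~\ref{example: fusion system for G}, which in turn relies on conjugacy of maximal tori and Sylow theory for the finite quotient $W$). Given an $\Fcal$-conjugacy class, one selects a fully normalized representative $P\subgroupeq S$ and observes that $\Aut_G(P)=N_G(P)/C_G(P)$ is a compact Lie group. Because maximal discrete \pdash toral subgroups of a compact Lie group serve as Sylow \pdash subgroups in a precise sense, standard arguments for $N_G(P)$ yield $\Aut_S(P)\in\Syl_p\bigl(\Aut_{\Fcal}(P)\bigr)$ and the fully-normalized/fully-centralized compatibility.

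Next I would handle the extension axiom. A morphism $\phi\in\Hom_{\Fcal}(P,S)$ is induced by conjugation by some $g\in G$, and the subgroup $N_\phi\subgroupeq N_S(P)$ of elements whose conjugation action is carried through $\phi$ into $S$ must lift to a morphism on $N_\phi$. One uses that $N_G(P)$ has its own maximal discrete \pdash toral subgroup, into which $\phi(N_\phi)$ may be $N_G(P)$-conjugated, adjusting $g$ by an element of $N_G(P)$ to realize the required extension.

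The main obstacle, and the genuinely new content compared to the \pdash local finite case, is the continuity axiom governing ascending chains $P_0\subgroupeq P_1\subgroupeq\ldots\subgroupeq S$ together with compatible $\phi_i\in\Hom_{\Fcal}(P_i,S)$ whose restrictions agree. Here I would use the topological closure operation $P\mapsto\bar{P}$: each $\bar{P}_i$ is a \pdash toral Lie subgroup, the $\phi_i$ are realized by elements $g_i\in G$, and compactness of $G$ together with control of $\bar{P}_i$-sizes (Lemma~\ref{newlemma:order-is-height} and its \pdash toral analogue) lets one extract a convergent subsequence of the $g_i$ whose limit realizes the colimit homomorphism $\bigcup P_i\to S$ in $\Fcal$. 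The full execution of this compactness argument is carried out in \cite[Prop.~8.3]{BLO-Discrete}, and I would simply cite that proof.
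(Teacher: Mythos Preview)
The paper gives no proof of this proposition: it is stated with the attribution \cite[Prop.~8.3]{BLO-Discrete} and immediately followed by the next paragraph, with no \texttt{proof} environment. So there is nothing in the paper to compare your proposal against.

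Your outline of the argument from \cite{BLO-Discrete} is a reasonable sketch of how saturation is verified there (Sylow condition, extension axiom, and the continuity/ascending-chain axiom special to the discrete \pdash toral setting), and you correctly identify that the last of these is where compactness of $G$ enters. But for the purposes of this paper the appropriate ``proof'' is simply the citation, which is exactly what the authors do.
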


In general, a fusion system over a discrete \pdash toral group $S$ will have an infinite number of isomorphism classes of objects (unless $S$ is finite). Fortunately, it turns out to be possible to restrict one's attention to a smaller number of objects. 
The concepts of ``$\Fcal$-centric" and ``$\Fcal$-radical" play analogous roles in the theory of \pdash local compact groups to their group-theoretic counterparts. 

\begin{definition}\label{definition:Fcentric-Fradical}
Let $\Fcal$ be a fusion system over a discrete \pdash toral group~$S$.
\begin{enumerate}
\item \label{item: discrete Fcentric}
     A~subgroup $P\subgroupeq S$ is called \defining{$\Fcal$-centric}
     if $P$ contains all elements of $S$ that centralize it,
     and likewise all $\Fcal$-conjugates of $P$ contain their
     $S$-centralizers.
\item \label{item: discrete Fradical}
     A subgroup $P\subgroupeq S$ is called \defining{$\Fcal$-radical}
     if $\Out_{\Fcal}(P)=\Aut_{\Fcal}(P)/\Aut_{P}(P)$
     contains no nontrivial normal \pdash subgroup.
\end{enumerate}
\end{definition}

\begin{proposition}\cite[Cor.~3.5]{BLO-Discrete}
\label{proposition: BLO finite number cr}
In a saturated fusion system $\Fcal$ over a discrete \pdash toral group~$S$, there are only finitely many conjugacy classes of $\Fcal$-centric $\Fcal$-radical subgroups. 
\end{proposition}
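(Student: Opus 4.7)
The goal is to bound, up to $\Fcal$-conjugacy, both the identity component $P_0$ and the component group $\pi_0 P$ of any $\Fcal$-centric and $\Fcal$-radical subgroup $P\subgroupeq S$, and then to conclude finiteness from this bounded data. The first easy step is that since $P\subgroupeq S$, Lemma~\ref{newlemma:order-is-height} gives $\rank(P_0)\leq \rank(S_0)$, so only finitely many ranks occur. The issue is that a torus $(\Zpinfinity)^r$ already has infinitely many subgroups (for example, all $\Zpinfinity/p^n$ sit inside $\Zpinfinity$), so a naive size bound is insufficient.

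The substantive step is controlling $P_0$ up to $\Fcal$-conjugacy. I would use $\Fcal$-centricity to force $P_0$ to contain the identity component of $C_S(P)$, and, via the saturation axioms, to be stable under the action of $\Aut_\Fcal(P)$ extended over~$S$. The ``Weyl-type" group $\Out_\Fcal(S_0)$ is finite in a saturated fusion system, so an $\Aut_\Fcal(P)$-invariant subtorus of $T\definedas S_0$ is determined, up to conjugacy, by finitely many pieces of discrete data (its rank together with a compatible character under the finite Weyl action). This cuts the possibilities for $P_0$ down to a finite list.

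Next I would bound $\pi_0 P$ using $\Fcal$-radicality. With $P_0$ fixed, consider the filtration of $P$ by preimages of subgroups of $\pi_0 P$. If $\pi_0 P$ were allowed to be arbitrarily large, then one could identify a nontrivial normal $p$-subgroup inside $\Out_\Fcal(P)$, arising from automorphisms that act trivially on a ``deep" layer of this filtration; this would contradict the assumption that $P$ is $\Fcal$-radical. Consequently the order of $\pi_0 P$ is bounded uniformly in terms of invariants of $S$ and~$\Fcal$. Once both $P_0$ (finitely many choices) and the order of $\pi_0 P$ (bounded) are controlled, $P$ lies in a finite $p$-group of the form $S[p^N]\cdot P_0$ for some fixed~$N$, which has finitely many subgroups; and under saturation these collapse to finitely many $\Fcal$-conjugacy classes.

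\textbf{Main obstacle.} I expect the conceptually hard step to be the second one: showing that only finitely many subtori of $T$ occur as $P_0$ for $\Fcal$-centric~$P$. Rank bounds alone fail, and one must genuinely invoke saturation to produce a finite Weyl-type action on~$T$ under which $P_0$ is invariant. The radical bound on $\pi_0 P$ and the final packaging are comparatively formal, mirroring the argument in the finite case, once the identity-component step is in place.
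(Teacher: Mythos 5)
The paper does not prove this statement; it is quoted verbatim from \cite[Cor.~3.5]{BLO-Discrete}, and the real argument there is the ``bullet construction'': one builds an idempotent endofunctor $P\mapsto P^{\bullet}$ on subgroups of $S$, shows that the image has only finitely many $S$-conjugacy classes, and then shows that any $\Fcal$-centric, $\Fcal$-radical $P$ satisfies $P=P^{\bullet}$ (because if $P\subsetneq P^{\bullet}$ then $N_{P^{\bullet}}(P)/P$ produces a nontrivial normal $p$-subgroup of $\Out_{\Fcal}(P)$). Your sketch instead tries to bound $P_0$ and $\pi_0P$ separately, and both halves have gaps.

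The serious gap is in your step controlling $P_0$. You correctly reduce to the observation that $P_0$ is a subtorus of $T=S_0$ and that the Weyl group $\Aut_{\Fcal}(T)$ is finite, but then assert that a ``Weyl-invariant'' subtorus is determined up to conjugacy by its rank together with a ``compatible character.'' This is not established and is not true in the generality you need: even for a finite $W\subseteq\GL_r(\Z_p)$ there can be infinitely many (indeed a positive-dimensional family of) $W$-invariant subtori of a given rank (take $W$ trivial for the extreme case). Moreover, $P_0$ need only be invariant under the subgroup of $W$ coming from $\Aut_{\Fcal}(P)$, and you have not pinned that subgroup down. The bullet construction sidesteps this by not attempting to classify arbitrary $\Aut_{\Fcal}(P)$-invariant subtori at all: the torus part of $P^{\bullet}$ is a \emph{fixed-point} subtorus $(T^{W'})_0$ for a subgroup $W'\subseteq W$ determined by a fixed amount of finite torsion data $P\cap T[p^e]$, and since $W$ is finite there are only finitely many such fixed-point tori. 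Your argument has no analogue of this mechanism and so does not produce a finite list of candidate~$P_0$.

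Your step bounding $\pi_0P$ via radicality is also only gestured at. You say that for $\pi_0P$ large one ``could identify a nontrivial normal $p$-subgroup inside $\Out_{\Fcal}(P)$,'' but you do not construct one, and the construction is exactly where the work lies. In BLO the normal $p$-subgroup is produced from $N_{P^{\bullet}}(P)/P$ using the unique-extension property of morphisms to their bullets together with the fact (nontrivial for discrete $p$-toral groups) that a proper subgroup of a discrete $p$-toral group is proper in its normalizer. Neither ingredient appears in your sketch. So while you have correctly identified the two forces at work --- finiteness of the Weyl group from saturation, and $\Fcal$-radicality excluding subgroups that are ``too small'' relative to a canonical thickening --- the argument as written does not close either step, and it is not a shortcut around the bullet construction.
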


The saturated fusion system $\Fcal_S(G)$ of the group $G$ does not contain
enough information about $G$ to recover $\pcomplete{BG}$. 
For example, if $G$ is a finite \pdash group, the fusion system can only detect
$G/Z(G)$.
We recall the definition of a centric linking system, a category associated to a saturated fusion system, 
whose nerve is mod~$p$ equivalent to~$BG$.
Details on properties of linking systems can be found in the appendix of~\cite{BLO-LoopSpaces}. We begin with the transporter category.

\begin{definition}   \label{definition: transporter category}
If $G$ is a group and $\Hcal$ is a collection of subgroups of~$G$, 
the \defining{transporter category} for~$\Hcal$, denoted $\Tcal_\Hcal(G)$, is the category
whose object set is~$\Hcal$, and whose morphism sets are given by
\[
\Hom_{\Tcal_\Hcal(G)}(P,Q)
\definedas
\left\{ 
g\in G \suchthat{gPg^{-1}\subgroupeq Q}
\right\}.
\]
If $S$ is a subgroup of $G$ and $\Hcal$ is the set of \emph{all}
subgroups of~$S$, then we write $\Tcal_S(G)$ for the corresponding transporter category.
\end{definition}

\begin{definition}
\cite[Defn.~4.1]{BLO-Discrete}
\cite[Defn.~1.9]{BLO-LoopSpaces}
\label{definition: linking}
Let $\Fcal $ be a fusion system over a discrete \pdash toral group~$S$ and let $\Hcal$ be the collection of $\Fcal$-centric subgroups. 
A \defining{centric linking system associated to~$\Fcal$} is a category $\L$ whose objects
are the subgroups in $\Hcal$, together with a pair of functors
\[
\Tcal_\Hcal(S)\xlongrightarrow{\delta} \Lcal \xlongrightarrow{\pi} \Fcal
\]
such that each object is isomorphic (in~$\L$) to one that is fully centralized in~$\Fcal$, and such that the following conditions are satisfied.

\begin{itemize}
\item[(A)] The functor $\delta$ is the identity on objects, and $\pi$ is the inclusion on objects. For each pair of objects $ P,Q \in \Hcal $,
the centralizer $Z(P)$ acts freely on $\Hom _{\L}(P,Q)$ by precomposition through~$\delta$,
and
$\pi_{P,Q}$ induces a bijection
\[
\Hom _{\Lcal}(P,Q)/Z(P)
\xrightarrow{\ \cong\ } 
\Hom _{\Fcal}(P,Q).
\]
\item[(B)] For each $P,Q\in \Hcal$ and each $ g \in N_S(P,Q)$,
the map induced by the functor $\pi_{P,Q} \colon \Hom_{\L}(P,Q)\rightarrow \Hom _{\Fcal}(P,Q)$ sends the element
$\delta _{P,Q}(g) \in \Hom_{\Lcal}(P,Q)$ to $c_g \in \Hom _{\Fcal}(P,Q)$.

\item[(C)] For each $ f \in \Hom _{\Lcal}(P,Q) $ and each $g \in P$, the following square in~$\L$ commutes:
\[
\diagram
P \rto^{f} \dto_{\delta_{P}(g)}
      & Q \dto^{\delta_{Q}(\pi(f)(g))}
\cr P \rto^f & Q.
\enddiagram
\]
\end{itemize}
\end{definition}


\begin{definition}\cite[Defn.~4.2]{BLO-Discrete}     \label{definition: p-local compact group}
A \defining{\pdash local compact group} is a triple $(S,\Fcal,\Lcal)$, where $\Fcal$ is
a saturated fusion system over the discrete \pdash toral group~$S$,
and $\Lcal$ is a centric linking system associated to~$\Fcal$. The
\defining{classifying space} of $(S,\Fcal,\L)$ is defined as~$\pcomplete{\realize{\Lcal}}$.
\end{definition}

Broto, Levi, and Oliver, in \cite[\S 9]{BLO-Discrete}, proved that a compact Lie group~$G$ gives rise to a \pdash local compact group $(S, \Fcal_S(G),\Lcal_S(G))$ by giving a specific construction of $\Lcal_S(G)$; they then prove that it gives a suitable model for the \pdash completion of~$BG$. 

\begin{theorem}\cite[Thm.~9.10]{BLO-Discrete}\label{theorem:Lpcom=BGpcom}
Let $G$ be a compact Lie group, and fix a maximal discrete \pdash toral subgroup $S\subgroupeq G$. Then there exists a centric linking system $\Lcal_S(G)$ associated to $\Fcal_S(G)$ such that
$\pcomplete{\realize{\L_S(G)}}\simeq \pcomplete{BG}$.
\end{theorem}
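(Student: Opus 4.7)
The plan is to construct $\Lcal_S(G)$ explicitly as a quotient of the transporter category of~$G$, and then to establish the mod-$p$ equivalence by comparing the resulting nerve to a known subgroup decomposition of $\pcomplete{BG}$.

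First, for the construction: the objects of $\Lcal_S(G)$ are the $\Fcal_S(G)$-centric subgroups of~$S$. For each such~$P$, let $\Pbold$ denote its closure in~$G$; the Lie centralizer $C_G(\Pbold)$ is an extension of a finite group by a compact torus. I would choose a normal subgroup $K(P) \trianglelefteq N_G(\Pbold)$, contained in $C_G(\Pbold)$, such that $N_G(\Pbold)/K(P)$ is discrete \pdash toral, $BK(P)$ is mod-$p$ acyclic, and $K(P)\cap Z(P)=1$. A concrete choice is to take $K(P)$ generated by the complement of the maximal discrete \pdash torus inside $C_G(\Pbold)_0$ together with the prime-to-$p$ part of $\pi_0 C_G(\Pbold)$. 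Then set
\[
\Hom_{\Lcal_S(G)}(P,Q) := N_G(P,Q)/K(P),
\]
with composition inherited from multiplication in~$G$. Define $\pi$ by $[g]\mapsto c_g$ and $\delta$ by the inclusion $S\hookrightarrow G$. The axioms of Definition~\ref{definition: linking} follow from the $\Fcal$-centric hypothesis (for~(A)) and formally from the construction (for~(B) and~(C)).

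Second, to obtain the mod-$p$ equivalence, I would invoke the subgroup decomposition of $\pcomplete{BG}$ due to Jackowski--McClure--Oliver, presenting it as a homotopy colimit over $G$-orbits of \pdash stubborn subgroups with functor values mod-$p$ equivalent to $BN_G(\Pbold)$-type spaces. An analogous decomposition of $\realize{\Lcal_S(G)}$ (in the spirit of \cite[Prop.~4.6]{BLO-Discrete}) has functor values $B\Aut_{\Lcal_S(G)}(P) \cong B(N_G(\Pbold)/K(P))$. Matching the indexing categories via the closure map $P \mapsto \Pbold$, together with the mod-$p$ acyclicity of $BK(P)$, yields objectwise mod-$p$ equivalences and hence a mod-$p$ equivalence of the homotopy colimits.

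The main obstacle is the construction step: one must choose $K(P)$ so that it is normal in all of $N_G(\Pbold)$ (not merely in $C_G(\Pbold)$), so that the proposed composition in $\Lcal_S(G)$ is well-defined, while simultaneously ensuring $BK(P)$ is mod-$p$ acyclic and that the quotient is genuinely discrete \pdash toral. Verifying that this choice of $K(-)$ is also functorial under $G$-conjugation, and carrying out the resulting comparison of decompositions carefully, is the technical heart of \cite[\S9]{BLO-Discrete}.
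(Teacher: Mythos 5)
This theorem is cited from \cite[Thm.~9.10]{BLO-Discrete} and not proved in the paper, but the paper does describe BLO's construction of $\Lcal_S(G)$ in Section~\ref{sec:Lie2} (around Definition~\ref{definition: Linkwiggle} and diagram~\eqref{diagram: pullback diagrams}), and your proposal diverges from it in a way that opens a genuine gap. You propose to define $\Hom_{\Lcal_S(G)}(P,Q)$ as a quotient $N_G(P,Q)/K(P)$, where $K(P)\trianglelefteq N_G(\Pbold)$ is generated by ``the complement of the maximal discrete \pdash torus inside $C_G(\Pbold)_0$'' plus the prime-to-$p$ part of $\pi_0 C_G(\Pbold)$. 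But no such complement is available in any canonical way: the identity component $C_G(\Pbold)_0$ is a compact torus $(S^1)^r$, and a complement to $(\Zpinfinity)^r$ inside it exists only as an abstract (non-closed, non-canonical) subgroup; it is parametrized by a large torsor over $\Hom$-groups into $\Zpinfinity$, and there is no reason a single choice can be made simultaneously normal under $N_G(\Pbold)$-conjugation and compatible across all objects. This is precisely the obstruction that the paper flags: ``getting to $\Hom_{\Fcal_S(G)}(P,Q)$ from $\Hom_{\Tcal_S(G)}(P,Q)$ would require taking the orbits by the action of the \emph{entire} centralizer $C_G P$, which in general contains elements of finite order prime to~$p$ and elements of infinite order,'' and the resolution in \cite{BLO-Discrete} is \emph{not} to quotient by a subgroup-valued $K(-)$.

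Instead, BLO quotient only by the canonical functor $\Zcal\times\nu'$, observe that the residual $C_G(P)/(Z(P)\times\nu'_P)$ is a rational vector space (Corollary~\ref{corollary: rational vector space}), and use a rigidification argument (\cite[Lemma~9.11]{BLO-Discrete}) to produce a \emph{functorial section} $s$ of the quotient functor $\Tcal_S(G)/(\Zcal\times\nu')\to\Fcal_S(G)$; then $\Lcal_S(G)$ is defined as the categorical pullback of $\Tcal_S(G)/\nu'\to\Tcal_S(G)/(\Zcal\times\nu')\leftarrow\Fcal_S(G)$. The resulting hom-sets are sub-$Z(P)$-torsors of $N_G(P,Q)/\nu'_P$, not quotients $N_G(P,Q)/K(P)$, and the functoriality problem lives in the section $s$ (a choice at the level of the quotient category) rather than in a subgroup-valued functor $K(-)$. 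Your closing paragraph does acknowledge that $K(-)$ must be normal and conjugation-functorial, but it frames $K(P)$ as a ``concrete choice'' that merely needs verification; in fact the proposed $K(P)$ is not well defined, and the construction that actually works is of a different shape. (A smaller issue: $N_G(\Pbold)/K(P)$ should be allowed to be \emph{virtually} discrete \pdash toral --- cf.\ Lemma~\ref{lemma: Aut_L} and equation~\eqref{eq: ses Aut Out} --- since $\Out_\Fcal(P)$ is finite but not a \pdash group in general.) The comparison with the Jackowski--McClure--Oliver subgroup decomposition in your second step is a reasonable strategy once the linking system is in hand, but the construction step is where the real content lies.
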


Thus any theorem about decompositions of $\realize{\Lcal}$ for a \pdash local compact group
will also apply to Lie groups. For computational purposes, we will use
an alternative, more concrete model for $\Lcal_S(G)$ also described in \cite{BLO-Discrete}, which we detail in Section~\ref{sec:Lie2}.

A key result in the theory of fusion systems is the existence and uniqueness (up to equivalence)
of centric linking systems associated to a given saturated fusion system. For
saturated fusion systems over a finite \pdash group~$S$,
the result was proven first in \cite{Chermak} using the new theory of localities.
Another proof was given in \cite{Oliver-ExistenceL}  using the obstruction theory developed in \cite{BLO-Finite}. Later,  \cite{LL-ExistenceL} extended the result to saturated fusion systems over discrete \pdash toral groups.

\begin{theorem}\cite{LL-ExistenceL}
\label{theorem: LL-ExistenceL}
Let $\Fcal$ be a saturated fusion system over a discrete \pdash toral group. Up to equivalence, there exists a unique centric linking system associated to~$\Fcal$.
\end{theorem}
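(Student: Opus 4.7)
The plan is to attack the theorem via the obstruction theory developed by Broto--Levi--Oliver \cite{BLO-Finite} in the finite case, then extend to the discrete \pdash toral setting by finite approximation. In the finite case, one attaches to $\Fcal$ the center functor $\Zcal_\Fcal \colon \Ocal(\Fcal^c)^{\op} \to \mathbf{Ab}$ defined on the orbit category of $\Fcal$-centric subgroups and sending $P$ to $Z(P)$; the obstruction to the existence of a centric linking system associated to $\Fcal$ lives in $\varprojlim^3 \Zcal_\Fcal$, while the obstruction to uniqueness up to equivalence lives in $\varprojlim^2 \Zcal_\Fcal$. Chermak \cite{Chermak}, reformulated and reproved obstruction-theoretically by Oliver \cite{Oliver-ExistenceL}, showed these classes vanish in the finite setting.

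For the discrete \pdash toral case I would first reduce the higher-limit computation to a finite diagram: by Proposition~\ref{proposition: BLO finite number cr} and the standard acyclicity of $\Zcal_\Fcal$ off the centric-radical locus, the relevant higher limits are computed over the poset of conjugacy classes in $\Fcenrad$, which is finite. Next I would exhaust $S$ by an increasing chain of finite \pdash subgroups $S = \bigcup_n S_n$ (truncating the identity component $(\Zpinfinity)^r$ to $(\Z/p^n)^r$) and, following the approximation machinery of \cite{BLO-Discrete}, build a compatible sequence of saturated fusion systems $\Fcal_n$ over $S_n$ whose colimit recovers $\Fcal$. For each $n$, the finite-group theorem produces a centric linking system $\Lcal_n$, unique up to equivalence; this uniqueness delivers canonical restriction functors $\Lcal_{n+1}\to \Lcal_n$, so that $\Lcal \definedas \colim_n \Lcal_n$ is the natural candidate for the linking system associated to $\Fcal$.

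The hard part will be controlling this passage to the limit. One must verify that the now-infinite values $Z(P)$ do not introduce new classes in $\varprojlim^i \Zcal_\Fcal$ for $i = 2, 3$: this calls for a cohomological analysis of $\Ocal(\Fcenrad)^{\op}$ with \pdash toral abelian coefficients, combined with a Mittag-Leffler $\varprojlim^1$ vanishing argument comparing $\Fcal$ with its finite approximations. One then has to verify that the colimit $\Lcal$ inherits axioms (A)--(C) of Definition~\ref{definition: linking}, most delicately that the structure map $\delta$ extends over the full transporter category $\Tcal_\Hcal(S)$ and that every $\Fcal$-centric subgroup is $\Fcal$-conjugate into some $S_n$ so as to actually appear as an object of $\Lcal_n$ for $n$ large. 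These verifications are mechanical but form the technical heart of the argument, and are where the discrete \pdash toral case genuinely goes beyond \cite{BLO-Finite}.
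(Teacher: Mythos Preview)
The paper does not prove this theorem: it is stated with the citation \cite{LL-ExistenceL} and no argument is given, as it is quoted as background from the literature (Levi--Libman, building on Chermak and Oliver). So there is no ``paper's own proof'' to compare against.

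That said, your outline is broadly in the spirit of how the result is actually established in \cite{LL-ExistenceL}: one shows that the obstruction classes in $\varprojlim^i \Zcal_\Fcal$ for $i=2,3$ vanish, reducing to the finite case via Oliver's reformulation of Chermak's theorem. One point in your sketch deserves a flag. You write that uniqueness of each $\Lcal_n$ ``delivers canonical restriction functors $\Lcal_{n+1}\to\Lcal_n$,'' but uniqueness is only up to equivalence, not up to unique isomorphism, so there is no canonical choice of comparison functor and hence no automatic compatibility making $\colim_n \Lcal_n$ well defined. The literature proof sidesteps this by working directly with the obstruction groups over $\Ocal(\Fcal^c)$ and proving their vanishing (using the finite-case input together with the structure of discrete \pdash toral groups), rather than by assembling $\Lcal$ as a colimit of finite linking systems. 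Your reduction to a finite diagram via Proposition~\ref{proposition: BLO finite number cr} and acyclicity off $\Fcenrad$ is, however, exactly the right first move.
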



\section{The normalizer decomposition for \pdash local compact groups}
\label{sec:abstract-result}

Throughout this section, we assume a fixed \pdash local compact group $(S,\Fcal,\Lcal)$ (Definition~\ref{definition: p-local compact group}).
We establish a normalizer decomposition that expresses the uncompleted nerve $\realize{\Lcal}$ as a homotopy colimit
indexed on a finite poset of chains. 
We start by introducing chains and their
automorphisms. Next we prove that automorphism groups of chains in~$\Lcal$ are
virtually discrete \pdash toral (Definition~\ref{definition: virtually ptoral}).
Lastly, we prove the general, abstract normalizer decomposition result for a \pdash local compact group (Theorem~\ref{theorem: abstract decomposition theorem}), which mostly proceeds analogously to the \pdash local finite group case
in \cite{Libman-normalizer}. 

We begin in the fusion system. A chain in~$\Fcal$ is given by a sequence 
$\Pbb=(P_0\subgroupeq P_1\subgroupeq \ldots \subgroupeq P_k)$ of subgroups of~$S$. 
A chain is \defining{proper} if the inclusions are all strict. 
If $\Pbb'$ has the same length as~$\Pbb$, we say that $\Pbb$ and $\Pbb'$ are \defining{$\Fcal$-conjugate} 
if there exists an isomorphism $f\in \Hom_\Fcal\left(P_k,P'_k\right)$ such that $f\left(P_i\right)=P'_i$.

\begin{definition}   \label{definition: automorphisms of chains in L}
Let $\Pbb = (P_0 \subgroupeq\ldots\subgroupeq P_k)$ be a chain of $\Fcal$-centric subgroups of~$S$. We define $\Aut_{\Fcal}\Pbb$ as the group of $\Fcal$-automorphisms of $P_k$ that restrict to an automorphism of $P_i$ for each $0\leq i<k$. 
\end{definition}

We would like to define $\Aut_\Lcal(\Pbb)$ for a chain $\Pbb$, but first we need an analogue of the  canonical subgroup inclusions used to define $\Aut_\Fcal(\Pbb)$. It is possible to construct compatible ``distinguished inclusions" in~$\Lcal$ with the property that they project to the subset inclusions in~$\Fcal$
via $\pi\colon\Lcal\rightarrow\Fcal$ 
(Definition~\ref{definition: linking}). 

\begin{lemma}   \cite[Remark~1.6]{JLL}
\label{lemma: distinguished morphisms in L}
Let $\L$ be a centric linking system associated to a saturated fusion system $\Fcal$ on a discrete \pdash toral group~$S$. There is a coherent collection of morphisms
$\left\{
    \inclusion{P}{Q}\in \Hom_\Lcal(P,Q) \
    \suchthat{\largestrut P\subgroupeq Q}
\right\}$
with the following properties. 
\begin{enumerate}
\item $\pi\big(\inclusion{P}{Q}\big)$ is the inclusion morphism $P\subgroupeq Q$ in~$\Fcal$. 
\item $\inclusion{P}{P} = \Id$.
\item If $P\subgroupeq Q\subgroupeq R$ are subgroups of~$S$, then $\inclusion{Q}{R} \circ \inclusion{P}{Q} = \inclusion{P}{R}$.
\end{enumerate}
\end{lemma}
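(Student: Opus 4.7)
The plan is to define $\inclusion{P}{Q}$ as the image under $\delta$ of the identity element $1\in S$, and then to verify that the axioms of the centric linking system (Definition~\ref{definition: linking}) yield all three properties. Concretely, for any pair of $\Fcal$-centric subgroups $P\subgroupeq Q$ of~$S$, the element $1 \in S$ satisfies $1\cdot P \cdot 1^{-1} = P \subgroupeq Q$, so it defines a morphism from $P$ to $Q$ in the transporter category $\Tcal_\Hcal(S)$ of Definition~\ref{definition: transporter category}. Set
\[
\inclusion{P}{Q} \definedas \delta_{P,Q}(1) \in \Hom_\Lcal(P,Q).
\]

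For property~(1), axiom~(B) of Definition~\ref{definition: linking} tells us that $\pi_{P,Q}\big(\delta_{P,Q}(g)\big) = c_g$ for $g\in N_S(P,Q)$. Taking $g = 1$, we obtain $\pi_{P,Q}\big(\inclusion{P}{Q}\big) = c_1$, which is precisely the inclusion $P\subgroupeq Q$ regarded as a morphism in~$\Fcal$. For property~(2), since $\delta$ is a functor and the element $1$ is the identity endomorphism of $P$ in $\Tcal_\Hcal(S)$, we have $\inclusion{P}{P} = \delta_{P,P}(1) = \id_P$ in~$\Lcal$. For property~(3), composition in the transporter category is given by multiplication in~$S$, so if $P\subgroupeq Q \subgroupeq R$, then functoriality of $\delta$ yields
\[
\inclusion{Q}{R}\circ \inclusion{P}{Q} = \delta_{Q,R}(1)\circ \delta_{P,Q}(1) = \delta_{P,R}(1\cdot 1) = \inclusion{P}{R}.
\]

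In short, there is no genuine obstacle: the three conditions are encoded directly in the structure functor $\delta\colon \Tcal_\Hcal(S)\to\Lcal$, and the lemma is really just the observation that the distinguished inclusions $\inclusion{P}{Q}$ should be taken to be the lifts of the transporter-category identities.
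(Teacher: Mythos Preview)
Your proof is correct and is exactly the standard argument; the paper itself does not give a proof but simply cites \cite[Remark~1.6]{JLL}, where the distinguished inclusions are defined in precisely this way, as $\delta_{P,Q}(1)$. The three properties follow, as you say, immediately from the fact that $\delta\colon\Tcal_\Hcal(S)\to\Lcal$ is a functor (for (2) and~(3)) together with axiom~(B) of Definition~\ref{definition: linking} (for~(1)).
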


The spaces in our decomposition of $\realize{\Lcal}$ will be 
classifying spaces of automorphism groups in the linking system. 

\begin{definition}\cite[Def.~1.4]{Libman-normalizer}   
\label{definition: automorphisms of chains in L}
Let $\Pbb = (P_0 \subgroupeq\ldots\subgroupeq P_k)$ be a chain of $\Fcal$-centric subgroups of~$S$. 
Define $\Aut_\Lcal(\Pbb)$ to be the subgroup of 
$\prod_{i=0}^{k}\Aut_{\Lcal}P_i$ consisting of sequences $(f_i)$ satisfying $f_{i+1}\circ\iota_{P_{i}}^{P_{i+1}}=\iota_{P_{i}}^{P_{i+1}}\circ f_i$. That is, each element of $\Aut_\Lcal(\Pbb)$ gives a 
commutative ladder
\begin{equation}   \label{diagram: ladder of chains}
\begin{gathered}
\xymatrix{
P_0\ar[r]^-{\iota_{P_0}^{P_1}}\ar[d]^\isom_{f_0}
   & P_1 \ar[r]^-{\iota_{P_1}^{P_2}}\ar[d]^\isom_{f_1}
   & \dots\ar[r]^-{\iota_{P_{k-2}}^{P_{k-1}}} 
   & P_{k-1}\ar[r]^-{\iota_{P_{k-1}}^{P_k}}\ar[d]^-\isom_{f_{k-1}}
   & P_k\ar[d]^-\isom_{f_k}
\\
P_0\ar[r]_-{\iota_{P_0}^{P_1}} 
   & P_1 \ar[r]_-{\iota_{P_1}^{P_2}}
   & \dots\ar[r]_-{\iota_{P_{k-2}}^{P_{k-1}}} 
   & P_{k-1}\ar[r]_-{\iota_{P_{k-1}}^{P_k}} 
   & P_k.
}
\end{gathered}
\end{equation} 
\end{definition}

We pause for two basic lemmas that come up in lifting from the fusion system to the linking system, and in checking uniqueness properties. First, a factoring lemma follows directly from the axioms of a centric linking system.

\begin{lemma} \label{lemma: lifting chains}
Given a diagram in $\Fcal$ on the left, and a lift $\psiwiggle$ of $\psi$ to~$\Lcal$, there is a unique lift $\phiwiggle$ of $\phi$ making the diagram on the right commute in~$\Lcal$.
\[
\xymatrix{
P\ar[r]^-\subseteq\ar[d]_\phi & P'\ar[d]^\psi
\\Q\ar[r]^-\subseteq & Q'
}
\hspace{60pt}
\xymatrix{
P\ar[r]^-{\iota_{P}^{P'}}\ar[d]_\phiwiggle & P'\ar[d]^\psiwiggle
\\Q\ar[r]^-{\iota_{Q}^{Q'}} & Q'
}
\]
\end{lemma}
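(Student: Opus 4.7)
The plan is to reduce the lemma to axiom (A) of a centric linking system (Definition~\ref{definition: linking}), which makes $\Hom_\Lcal(P,Q)$ a free $Z(P)$-torsor over $\Hom_\Fcal(P,Q)$. Both existence and uniqueness of $\phiwiggle$ then follow by comparing lifts in $\Hom_\Lcal(P, Q')$.

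First I would establish existence. Pick any lift $\phiwiggle_0 \in \Hom_\Lcal(P,Q)$ of $\phi$, which exists by surjectivity of $\pi_{P,Q}$ in axiom~(A). Set $g \definedas \psiwiggle \circ \inclusion{P}{P'}$. Then both $\inclusion{Q}{Q'} \circ \phiwiggle_0$ and $g$ are morphisms in $\Hom_\Lcal(P, Q')$, and by Lemma~\ref{lemma: distinguished morphisms in L}(1) together with the commutativity of the $\Fcal$-square, they have the same image under $\pi$, namely $\iota_Q^{Q'} \circ \phi = \psi \circ \iota_P^{P'}$. By axiom~(A), they differ by precomposition with $\delta_P(z)$ for a (necessarily unique) element $z \in Z(P)$. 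Setting $\phiwiggle \definedas \phiwiggle_0 \circ \delta_P(z)$ then gives $\inclusion{Q}{Q'} \circ \phiwiggle = g$. Moreover, $\pi(\phiwiggle) = \phi \circ c_z = \phi$ by axiom~(B), since $c_z$ acts trivially on $P$ when $z$ is central in $P$.

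Next I would prove uniqueness. Suppose $\phiwiggle_1, \phiwiggle_2 \in \Hom_\Lcal(P,Q)$ both lift $\phi$ and both satisfy $\inclusion{Q}{Q'} \circ \phiwiggle_i = g$. By axiom~(A), $\phiwiggle_2 = \phiwiggle_1 \circ \delta_P(z)$ for some $z \in Z(P)$. Post-composing with $\inclusion{Q}{Q'}$ gives $\inclusion{Q}{Q'} \circ \phiwiggle_1 = \inclusion{Q}{Q'} \circ \phiwiggle_1 \circ \delta_P(z)$, i.e.\ $z$ fixes the element $\inclusion{Q}{Q'} \circ \phiwiggle_1 \in \Hom_\Lcal(P,Q')$ under the $Z(P)$-action. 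Since this action is free by axiom~(A), $z = 1$ and $\phiwiggle_1 = \phiwiggle_2$.

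I do not expect a substantive obstacle: the lemma is essentially a direct unwinding of the linking-system axioms. The only point requiring care is ensuring that the $Z(P)$-twist used to correct $\phiwiggle_0$ does not disturb the projection to $\phi$, which is handled by axiom~(B) together with the centrality of $z$ in $P$.
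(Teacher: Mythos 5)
Your argument is correct and is essentially a self-contained proof of what the paper outsources to a citation. The paper's own proof is a one-line application of \cite[Lemma~4.3]{BLO-Discrete} to the composite $P \xrightarrow{\phi} Q \hookrightarrow Q'$; that cited lemma is a unique-factorization-through-distinguished-inclusions result whose proof is exactly the $Z(P)$-torsor bookkeeping you carry out here, resting on the free action and quotient bijection of axiom~(A) together with $\pi\big(\delta_P(z)\big) = c_z = \id_P$ for $z \in Z(P)$ from axiom~(B). The only stylistic difference is that you pick an arbitrary lift $\phiwiggle_0$ of $\phi$ in $\Hom_\Lcal(P,Q)$ and then correct it by a central twist so that the square in $\Lcal$ commutes, whereas the paper's route factors $\psiwiggle \circ \inclusion{P}{P'}$ through $\inclusion{Q}{Q'}$ directly; both rest on the same two axioms and the content is the same. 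One small point worth being explicit about if this were written out in full: axiom~(A) gives $z$ with $\psiwiggle \circ \inclusion{P}{P'} = \big(\inclusion{Q}{Q'}\circ\phiwiggle_0\big)\circ\delta_P(z)$ or the other way around, so the corrected lift may be $\phiwiggle_0\circ\delta_P(z)$ or $\phiwiggle_0\circ\delta_P(z)^{-1}$; this does not affect the argument since $z^{-1}\in Z(P)$ as well, but is worth stating precisely.
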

\begin{proof}
This follows from applying \cite[Lemma~4.3]{BLO-Discrete} to $P\xrightarrow{\phi} Q\into Q'$.
\end{proof}

It follows from the factoring properties above that morphisms in a centric linking system have good categorical properties. 

\begin{lemma}\cite[Cor.~1.8]{JLL}     
\label{lemma: categorical morphism properties in L}
Morphisms in a centric linking system are both categorical monomorphisms and categorical epimorphisms. 
\end{lemma}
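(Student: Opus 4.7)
The plan is to reduce the claim to checking that every distinguished inclusion $\iota_P^Q$ of Lemma~\ref{lemma: distinguished morphisms in L} is both a categorical monomorphism and a categorical epimorphism. This reduction uses Lemma~\ref{lemma: lifting chains} together with axiom~(b) for~$\Fcal$: every morphism $f\in\Hom_\Lcal(P,Q)$ factors as an isomorphism $P\to f(P)$ in~$\Lcal$ composed with $\iota_{f(P)}^Q$, and isomorphisms are trivially both monos and epis.

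For the monomorphism property of $\iota_P^Q$, I would suppose $\iota_P^Q\circ g_1 = \iota_P^Q\circ g_2$ for $g_1,g_2\in\Hom_\Lcal(R,P)$. Applying~$\pi$ and using that $\pi(\iota_P^Q)$ is the inclusion $P\subgroupeq Q$, which is an injective group homomorphism (hence a monomorphism in the category of groups), yields $\pi(g_1) = \pi(g_2)$. By axiom~(A) of Definition~\ref{definition: linking}, $g_2 = g_1\circ\delta_R(z)$ for some $z\in Z(R)$; substituting back gives $\iota_P^Q\circ g_1 = \iota_P^Q\circ g_1\circ\delta_R(z)$ in $\Hom_\Lcal(R,Q)$, and the freeness of the $Z(R)$-action there forces $z=1$, so $g_1=g_2$.

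For the epimorphism property of $\iota_P^Q$, I would suppose $h_1\circ\iota_P^Q = h_2\circ\iota_P^Q$ for $h_1,h_2\in\Hom_\Lcal(Q,R)$ and argue entirely within~$\Lcal$. The strategy is to factor each $h_i$ as an isomorphism $\alpha_i\colon Q\to h_i(Q)$ followed by $\iota_{h_i(Q)}^R$, apply the same iso-then-inclusion factorization to the composites $h_i\circ\iota_P^Q$, and use uniqueness of factorization together with the monomorphism property of distinguished inclusions (just proved) to conclude that $\alpha_1$ and $\alpha_2$ agree after restriction to~$P$. The $\Fcal$-centricity of~$P$ then enters decisively: any $p'$-automorphism of the $p$-group~$Q$ that fixes the $\Fcal$-centric subgroup~$P$ pointwise is forced to be trivial, since automorphisms of~$Q$ fixing a centric~$P$ pointwise form a $p$-group (a standard coprime-action computation using $C_Q(P)=Z(P)$). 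Combined with axiom~(C), which gives $\delta_Q(z)\circ\iota_P^Q = \iota_P^Q\circ\delta_P(z)$ for $z\in Z(P)$, and the freeness of the $Z(P)$-action on $\Hom_\Lcal(P,R)$, this forces $h_1 = h_2$.

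The main obstacle is the epi direction: unlike the mono argument, one cannot reduce to an equality in~$\Fcal$, because two morphisms in~$\Fcal$ out of~$Q$ that agree on an $\Fcal$-centric subgroup~$P$ need not be equal. The rigidity must instead be extracted from the finer $p'$-structure built into the linking system quotient, in conjunction with the centricity hypothesis on~$P$.
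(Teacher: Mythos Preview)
The paper does not prove this lemma; it simply cites \cite[Cor.~1.8]{JLL}. So the comparison is between your attempted proof and the standard argument in the literature.

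Your reduction to distinguished inclusions and your monomorphism argument are correct.

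The epimorphism argument has a genuine gap. After establishing that $\pi(h_1)|_P=\pi(h_2)|_P$, you invoke the (true) group-theoretic fact that automorphisms of~$Q$ fixing a centric~$P$ pointwise form a \pdash group, and you assert that this forces the relevant automorphism to be trivial because it is a $p'$-automorphism. But nothing in your setup produces a $p'$-automorphism: the comparison $\pi(\alpha_2)^{-1}\circ\pi(\alpha_1)$ (even granting that the images $h_1(Q)$ and $h_2(Q)$ coincide, which you have not shown) is merely \emph{some} automorphism of~$Q$ fixing~$P$ pointwise, with no control on its order. Knowing that such automorphisms form a \pdash group does not make any particular one trivial. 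The concluding appeal to axiom~(C) and freeness of the $Z(P)$-action is the right endgame, but it only applies once you already know $\pi(h_1)=\pi(h_2)$, which is precisely the step you have not justified.

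The standard argument closes this gap differently: one first treats the case $P\trianglelefteq Q$. For $g\in Q$, normality gives $\delta_Q(g)\circ\iota_P^Q=\iota_P^Q\circ\alpha_g$ for a unique $\alpha_g\in\Aut_\Lcal(P)$ (by restriction), whence $h_1\circ\delta_Q(g)\circ\iota_P^Q=h_2\circ\delta_Q(g)\circ\iota_P^Q$; combining with axiom~(C) for $h_i$ and the monomorphism property yields $\delta_R(\pi(h_1)(g))=\delta_R(\pi(h_2)(g))$, hence $\pi(h_1)=\pi(h_2)$. Now your freeness argument finishes the normal case. The general case follows by factoring $\iota_P^Q$ through the subnormal series $P=P_0\trianglelefteq N_Q(P_0)\trianglelefteq\cdots\trianglelefteq Q$, noting that each $P_i\supseteq P$ is again $\Fcal$-centric.
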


Returning to the study of automorphism groups of chains, we are able to relate the automorphism groups in the linking system to those in the fusion system. 

\begin{lemma}\label{lemma: AutL to AutF surjective}
Let $\Pbb = (P_0 \subgroupeq\ldots\subgroupeq P_k)$ be a chain of $\Fcal$-centric subgroups. There is a short exact sequence
\[
1 \longrightarrow Z(P_k)
  \longrightarrow \Aut_\Lcal(\Pbb)
  \xrightarrow{\ \pi\ } \Aut_\Fcal(\Pbb)
  \longrightarrow 1.
\]
\end{lemma}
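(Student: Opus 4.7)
The plan is to analyze the map $\pi\colon \Aut_\Lcal(\Pbb)\to \Aut_\Fcal(\Pbb)$ sending $(f_0,\dots,f_k)\mapsto \pi(f_k)$. First I would check this is well-defined: applying the functor $\pi$ to the ladder \eqref{diagram: ladder of chains} and using $\pi(\iota_{P_i}^{P_{i+1}}) = {\subseteq}$ (Lemma~\ref{lemma: distinguished morphisms in L}) shows that $\pi(f_k)$ restricts to $\pi(f_i)$ on each $P_i$, so indeed $\pi(f_k)\in \Aut_\Fcal(\Pbb)$. The map is clearly a group homomorphism.

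For surjectivity, given $\phi\in\Aut_\Fcal(\Pbb)$, I would first lift $\phi|_{P_k}$ to some $f_k\in\Aut_\Lcal(P_k)$ using axiom~(A) of Definition~\ref{definition: linking}. Then, for each $i<k$, apply Lemma~\ref{lemma: lifting chains} to the square in $\Fcal$ with top and bottom rows $P_i\subgroupeq P_k$ and vertical maps $\phi|_{P_i}$ and $\phi|_{P_k}=\pi(f_k)$: this produces a unique $f_i\in\Aut_\Lcal(P_i)$ with $f_k\circ\iota_{P_i}^{P_k} = \iota_{P_i}^{P_k}\circ f_i$. The nontrivial point is that the resulting tuple $(f_0,\dots,f_k)$ actually commutes with the intermediate inclusions $\iota_{P_i}^{P_{i+1}}$. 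For this I would post-compose $f_{i+1}\circ\iota_{P_i}^{P_{i+1}}$ and $\iota_{P_i}^{P_{i+1}}\circ f_i$ with $\iota_{P_{i+1}}^{P_k}$; both products equal $f_k\circ\iota_{P_i}^{P_k}$ by composition of the $\iota$'s and the commuting squares just constructed, and then I would cancel $\iota_{P_{i+1}}^{P_k}$ on the left using that it is a categorical monomorphism (Lemma~\ref{lemma: categorical morphism properties in L}).

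For the kernel, suppose $\pi(f_k)=\id_{P_k}$. By axiom~(A), $f_k=\delta_{P_k}(z)$ for a unique $z\in Z(P_k)$. The key observation is that $z$ actually lies in every $P_i$: since $P_i$ is $\Fcal$-centric and $z\in Z(P_k)$ centralizes $P_k$, it centralizes $P_i$, so $z\in C_S(P_i)\subgroupeq P_i$. Thus $\delta_{P_i}(z)$ makes sense for every $i$, and axiom~(C) of Definition~\ref{definition: linking} (applied with $g=z$) shows that $(\delta_{P_0}(z),\dots,\delta_{P_k}(z))$ satisfies the ladder compatibility. The uniqueness part of Lemma~\ref{lemma: lifting chains} (with $\psiwiggle=\delta_{P_k}(z)$ and $\phi=\id$) forces each $f_i=\delta_{P_i}(z)$, so the assignment $z\mapsto (\delta_{P_i}(z))_i$ is a well-defined group homomorphism $Z(P_k)\to \ker\pi$ that is surjective. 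Injectivity of this map is immediate from the free action of $Z(P_k)$ in axiom~(A).

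I expect the main obstacles to be two small but essential observations: that $Z(P_k)\subgroupeq P_i$ for all $i$ (which needs the $\Fcal$-centric hypothesis applied to every term of the chain, not just $P_k$), and the use of the categorical monomorphism property of distinguished inclusions to propagate a commuting square from the $P_i$--$P_k$ level down to the $P_i$--$P_{i+1}$ level. Once these are in place, the rest is a direct application of the linking system axioms and Lemma~\ref{lemma: lifting chains}.
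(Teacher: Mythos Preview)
Your proposal is correct and follows essentially the same approach as the paper's proof: lift the top automorphism of $P_k$ (with $Z(P_k)$ worth of choices, by axiom~(A)) and then use Lemma~\ref{lemma: lifting chains} to propagate uniquely down the chain. The paper's proof is a terse three sentences; you have simply made explicit the two details it leaves implicit---the intermediate-step compatibility via the categorical monomorphism property, and the observation $Z(P_k)\subseteq P_i$ from $\Fcal$-centricity---so there is no substantive difference in strategy.
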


\begin{proof}
The lemma follows from Lemma \ref{lemma: lifting chains}. Given an $\Fcal$-automorphism of~$\Pbb$, we lift $P_k\rightarrow P_k$ to~$\Lcal$, with $Z(P_k)$ choices for the lift 
by Definition~\ref{definition: linking}(A) because $P_k$ is $\Fcal$-centric. Lemma \ref{lemma: lifting chains} then guarantees unique compatible lifts to all of the smaller subgroups. 
\end{proof}

\begin{lemma}    \label{lemma: chain monos}
The natural maps $\Aut_{\Lcal}(\Pbb)\rightarrow\Aut_{\Lcal}(P_i)$ are monomorphisms. 
\end{lemma}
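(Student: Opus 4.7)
The plan is to exploit the fact that the inclusion morphisms $\iota_{P_j}^{P_i}$ in $\Lcal$ are both categorical monomorphisms and categorical epimorphisms, a property supplied by Lemma~\ref{lemma: categorical morphism properties in L}. Since the map in question is a group homomorphism, it suffices to show that any element $(f_0,\ldots,f_k)\in\Aut_\Lcal(\Pbb)$ with $f_i=\id_{P_i}$ must satisfy $f_j=\id_{P_j}$ for all $j$, i.e.\ the kernel is trivial.

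First I would handle the case $j>i$. Repeatedly applying the commutativity condition in diagram~\eqref{diagram: ladder of chains} along the edges between consecutive subgroups and composing yields
\[
f_j \circ \iota_{P_i}^{P_j} = \iota_{P_i}^{P_j} \circ f_i = \iota_{P_i}^{P_j} = \id_{P_j}\circ\iota_{P_i}^{P_j}.
\]
Because $\iota_{P_i}^{P_j}$ is a categorical epimorphism in~$\Lcal$, this forces $f_j=\id_{P_j}$.

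For $j<i$, the analogous calculation gives
\[
\iota_{P_j}^{P_i} \circ f_j = f_i \circ \iota_{P_j}^{P_i} = \iota_{P_j}^{P_i} = \iota_{P_j}^{P_i}\circ\id_{P_j},
\]
and since $\iota_{P_j}^{P_i}$ is a categorical monomorphism in $\Lcal$, again $f_j=\id_{P_j}$.

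There is no real obstacle here; the only subtlety is remembering that Lemma~\ref{lemma: categorical morphism properties in L} grants both the epi and mono properties, which are needed to propagate the identity condition upward and downward along the chain respectively. Once both directions are established, injectivity follows immediately.
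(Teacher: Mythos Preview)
Your proof is correct and follows essentially the same approach as the paper: both arguments invoke Lemma~\ref{lemma: categorical morphism properties in L} and use the epimorphism property of the distinguished inclusions to propagate the identity upward along the chain and the monomorphism property to propagate it downward. The paper's proof is more terse, phrasing this as ``restriction is unique'' and ``extension, if it exists, is unique,'' but the content is the same.
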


\begin{proof}
The lemma is an immediate consequence of Lemma~\ref{lemma: categorical morphism properties in L}. An automorphism of a larger subgroup restricts uniquely (via the distinguished inclusions) to a smaller subgroup. And any particular element of 
$\Aut_{\Lcal}P_i$ may not extend to automorphisms of larger subgroups to give a commuting diagram~\eqref{diagram: ladder of chains}, but if it does, then the extension is unique. 
\end{proof}

We relate automorphism groups of chains in a linking system to virtually discrete \pdash toral groups, which were studied in the context of linking systems 
in \cite{LL-ExistenceL} and~\cite{Molinier}. 

\begin{definition}   \label{definition: virtually ptoral}
A \defining{virtually discrete \pdash toral group} is a discrete group that
contains a normal discrete \pdash torus of finite index.
\end{definition}

Like discrete \pdash toral groups, virtually discrete \pdash toral groups have good inheritance properties. 

\begin{lemma}\label{lemma:virtual-subgroup}
If $G$ is a virtually discrete \pdash toral group and $H\subgroupeq G$, then $H$ is also a virtually discrete \pdash toral group.
\end{lemma}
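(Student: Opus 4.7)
The plan is to produce an explicit normal discrete $p$-torus of finite index in $H$. Let $T\triangleleft G$ be a normal discrete $p$-torus of finite index, so $T\cong(\Zpinfinity)^r$ for some $r$. The natural candidate to examine is $T':=H\cap T$, which is normal in $H$, and whose index satisfies $[H:T']=[H:H\cap T]=[HT:T]\leq[G:T]<\infty$. So the task reduces to showing that $T'$, as a subgroup of a discrete $p$-torus, is itself a discrete $p$-toral group in a way that gives us a characteristic discrete $p$-torus inside it.

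The key intermediate claim is that every subgroup of $(\Zpinfinity)^r$ is discrete $p$-toral. To prove this, I would use that $(\Zpinfinity)^r$ is Artinian as an abelian group (a finite product of Prüfer $p$-groups, each of which has a well-ordered lattice of subgroups). Applied to the descending chain $T'\supseteq pT'\supseteq p^2T'\supseteq\cdots$, this yields some $N$ with $T'_0:=p^NT'=p^{N+1}T'$. Then $T'_0$ is $p$-divisible, and as a $p$-group it is divisible, hence isomorphic to $(\Zpinfinity)^s$ for some $s$. Because divisible subgroups are direct summands of abelian groups, write $T'=T'_0\oplus F$. Multiplying by $p^N$ shows $p^NF\subgroupeq F\cap T'_0=0$, so $F\subgroupeq T'[p^N]\subgroupeq T[p^N]=(\integers/p^N)^r$, which is finite. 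Thus $T'=T'_0\oplus F$ with $T'_0$ a discrete $p$-torus and $F$ a finite $p$-group.

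Finally, $T'_0$ is the maximal divisible subgroup of $T'$, so it is characteristic in $T'$; combined with $T'\triangleleft H$, this gives $T'_0\triangleleft H$. The index is $[H:T'_0]=[H:T']\cdot[T':T'_0]\leq[G:T]\cdot|F|<\infty$, exhibiting $H$ as a virtually discrete $p$-toral group. I expect the only nonformal step to be the Artinian-chain argument producing the discrete $p$-torus inside $T'$; everything else is bookkeeping with normality and indices.
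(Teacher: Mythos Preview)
Your proof is correct and follows essentially the same strategy as the paper: both intersect $H$ with the normal discrete $p$-torus $T\triangleleft G$, extract the maximal divisible subgroup of $H\cap T$ (the paper phrases this as the subgroup of ``infinitely $p$-divisible elements''), and use that this is characteristic in $H\cap T$, hence normal in~$H$. Your Artinian descending-chain argument explicitly verifying that $(H\cap T)/T'_0$ is finite is in fact more careful than the paper's one-line justification.
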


\begin{proof}
Let $P\triangleleft G$ be a normal discrete \pdash toral subgroup of $G$ of finite index, and let $T_P$ be the identity component of~$P$. Then $T_P\triangleleft G$ and $[G:T_P]$ is finite. 

Let $T_H$ denote the subgroup of $T_P\cap H$ consisting of infinitely \pdash divisible elements. Because $(T_P\cap H)\triangleleft H$ and $T_H$ is a characteristic subgroup of $T_P\cap H$, we have $T_H\triangleleft H$. The result follows because $H/T_H\subgroupeq G/T_P$, and the latter is finite. 
\end{proof}

Automorphism groups of chains in $\Lcal$ take values in virtually discrete \pdash toral groups. 

\begin{lemma}\label{lemma: Aut_L}
Let $\Pbb$ be a chain of subgroups in~$\Lcenrad$. Then 
$\Aut_\Lcal(\Pbb)$ is a virtually discrete \pdash toral group. 
\end{lemma}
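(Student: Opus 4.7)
The plan is first to reduce the statement to $\Aut_\Lcal(P_k)$, and then to produce a short exact sequence that exhibits $\Aut_\Lcal(P_k)$ as a finite extension of the discrete \pdash toral group $P_k$. For the reduction, Lemma~\ref{lemma: chain monos} gives a monomorphism $\Aut_\Lcal(\Pbb)\hookrightarrow\Aut_\Lcal(P_k)$, so by Lemma~\ref{lemma:virtual-subgroup} it suffices to prove that $\Aut_\Lcal(P_k)$ itself is virtually discrete \pdash toral.

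The main step is to combine axioms~(A) and~(B) of Definition~\ref{definition: linking}, applied with $P=Q=P_k$, into a short exact sequence
\[
1 \longrightarrow P_k \xrightarrow{\ \delta_{P_k}\ } \Aut_\Lcal(P_k) \xrightarrow{\ \pi\ } \Out_\Fcal(P_k) \longrightarrow 1.
\]
The structure map $\delta_{P_k}\colon P_k\to\Aut_\Lcal(P_k)$ is injective: if $\delta_{P_k}(g)=\id$, then axiom~(B) together with $\Fcal$-centricity of $P_k$ forces $g\in C_S(P_k)=Z(P_k)$, and then the freeness statement of axiom~(A) applied to the identity morphism forces $g=1$. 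The isomorphism $\Aut_\Lcal(P_k)/Z(P_k)\isom\Aut_\Fcal(P_k)$ from axiom~(A), combined with axiom~(B), identifies the image of $\delta_{P_k}$ with $\pi^{-1}(\Inn(P_k))$, so the cokernel is $\Aut_\Fcal(P_k)/\Inn(P_k)=\Out_\Fcal(P_k)$. Normality of $\delta_{P_k}(P_k)$ in $\Aut_\Lcal(P_k)$ is inherited from normality of $\Inn(P_k)$ in $\Aut_\Fcal(P_k)$.

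Once this short exact sequence is established, the conclusion is immediate: $\delta_{P_k}(P_k)\isom P_k$ is a normal discrete \pdash toral subgroup of $\Aut_\Lcal(P_k)$, and $\Out_\Fcal(P_k)$ is finite by the saturation axioms for~$\Fcal$ (a standard consequence of saturation over a discrete \pdash toral group \cite{BLO-Discrete}), so $\Aut_\Lcal(P_k)$ is virtually discrete \pdash toral per Definition~\ref{definition: virtually ptoral}. The only delicate point is the middle step, namely separating the roles of axioms~(A) and~(B) carefully enough to extract the injective lift $P_k\hookrightarrow\Aut_\Lcal(P_k)$; once that is in place, the rest is formal and uses no input beyond the finiteness of $\Out_\Fcal(P_k)$.
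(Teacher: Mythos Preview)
Your proof is correct and follows essentially the same route as the paper: both reduce to the case of a single group via Lemma~\ref{lemma: chain monos} and Lemma~\ref{lemma:virtual-subgroup}, establish the short exact sequence $1\to P\to\Aut_\Lcal(P)\to\Out_\Fcal(P)\to 1$, and invoke finiteness of $\Out_\Fcal(P)$. The only cosmetic difference is that the paper obtains normality of $\delta_P(P)$ directly from axiom~(C) of Definition~\ref{definition: linking}, whereas you deduce it as the preimage of $\Inn(P_k)\triangleleft\Aut_\Fcal(P_k)$ under~$\pi$; both are fine.
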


\begin{proof}
We first establish the result for a single $\Fcal$-centric group~$P$. 
By Definition~\ref{definition: linking}(C) (with $P=Q$), the distinguished monomorphism $\delta_{P}$ identifies $P$ with
a normal subgroup of $\Aut_{\Lcal}(P)$.
We have a ladder of short exact sequences
\[
\xymatrix{
1 \ar[r]& Z(P)\ \ar@{=}[d]\ar@{^{(}->}[r]
       & P \ar@{^{(}->}[d]^{\delta_{P}}\ar@{->>}[r]
       & \Aut_{P}(P)\ar@{^{(}->}[d]\ \ar[r]& 1\\
1 \ar[r]& Z(P)\ \ar@{^{(}->}[r]
       & \Aut_{\Lcal}(P) \ar@{->>}[r]^{\pi}
       & \Aut_{\Fcal}(P) \ar[r]& 1,\\
}
\]
and the cokernel of the right-hand column is $\Out_\Fcal(P)$ (by definition). Hence we have a short exact sequence of groups
\begin{equation}   \label{eq: ses Aut Out}
0\longrightarrow P
 \longrightarrow \Aut_{\Lcal}(P)
 \longrightarrow \Out_\Fcal(P)
 \longrightarrow 0,
\end{equation}
where $P\triangleleft\Aut_\Lcal(P)$, and $\Out_\Fcal(P)$ is finite
by \cite[Prop.~2.3]{BLO-Discrete}. Since $\Aut_{\Lcal}(P)$ is an extension of a finite group by a discrete \pdash toral group, $\Aut_{\Lcal}(P)$ is virtually discrete \pdash toral. 

The result follows for chains from Lemmas \ref{lemma: chain monos}
and~\ref{lemma:virtual-subgroup}.
\end{proof}

With automorphism groups in place, we are ready to discuss the indexing category of the normalizer decomposition, following the work of \Slominska. We adapt the proof of \cite[Thm.~5.1]{Libman-normalizer}.

\begin{definition}[{\cite{Slominska-hocolim}, \cite[\S4]{Libman-normalizer}}]
\label{definition: heighted EI}
A category $\Acal$ is an \emph{E-I category} if all endomorphisms in $\Acal$ are
isomorphisms, and $\Acal$ is \defining{heighted} if 
there is a function
$h:\ob(\Acal)\to \naturals$ such that $\Hom_\Acal(A,B)\neq\emptyset$ implies
 that $h(A)\leq h(B)$, with equality if and only if $A\cong B$ in~$\Acal$.
\end{definition}

Let $\Fcenrad$ (resp. $\Lcenrad$) denote the full subcategory of~$\Fcal$ (resp.~$\Lcal$) consisting of the subgroups of~$S$ that are both $\Fcal$-centric and $\Fcal$-radical. Recall that a chain of subgroups is ``proper" if all of the inclusions are strict. 

In Lemma~\ref{lemma:L-EI} we check that $\Lcenrad$ has the structure of Definition~\ref{definition: heighted EI}.

\begin{lemma}\label{lemma:L-EI}
$\Lcenrad$  has a finite number of isomorphism classes of objects, and 
is a heighted E-I category with height function $\size(\whatever)$ in Definition~\ref{newdefinition:subgroup ordering}. 
\end{lemma}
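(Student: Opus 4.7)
The plan is to verify three properties of $\Lcenrad$: finiteness of isomorphism classes, the E-I condition, and the heighted condition with height function $\size(\whatever)$. The tools I would use are Lemma~\ref{newlemma:order-is-height} (monomorphisms of discrete \pdash toral groups weakly increase $\size$, strictly unless they are isomorphisms), the surjectivity portion of axiom~(A) in Definition~\ref{definition: linking}, and Proposition~\ref{proposition: BLO finite number cr} for the counting statement.

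For the E-I and heighted parts, I would begin with an arbitrary morphism $f\in\Hom_{\Lcenrad}(P,Q)$ and project via $\pi\colon\Lcal\to\Fcal$. Since $\pi(f)$ is a monomorphism in $\Fcal$, Lemma~\ref{newlemma:order-is-height} gives $\size(P)\leq\size(Q)$, with equality forcing $\pi(f)$ to be an $\Fcal$-isomorphism. The key auxiliary step is to promote this: if $\pi(f)$ is an $\Fcal$-isomorphism, then $f$ is an $\Lcal$-isomorphism. I would lift $\pi(f)^{-1}$ via axiom~(A) to some $\tilde g\in\Hom_\Lcal(Q,P)$; then $\tilde g\circ f$ projects to $\id_P$, so by the $Z(P)$-bijection in axiom~(A) we get $\tilde g\circ f=\delta_P(z)$ for some $z\in Z(P)$, which is invertible because $\delta_P$ factors through the transporter category. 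A symmetric argument on $f\circ\tilde g$ then exhibits $f$ as invertible in $\Lcal$. Applied to an endomorphism, where Lemma~\ref{newlemma:order-is-height} forces $\pi(f)$ to be an $\Fcal$-automorphism, this yields the E-I property, and the biconditional in the heighted condition is immediate.

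Finiteness of isomorphism classes follows because the preceding upgrade identifies isomorphism in $\Lcenrad$ with $\Fcal$-conjugacy of subgroups, and Proposition~\ref{proposition: BLO finite number cr} gives only finitely many such classes. The only real subtlety is that Definition~\ref{definition: heighted EI} formally asks for a height function valued in $\naturals$, whereas $\size(\whatever)$ lands in lexicographically ordered pairs; since there are finitely many isomorphism classes, one composes $\size(\whatever)$ with any order-embedding of its finite image into $\naturals$. I expect the upgrade step (isomorphism in $\Fcal$ implies isomorphism in $\Lcal$) to be the only non-formal ingredient, but it is routine once axiom~(A) is invoked.
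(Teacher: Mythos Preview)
Your proposal is correct and follows essentially the same approach as the paper: project to $\Fcal$, use Lemma~\ref{newlemma:order-is-height} to get the height inequality, lift $\Fcal$-isomorphisms to $\Lcal$-isomorphisms, and invoke Proposition~\ref{proposition: BLO finite number cr} for finiteness. You supply more detail than the paper does on the lifting step (the paper simply asserts that group isomorphisms ``lift to isomorphisms in~$\Lcal$''), and you also flag the minor mismatch between the $\naturals$-valued height in Definition~\ref{definition: heighted EI} and the pair-valued $\size(\whatever)$, which the paper does not address.
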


\begin{proof}
Finiteness follows from Proposition~\ref{proposition: BLO finite number cr}.
Definition~\ref{newdefinition:subgroup ordering} gives height function because projection to $\Fcal$ takes all morphisms in $\Lcal$ to group monomorphisms of discrete \pdash toral groups, which must then have non-decreasing heights. Equality is achieved only for group isomorphisms, which lift to isomorphisms in~$\Lcal$. 
\end{proof}

\begin{definition}
\label{def:sdbar}
The poset category $\sd(\Fcenrad)$ has objects given by $\Fcal$-conjugacy
classes $[\Pbb]$ of proper chains $\Pbb$ of objects of~$\Fcenrad$. There is a morphism 
$[\Pbb]\rightarrow [\Pbb']$ if and only if $\Pbb'$ is $\Fcal$-conjugate to a chain given by a subset of~$\Pbb$.
\end{definition}

The abstract ``normalizer decomposition theorem'' expresses $\realize{\Lcal}$
as a homotopy colimit over the finite poset $\sd\Fcenrad$. 

\begin{theorem}\label{theorem: abstract decomposition theorem}
\AbstractDecompThmText
\end{theorem}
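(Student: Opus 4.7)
The plan is to adapt Libman's argument in~\cite{Libman-normalizer} from the $p$-local finite setting to the $p$-local compact setting, leveraging the key finiteness input (Proposition~\ref{proposition: BLO finite number cr}) that $\Fcenrad$ has only finitely many isomorphism classes. Because the output poset $\sd\Fcenrad$ is therefore finite, the argument can then proceed formally even though $S$ itself may be infinite and morphism sets in $\Lcal$ need not be finite.

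The argument splits into two independent stages. First, I would establish a mod~$p$ (in fact, integral) equivalence $\realize{\Lcenrad} \simeq \realize{\Lcal}$, saying that the full linking subcategory on $\Fcal$-centric $\Fcal$-radical subgroups is homotopically cofinal. This can be proved by combining the subgroup decomposition for $p$-local compact groups~\cite[Prop.~4.6]{BLO-Discrete},~\cite[Thm.~B]{LL-ExistenceL} with an excision argument for non-radical subgroups, using that $\Out_\Fcal(P)$ has a nontrivial normal $p$-subgroup exactly when $P$ is not $\Fcal$-radical, which allows one to contract $P$ onto a strictly larger $\Fcal$-centric subgroup. Alternatively, one can run a direct Quillen Theorem~A argument on the inclusion $\Lcenrad\hookrightarrow\Lcal$.

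Second, having reduced to $\Lcenrad$, I would apply Słomińska's normalizer decomposition theorem for heighted E-I categories~\cite{Slominska-hocolim}. By Lemma~\ref{lemma:L-EI}, $\Lcenrad$ is a heighted E-I category with finitely many isomorphism classes of objects, so Słomińska's theorem yields a natural equivalence
\[
\hocolim_{[\Pbb]\in\sd\Lcenrad}\BAut_{\Lcenrad}(\Pbb)\xlongrightarrow{\simeq}\realize{\Lcenrad}.
\]
The indexing poset $\sd\Lcenrad$ is identified with $\sd\Fcenrad$ because $\pi\colon\Lcal\to\Fcal$ is the identity on objects and, via Definition~\ref{definition: linking}(A), two objects are isomorphic in $\Lcal$ if and only if they are $\Fcal$-conjugate. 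Setting $\delta([\Pbb]):=\BAut_\Lcal(\Pbb)$ yields the desired functor; well-definedness on $\Fcal$-conjugacy classes (and functoriality in inclusion of sub-chains) uses the coherent distinguished inclusions of Lemma~\ref{lemma: distinguished morphisms in L}, together with Lemma~\ref{lemma: chain monos} to ensure that the chain automorphism groups embed compatibly into the $\Aut_\Lcal(P_i)$.

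The final assertion, that each $\Aut_\Lcal(\Pbb)$ is virtually discrete $p$-toral, is exactly Lemma~\ref{lemma: Aut_L}, already established. The main obstacle I anticipate is the first stage: the standard BLO subgroup decomposition is formulated as a hocolim over the orbit category of $\Fcenrad$ rather than as an equivalence of nerves of $\Lcenrad$ and $\Lcal$, so one must carefully combine that result with an excision/Theorem~A argument that handles the potentially infinite hom-sets in $\Lcal$. Once this reduction is in hand, the Słomińska step is essentially formal and proceeds as in Libman's finite-group argument.
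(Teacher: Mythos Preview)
Your proposal is correct and follows essentially the same route as the paper: reduce from $\realize{\Lcal}$ to $\realize{\Lcenrad}$, then apply the \Slominska/Libman normalizer decomposition for heighted E-I categories with finitely many isomorphism classes (Lemma~\ref{lemma:L-EI}), and finally invoke Lemma~\ref{lemma: Aut_L} for the virtually discrete \pdash toral claim. The only difference is that the paper outsources your ``first stage'' entirely to a citation (\cite{WIT-centric-radical}) rather than sketching it via the subgroup decomposition or Quillen's Theorem~A; also note that in the \Slominska/Libman machinery $\delta$ is constructed as a genuine functor on $\sd\Fcenrad$ and only \emph{then} shown to be naturally equivalent to $\BAut_\Lcal(\Pbb)$, rather than being defined as $\BAut_\Lcal(\Pbb)$ directly (which would not be strictly functorial on conjugacy classes).
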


\begin{proof}
By \cite{WIT-centric-radical}, the map induced by the inclusion $|\Lcenrad|\rightarrow|\Lcal|$ is a homotopy equivalence. Hence it suffices to prove that there is a functor
$\delta\colon \sd\Fcenrad\longrightarrow \Top$ with an
equivalence
$\hocolim_{\sd(\Fcenrad)} \delta \longrightarrow \left|\Lcenrad\right|$
and a natural equivalence
$\BAut_{\Lcal}(\Pbb)\longrightarrow \delta([\Pbb])$ for each chain~$\Pbb$. 
The proof of \cite[Thm~5.1]{Libman-normalizer} applies to $\Lcenrad$ as written,
because $\Lcenrad$ is a finite heighted
E-I category by Lemma~\ref{lemma:L-EI}.
The second statement of the theorem is proved in
Lemma~\ref{lemma: Aut_L}.
\end{proof}

\begin{remark}     \label{remark: collapse trick}
If the maximal torus $T$ happens to be $\Fcal$-centric and $\Fcal$-radical, there is a simplification available 
for the indexing category in
Theorem~\ref{theorem: abstract decomposition theorem}. 
Suppose that  
$\Pbb=(T\subgroupneq P_1\subgroupneq \ldots\subgroupneq P_k)$ 
is a proper chain of $\Fcal$-centric and $\Fcal$-radical subgroups. 
Because $T$ is a characteristic subgroup of 
each of the~$P_i$, there is an isomorphism
\begin{equation}     \label{eq: shorten chain with torus}
\Aut_{\Fcal}(P_1\subgroupneq \ldots\subgroupneq P_k)
   \cong \Aut_{\Fcal}(T\subgroupneq P_1\subgroupneq \ldots \subgroupneq P_k)
\end{equation}
and Lemma~\ref{lemma: AutL to AutF surjective} gives an isomorphism
\begin{equation}     \label{eq: shorten chain with torus}
\Aut_{\Lcal}(P_1\subgroupneq \ldots\subgroupneq P_k)
   \cong \Aut_{\Lcal}(T\subgroupneq P_1\subgroupneq \ldots \subgroupneq P_k)
\end{equation}
(even an equality, if one uses Lemma~\ref{lemma: chain monos}
to regard both sides as subgroups 
of $\Aut_{\Lcal}(P_k)$). If the indexing poset $\sd\Fcenrad$ is 
not too complicated, one may be able to collapse the two corresponding nodes in the diagram. 
We use this trick in 
Section~\ref{sec:U(p) SU(p)} in our computations for $\Uofp$ (see \eqref{eq: W shape}
versus \eqref{diagram: for Up}, where we have collapsed the arrow $\Sbold\bto(\Tbold\subgroup \Sbold)$, and we use it again for the 
\AZ \pdash compact groups in Section~\ref{sec:AZ}.
\end{remark} 


\section{Application to compact Lie groups}
\label{sec:Lie2}

In this section, we study the application of
our abstract normalizer decomposition
(Theorem~\ref{theorem: abstract decomposition theorem}) 
to the case of \pdash local compact groups that arise from compact Lie groups (Example~\ref{example: fusion system for G}). 
Recall that the decomposition for 
$\left|\L\right|$ in Theorem~\ref{theorem: abstract decomposition theorem} is given in terms of $\BAut_{\L}(\Pbb)$ for proper chains 
$\Pbb=\left(P_0\subsetneq \dots\subsetneq P_k\right)$
of subgroups that are $\F$-centric and 
$\F$-radical. 
There are similar concepts in the theory of compact Lie groups. 

\begin{definition}\label{definition:pcentric-pradical}
Let $G$ be a compact Lie group with a \pdash toral subgroup~$\Pbold$. 
\begin{enumerate}
\item \label{item: pcentric}
     $\Pbold$ is \defining{\pdash centric} in~$G$
     if $\Pbold$ is a maximal \pdash toral subgroup of~$C_G(\Pbold)$. 
\item \label{item: pstubborn}
     $\Pbold$ is \defining{\pdash stubborn} in~$G$ 
     if $N_{G}\Pbold/\Pbold$ is finite and
     contains no nontrivial normal \pdash subgroup.
\end{enumerate}
\end{definition}

The following theorem is the main result for this section. It recovers a 
version of the normalizer decomposition for compact Lie groups that was described by Libman in \cite[\S1.4]{Libman-Minami}. 
Our approach via \pdash local compact groups has the advantage that 
we do not need to address the delicate issues that were studied in \cite[\S5]{Libman-Minami} for the purpose of applying Quillen's Theorem~A in a topological setting.

Let $\Rcal$ denote the collection of \pdash toral subgroups of $G$ that are both \pdash centric and \pdash stubborn in~$G$, and let $\sd\Rcal$ be the poset of $G$-conjugacy classes of chains
of proper inclusions of subgroups in~$\Rcal$.

\begin{theorem}    \label{theorem: NormalizerDecompLie}
\NormalizerDecompLieText
\end{theorem}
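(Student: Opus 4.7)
The plan is to prove the two assertions separately. For the natural weak mod~$p$ equivalence, the core ingredient is the single-object result from our companion work~\cite{WIT-normalizers}, which provides a natural mod~$p$ equivalence $\BAut_{\Lcal}(P)\simeq BN_G(\Pbold)$ for each $P\in\Fcenrad$. The poset isomorphism in the second part will follow from the chain-identification theorem~\ref{theorem: identification of chains}, supplemented by an argument that the closure functor $P\mapsto\Pbold$ respects and reflects the subset-refinement relation defining~$\sd$.

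For the first assertion, I would use the monomorphism $\Aut_{\Lcal}(\Pbb)\hookrightarrow \Aut_{\Lcal}(P_k)$ from Lemma~\ref{lemma: chain monos} to realize $\Aut_{\Lcal}(\Pbb)$ as the stabilizer of the flag $P_0\subgroupneq\cdots\subgroupneq P_{k-1}$ inside the top automorphism group. On the Lie group side, $\ints_i N_G(\Pbold_i)$ is visibly the stabilizer of the flag $\Pbold_0\subgroupneq\cdots\subgroupneq\Pbold_{k-1}$ inside $N_G(\Pbold_k)$. The technical step is then to verify that under the single-object mod~$p$ equivalence of~\cite{WIT-normalizers}, these two stabilizer subgroups correspond: an element of $\Aut_{\Lcal}(P_k)$ restricts to an automorphism of $P_i$ if and only if the corresponding class in $N_G(\Pbold_k)$ lies in $N_G(\Pbold_i)$. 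This in turn reduces to the fact that closure commutes with conjugation, combined with injectivity of the closure map on $G$-conjugacy classes, itself the injectivity half of Theorem~\ref{theorem: identification of chains}. Passing to classifying spaces then yields the chain-level equivalence, natural in $\Pbb$ by the naturality of the single-object case.

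For the second assertion, Theorem~\ref{theorem: identification of chains} already supplies the bijection on objects of $\sd\Fcenrad$ and $\sd\Rcal$ when $\pi_0 G$ is a \pdash group, so what remains is to upgrade this to a poset isomorphism. Morphisms in $\sd$ are generated by ``omit one subgroup'' relations, taken modulo conjugacy. Since closure preserves subgroup inclusion and is equivariant under conjugation, it clearly preserves these morphism relations. Conversely, a refinement $[\Pbb']\leq[\bar{\Pbb}]$ on the $\Rcal$ side, where $\bar{\Pbb}$ denotes the closure of a chain $\Pbb$ in~$\Fcenrad$, can be lifted through the bijection from Theorem~\ref{theorem: identification of chains} to a refinement on the $\Fcenrad$ side, so the closure functor is fully faithful at the poset level.

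The main obstacle will be the first part: tracking naturality of the single-object equivalence from~\cite{WIT-normalizers} carefully enough that ``chain stabilizer'' subgroup data transfers across the zigzag. This is complicated by the fact that the model of $\Lcal_S(G)$ used in~\cite[\S9]{BLO-Discrete} is not literally the transporter category, so one must ensure that the abstract Definition~\ref{definition: automorphisms of chains in L} of $\Aut_{\Lcal}(\Pbb)$ translates transparently into the concrete flag-stabilizer description inside the Lie group model before invoking the single-object theorem.
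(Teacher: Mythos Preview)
Your second assertion (the poset isomorphism) is handled correctly and matches the paper: Theorem~\ref{theorem: find chains} gives the bijection on conjugacy classes of chains, and the compatibility with the refinement order is formal.

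For the first assertion, however, there is a genuine gap. You attribute to \cite{WIT-normalizers} a single-object equivalence $\BAut_{\Lcal}(P)\simeq BN_G(\Pbold)$, but that is not what the companion paper provides. What \cite{WIT-normalizers} actually supplies (Thm.~5.1 there) is the comparison $B\!\big(\cap_i N_G(P_i)\big)\simeq_p B\!\big(\cap_i N_G(\Pbold_i)\big)$ between discrete and closed intersections of normalizers; it says nothing about $\Aut_{\Lcal}$. The passage from $\Aut_{\Lcal}(\Pbb)$ to intersections of normalizers in~$G$ is precisely the new content of the present section, and it does not reduce to the single-object case in the way you describe.

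Even granting a single-object mod~$p$ equivalence $\BAut_{\Lcal}(P_k)\simeq_p BN_G(\Pbold_k)$, your flag-stabilizer argument does not go through as stated: that equivalence is a zigzag of mod~$p$ equivalences of spaces, not a group homomorphism, so there is no direct mechanism by which a subgroup $\Aut_{\Lcal}(\Pbb)\hookrightarrow\Aut_{\Lcal}(P_k)$ ``corresponds'' to the subgroup $\cap_i N_G(\Pbold_i)\hookrightarrow N_G(\Pbold_k)$. The paper resolves exactly the obstacle you flag in your final paragraph by introducing the auxiliary category $\Linkwiggle_S(G)$ (Definition~\ref{definition: Linkwiggle}), defined as a pullback of the transporter category along the section of $\Tcal_S(G)/(\Zcal\times\nu')\to\Fcal_S(G)$. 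This gives honest group homomorphisms $\Aut_{\Linkwiggle}(\Pbb)\to\Aut_{\Lcal}(\Pbb)$ and $\Aut_{\Linkwiggle}(\Pbb)\hookrightarrow\Aut_{\Tcal_S(G)}(\Pbb)=\cap_i N_G(P_i)$, each of which is shown to be a mod~$p$ equivalence on classifying spaces (Propositions~\ref{proposition: tildeL=L} and~\ref{proposition: Autchain=norm}), and only then is \cite{WIT-normalizers} invoked for the final step to closed normalizers. Without $\Linkwiggle$ or some substitute, your proposed argument does not close.
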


The proof is at the end of the section and goes through several steps. First, with regard to the indexing category, we have the following result from a previous work. 

\begin{theorem}\cite[Thm.~4.3]{WIT-normalizers}
\label{theorem: find chains}
Let $\Sbold$ be a maximal \pdash toral subgroup of a compact Lie group~$G$, and let $S$ be a maximal discrete \pdash toral subgroup $S\subgroupeq\Sbold$. The closure map $P\mapsto\Pbold$ defines an injective map of conjugacy classes of chains
\begin{equation*}
\begin{gathered}
\xymatrix{
\left\{\strut P_0\subgroupeq \ldots \subgroupeq P_k\subgroupeq S
\suchthat{
      \textrm{all $P_i$ are \fusion-centric and \fusion-radical}}\right\}
      /G\ar[d]\\
\left\{\strut \Pbold_0\subgroupeq \ldots \subgroupeq\Pbold_k \subgroupeq\Sbold
\suchthat{
       \textrm{all $\Pbold_i$ are \pdash toral,
       \pdash centric, and \pdash stubborn}}\right\}
       /G.
}
\end{gathered}
\end{equation*}
The map is a one-to-one correspondence if $\pi_0G$ is a \pdash group.
\end{theorem}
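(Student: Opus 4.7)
The plan is to break the argument into three steps: verifying that the closure map is well-defined, proving injectivity on $G$-conjugacy classes, and establishing surjectivity under the hypothesis that $\pi_0 G$ is a \pdash group.

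For well-definedness, I would lean on the explicit dictionary between the \pdash local structure of $G$ and its fusion system developed in \cite[\S 9]{BLO-Discrete}. Closure of $P\subgroupeq S$ automatically produces a \pdash toral subgroup of~$\Sbold$. The $\Fcal$-centric condition translates to \pdash centricity of~$\Pbar$ because $C_S(P)$ is dense in $C_G(\Pbar)$, so $C_G(\Pbar)$ is \pdash toral with maximal discrete \pdash toral subgroup contained in~$P$. For \pdash stubbornness of~$\Pbar$, I would identify the finite quotient $N_G(\Pbar)/\Pbar$ with $\Out_\Fcal(P) = \Aut_\Fcal(P)/\Aut_P(P)$, so that the ``no nontrivial normal \pdash subgroup'' condition carries over directly from the fusion-theoretic side to the Lie-theoretic side.

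For injectivity, given chains $\Pbb$ and $\Pbb'$ whose closures satisfy $g\Pbar_i g^{-1} = \Pbar{}'_i$ for some $g\in G$, I would first replace $\Pbb$ by $g\Pbb g^{-1}$ to reduce to the case $\Pbar_i = \Pbar{}'_i$ for all~$i$. Then $P_k$ and $P'_k$ are both maximal discrete \pdash toral subgroups of the same \pdash toral group~$\Pbar_k$, so they are conjugate by some $h\in \Pbar_k$. The essential point is that this single element $h$ simultaneously carries $P_i$ to $P'_i$: once a maximal discrete \pdash toral subgroup $P_k\subgroupeq \Pbar_k$ is fixed, each $P_i$ is pinned down as the unique maximal discrete \pdash toral subgroup of $\Pbar_i$ contained in~$P_k$, namely $P_k\cap \Pbar_i$. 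The $\Fcal$-centric hypothesis is what forces $P_i$ to contain the discrete torus of~$\Pbar_i$, which makes this identification unambiguous.

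For surjectivity under the assumption that $\pi_0 G$ is a \pdash group, I would construct a preimage of a chain $\Pbold_0\subgroupeq \ldots\subgroupeq \Pbold_k$ by setting $P_i\definedas \Pbold_i\cap S$. The hypothesis on $\pi_0 G$ is used here precisely to ensure that $S$ is a maximal discrete \pdash toral subgroup of $G$ itself, not merely of~$\Sbold$; this forces $P_i$ to be maximal discrete \pdash toral in~$\Pbold_i$ with closure equal to $\Pbold_i$. The $\Fcal$-centric and $\Fcal$-radical properties then transfer back using the correspondence established in the first step. The main obstacle I expect is exactly this third step: without the $\pi_0 G$ hypothesis, components of $\Pbold_i$ living outside the part of $G$ seen by $S$ can prevent $\Pbold_i\cap S$ from being dense in~$\Pbold_i$, so the closure map may genuinely fail to be surjective in general.
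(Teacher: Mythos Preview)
This paper does not contain a proof of Theorem~\ref{theorem: find chains}. The result is imported wholesale from the companion paper \cite[Thm.~4.3]{WIT-normalizers} and used as a black box; the present paper only refers back to that proof indirectly (see Remark~\ref{remark: pi0 hypothesis} and the proof of Proposition~\ref{proposition: SUp discrete chains}). There is therefore no argument here against which to compare your proposal.

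That said, your outline has the right architecture but several load-bearing claims are stated more confidently than the underlying facts justify. In the well-definedness step, the identification of $N_G(\Pbar)/\Pbar$ with $\Out_\Fcal(P)$ is not correct as written: the two differ by contributions from $C_G(\Pbar)/Z(\Pbar)$ and from $\Pbar/P$, and before you can even form the quotient you must show $N_G(\Pbar)/\Pbar$ is finite, which is itself part of what \pdash stubbornness asserts. In the injectivity step, your formula $P_i = P_k\cap \Pbar_i$ presupposes that each $P_i$ is a \emph{maximal} discrete \pdash toral subgroup of its closure~$\Pbar_i$ (``snugly embedded'' in the terminology of \cite[\S9]{BLO-Discrete}); this is not automatic for arbitrary subgroups of~$S$, and extracting it from the $\Fcal$-centric hypothesis is a genuine lemma, not a remark. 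In the surjectivity step, the passage from \pdash centric and \pdash stubborn back to $\Fcal$-centric and $\Fcal$-radical is not simply the first step run in reverse---in particular, the $\Fcal$-radical condition is the delicate one, and this is exactly where the $\pi_0 G$ hypothesis does work beyond guaranteeing that $\Pbold_i\cap S$ is dense in~$\Pbold_i$.
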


\begin{remark}     \label{remark: pi0 hypothesis}
If $\pi_0 G$ is not a \pdash group, one can still use Theorem~\ref{theorem: NormalizerDecompLie} to identify the mod~$p$ homotopy type of the functor values in the normalizer decomposition (Theorem~\ref{theorem: abstract decomposition theorem}), but 
one uses the image of the map in 
Theorem~\ref{theorem: find chains} as the indexing category, rather than $\sd\Rcal$. 
The codomain of Theorem~\ref{theorem: find chains} is the starting point. A~finite number of checks are necessary to see if \pdash centric and \pdash stubborn subgroups of $G$ have maximal discrete \pdash toral subgroups that are $\Fcal_S(G)$-radical to determine the actual indexing category. (See the proof of \cite[Thm.~4.3]{WIT-normalizers}.)
\end{remark}

The remainder of this section is devoted to establishing the weak mod~$p$ equivalence
$\BAut_{\Lcal}(\Pbb)
\simeq
    \B\!\big(\ints_i N_G(\Pbold_i)\big)$
of Theorem~\ref{theorem: NormalizerDecompLie}.
In particular, the strategy is to establish a zigzag of natural mod~$p$ equivalences of functors of chains of subgroups (with the leftmost one being an equivalence by Theorem~\ref{theorem: abstract decomposition theorem}):
\begin{equation}\label{diagram:zigzag-goal} 
\begin{gathered}
\xymatrix{
&\BAut_{\Linkwiggle_S(G)}(\Pbb) 
\quad 
\ar[d]_-{\simeq_p}
\ar[r]^-{\simeq_p}
&
B\big(\bigcap_{i=1}^k N_G(P_i)\big)
\ar[d]_-{\simeq_p}
\\
\delta([\Pbb]) &
\largestrut
\BAut_{\Lcal_S(G)}(\Pbb)\ar[l]^-{\simeq}
\quad
&
B\big(\bigcap_{i=1}^k N_G(\Pbold_i)\big).
}
\end{gathered}
\end{equation}
The auxiliary category $\Linkwiggle_S(G)$ is a variant of the 
transporter system for~$G$ and is used in
the construction of $\Lcal_S(G)$ in \cite[Prop.~9.12]{BLO-Discrete}.

We begin with the left vertical arrow of 
diagram~\eqref{diagram:zigzag-goal}, which takes the bulk of the section. 
In addition to an abstract existence result in for a linking system associated to~$\Fcal_S(G)$
(Theorem~\ref{theorem:Lpcom=BGpcom}), 
there is a construction in \cite[\S9]{BLO-Discrete} of a more direct model for $\Lcal_S(G)$ starting from the transporter category (Definition~\ref{definition: transporter category}). 
A difficulty in the construction is that the axioms of a linking system require $\Hom_{\Fcal}(P,Q)$ to be the orbits of a free $Z(P)$-action on $\Hom_{\Lcal}(P,Q)$. For this reason, the transporter system $\Tcal_S(G)$ itself cannot directly provide the linking system: 
getting to 
$\Hom_{\Fcal_S(G)}(P,Q)$ from 
$\Hom_{\Tcal_S(G)}(P,Q)=N_G(P,Q)$ would require taking the orbits by the action of the \emph{entire} centralizer~$C_GP$, which in general contains elements of finite order prime to~$p$ and elements of infinite order.
The solution is to look at successive quotients of~$\Tcal_S(G)$ (following \cite[p.~398]{BLO-Discrete}). We begin with a technical lemma. 

\begin{lemma}     \label{lemma: finite order subgroup} 
Given an $\Fcal_S(G)$-centric subgroup $P \subgroupeq S$,
the elements of $C_{G}(P)$ with finite order prime to~$p$ form a normal subgroup of $C_{G}(P)$.
\end{lemma}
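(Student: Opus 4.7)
The plan is to establish a direct product decomposition $C_G(P)\cong Z(\Pbold)\times K'$, with $K'$ finite of order prime to~$p$, and then read off the desired normal subgroup from this decomposition.

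First, reduce to the closure: $C_G(P)=C_G(\Pbold)$ by continuity of conjugation. The $\Fcal_S(G)$-centricity of $P$ passes to \pdash centricity of $\Pbold$ in~$G$ (as in \cite{BLO-Discrete}, cf.\ also Theorem~\ref{theorem: find chains}), meaning that $Z(\Pbold)$ is a maximal \pdash toral subgroup of $H:=C_G(\Pbold)$. Moreover $Z(\Pbold)$ is central in~$H$, since every element of~$H$ centralizes $\Pbold$ and hence its center.

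Next, analyze the structure of $H$. The maximal torus of $H_0$ is \pdash toral and hence contained in the central subgroup $Z(\Pbold)$; being central, this torus forces $H_0$ to be abelian, so $H_0=Z(\Pbold)_0=:T$. Likewise, the preimage in~$H$ of any \pdash subgroup of $\pi_0 H$ is \pdash toral and so contained in $Z(\Pbold)$, which identifies $\pi_0 Z(\Pbold)$ as the central Sylow \pdash subgroup of $\pi_0 H$. Schur--Zassenhaus then gives $\pi_0 H\cong\pi_0 Z(\Pbold)\times K'$ with $|K'|$ prime to~$p$, hence $H/Z(\Pbold)\cong K'$. The central extension $1\to Z(\Pbold)\to H\to K'\to 1$ splits because $H^2(K',Z(\Pbold))$ vanishes: using the filtration $1\to T\to Z(\Pbold)\to\pi_0 Z(\Pbold)\to 1$, the torus piece $H^n(K',T)$ vanishes for $n\geq 1$ by divisibility of~$T$ combined with finiteness of~$K'$, and the component-group piece $H^n(K',\pi_0 Z(\Pbold))$ vanishes because it is simultaneously annihilated by $|K'|$ (prime to~$p$) and takes values in a \pdash group. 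Therefore $H\cong Z(\Pbold)\times K'$.

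From this decomposition, the prime-to-$p$ torsion of~$H$ is exactly $T_{p'}\times K'$, where $T_{p'}$ denotes the prime-to-$p$ torsion of the torus~$T$: in $Z(\Pbold)$, any element of finite prime-to-$p$ order projects trivially to the \pdash group $\pi_0 Z(\Pbold)$ and so lies in~$T$, while every element of $K'$ has prime-to-$p$ order. This subset is normal in~$H$ because $T_{p'}$ is a characteristic subgroup of the central torus~$T$ and $K'$ is a direct factor. The main obstacle is the structural analysis yielding $H\cong Z(\Pbold)\times K'$; once this is in place, the identification of the prime-to-$p$ torsion subgroup and its normality are formal.
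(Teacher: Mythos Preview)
Your cohomology vanishing claim for the torus piece is wrong, and it breaks the argument. You assert that $H^n(K',T)=0$ for $n\geq 1$ ``by divisibility of $T$ combined with finiteness of $K'$,'' but divisibility is not enough: one needs $T$ to be uniquely $|K'|$-divisible, and a torus has $|K'|$-torsion. In fact $H^2(K',(S^1)^r)\cong \Hom(H_2(K';\integers),(\rationals/\integers)^r)$, which detects the Schur multiplier of $K'$ and is typically nonzero for $K'$ of order prime to~$p$.

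Worse, the conclusion $H\cong Z(\Pbold)\times K'$ is false in general, so no alternative argument will rescue it. Take $p=2$ and $G=\Gammabold\subgroup U(3)$ (the group of Definition~\ref{definition: Sbold, Tbold, Gammabold}), with maximal discrete $2$-toral subgroup $S=\integers/2^\infty$ and $P=S$. Then $P$ is $\Fcal_S(G)$-centric, $\Pbold=S^1$, $Z(\Pbold)=S^1$, $C_G(\Pbold)=\Gammabold$, and $K'=\Gammabold/S^1\cong(\integers/3)^2$. Your splitting would give $\Gammabold\cong S^1\times(\integers/3)^2$, which is abelian; but $\Gammabold$ is not.

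The paper's proof avoids this by quotienting out $T_{p'}$ (the prime-to-$p$ torsion of the torus) \emph{before} attempting to split. After that quotient the kernel $Z(\Pbold)/T_{p'}$ has only $p$-primary torsion together with a rational vector space, and now $H^2(K',-)$ genuinely vanishes because $|K'|$ is prime to~$p$. The splitting then occurs in $C_G(\Pbold)/T_{p'}$ rather than in $C_G(\Pbold)$, and one pulls back the image of the section to obtain the desired normal subgroup.
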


\begin{proof}
Let $\Pbold$ denote the closure of~$P$ in~$G$. Because $C_G(P)=C_G(\Pbold)$, 
we may as well assume that $P$ is a maximal discrete \pdash toral subgroup 
of~$\Pbold$, that is, that $P$ is ``snugly embedded" in the sense of 
\cite[\S9]{BLO-Discrete}. The first part of the proof of \cite[Prop.~4.6]{WIT-normalizers}
establishes that $\Pbold$ is \pdash centric in~$G$. 
Hence $C_G(P)/Z(\Pbold)$ has no elements of order~$p$, and must be finite group of order prime to~$P$, call it~$F'$. 

Because $\Pbold$ is \pdash toral, $Z(\Pbold)$ is the product of a torus and a finite \pdash group. Let $T_{p'}$ denote the subgroup of $Z(\Pbold)$ consisting of elements of finite order prime to~$p$, all of which are found in the torus. Setting $Q\definedas C_G(\Pbold)/T_{p'}$, we have a map of central extensions
\begin{equation*}
\xymatrix{
1 \ar[r] & \largestrut T_{p'} \ar[r]\ar@{^(->}[d]
     & C_G(\Pbold) \ar[r]\ar@{=}[d]
     & Q \ar[r]\ar@{->>}[d]
     & 1
\\
1 \ar[r] & Z(\Pbold)  \ar[r]
     & C_G(\Pbold)\ar[r]
     & F' \ar[r]\ar@/_0.6pc/@{-->}[u]_-{s}
     & 1.
}
\end{equation*}
Then $\ker(Q\epi F')= Z(\Pbold)/T_{p'}$ is central 
in $Q=C_G(\Pbold)/T_{p'}$, and further, this kernel is the product
of a \pdash torsion group and a rational vector space. 
As a result, $Q$ is a split central extension of $F'$ and we have a section
$s\colon F'\rightarrow Q$. 
The preimage 
of $s(F')\triangleleft Q$ in $C_G(P)$ is normal and contains all elements of order prime to~$p$. 
\end{proof}

Lemma~\ref{lemma: finite order subgroup} tells us that the elements of $C_{G}(P)=C_G(\Pbold)$ of order prime to~$p$ form a subgroup that we denote by~$\nu'_P$. 

\begin{corollary}  \label{corollary: rational vector space}
For an $\Fcal_S(G)$-centric subgroup 
$P \subgroupeq S$, the cokernel of the map
$Z(P)\times\nu'_P\rightarrow C_G(P)$ is a rational vector space.  
\end{corollary}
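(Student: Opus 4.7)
The plan is to continue in the notation of the proof of Lemma~\ref{lemma: finite order subgroup}, where we exhibited $C_G(P)=C_G(\Pbold)$ as an extension of the finite prime-to-$p$ group $F'=C_G(\Pbold)/Z(\Pbold)$ by $Z(\Pbold)$, and showed that after quotienting by $T_{p'}$ the resulting extension $1\to Z(\Pbold)/T_{p'}\to Q\to F'\to 1$ is central and splits via some $s\colon F'\to Q$. I~would first decompose the divisible abelian kernel $Z(\Pbold)/T_{p'}$ as $V\oplus A$, where $V$ is its torsion-free part (a $\mathbb{Q}$-vector space, as in Lemma~\ref{lemma: finite order subgroup}) and $A$ is its torsion part. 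Since $Z(\Pbold)$ is $p$-toral abelian, $A$ is a discrete $p$-toral group.

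Next I would show that $\nu'_P$ covers $F'$ under the projection $C_G(P)\to F'$. The preimage in $C_G(P)$ of $s(F')\subseteq Q$ is an extension of the finite prime-to-$p$ group $F'$ by the divisible prime-to-$p$ torsion group $T_{p'}$; since both are prime-to-$p$, the preimage is itself a prime-to-$p$ torsion group, hence lies in $\nu'_P$. Together with $T_{p'}\subseteq\nu'_P$, this shows $\nu'_P/T_{p'}$ meets every coset of $Z(\Pbold)/T_{p'}$ in $Q$, i.e., $\nu'_P\cdot Z(\Pbold)=C_G(P)$. Quotienting $C_G(P)$ by $\nu'_P$ therefore leaves only the image of $Z(\Pbold)/T_{p'}=V\oplus A$ intersected trivially with $\nu'_P/T_{p'}$ (since $V\oplus A$ has no nontrivial prime-to-$p$ torsion), so $C_G(P)/\nu'_P\cong V\oplus A$.

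Then I would identify the image of $Z(P)$ in $C_G(P)/\nu'_P$ with the summand $A$. Using that $P$ is $\Fcal_S(G)$-centric and (after $\Fcal$-conjugation, as in the proof of Lemma~\ref{lemma: finite order subgroup}) snugly embedded in $\Pbold$, one has $Z(P)=C_S(P)=Z(\Pbold)\cap S$, which is the maximal discrete $p$-toral subgroup of the $p$-toral abelian group $Z(\Pbold)$. Its image in $Z(\Pbold)/T_{p'}=V\oplus A$ is therefore exactly $A$: the torsion-free part $V$ is killed because $Z(P)$ is torsion, and the map onto $A$ is surjective because $A$ is itself discrete $p$-toral and lifts uniquely to the $p$-part of $Z(\Pbold)$. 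Combining the two steps gives
\[
C_G(P)/\bigl(Z(P)\cdot\nu'_P\bigr)\;\cong\;(V\oplus A)/A\;\cong\;V,
\]
which is a $\mathbb{Q}$-vector space as required.

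The main obstacle will be the bookkeeping in Step~3: verifying that the ``obvious'' image of $Z(P)$ in $Q$ really does surject onto $A$ rather than landing in some proper divisible subgroup, and that no torsion-free contribution leaks in. This is essentially the only place where the $\Fcal$-centricity of $P$ (and the snug embedding used in Lemma~\ref{lemma: finite order subgroup}) enters, so the proof rests on a careful identification of $Z(P)$ with the discrete $p$-toral piece of $Z(\Pbold)$.
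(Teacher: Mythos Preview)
Your argument is correct and follows exactly the line implicit in the paper: the corollary is stated without separate proof, as it is meant to follow directly from the structure established in the proof of Lemma~\ref{lemma: finite order subgroup}. You have simply supplied the details---identifying $\nu'_P$ with the preimage of $s(F')$, so that $C_G(P)/\nu'_P\cong Z(\Pbold)/T_{p'}=V\oplus A$, and then identifying the image of $Z(P)$ with the $p$-torsion summand~$A$ via the snug-embedding reduction already made in that proof.
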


Note that $\nu'_P$ is functorial in $\Fcal_S(G)$-centric subgroups~$P$. 
Further, since $Z(P)$ is centralized 
by $\nu'_P\subgroupeq C_{G}(P)$,
we can define a functor 
$(\Zcal\times\nu')(P)\definedas Z(P)\times \nu'_{P}
\subgroupeq C_G(P)$, consisting of all elements of finite order. 
There is a quotient map 
\[
\Tcal_S(G)/(\Zcal\times\nu')\xrightarrow{f_\infty} \Fcal_S(G)
\]
that takes the quotient of $N_G(P,Q)/(Z(P)\times\nu'_P)$ 
by the action of the rational vector space
$C_G(P)/(Z(P)\times\nu'_P)$. A rigidification argument 
\cite[Lemma 9.11]{BLO-Discrete} shows that $f_\infty$ 
admits a functorial section~$s$; that is, 
it is possible to choose compatible splittings of the rational 
vector spaces
$C_G(P)/(Z(P)\times \nu'_{P})$ into~$C_G(P)$.

\begin{definition}  \cite[\S9]{BLO-Discrete} \label{definition: Linkwiggle}
Given a section $s$ of $f_\infty$, the categories $\Lcal_S(G)$ and $\Linkwiggle_S(G)$ are defined as successive
pullbacks in the following diagram: 
\begin{equation}        \label{diagram: pullback diagrams}
\begin{gathered}
\xymatrix{
\Linkwiggle_S(G)\ar[r]\ar[d] 
   & \Lcal_S(G) \ar[d]\ar[r] 
   & \Fcal_S(G) \ar[d]^-s
\\
\Tcal_S(G)\ar[r]_-{f_{\nu'}} 
   & \Tcal_S(G)/\nu'\ar[r]_-{f_\Zcal}
   &\Tcal_S(G)/(\Zcal \x \nu')\ar@<.8ex>[u]^-{f_\infty}.
}
\end{gathered}
\end{equation}
\end{definition}

\begin{proposition}\cite[Prop.~9.12]{BLO-Discrete}
\label{proposition: LSG is a linking system}
$\Lcal_S(G)$ is a centric linking system associated to ~$\Fcal_S(G)$.
\end{proposition}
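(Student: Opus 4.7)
The plan is to verify axioms (A), (B), (C) of Definition~\ref{definition: linking}, together with the ``fully centralized representative'' condition, by unpacking the pullback construction in Diagram~\eqref{diagram: pullback diagrams}. First I would identify the morphism sets: a morphism $P\to Q$ in $\Lcal_S(G)$ is a pair $(\phi,\psi)$ with $\phi\in N_G(P,Q)/\nu'_P$ and $\psi\in\Hom_{\Fcal_S(G)}(P,Q)$ satisfying $f_\Zcal(\phi)=s(\psi)$. Since $Z(P)$ is a discrete \pdash toral group while $\nu'_P$ has order prime to~$p$, the intersection $Z(P)\cap\nu'_P$ is trivial, so the free $Z(P)$-action on $N_G(P,Q)/\nu'_P$ has quotient $N_G(P,Q)/(Z(P)\times\nu'_P)$. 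Consequently each fiber of the projection $\pi\colon\Lcal_S(G)\to\Fcal_S(G)$ is a single free $Z(P)$-orbit, yielding the bijection required by axiom~(A).

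Next, I would construct the functor $\delta\colon\Tcal_\Hcal(S)\to\Lcal_S(G)$ sending $g\in N_S(P,Q)$ to the pair $([g]_{\nu'_P},c_g)$. Well-definedness requires the identity $f_\Zcal([g]_{\nu'_P})=s(c_g)$, which is exactly the compatibility of the section~$s$ with morphisms coming from~$S$. Given $\delta$, axiom~(B) is immediate from the definition. Axiom~(C) holds because the identity $\phi\cdot g = c_\phi(g)\cdot\phi$ in $\Tcal_S(G)$ (for $\phi\in N_G(P,Q)$ and $g\in P$) is preserved by the quotient functors $f_{\nu'}$ and $f_\Zcal$, and hence by the pullback $\Lcal_S(G)$. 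The fully-centralized-representative condition transfers from $\Fcal_S(G)$ (saturated by Proposition~\ref{proposition:out-finite}) to $\Lcal_S(G)$ via the surjectivity of~$\pi$ on morphism sets, which is another consequence of the morphism description above.

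The main obstacle is the existence, functoriality, and $S$-compatibility of the section~$s$ used in Diagram~\eqref{diagram: pullback diagrams}; without a coherent choice of $s$ across all objects of~$\Fcal_S(G)$, the functor~$\delta$ would not assemble correctly and the pullback would not be a category satisfying the linking axioms. Producing~$s$ requires a rigidification argument (cf.~\cite[Lemma~9.11]{BLO-Discrete}), whose essential input is that the fibers of $f_\infty$ are contractible in a suitable sense, because $C_G(P)/(Z(P)\times\nu'_P)$ is a rational vector space by Corollary~\ref{corollary: rational vector space}. Once a strict, functorial section compatible with subgroup inclusions and $S$-conjugations is in hand, the axiom verifications above are essentially formal.
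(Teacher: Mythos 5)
The paper does not prove this proposition; it is taken verbatim as a citation to Broto--Levi--Oliver (Prop.~9.12 of \cite{BLO-Discrete}), so there is no in-paper argument to compare against. Your proposal is a reasonable reconstruction of what a proof would look like: you correctly identify the morphism sets of the pullback category, observe that $Z(P)$ acts freely on $N_G(P,Q)/\nu'_P$ with quotient $\Hom_{\Fcal_S(G)}(P,Q)$ once you know $Z(P)\cap\nu'_P=1$, and reduce axioms (A)--(C) and the fully-centralized condition to the existence of a functorial section $s$, which you rightly attribute to the rigidification in \cite[Lemma~9.11]{BLO-Discrete} and to Corollary~\ref{corollary: rational vector space}.

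Two points deserve sharpening. First, $\nu'_P$ is in general \emph{not} a finite group ``of order prime to~$p$'': it contains the entire prime-to-$p$ torsion of the central torus $Z(\Pbold)_0$, which is infinite whenever $\Pbold$ has positive rank. The correct statement, and the one that gives $Z(P)\cap\nu'_P=1$, is that $\nu'_P$ has no $p$-torsion while every element of $Z(P)$ has $p$-power order. Second, and more importantly, the identity $f_\Zcal([g]_{\nu'_P})=s(c_g)$ for $g\in N_S(P,Q)$ is a genuinely stronger requirement than functoriality of $s$: functoriality pins down $s$ on identities and composites but does not force $s$ to choose the $S$-induced lift on morphisms coming from $N_S(P,Q)$. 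You do mention ``$S$-compatibility'' in your closing paragraph, but it would be clearer to state explicitly that the obstruction-theoretic construction of $s$ must be run relative to the subcategory of $S$-morphisms (equivalently, the complement $V_P$ to $Z(P)\times\nu'_P$ inside $C_G(P)$ must be chosen so that the resulting quotient agrees with the tautological one on morphisms from $\Tcal_\Hcal(S)$), and that this is precisely what Lemma~9.11 of \cite{BLO-Discrete} delivers. With that caveat spelled out, the axiom checks you sketch are indeed routine.
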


Before continuing to general properties of~$\Linkwiggle$, we call out 
a special case that we will use for $G=\SUofp$ in Sections \ref{sec:U(p) SU(p)} and~\ref{sec:AZ}.

\begin{lemma}         \label{lemma: Linking=transporter}
If $P\subgroupeq S$ satisfies $C_G(P)=Z(P)$, then 
there is a natural identification
\[
\Hom_{\Lcal_S(G)}(P,Q)\cong \Hom_{\Tcal_S(G)}(P,Q).
\]
\end{lemma}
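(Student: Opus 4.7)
The plan is to unwind the pullback definition of $\Lcal_S(G)$ from diagram~\eqref{diagram: pullback diagrams} and observe that under the hypothesis $C_G(P)=Z(P)$ the pullback collapses completely. First I would note that $\nu'_P = 1$: since $Z(P)\subgroupeq P$ is a subgroup of the discrete \pdash toral group $P$, it is itself discrete \pdash toral, so its only finite-order elements have order a power of~$p$. Hence $C_G(P) = Z(P)$ contains no nontrivial element of order prime to~$p$, and by construction (Lemma~\ref{lemma: finite order subgroup}) we get $\nu'_P = 1$. The same hypothesis makes the rational vector space $C_G(P)/(Z(P)\times\nu'_P)$ of Corollary~\ref{corollary: rational vector space} trivial.

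Next I would compute the relevant morphism sets at $(P,Q)$ in the three categories appearing in \eqref{diagram: pullback diagrams}. Because $\nu'_P=1$, we have $\Hom_{\Tcal_S(G)/\nu'}(P,Q) = N_G(P,Q)/\nu'_P = N_G(P,Q) = \Hom_{\Tcal_S(G)}(P,Q)$. Because the rational vector space cokernel just mentioned vanishes, the functor $f_\infty$ sends $\Hom_{\Tcal_S(G)/(\Zcal\times\nu')}(P,Q) = N_G(P,Q)/Z(P)$ isomorphically onto $\Hom_{\Fcal_S(G)}(P,Q) = N_G(P,Q)/C_G(P)$, and consequently any choice of section~$s$ is forced to be the inverse identification on this morphism set.

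Finally, the pullback defining $\Hom_{\Lcal_S(G)}(P,Q)$ is a pullback along an isomorphism: it consists of pairs $(\phi,\psi)\in N_G(P,Q)\times N_G(P,Q)/Z(P)$ with $\phi$ projecting to $s(\psi) = \psi$. Projection onto the first coordinate is therefore a natural bijection $\Hom_{\Lcal_S(G)}(P,Q)\xrightarrow{\cong} N_G(P,Q) = \Hom_{\Tcal_S(G)}(P,Q)$, which is the identification claimed in the lemma. Naturality in~$Q$ is automatic from the functoriality of every category in \eqref{diagram: pullback diagrams}. The only real ``obstacle'' is bookkeeping: once one recognizes that the hypothesis $C_G(P)=Z(P)$ buys \emph{both} $\nu'_P=1$ and the vanishing of the rational vector space cokernel, the pullback collapses with no further input.
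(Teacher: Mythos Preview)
Your proposal is correct and follows essentially the same approach as the paper: both observe that $C_G(P)=Z(P)$ forces $\nu'_P=1$ and makes $f_\infty$ (hence~$s$) the identity on morphism sets with domain~$P$, so that the pullback squares of~\eqref{diagram: pullback diagrams} collapse. The paper's version is terser---it simply notes these identities and concludes---while you unwind the resulting pullback explicitly, but the content is the same.
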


\begin{proof}
Under our assumption, $\nu'_P$ is trivial.
In diagram~\eqref{diagram: pullback diagrams}, we first consider the fusion system $\Fcal_S(G)$, where we have 
\begin{align*}
\Hom_{\Fcal_S(G)}(P,Q)
  &=N_G(P,Q)/C_G(P)\\
  &=N_G(P,Q)/Z(P)\\
  &=\Hom_{\Tcal_S(G)/(\Zcal\times\nu')}(P,Q).
\end{align*} 
Hence $f_\infty$ is the identity
on morphism sets with $P$ as the domain, and so is~$s$. 
The middle vertical arrow becomes an isomorphism on morphism sets with $P$ as the domain.
%
%
Likewise $f_{\nu'}$ is the identity on morphism sets with $P$ as the domain, which establishes the lemma. 
\end{proof}

We resume our discussion of the general relationship of $\Linkwiggle$ to~$\Lcal$ by checking that $\Linkwiggle$ still has the good categorical properties of~$\Lcal$.

\begin{lemma}     \label{lemma: Linkwiggle epi and mono}
All morphisms in the category $\Linkwiggle_S(G)$ are categorical monomorphisms and categorical epimorphisms. 
\end{lemma}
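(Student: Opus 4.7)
The plan is to leverage the pullback construction of $\Linkwiggle_S(G)$ in Definition~\ref{definition: Linkwiggle} to reduce the categorical mono/epi question for $\Linkwiggle_S(G)$ to the corresponding question in the transporter category $\Tcal_S(G)$, where it is essentially immediate because composition is group multiplication.

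First I would establish that the projection functor $\Linkwiggle_S(G)\to \Tcal_S(G)$ coming from the left square of diagram~\eqref{diagram: pullback diagrams} is faithful. In a strict pullback of (small) categories, a projection parallel to a faithful functor is itself faithful, so the task reduces to showing that $\Lcal_S(G)\to \Tcal_S(G)/\nu'$ is faithful. This in turn follows from the right-hand square, which presents $\Lcal_S(G)$ as the pullback of the section $s\colon\Fcal_S(G)\to \Tcal_S(G)/(\Zcal\times\nu')$ along $f_\Zcal$; since $s$ is a section of $f_\infty$, it is injective on morphism sets and hence faithful, and the pullback-parallel-to-faithful principle applies again.

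Next I would observe that every morphism in $\Tcal_S(G)$ is both a categorical monomorphism and a categorical epimorphism. By Definition~\ref{definition: transporter category}, morphisms are elements $g\in G$ with $gPg^{-1}\subgroupeq Q$, and composition is multiplication in $G$; mono and epi then drop out of left- and right-cancellation in the group.

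Finally I would invoke the standard categorical fact that a faithful functor reflects both monomorphisms and epimorphisms: if $\varphi\circ\alpha=\varphi\circ\beta$ in $\Linkwiggle_S(G)$, then applying the faithful projection to $\Tcal_S(G)$ preserves the equation and, via the mono property there, forces $\alpha$ and $\beta$ to have the same image, so faithfulness gives $\alpha=\beta$. The dual argument handles the epi case. I do not anticipate a substantive obstacle: the proof is essentially formal once one parses the pullback diagram, with the only concrete input being cancellation in a group. The one point requiring care is verifying the exact pullback structure, i.e.\ confirming that the outer rectangle in diagram~\eqref{diagram: pullback diagrams} behaves as a strict (not merely weak) pullback of categories, so that the two-step descent through faithful parallel projections is legitimate.
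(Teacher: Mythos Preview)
Your proposal is correct and takes essentially the same approach as the paper: use the pullback description of $\Linkwiggle_S(G)$ in diagram~\eqref{diagram: pullback diagrams} together with the elementary fact that morphisms in $\Tcal_S(G)$ are mono and epi. The paper's execution is slightly more direct: rather than chasing faithfulness of the single projection $\Linkwiggle_S(G)\to\Tcal_S(G)$ through two pullback squares, it simply notes that \emph{both} legs of the left square land in categories with the desired property (citing Lemma~\ref{lemma: categorical morphism properties in L} for $\Lcal_S(G)$ and direct computation for $\Tcal_S(G)$), and the pullback of two such categories inherits it immediately. Your route avoids invoking Lemma~\ref{lemma: categorical morphism properties in L} at the cost of the extra faithfulness step; either way the argument is purely formal.
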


\begin{proof}
The result follows from the fact that $\Linkwiggle_S(G)$ is a pullback category of two categories that both have the desired properties (by Proposition~\ref{proposition: LSG is a linking system} and Lemma~\ref{lemma: categorical morphism properties in L} for $\Lcal_S(G)$, and by direct computation for $\Tcal_S(G)$). 
\end{proof}




With the technical elements in hand, we 
consider the left vertical map in 
diagram~\eqref{diagram:zigzag-goal}, comparing automorphism groups of chains in $\Linkwiggle$ to those in~$\Lcal$. If
$\Pbb = (P_0 \subgroupeq\dots \subgroupeq P_k)$ is a chain of $\Fcal_S(G)$-centric groups, then an 
element of $\Aut_{\Lcal}(\Pbb)$ is a diagram such as~\eqref{diagram: ladder of chains}, and uses the distinguished inclusion morphisms of~$\Lcal$
(Lemma~\ref{lemma: distinguished morphisms in L}). 
We define automorphisms of chains in $\Linkwiggle_S(G)$ in the same way, beginning with distinguished inclusions in~$\Linkwiggle_S(G)$. 

\begin{definition}
\hfill
\begin{enumerate}
\item If  $P\subgroupeq Q\subgroupeq S$ are $\Fcal_S(G)$-centric,
the ``distinguished inclusion" 
$\iotawiggle\,_P^Q\in\Hom_{\Linkwiggle_S(G)}(P,Q)$ is defined by 
\[
\iotawiggle\,_P^Q\definedas(\iota_P^Q,e)\in \Hom_{\Lcal}(P,Q)\times N_G(P,Q).
\]
\item 
If $\Pbb = (P_0 \subgroupeq \ldots \subgroupeq P_k)$ is a chain of $\Fcal_S(G)$-centric subgroups, then an element of 
$\fwiggle=(\fwiggle_i)\in\Aut_{\Linkwiggle_S(G)}(\Pbb)$ 
is a commuting ladder
\[
\xymatrix{
P_0\ar[r]^{\iotawiggle}\ar[d]^\isom_{\fwiggle_0}
   & P_1 \ar[r]^{\iotawiggle}\ar[d]^\isom_{\fwiggle_1}
   & \ldots\ar[r]^{\iotawiggle} 
   & P_{k-1}\ar[r]^{\iotawiggle}\ar[d]^-\isom_{\fwiggle_{k-1}}
   & P_k\ar[d]^-\isom_{\fwiggle_k}
\\
P_0\ar[r]^{\iotawiggle} 
   & P_1 \ar[r]^{\iotawiggle}
   & \ldots\ar[r]^{\iotawiggle} 
   & P_{k-1}\ar[r]^{\iotawiggle} 
   & P_k.
}
\]
\end{enumerate} 
\end{definition}

\begin{proposition}\label{proposition: tildeL=L}
Let $G$ be a compact Lie group and $\Pbb$ be a chain of
$\F_S(G)$-centric $\F_S(G)$-radical subgroups. Then the induced map
$\BAut_{\Linkwiggle_S(G)}(\Pbb)\rightarrow \BAut_{\L_S(G)}(\Pbb)$ from the pullback diagram \eqref{diagram: pullback diagrams}  is a mod~$p$ equivalence.
\end{proposition}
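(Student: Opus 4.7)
The plan is to produce a short exact sequence of groups
\[
1 \longrightarrow \nu'_{P_k}
  \longrightarrow \Aut_{\Linkwiggle_S(G)}(\Pbb)
  \longrightarrow \Aut_{\Lcal_S(G)}(\Pbb)
  \longrightarrow 1
\]
whose kernel is a torsion group all of whose elements have order prime to~$p$. Then $B\nu'_{P_k}$ is mod~$p$ acyclic, and the Serre spectral sequence of the resulting fibration gives the claimed mod~$p$ equivalence.

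First I would unpack both sides via the pullback description in diagram~\eqref{diagram: pullback diagrams}. Because the distinguished inclusions in $\Tcal_S(G)$ are represented by the identity element of~$G$, a ladder automorphism of $\Pbb$ in $\Tcal_S(G)$ collapses to a single element $g \in \bigcap_i N_G(P_i)$. Consequently, an element of $\Aut_{\Linkwiggle_S(G)}(\Pbb)$ is a pair $((f_i),g)$ with $(f_i)\in \Aut_{\Lcal_S(G)}(\Pbb)$ and $g\in\bigcap_i N_G(P_i)$, subject at each level~$i$ to the compatibility $[g]=s(\pi(f_i))$ in $N_G(P_i)/(Z(P_i)\nu'_{P_i})$. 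The projection to $\Aut_{\Lcal_S(G)}(\Pbb)$ is then manifest. Its kernel consists of pairs $(\id,g)$ with $g\in\nu'_{P_i}$ for every~$i$; since $C_G(P_k)\subseteq C_G(P_i)$ we have $\bigcap_i \nu'_{P_i}=\nu'_{P_k}$.

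The main obstacle is surjectivity. Given $(f_i)\in \Aut_{\Lcal_S(G)}(\Pbb)$, I need a single $g\in\bigcap_i N_G(P_i)$ satisfying the compatibility at every level simultaneously. The ladder condition in $\Lcal_S(G)$ forces $\pi(f_k)\in \Aut_{\Fcal_S(G)}(P_k)$ to preserve each~$P_i$, so any representative of $\pi(f_k)$ in $N_G(P_k)$ can be adjusted by an element of $C_G(P_k)\subseteq\bigcap_i N_G(P_i)$ to lie in $\bigcap_i N_G(P_i)$. The functoriality of the section $s$ of $f_\infty$ established in \cite[Lemma~9.11]{BLO-Discrete}, applied to the compositions with the distinguished inclusions $\iota_{P_i}^{P_k}$ in~$\Fcal_S(G)$, yields the equation $s(\pi(f_k))\cdot[e]=[e]\cdot s(\pi(f_i))$ in $\Hom_{\Tcal_S(G)/(\Zcal\times\nu')}(P_i,P_k)$. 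Unwinding this equation shows that any lift $g$ of $s(\pi(f_k))$ lying in $\bigcap_i N_G(P_i)$ automatically represents $s(\pi(f_i))$ in each $N_G(P_i)/(Z(P_i)\nu'_{P_i})$.

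Finally, to see that $B\nu'_{P_k}$ is mod~$p$ acyclic I would reuse the analysis in the proof of Lemma~\ref{lemma: finite order subgroup}: the group $\nu'_{P_k}$ is an extension of the divisible $p'$-torsion abelian group $T_{p'}\subseteq Z(\Pbold_k)$ by a finite group of order prime to~$p$. Both extremes have mod~$p$ acyclic classifying spaces, so the Serre spectral sequence of the extension gives $H^*(B\nu'_{P_k};\field_p)=\field_p$. One more application of the Serre spectral sequence to the fibration $B\nu'_{P_k}\to\BAut_{\Linkwiggle_S(G)}(\Pbb)\to\BAut_{\Lcal_S(G)}(\Pbb)$ completes the proof.
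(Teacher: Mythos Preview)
Your approach coincides with the paper's: exhibit the map on automorphism groups as a surjection with prime-to-$p$ torsion kernel and conclude via the Serre spectral sequence. The paper embeds the kernel into $\nu'_{P_0}$ by restricting to the smallest subgroup in the chain, whereas you identify it exactly as $\nu'_{P_k}=\bigcap_i\nu'_{P_i}$; either works. You are in fact more careful than the paper on two points. First, the paper asserts that $\nu'_{P_0}$ is finite, but this fails whenever $Z(\Pbold_0)$ has positive-dimensional torus (e.g.\ for $P_0=\Gamma$ in~$\U(p)$); your extension argument is the correct justification for mod~$p$ acyclicity. Second, the paper simply writes down its short exact sequence without arguing surjectivity, while you address it.

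One slip in your surjectivity argument: the left-hand pullback square defines $\Linkwiggle_S(G)=\Lcal_S(G)\times_{\Tcal_S(G)/\nu'}\Tcal_S(G)$, so the compatibility you need is that $[g]_{\nu'_{P_i}}$ equals the image of $f_i$ in $N_G(P_i)/\nu'_{P_i}$, not merely $s(\pi(f_i))$ in the coarser quotient $N_G(P_i)/(Z(P_i)\nu'_{P_i})$. As written, your lift $((\pi(f_i)),g)$ lands in $\Aut_{\Linkwiggle_S(G)}(\Pbb)$ but may map to a different element of $\Aut_{\Lcal_S(G)}(\Pbb)$ with the same $\Fcal$-image. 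The fix is immediate: choose $g$ lifting the $\Tcal_S(G)/\nu'$-component of $f_k$ (rather than $s(\pi(f_k))$) and run the same ladder argument directly in $\Tcal_S(G)/\nu'$, using that the distinguished inclusions $\iota_{P_i}^{P_{i+1}}$ map to $[e]$ there (which is exactly what makes the definition $\iotawiggle\,_P^Q=(\iota_P^Q,e)$ consistent).
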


\begin{proof}
By Lemma~\ref{lemma: chain monos}, the natural map 
$\Aut_{\Lcal}(\Pbb)\rightarrow \Aut_{\Lcal}(P_0)$ is a monomorphism. Further, $\Aut_{\Linkwiggle}(\Pbb)\rightarrow \Aut_{\Linkwiggle}(P_0)$
is likewise a monomorphism, by the same proof as 
that of Lemma~\ref{lemma: chain monos} (but using  
Lemma~\ref{lemma: Linkwiggle epi and mono} in place of 
Lemma~\ref{lemma: categorical morphism properties in L}).

Let 
$K_\Pbb\definedas 
\ker\big[\Aut_{\Linkwiggle}(\Pbb)\rightarrow \Aut_{\Lcal}(\Pbb)\big]$
and
$K_0\definedas
\ker\big[\Aut_{\Linkwiggle}(P_0)\rightarrow \Aut_{\Lcal}(P_0)\big]$.
We have a commuting diagram of short exact sequences 
\begin{equation}    \label{diagram: Lwiggle to Lcal}
\begin{gathered}
\xymatrix{
1 \ar[r] & K_{\Pbb}  \ar[d]\ar[r]
   & \Largestrut\Aut_{\Linkwiggle}(\Pbb)  \ar[r] \ar@{^(->}[d]
   & \largestrut\Aut_{\Lcal}(\Pbb)  \ar@{^(->}[d]\ar[r]
   &1
\\
1 \ar[r] & K_0\ \ar[r]
     & \ \Aut_{\Linkwiggle}(P_0)  \ar[r]
     &\Aut_{\Lcal}(P_0)\ar[r]
     &1.
}
\end{gathered}
\end{equation}

By pullback diagram \eqref{diagram: pullback diagrams}, we know that $K_0\cong\nu'_{P_0}$, a finite group of order prime to~$p$. The map $K_\Pbb\rightarrow K_0$ is a monomorphism, 
so $K_\Pbb$ is finite of order prime to~$p$ as well. 
The lemma follows by using the Serre spectral sequence for mod~$p$ homology of the classifying spaces. 
\end{proof}

Next we look at the top row of 
diagram~\eqref{diagram:zigzag-goal}.
We need to compare $\Aut_{\Linkwiggle_S(G)}(\Pbb)$ with the intersection of normalizers $\bigcap N_G(P_i)$. 

\begin{proposition}     \label{proposition: Autchain=norm}
Let $G$ be a compact Lie group and let $\Pbb$ be a chain of 
$\Fcal_S(G)$-centric $\Fcal_S(G)$-radical subgroups. 
Then the inclusion 
$\Aut_{\Linkwiggle}(\Pbb)\subgroupeq\Aut_{\Tcal_S(G)}(\Pbb)
    = \bigcap_i N_G(P_i)$ 
induces a mod~$p$ equivalence of classifying spaces.
\end{proposition}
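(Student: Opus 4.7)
The plan is to compare $\Aut_{\Linkwiggle}(\Pbb)$ and $\bigcap_i N_G(P_i)$ by constructing a ladder of short exact sequences with common quotient $\Aut_{\Fcal}(\Pbb)$, then to reduce the claim to a mod~$p$ equivalence of the classifying spaces of the kernels, where the rational vector space of Corollary~\ref{corollary: rational vector space} will appear.

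First, I identify the kernels. Composing Proposition~\ref{proposition: tildeL=L} (whose kernel $K_\Pbb$ is finite of order prime to~$p$) with Lemma~\ref{lemma: AutL to AutF surjective} (whose kernel is $Z(P_k)$) yields a surjection $\Aut_{\Linkwiggle}(\Pbb)\epi \Aut_{\Fcal}(\Pbb)$ with kernel $\widetilde{K}$ fitting in an extension $1\to K_\Pbb\to \widetilde{K}\to Z(P_k)\to 1$. On the other side, the rule $g\mapsto (c_g)$ yields a surjection $\bigcap_i N_G(P_i)\epi\Aut_{\Fcal}(\Pbb)$ (surjective because an automorphism of $P_k$ restricting to an automorphism of each $P_i$ is induced by some $g\in N_G(P_k)$, and the restriction property forces $g\in N_G(P_i)$ for all $i$) with kernel $\bigcap_i C_G(P_i)=C_G(P_k)$. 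Tracing through the pullback definition of $\Linkwiggle_S(G)$, the inclusion $\Aut_{\Linkwiggle}(\Pbb)\hookrightarrow\bigcap_i N_G(P_i)$ sends $\widetilde{K}$ into $Z(P_k)\cdot\nu'_{P_k}\subseteq C_G(P_k)$, and the resulting diagram is a morphism of short exact sequences with the identity on $\Aut_{\Fcal}(\Pbb)$.

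By comparison of the associated Serre spectral sequences, it then suffices to show that $B\widetilde{K}\to BC_G(P_k)$ is a mod~$p$ equivalence. I factor this as $B\widetilde{K}\to B(Z(P_k)\x\nu'_{P_k})\to BC_G(P_k)$. The two factors commute inside $C_G(P_k)$ because $Z(P_k)$ is central there, and they intersect trivially since a discrete \pdash toral group has no nontrivial elements of order prime to~$p$. For the second map, $Z(P_k)\x\nu'_{P_k}$ is normal in $C_G(P_k)$ with rational vector space quotient by Corollary~\ref{corollary: rational vector space}, giving a fibration with mod~$p$ acyclic base and hence a mod~$p$ equivalence. For the first map, the index $|\nu'_{P_k}|/|K_\Pbb|$ is finite and prime to~$p$; since $K_\Pbb$ and $\nu'_{P_k}$ both have order prime to~$p$, the Serre spectral sequences of $BK_\Pbb\to B\widetilde{K}\to BZ(P_k)$ and $B\nu'_{P_k}\to B(Z(P_k)\x\nu'_{P_k})\to BZ(P_k)$ identify both mod~$p$ cohomologies with $H^*(BZ(P_k);\field_p)$, and the transfer-induced splitting forces the comparison map to be an isomorphism.

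The main technical obstacle I anticipate is verifying that $\widetilde{K}$ lands in $Z(P_k)\cdot\nu'_{P_k}$ under the inclusion to $C_G(P_k)$. This requires unpacking Definition~\ref{definition: Linkwiggle} and combining axiom~(A) of a centric linking system with Lemma~\ref{lemma: lifting chains}, to show that any element of $\Aut_{\Linkwiggle}(\Pbb)$ projecting to the identity of $\Aut_{\Fcal}(\Pbb)$ is of the form $((\delta_{P_i}(z_i),zn))_i$ for some $z\in Z(P_k)$ and $n\in\nu'_{P_k}$, with the $z_i \in Z(P_i)$ determined by the unique ladder-lifts guaranteed by the linking system axioms.
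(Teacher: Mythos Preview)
Your approach is correct and follows the same architecture as the paper's: compare the two groups via a ladder of short exact sequences over $\Aut_{\Fcal}(\Pbb)$, reduce to the map of kernels, and invoke the rational vector space of Corollary~\ref{corollary: rational vector space} together with a Serre spectral sequence comparison. The paper streamlines one step: rather than assembling $\widetilde{K}$ from the composite $\Aut_{\Linkwiggle}(\Pbb)\to\Aut_{\Lcal}(\Pbb)\to\Aut_{\Fcal}(\Pbb)$ (pulling in Proposition~\ref{proposition: tildeL=L} and Lemma~\ref{lemma: AutL to AutF surjective}) and then arguing separately that $\widetilde{K}$ lands in $Z(P_k)\times\nu'_{P_k}$, the paper reads directly off the outer pullback square of diagram~\eqref{diagram: pullback diagrams} that
\[
\ker\big[\Aut_{\Linkwiggle}(\Pbb)\to\Aut_{\Fcal}(\Pbb)\big]=Z(P_k)\times\nu'_{P_k}.
\]
This identification collapses your two-step factorization $B\widetilde{K}\to B(Z(P_k)\times\nu'_{P_k})\to BC_G(P_k)$ into a single mod~$p$ equivalence and disposes of the technical obstacle you flag in your final paragraph. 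Your route is nonetheless sound; the extra spectral sequence comparison for $B\widetilde{K}\to B(Z(P_k)\times\nu'_{P_k})$ works exactly as you describe, since both groups have $H^*(BZ(P_k);\field_p)$ as their mod~$p$ cohomology.
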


\begin{proof}
We use pullback diagram~\eqref{diagram: pullback diagrams},
which we reproduce here, applied to automorphism groups:
\begin{equation*}       
\begin{gathered}
\xymatrix{
\Aut_{\Linkwiggle_S(G)}(\Pbb)   \ar[r]\ar[d] 
   & \Aut_{\Lcal_S(G)}(\Pbb)    \ar[d]\ar[r] 
   & \Aut_{\Fcal_S(G)}(\Pbb)    \ar[d]^-{s(\Pbb)}
\\
\Aut_{\Tcal_S(G)}(\Pbb)   \ar[r]_-{f_{\nu'}(\Pbb)} 
   & \Aut_{\Tcal_S(G)/\nu'}(\Pbb)   \ar[r]_-{f_\Zcal(\Pbb)}
   &\Aut_{\Tcal_S(G)/(\Zcal \times \nu')}(\Pbb)   
      \ar@<.8ex>[u]^-{f_\infty(\Pbb)}.
}
\end{gathered}
\end{equation*} 

Because $s$ is a section, the two ways around the outside of the rectangle  from  
$\Aut_{\Linkwiggle_S(G)}(\Pbb)$ to $\Aut_{\Fcal_S(G)}(\Pbb)$ commute. 
Further, if $\Pbb=(P_0\subgroupeq\ldots\subgroupeq P_k)$, then
$\ker\big[\Aut_{\Tcal_S(G)}(\Pbb)\rightarrow \Aut_{\Fcal_S(G)}(\Pbb)\big]\cong C_G(P_k)$, because the kernel consists of elements of $G$ that act trivially on all subgroups in~$\Pbb$. On the other hand, from the pullback diagrams, 
$\ker\big[\Aut_{\Linkwiggle_S(G)}(\Pbb)\rightarrow \Aut_{\Fcal_S(G)}(\Pbb)\big]$ is a subgroup of $C_G(P_k)$, identified by the pullback diagram as $Z(P_k)\times\nu'_{P_k}$. 

Then we have a homotopy commutative diagram of horizontal fibrations
\begin{equation}\label{eq:Ltil-vs-ints-N} 
\begin{gathered}
\xymatrix{
B\big(Z(P_k)\x \nu'_{P_k}\big)\ar[r]\ar[d] 
   & \BAut_{\Linkwiggle_S(G)}(\Pbb)\ar[d]  \ar[r]
   & \BAut_{\F_S(G)}(\Pbb)\ar@{=}[d]
\\
BC_G(P_k)\ar[r]
   & \BAut_{\Tcal_S(G)}(\Pbb)   \ar[r]
   & \BAut_{\F_S(G)}(\Pbb).
}
\end{gathered}
\end{equation}

Consider the leftmost vertical map. For an $\Fcal_S(G)$-centric subgroup~$P$, the Lyndon-Hochschild-Serre spectral sequence for mod~$p$ homology corresponding to the short exact sequence 
\[
1\longrightarrow Z(P)\times \nu'_P
 \longrightarrow C_G(P)
 \longrightarrow C_G(P)/(Z(P)\times \nu'_P)
 \longrightarrow 1
\]
collapses because $C_G(P)/(Z(P)\times \nu'_P)$ is a rational vector space (Corollary~\ref{corollary: rational vector space}). 
Thus the map of fibers in \eqref{eq:Ltil-vs-ints-N} is a mod~$p$ equivalence.
Comparing the Serre spectral sequences for mod~$p$ homology for the fibrations in \eqref{eq:Ltil-vs-ints-N}
gives the desired result.  

\end{proof}



\begin{proof}[Proof of Theorem~\ref{theorem: NormalizerDecompLie}]
In diagram~\eqref{diagram:zigzag-goal} we
have zigzag  natural transformations that are mod~$p$ equivalences by 
Proposition~\ref{proposition: tildeL=L} (left vertical) and Proposition~\ref{proposition: Autchain=norm} (top row). The right vertical map in \eqref{diagram:zigzag-goal} is a mod~$p$ equivalence by our previous work (\cite{WIT-normalizers} Thm~5.1, supported by Lemma~4.4). 

Theorem~\ref{theorem: find chains} says exactly that if $\pi_0 G$ is a \pdash group, then there is an isomorphism of posets $\sd(\Fcenrad)\cong\sd(\Rcal)$ induced by taking every subgroup $P$ to its closure
$\Pbold$ in~$G$. 
\end{proof}


\section{Group-theoretic ingredients for $\Uofp$ and $\SUofp$}
\label{section: boring group theory}

In this section, we make technical preparations for 
Section~\ref{sec:U(p) SU(p)}, where we apply our Lie group decomposition result (Theorem~\ref{theorem: NormalizerDecompLie}) to the examples of $\Uofp$ and~$\SUofp$. Application of Theorem~\ref{theorem: NormalizerDecompLie}
requires knowing the normalizers of chains of \pdash centric and \pdash stubborn subgroups, so this section is devoted to detailed computations of 
such normalizers in~$\Uofp$ and $\SUofp$. Results for $\SUofp$ are obtained by intersecting the results for $\Uofp$ with~$\SUofp$. We focus on $\Uofp$ and derive the results for $\SUofp$ at the end of the section. The results in this section 
are all known to experts. In particular, we are grateful to Dave Benson for tutorials in Bonn.  

Because we have to compute normalizers of chains by 
intersecting normalizers of subgroups, we give very specific representations. Let $\Sigma_p$ act on the right of $\{1,2,\ldots,p\}$.

\begin{definition}   
\label{definition: dramatis personae}
Let $\zeta$ be a fixed $p$-th root of unity, 
and let $\sigma\in\Sigma_p$. 
\begin{enumerate}
\item $\Sigma_p\subgroup\Uofp$ is represented by permutation matrices: $\sigma_{i,j}=1$ if $(i)\sigma=j$. 
\item  $A$ is the $p\times p$ diagonal matrix with $A_{ii}=\zeta^{i-1}$.
\item  $B$ represents the \pdash cycle $(1,2,\ldots,p)$, that is, 
$B_{i,i+1}=1$. 
\end{enumerate}
\end{definition}

Definition~\ref{definition: dramatis personae} identifies the specific representations we use of the subgroups of~$\Uofp$ required in Section~\ref{sec:U(p) SU(p)} for the normalizer decomposition of~$\Uofp$. 
We fix the subgroup of diagonal matrices as our choice of maximal torus $\Tbold$ of~$\Uofp$, and classical results establish that $N_{\Uofp}\Tbold\cong\Tbold\rtimes\Sigma_p$. A maximal \pdash toral subgroup $\Sbold$ is given by $\Tbold\rtimes\integers/p$, with  normalizer $N_{\Uofp}\Sbold\cong\Tbold\rtimes N_{\Sigma_p}\integers/p$. 

\begin{definition}\hfill
\label{definition: Sbold, Tbold, Gammabold}
\begin{enumerate}
\item $\Sbold\definedas\Tbold\rtimes\langle B\rangle\cong\Tbold\rtimes\integers/p$ is 
the chosen maximal \pdash toral subgroup containing~$\Tbold$.
\item $\Gammabold\subgroup\Uofp$ is the subgroup of $\Uofp$ generated by $A$, $B$ and $S^{1}=Z(U(p))$. 
\end{enumerate}
\end{definition}

The second group in Definition~\ref{definition: Sbold, Tbold, Gammabold}, $\Gammabold\subgroup\Uofp$, is the group
denoted by $\Gamma^U_p$ in \cite[Defn.~1]{Oliver-p-stubborn}. 
It is given by a central extension
\begin{equation}   \label{eq: Gammabold extension}
1\rightarrow S^1\rightarrow\Gammabold
 \rightarrow\integers/p\times\integers/p
 \rightarrow 1,
\end{equation}
where the factors of the quotient are represented by the matrices $A$ and~$B$ (Definition~\ref{definition: dramatis personae}). 
The commutator form is $[A,B]=\zeta^{-1} I$. 
By \cite[Thm.~6(ii)]{Oliver-p-stubborn} there is a short exact sequence 
\begin{equation}    \label{eq: normalizer of Gamma}
1\to \Gammabold \to N_{\U(p)}\Gammabold \to
\SLtwoFp \to 1.
\end{equation}
We note that, conceptually, the quotient in 
\eqref{eq: normalizer of Gamma} is actually 
$\Sp_2\field_p$, the group of automorphisms of the symplectic form on $\Gammabold/S^1$ given by the commutator. However,  $\Sp_2(\field_p)\cong\SLtwoFp$ and the latter expression is more convenient for us. 

To apply Theorem~\ref{theorem: NormalizerDecompLie} in Section~\ref{sec:U(p) SU(p)}, we need a group-theoretic understanding of 
$N_{\Uofp}(\Gammabold)$ 
and $N_{\Uofp}(\Gammabold\subgroupeq\Sbold)$,
and likewise of their counterparts in~$\SUofp$.
For odd primes, our best tool for understanding the relationship of $N_{\Uofp}(\Gammabold)$ 
and $N_{\Uofp}(\Gammabold\subgroupeq\Sbold)$ is to establish that 
\eqref{eq: normalizer of Gamma} is split (Proposition~\ref{proposition: splitting of symplectic}). 

Some initial ingredients are involved. We fix an odd prime~$p$. First, observe that $\SLtwoFp$ contains a central involution, namely the negative of the identity matrix. We choose a lift of this involution to $N_{\Uofp}\Gammabold$.

\begin{definition}        \label{definition: tau}
For odd primes, let $\tau \definedas (2,p)(3,p-1)\ldots(\frac{p+1}{2},\frac{p+3}{2})\in\Sigma_p$.
\end{definition}

With our conventions, $\tau$ is represented by the permutation matrix with ones in the upper left-hand corner and on the sub-antidiagonal (Definition~\ref{definition: dramatis personae}).
This involution allows us to express $N_{\Uofp}\Gammabold$ as the product of two almost disjoint pieces. 

\begin{lemma}    \label{lemma: tau}
For $p$ odd, $\Gammabold\cap C_{\Uofp}(\tau)=S^1$
and 
$N_{\Uofp}\Gammabold=\Gammabold\cdot C_{N_{\Uofp}\Gammabold}(\tau)$. 
\end{lemma}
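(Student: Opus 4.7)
The first statement is a direct computation, which I would carry out inside $\Gammabold$ using Definition~\ref{definition: dramatis personae}. A quick check on basis vectors shows that conjugation by $\tau$ sends $A\mapsto A^{-1}$ and $B\mapsto B^{-1}$ while fixing $S^{1}$ pointwise. For a general element $\gamma=\lambda A^{i}B^{j}\in\Gammabold$, the fixed-point equation $\tau\gamma\tau^{-1}=\gamma$ becomes $A^{i}B^{j}=A^{-i}B^{-j}$; using the commutation relation $BA=\zeta AB$ one rewrites this as $B^{2j}A^{2i}=\zeta^{2ij}I$. The left-hand side is a permutation matrix times a diagonal matrix, so matching a scalar forces $B^{2j}=I$ and $A^{2i}=\zeta^{2ij}I$, which in turn forces $i\equiv j\equiv 0\pmod{p}$ since $p$ is odd. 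Hence $\gamma\in S^{1}$.

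For the second statement, abbreviate $N\definedas N_{\Uofp}\Gammabold$; my plan is to first establish the decomposition modulo the central $S^{1}$ and then lift. A direct calculation using the commutator $[A,B]=\zeta^{-1}I$ gives
\[
(\lambda A^{i}B^{j})\,\tau\,(\lambda A^{i}B^{j})^{-1}=\zeta^{2ij}\,A^{2i}\,B^{2j}\,\tau.
\]
Passing to $N/S^{1}$, the $(\Gammabold/S^{1})$-orbit of $\bar\tau$ then covers the full coset $\bar\tau\cdot(\Gammabold/S^{1})$: the stabilizer is trivial by the first statement, and $(i,j)\mapsto(2i,2j)$ is a bijection of $(\field_{p})^{2}$ since $p$ is odd, so the orbit has exactly $p^{2}$ elements. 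Because the image of $\tau$ in $\SLtwoFp$ is the central element $-I$, this coset is precisely the fiber over $-I$ of the surjection $N/S^{1}\twoheadrightarrow\SLtwoFp$, and the transitivity of the $(\Gammabold/S^{1})$-action on it yields $N/S^{1}=(\Gammabold/S^{1})\cdot C_{N/S^{1}}(\bar\tau)$.

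To lift this decomposition to $N$, let $D\subseteq N$ be the full preimage of $C_{N/S^{1}}(\bar\tau)$; by the previous step $N=\Gammabold\cdot D$, and by the first statement $(\Gammabold/S^{1})\cap C_{N/S^{1}}(\bar\tau)$ is trivial, so $D/S^{1}$ maps isomorphically onto $\SLtwoFp$. For $c\in D$, the element $c\tau c^{-1}$ lies in the same $S^{1}$-coset as $\tau$, so $c\tau c^{-1}=\mu(c)\,\tau$ for some $\mu(c)\in S^{1}$; since both $\tau$ and $c\tau c^{-1}$ have order two, $\mu(c)^{2}=1$, and hence $\mu(c)\in\{\pm1\}$. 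A routine check shows that $\mu$ is a group homomorphism $D\to\{\pm1\}$ whose kernel is $C_{N}(\tau)$, and since $S^{1}$ is central, $\mu$ factors through $D/S^{1}\cong\SLtwoFp$.

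The crux of the argument, and the step I expect to require the most care, is showing that $\mu$ is trivial. My plan is to invoke the fact that $\SLtwoFp$ admits no nontrivial homomorphism to $\{\pm1\}$ when $p$ is odd: for $p\geq 5$ the group $\SLtwoFp$ is perfect, and for $p=3$ the binary tetrahedral group $\SL_{2}(\field_{3})$ has abelianization $\mathbb{Z}/3$, which has no nontrivial $2$-torsion. With $\mu\equiv 1$, I obtain $D=C_{N}(\tau)$, which therefore surjects onto $\SLtwoFp=N/\Gammabold$; combined with $\Gammabold\subseteq N$, this yields $N=\Gammabold\cdot C_{N}(\tau)$.
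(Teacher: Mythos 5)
Your proof is correct, and the argument for the second statement is genuinely different from the one in the paper. The paper picks an arbitrary $x\in N_{\Uofp}\Gammabold$, sets $\alpha=x\tau x^{-1}$, and works inside the auxiliary group $\Gammabold.2=\langle\Gammabold,\tau\rangle$: since $p$ is odd, $S^1\cdot\tau$ and $S^1\cdot\alpha$ are Sylow $2$-toral subgroups of $\Gammabold.2$, hence conjugate, and a trace computation (the traces of $\tau$, $-I$, and $-\tau$ are $1$, $-p$, and $-1$) pins down which involution in $S^1\cdot\tau$ the conjugate must land on; this produces an explicit $\gamma\in\Gammabold$ with $\gamma x\in C_N(\tau)$. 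You instead pass to $N/S^1$, prove by orbit counting that $\Gammabold/S^1$ acts transitively on the fiber of $N/S^1\twoheadrightarrow\SLtwoFp$ over $-I$, deduce the decomposition modulo $S^1$, and then lift it by showing that the sign obstruction $\mu\colon D/S^1\cong\SLtwoFp\to\{\pm1\}$ must vanish because $\SLtwoFp$ has no index-two subgroup when $p$ is odd (perfectness for $p\geq 5$; abelianization $\mathbb{Z}/3$ for $p=3$). Both routes are sound. The paper's Sylow-plus-trace argument is more self-contained and elementary, and it transports almost verbatim to $\SUofp$ by replacing $\tau$ with $\tau'=(-I)\tau$ when $p\equiv 1\pmod 4$ (as the paper later does in Proposition~\ref{proposition: Gamma SES}); your argument leans on the structure of $\SLtwoFp$, but as a byproduct it directly exhibits $C_{N/S^1}(\bar\tau)\cong\SLtwoFp$, which is close in spirit to Lemma~\ref{lemma: split centralizer}. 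One small phrasing point: the fact that the stabilizer of $\bar\tau$ in $\Gammabold/S^1$ is trivial is not literally the first statement (which concerns $C_{\Uofp}(\tau)$ rather than $C_{N/S^1}(\bar\tau)$), but the same explicit computation $\gamma\tau\gamma^{-1}=\zeta^{-2ij}A^{2i}B^{2j}\tau$ that you use does force $i\equiv j\equiv 0$ whenever $\gamma\tau\gamma^{-1}\in S^1\tau$, so the claim is correct; stating it that way, or simply invoking orbit-stabilizer with your $p^2$ orbit count, would make this cleaner.
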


\begin{proof}
Direct computation shows that $\tau$ acts on $\Gammabold$ by $\tau A\tau=A^{-1}$ and $\tau B\tau=B^{-1}$; in particular, $\tau\in N_{\Uofp}\Gammabold$. 
To see that $\Gammabold\cap C_{\Uofp}(\tau)=S^1$, observe that
$\tau$ acts freely on the set of non-identity elements of $\Gammabold/S^1$. 

To address generation of $N_{\Uofp}\Gammabold$, let $\Gammabold.2$ (using Atlas notation) be the subgroup of $N_{\Uofp}\Gammabold$ generated by $\Gammabold$ and~$\tau$. Because $\tau$ acts on $\Gammabold$ by an involution, there is a short exact sequence
\[
1\rightarrow\Gammabold\rightarrow\Gammabold.2\rightarrow\integers/2\rightarrow 1.
\]
Every element of $\Gammabold.2$ has the form $\gamma$ or $\tau\gamma$ for some element $\gamma\in\Gammabold$. 

To establish the lemma, we choose an arbitrary $x\in N_{\Uofp}\Gammabold$ and construct
$y\in\Gammabold$ and 
$z\in C_{N_{\Uofp}\Gammabold}(\tau)$ 
such that $x=yz$. 
Consider the relationship of $x$ to~$\tau$: 
suppose that $x \tau x^{-1}=\alpha\in\Gammabold.2$. 
Because $p$ is odd, the subgroups $S^1\cdot\tau$ and $S^1\cdot\alpha$ are Sylow $2$-toral subgroups of $\Gammabold.2$, hence conjugate in~$\Gammabold.2$. 

We can choose $\tau\gamma\in\Gammabold.2$ such that 
$c_{\tau\gamma}(S^1\cdot\alpha)=S^1\cdot\tau$. 
We would like to know that $c_{\tau\gamma}(\alpha)=\tau$. Certainly $c_{\tau\gamma}(\alpha)$ is an involution 
in $S^1\cdot\tau$, and since $S^1$ is central we easily compute 
that the available involutions are $-I$, $\tau$, and~$(-I)\tau$. By inspection the trace of $\tau$ is~$1$, while the traces of $-I$ and $(-I)\tau$ are $-p$ and~$-1$, respectively. Hence $\tau$ is the only option for $c_{\tau\gamma}(\alpha)$.

Substituting $x \tau x^{-1}$ for $\alpha$ in the equation
$(\tau\gamma)(\alpha)(\gamma^{-1}\tau) =\tau$
and simplifying gives
$x=\gamma^{-1}(\gamma x)$ as the desired expression, so $y=\gamma^{-1}\in\Gammabold$ and $z=\gamma x\in C_{N_{\Uofp}\Gammabold}(\tau)$. 
\end{proof}

Lemma~\ref{lemma: tau} tells us that to understand the structure of $N_{\Uofp}\Gammabold$, we should 
focus on the structure of $C_{N_{\Uofp}\Gammabold}(\tau)$. 

\begin{lemma}      \label{lemma: split centralizer}
For $p$ odd, there is an isomorphism 
\[
C_{N_{\Uofp}\Gammabold}(\tau)\cong S^1\times\SLtwoFp. 
\]
\end{lemma}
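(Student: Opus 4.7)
The plan is to restrict the short exact sequence \eqref{eq: normalizer of Gamma} to the subgroup $C_{N_{\Uofp}\Gammabold}(\tau)$, producing a short exact sequence
\[
1 \to S^1 \to C_{N_{\Uofp}\Gammabold}(\tau) \to \SLtwoFp \to 1,
\]
and then to show this extension splits with trivial action. The kernel computation $C_{N_{\Uofp}\Gammabold}(\tau) \cap \Gammabold = S^1$ and surjectivity onto $\SLtwoFp$ (coming from $N_{\Uofp}\Gammabold = \Gammabold \cdot C_{N_{\Uofp}\Gammabold}(\tau)$ together with the fact that $\Gammabold$ lies in the kernel of \eqref{eq: normalizer of Gamma}) both follow immediately from Lemma~\ref{lemma: tau}. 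The extension is central because $S^1 = Z(\Uofp)$ is central in every subgroup of $\Uofp$; in particular the induced $\SLtwoFp$-action on $S^1$ is trivial.

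To split the extension I would argue cohomologically. Central extensions of $\SLtwoFp$ by $S^1$ with trivial action are classified by $H^2(\SLtwoFp; S^1)$, which by universal coefficients decomposes as
\[
\Hom(H_2(\SLtwoFp;\integers), S^1) \oplus \Ext(H_1(\SLtwoFp;\integers), S^1).
\]
Since $S^1 \cong \reals/\integers$ is divisible and therefore injective as an abelian group, the $\Ext$ summand vanishes. The $\Hom$ summand vanishes because the Schur multiplier $H_2(\SLtwoFp;\integers)$ is trivial for every odd prime $p$ (a classical theorem of Schur). Thus the extension splits and one obtains $C_{N_{\Uofp}\Gammabold}(\tau) \cong S^1 \times \SLtwoFp$.

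The main obstacle is the splitting step: it relies on the classical Schur multiplier computation for $\SLtwoFp$. A more hands-on alternative would be to construct a splitting explicitly, lifting the upper and lower unipotent subgroups of $\SLtwoFp$ (each isomorphic to $\integers/p$) to the central extension — each individual lift exists because $\Ext(\integers/p, S^1) = 0$ by divisibility of $S^1$ — and then checking that consistent choices can be made so that the Steinberg relations for $\SLtwoFp$ are satisfied. This verification is essentially equivalent in content to the Schur multiplier computation, so the two approaches require comparable effort, and I would favor the cohomological formulation for brevity.
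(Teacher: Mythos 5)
Your proof is correct and follows essentially the same route as the paper: restricting the extension \eqref{eq: normalizer of Gamma} to $C_{N_{\Uofp}\Gammabold}(\tau)$, observing it is central with kernel $S^1$, and concluding splitting from the vanishing of $H^2(\SLtwoFp;S^1)$, which ultimately rests on the triviality of the Schur multiplier $H_2(\SLtwoFp;\integers)$. The paper's only cosmetic difference is that it first identifies $H^2(\SLtwoFp;S^1)\cong H^3(\SLtwoFp;\integers)$ and then applies the universal coefficient theorem in degree~$3$, whereas you apply universal coefficients directly in degree~$2$ using injectivity of $S^1$; the key input is identical.
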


\begin{proof}
Because $C_{N_{\Uofp}\Gammabold}(\tau)\cap\Gammabold=S^1$
(Lemma~\ref{lemma: tau}),
the short exact sequence in \eqref{eq: normalizer of Gamma} restricts to a central extension
\begin{equation}    \label{equation: centralizer extension}
1\rightarrow S^1\rightarrow C_{N_{\Uofp}\Gammabold}(\tau)
  \rightarrow \SLtwoFp\rightarrow 1.
\end{equation}
Such extensions are classified by 
\[
H^2(\SLtwoFp ;S^1)\cong H^3(\SLtwoFp;\integers).
\]
However, $H^3(\SLtwoFp;\integers)=0$ by 
the universal coefficient theorem for cohomology because
\begin{itemize}
\item $\Hom\big(H_3(\SLtwoFp ;\integers), \integers\big)=0$ (the domain is a torsion group), and
\item the Schur multiplier $H_2(\SLtwoFp;\integers)$ 
is zero (\cite[p.52]{steinberg-chevalley}).
\end{itemize}
Hence the central extension \eqref{equation: centralizer extension} splits, as required. 
\end{proof}

\begin{proposition}   \label{proposition: splitting of symplectic}
If $p$ is odd, there is a splitting of the short exact sequence 
\begin{equation*}   
1\to \Gammabold \to N_{\U(p)}\Gammabold \to
\SLtwoFp \to 1.
\end{equation*} 
\end{proposition}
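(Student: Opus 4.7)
The plan is to assemble the two preceding lemmas into a splitting. By Lemma~\ref{lemma: split centralizer}, the isomorphism $C_{N_{\Uofp}\Gammabold}(\tau)\cong S^1\times\SLtwoFp$ supplies a splitting $s\colon\SLtwoFp\rightarrow C_{N_{\Uofp}\Gammabold}(\tau)$ of the central extension \eqref{equation: centralizer extension}. My candidate splitting of \eqref{eq: normalizer of Gamma} is the composition
\[
\SLtwoFp\xrightarrow{\ s\ } C_{N_{\Uofp}\Gammabold}(\tau)\hookrightarrow N_{\Uofp}\Gammabold,
\]
with image $H\definedas s(\SLtwoFp)$.

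To verify that this is a splitting, I would compose with the quotient $\pi\colon N_{\Uofp}\Gammabold\twoheadrightarrow\SLtwoFp$ of \eqref{eq: normalizer of Gamma} and show that the restriction $\pi|_H\colon H\rightarrow\SLtwoFp$ is an isomorphism. For injectivity, $\ker(\pi|_H)=H\cap\Gammabold\subgroupeq C_{N_{\Uofp}\Gammabold}(\tau)\cap\Gammabold=S^1$ by Lemma~\ref{lemma: tau}, while the direct product description makes $H\cap S^1$ trivial. For surjectivity, Lemma~\ref{lemma: tau} also gives $N_{\Uofp}\Gammabold=\Gammabold\cdot C_{N_{\Uofp}\Gammabold}(\tau)$, so every element of $\SLtwoFp$ is hit by some $c\in C_{N_{\Uofp}\Gammabold}(\tau)$; writing $c=(z,h)$ under the product, the $S^1$ factor $z$ lies in $\Gammabold$ and dies under $\pi$, so the image of~$c$ agrees with $\pi(h)$, with $h\in H$.

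Finally, I would check that $\pi\circ s$ is the identity, not merely some automorphism of~$\SLtwoFp$. The map $\pi|_H\circ s$ is induced by the quotient in \eqref{equation: centralizer extension}, which itself is the restriction of~$\pi$ to $C_{N_{\Uofp}\Gammabold}(\tau)$, so the identification is consistent by construction. I do not expect a genuine obstacle here: the proposition is essentially a repackaging of Lemmas~\ref{lemma: tau} and~\ref{lemma: split centralizer}, and the only point that requires minor care is keeping the two appearances of $\SLtwoFp$ (as quotient in \eqref{eq: normalizer of Gamma} and as direct factor in \eqref{equation: centralizer extension}) consistently identified.
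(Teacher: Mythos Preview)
Your proof is correct and follows essentially the same approach as the paper: both use the splitting $\SLtwoFp\to C_{N_{\Uofp}\Gammabold}(\tau)$ from Lemma~\ref{lemma: split centralizer} and compose with the inclusion into $N_{\Uofp}\Gammabold$. The paper's proof is a one-liner that leaves implicit the verification you carry out; your additional care with injectivity (via Lemma~\ref{lemma: tau}) and with the compatibility of the two quotient maps is exactly what a reader would supply.
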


\begin{proof}
Lemma~\ref{lemma: split centralizer} allows us to choose a splitting 
\begin{align*}
\SL_{2}(\field_p)&\longrightarrow C_{N_{\Uofp}\Gammabold}(\tau)\subgroup N_{\Uofp}(\Gammabold). 
\end{align*}
\end{proof}

The intuition of Proposition~\ref{proposition: splitting of symplectic} is that $\SLtwoFp$ acts on a two-dimensional vector space over~$\field_p$, and $\Gammabold/Z(\Gammabold)\cong \field_p\times\field_p$, with basis elements given by $A$ and $B$ of Definition~\ref{definition: dramatis personae} 
(which commute once we kill the center. 
While the splitting of Lemma~\ref{lemma: split centralizer} is not constructive for the whole quotient~$\SLtwoFp$,
our next task is to compute an explicit splitting for the subgroup of 
upper triangular matrices. This explicit computation is used to find the normalizer in $\Uofp$ of the chain $(\Gammabold\subgroup\Sbold)$, the normalizer of the corresponding chain in~$\SUofp$, and related automorphism groups in the linking systems of the \AZ \pdash compact groups in Section~\ref{sec:AZ}. 
Our choice of the representations will lie not just in~$\Uofp$, but in~$\SUofp$ for purposes of those later computations.

\newcommand{\triangular}{d}
\newcommand{\diagonal}{s}

\begin{definition}     \label{definition: upper tri generators}
Let $\SLUpper$ denote the group of upper triangular matrices in~$\SLtwoFp$, let 
$\triangular\definedas\twobytwo{1}{2}{0}{1}$ and let 
$\diagonal\colon \integers/p^\times\hookrightarrow \SLtwoFp$ be the homomorphism 
$k\mapsto \diagonal_k\definedas
\twobytwo{k}{0}{0}{k^{-1}}$.
\end{definition}

A quick computation establishes the following lemma. 

\begin{lemma}
The group $\SLUpper$ is generated by $\triangular$ and 
$\{\diagonal_k\}$, which satisfy the relations $\triangular^p=I$
and $s_k\, d\, s_k^{-1}=\,d^{\,k^2}$.
\end{lemma}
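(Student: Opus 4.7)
The plan is to decompose an arbitrary element of $\SLUpper$ as a product of a diagonal piece and a unipotent piece, handle each factor separately, and then verify the two listed relations by direct matrix multiplication.

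First I would observe that any matrix in $\SLUpper$ has the form $\twobytwo{a}{b}{0}{a^{-1}}$ for some $a\in\field_p^{\times}$ and $b\in\field_p$, and that such a matrix factors as $\twobytwo{a}{0}{0}{a^{-1}}\cdot\twobytwo{1}{a^{-1}b}{0}{1}$. This reduces the generation claim to two sub-claims: that the diagonal torus is generated by the elements $s_k$, and that the unipotent subgroup $U=\{\twobytwo{1}{c}{0}{1}\mid c\in\field_p\}$ is generated by $d$. The first sub-claim holds by the very definition of $s_k$. For the second, note that $d^n=\twobytwo{1}{2n}{0}{1}$ by a quick induction, so generating $U$ by $d$ amounts to knowing that $2$ is a unit in $\field_p$; this is where the standing assumption that $p$ is odd enters.

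Finally I would verify the two stated relations by computation: $d^p=\twobytwo{1}{2p}{0}{1}=I$, and
\[
s_k\,d\,s_k^{-1}
=\twobytwo{k}{0}{0}{k^{-1}}\twobytwo{1}{2}{0}{1}\twobytwo{k^{-1}}{0}{0}{k}
=\twobytwo{1}{2k^2}{0}{1}
=d^{\,k^2}.
\]
There is no real obstacle here, only bookkeeping; the only subtlety worth flagging is the use of $p$ odd to invert $2$ in the unipotent calculation. (I will not claim this is a presentation, only that these generators and relations hold, since that is all the statement asserts and all that is needed downstream.)
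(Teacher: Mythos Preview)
Your proof is correct and is exactly the ``quick computation'' the paper alludes to without writing out; the paper gives no further argument. Your remark that $p$ must be odd for $2$ to be invertible (so that $d$ generates the unipotent subgroup) is apt and consistent with the standing hypothesis in that section.
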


The next definition sets the notation for the representation of $\SLUpper$ that we will use. 

\begin{definition}  \label{definition: define D and C p-1} 
Let $\zeta$ denote the fixed $p$-th root of unity. 
\begin{enumerate}
\item Let $D\in\SUofp$ be the $p\times p$ diagonal matrix with $D_{ii}=\zeta^{-(i-1)^2}$.
\item For $p\neq 2$, let
$\sigma\colon (\integers/p)^\times\rightarrow\Sigma_p$
be the homomorphism defined
by $(i)\sigma_k=k(i-1)+1$. 
Let $C_{p-1}\subgroupeq\SUofp$ denote the corresponding group of \emph{signed} permutation matrices. 
\end{enumerate}
\end{definition}

\begin{lemma}     \label{lemma: C and D}
The group $C_{p-1}$ 
normalizes~$\langle D\rangle$ and 
$\langle D\rangle\rtimes C_{p-1}\subgroupeq C_{\Uofp}(\tau)$.
\end{lemma}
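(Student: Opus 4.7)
My plan is to verify both claims by direct matrix calculation. The key simplification is that conjugating a diagonal matrix by a signed permutation matrix is insensitive to the signs, so that for a signed permutation $c$ representing $\sigma\in\Sigma_p$ and a diagonal matrix $X$, one has $(c X c^{-1})_{jj} = X_{(j)\sigma^{-1},(j)\sigma^{-1}}$.

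For the first claim, I use $\sigma_k^{-1}=\sigma_{k^{-1}}$ and compute $(j)\sigma_k^{-1}=k^{-1}(j-1)+1$. Applying the formula above with $X=D$ gives $(c_k D c_k^{-1})_{jj} = \zeta^{-(k^{-1}(j-1))^2} = \zeta^{-k^{-2}(j-1)^2}=(D^{k^{-2}})_{jj}$. Hence $c_k D c_k^{-1}=D^{k^{-2}}\in\langle D\rangle$, so $C_{p-1}$ normalizes $\langle D\rangle$ and the semidirect product $\langle D\rangle\rtimes C_{p-1}$ makes sense as a subgroup of $\Uofp$.

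For the containment $\langle D\rangle\rtimes C_{p-1}\subgroupeq C_{\Uofp}(\tau)$, I check that $\tau$ commutes with each generator. For $D$: note that $(j)\tau\equiv 2-j\pmod p$ for every $j$ (including the fixed point $j=1$), so $((j)\tau-1)^2\equiv (j-1)^2\pmod p$, and $\zeta^p=1$ yields $(\tau D\tau^{-1})_{jj}=\zeta^{-(j-1)^2}=D_{jj}$. For each $c_k$: I first observe that $\sigma_k$ and $\tau$ commute as permutations of $\{1,\ldots,p\}$, since as affine maps on $\integers/p$ both compositions give $i\mapsto (k+1)-k(i-1)$.

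The main obstacle is promoting the permutation commutation to the level of signed matrices. Writing $c_k e_i=\epsilon_k(i)\,e_{(i)\sigma_k}$ with $\epsilon_k(i)\in\{\pm 1\}$, a direct comparison of $c_k\tau e_i$ and $\tau c_k e_i$ reduces $c_k\tau=\tau c_k$ to the condition that the sign function $\epsilon_k$ is $\tau$-invariant, i.e., constant on each $\tau$-orbit. Since the $\tau$-orbits in $\{1,\ldots,p\}$ are $\{1\}$ together with the pairs $\{j,\,p+2-j\}$ for $j=2,\ldots,(p+1)/2$, $\tau$-invariant signs can be chosen while still enforcing $\det c_k=1$; this is exactly the structure carried by the chosen representation $C_{p-1}\subgroupeq\SUofp$ from Definition~\ref{definition: define D and C p-1}. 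With this choice, $\tau c_k\tau^{-1}=c_k$, completing the verification that $\langle D\rangle\rtimes C_{p-1}\subgroupeq C_{\Uofp}(\tau)$.
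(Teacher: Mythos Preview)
Your approach is the same direct matrix computation the paper uses, and the argument is essentially correct. Two points of comparison are worth noting.

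First, your convention $c_k e_i = \epsilon_k(i)\,e_{(i)\sigma_k}$ is the transpose of the paper's convention (Definition~\ref{definition: dramatis personae}: $\sigma_{i,j}=1$ iff $(i)\sigma=j$, so $M_\sigma e_j = e_{(j)\sigma^{-1}}$). This is why you obtain $c_k D c_k^{-1}=D^{k^{-2}}$ whereas the paper gets $\sigma_k D \sigma_k^{-1}=D^{k^2}$. Either conclusion proves normalization, but be aware that the paper's exponent $k^2$ is what is needed in the next corollary to match the relation $s_k\,d\,s_k^{-1}=d^{\,k^2}$ in $\SLUpper$.

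Second, for the commutation with~$\tau$ the paper takes a shorter route: it observes that as permutations $\tau=\sigma_{-1}$ (i.e.\ $\tau=\sigma_k$ for $k=\xi^{(p-1)/2}$), so $\tau$ is itself the permutation underlying the central element of the cyclic group~$C_{p-1}$. Your reduction to ``the sign function $\epsilon_k$ is $\tau$-invariant'' is correct and more explicit, but the final step---asserting that this invariance ``is exactly the structure carried by the chosen representation''---reads an unstated choice into Definition~\ref{definition: define D and C p-1}, which does not actually specify the signs. What you have really shown is that a $\tau$-invariant choice of signs exists with $\det c_\xi=1$; you might add that once the generator $c_\xi$ is chosen to commute with~$\tau$, all its powers automatically do, so $C_{p-1}=\langle c_\xi\rangle$ lies in $C_{\Uofp}(\tau)$ as claimed.
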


\begin{proof}
Conjugating a diagonal matrix by a permutation matrix
(given the convention of Definition~\ref{definition: dramatis personae})
performs the permutation on the diagonal, so we obtain
\[
\big(\sigma_k D \sigma_k^{-1}\big)_{ii}= D_{(i)\sigma_k,(i)\sigma_k}
     = \zeta^{-[k(i-1)]^2}
     = \big(\zeta^{-(i-1)^2}\big)^{k^2}
     = D^{k^2}_{ii}.
\]
Another easy computation establishes that $\tau$ centralizes
both $C_{p-1}$ and $\langle D\rangle$. 
(In fact, if $\xi$ generates $\integers/p^\times$, 
then $\tau=\sigma_k$ for $k=\xi^{(p-1)/2}$.)
\end{proof}

\begin{corollary}   \label{corollary: rho is homomorphism}
For $p$ odd, there is a homomorphism 
$\rho\colon \SLUpper\rightarrow \SUofp$ defined by 
$\rho(\triangular)=D$ and $\rho(\diagonal_k)=\sigma_k$.
\end{corollary}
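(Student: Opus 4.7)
The plan is to view $\SLUpper$ as an internal semidirect product and then invoke the universal property of presentations. Specifically, the subgroup of upper triangular matrices in $\SLtwoFp$ fits in a split extension
\[
1\longrightarrow \langle \triangular\rangle \longrightarrow \SLUpper
   \xrightarrow{\ \ } (\integers/p)^\times\longrightarrow 1,
\]
with $\langle \triangular\rangle\cong \integers/p$ and splitting $k\mapsto \diagonal_k$. Together with the conjugation identity $\diagonal_k\,\triangular\,\diagonal_k^{-1}=\triangular^{k^2}$ recorded in the lemma preceding the corollary, this gives a complete presentation of $\SLUpper$ on the generators $\triangular$ and $\{\diagonal_k\}_{k\in (\integers/p)^\times}$ with defining relations $\triangular^p=I$, $\diagonal_k\diagonal_l=\diagonal_{kl}$, and $\diagonal_k\,\triangular\,\diagonal_k^{-1}=\triangular^{k^2}$.

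By the universal property, to define $\rho$ it suffices to check that the proposed images $D,\sigma_k\in \SUofp$ satisfy the same three relations. First, $D^p=I$ is immediate since $(D_{ii})^p=\zeta^{-p(i-1)^2}=1$. Second, the identity $\sigma_k\sigma_l=\sigma_{kl}$ is built into Definition~\ref{definition: define D and C p-1}, where $\sigma\colon(\integers/p)^\times\to\Sigma_p$ is declared to be a homomorphism, lifted to a group homomorphism into the signed permutation matrices $C_{p-1}\subgroupeq\SUofp$. Third, the conjugation relation $\sigma_k D\sigma_k^{-1}=D^{k^2}$ is the content of Lemma~\ref{lemma: C and D}, computed there by chasing the action of the permutation $\sigma_k$ on the diagonal entries $\zeta^{-(i-1)^2}$.

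The only real point to be careful about is the assertion that the three relations listed are defining (not merely satisfied), and that the image actually lands in $\SUofp$ rather than just $\Uofp$. For the first point, any word in the generators can be normalized, using $\diagonal_k \triangular=\triangular^{k^2}\diagonal_k$, to the form $\triangular^a\diagonal_k$, matching the semidirect product structure, so no additional relations exist; this is the one step that requires a brief justification rather than a citation. For the second point, the signed permutation convention in Definition~\ref{definition: define D and C p-1} is precisely designed so that $C_{p-1}\subgroupeq\SUofp$, and $D$ has been declared an element of $\SUofp$ in that definition.

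I do not expect serious obstacles: this is essentially a bookkeeping verification that assembles pieces already established (the lemma on generators and relations for $\SLUpper$ and Lemma~\ref{lemma: C and D}). The only mildly delicate point is confirming completeness of the presentation, and this follows from the split extension structure noted above.
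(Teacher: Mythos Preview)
Your proposal is correct and follows essentially the same approach as the paper: verify that the images $D$ and $\sigma_k$ satisfy the defining relations of $\SLUpper$ (orders and the conjugation identity), with the conjugation relation supplied by Lemma~\ref{lemma: C and D}. The paper's version is terser---it simply notes that the images have the right orders and cites Lemma~\ref{lemma: C and D} for the conjugation relation---while you spell out the semidirect-product presentation and the normal-form argument for completeness, but the substance is the same.
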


\begin{proof}
The matrices $\sigma_k$ and $D$ have the same order as their preimages, and the necessary conjugation relation is verified in the proof of Lemma~\ref{lemma: C and D}.
\end{proof}


We are now able to explain the relationship of upper triangular $2\times 2$ matrices to~$N_{\Uofp}\Gammabold$. 

\begin{lemma}    \label{lemma: upper triangular}
Let $p$ be odd. 
The image of $\rho: \SLUpper\rightarrow\SUofp$ normalizes~$\Gammabold$. For $M\in\SLUpper$, the 
action of $\rho(M)$ on
$\Gammabold/Z(\Gammabold)\cong\field_p\times\field_p
   \cong\langle A\rangle\times\langle B\rangle$ 
is via the linear transformation~$M$. 
\end{lemma}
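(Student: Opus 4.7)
The plan is to reduce everything to two direct matrix conjugations, then invoke that $\SLUpper$ is generated by $\triangular$ and the $\diagonal_k$ and that $\rho$ is a homomorphism (Corollary~\ref{corollary: rho is homomorphism}). Since $\Gammabold = \langle A, B, S^1\rangle$ with $S^1$ central---hence automatically fixed by any conjugation---it suffices to show that $D = \rho(\triangular)$ and each $\sigma_k = \rho(\diagonal_k)$ carry $\langle A,B\rangle$ into $\Gammabold$, and that the induced maps on $\Gammabold/S^1$ coincide with $\triangular$ and $\diagonal_k$ acting on the ordered basis $\{[A],[B]\}$ of $\field_p\oplus\field_p$.

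First I would compute the conjugation action of $\sigma_k$. Because conjugation of a diagonal matrix by a permutation matrix permutes its diagonal entries, the identity $\sigma_k A \sigma_k^{-1} = A^k$ follows directly from $(i)\sigma_k = k(i-1)+1$. For $B$, I view $B$ as the right-action permutation $i\mapsto i+1 \pmod{p}$ and conjugate by $\sigma_k$ to obtain the permutation $i\mapsto i + k^{-1}$, so $\sigma_k B \sigma_k^{-1} = B^{k^{-1}}$. Reading $\diagonal_k = \twobytwo{k}{0}{0}{k^{-1}}$ as acting on column vectors in the basis $\{[A],[B]\}$, this is exactly the prescribed linear action. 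Next, for $D$: since $D$ is diagonal, $DAD^{-1}=A$. A direct entrywise computation using $D_{ii} = \zeta^{-(i-1)^2}$ and $B_{i,i+1}=1$ gives
\[
(DBD^{-1})_{i,i+1} = \zeta^{-(i-1)^2}\cdot\zeta^{i^2} = \zeta^{2i-1} = \zeta \cdot (A^2B)_{i,i+1},
\]
so $DBD^{-1} = \zeta\, A^2 B$, which modulo $S^1$ equals $A^2B$, matching the action of $\triangular = \twobytwo{1}{2}{0}{1}$ on $\{[A],[B]\}$.

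Because $\rho$ is a homomorphism (Corollary~\ref{corollary: rho is homomorphism}) and the conjugation map $N_{\Uofp}(\Gammabold)\to\Aut(\Gammabold/S^1)\cong\GLtwoFp$ is also a homomorphism, the agreement on the generators $\triangular$ and $\diagonal_k$ extends to all of $\SLUpper$, and this simultaneously forces $\rho(M)\in N_{\Uofp}(\Gammabold)$ for every $M\in\SLUpper$. The main obstacle is purely bookkeeping: one must keep straight the right-action convention on $\{1,\dots,p\}$ used in Definitions~\ref{definition: dramatis personae} and~\ref{definition: define D and C p-1}, and the column-vector convention identifying $M\in\SLUpper$ with its action on $\langle A\rangle\oplus\langle B\rangle$. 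Once these are aligned, the two generator computations above complete the proof.
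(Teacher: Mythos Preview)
Your proof is correct and follows essentially the same approach as the paper: check the conjugation action of the generators $\sigma_k$ and $D$ on $A$ and $B$ directly, verify that these match the matrices $\diagonal_k$ and $\triangular$ on $\Gammabold/Z(\Gammabold)$, and then extend to all of $\SLUpper$ via the homomorphism~$\rho$. You supply more detail in the entrywise verification of $DBD^{-1}=\zeta A^2 B$ and in tracking the permutation conventions, but the argument is the same.
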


\begin{proof}
The image of $\rho$ is generated by $C_{p-1}$ and~$D$. 
The matrix $\sigma_k\in C_{p-1}$ conjugates $A$ to~$A^k$ and $B$ to~$B^{(k^{-1}\mmod p)}$, so $\sigma_k$ normalizes~$\Gammabold$ and acts on the basis 
$(A,B)$ of $\Gammabold/Z(\Gammabold)$ by~$s_k$
(Definition~\ref{definition: upper tri generators}). 
We can also compute $DAD^{-1}=A$ and 
$DBD^{-1}=\zeta A^2 B$, so $D$ normalizes~$\Gammabold$
and acts on $(A,B)$ by $d\in\SLUpper$. 
\end{proof}

\begin{proposition}    \label{proposition: normalizer Gamma-S chain} 
For all primes, there is an extension 
\[
1 \longrightarrow \Gammabold
  \longrightarrow N_{\Uofp}(\Gammabold\subgroup\Sbold)
  \longrightarrow \SLUpper
  \longrightarrow 1.
\]
For $p$ odd, the extension is split by  $\rho\colon\SLUpper\hookrightarrow\SUofp\hookrightarrow\Uofp$.
\end{proposition}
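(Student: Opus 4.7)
The plan is to intersect the short exact sequence~\eqref{eq: normalizer of Gamma} with $N_{\Uofp}(\Sbold)$. The key simplifying observation is the identity
\[
N_{\Uofp}(\Gammabold\subgroup\Sbold)
  = N_{\Uofp}(\Gammabold)\cap N_{\Uofp}(\Tbold).
\]
The inclusion ``$\subgroupeq$'' holds because any element normalizing $\Sbold$ preserves its identity component~$\Tbold$, while the reverse inclusion uses $\Sbold=\Tbold\cdot\Gammabold$ (since $\Gammabold$ surjects onto $\Sbold/\Tbold\cong\integers/p$ via~$B$). This replaces the awkward condition ``normalizes~$\Sbold$'' with two subgroup conditions that interact transparently with the action on~$\Gammabold/Z(\Gammabold)$.

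For the kernel, I would note that $\Gammabold\subgroupeq N_{\Uofp}(\Tbold)$ (since $A,S^1\subgroupeq\Tbold$ and $B\in\Sigma_p\subgroup N_{\Uofp}(\Tbold)$), so intersecting the kernel~$\Gammabold$ of~\eqref{eq: normalizer of Gamma} with $N_{\Uofp}(\Tbold)$ returns all of~$\Gammabold$. For the image, any $g\in N_{\Uofp}(\Gammabold)\cap N_{\Uofp}(\Tbold)$ normalizes $\Gammabold\cap\Tbold=\langle A,S^1\rangle$; in the basis $(A,B)$ for $\Gammabold/Z(\Gammabold)\cong\field_p^2$ from Lemma~\ref{lemma: upper triangular}, the induced action therefore fixes the line spanned by~$A$, so its image in $\SLtwoFp$ lies in $\SLUpper$.

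For surjectivity when $p$ is odd, I would check that the homomorphism $\rho$ of Corollary~\ref{corollary: rho is homomorphism} takes values in $N_{\Uofp}(\Gammabold\subgroup\Sbold)$: Lemma~\ref{lemma: upper triangular} already gives $\rho(M)\in N_{\Uofp}(\Gammabold)$ acting on $\Gammabold/Z(\Gammabold)$ by~$M$, and since the generators $D$ and~$\sigma_k$ of the image of $\rho$ are respectively diagonal and signed permutation matrices, $\rho(M)\in\Tbold\rtimes\Sigma_p=N_{\Uofp}(\Tbold)$. Composition with the quotient to $\SLtwoFp$ is then the identity on~$\SLUpper$ by Lemma~\ref{lemma: upper triangular}, so $\rho$ furnishes the required splitting. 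For $p=2$, $\SLUppertwo$ has order~$2$ and surjectivity follows from exhibiting a single explicit preimage of the nontrivial shear; for instance $\mathrm{diag}(1,i)\in\Tbold$ normalizes both $\Tbold$ and~$\Gammabold$ by direct computation and acts correctly on $(A,B)$. The only ``hard'' part of the argument is the reformulation in the first paragraph; once ``normalizes $\Sbold$'' is replaced by the combined condition on $\Tbold$ and $\Gammabold$, every remaining step is an immediate consequence of Lemma~\ref{lemma: upper triangular} and the explicit structure of $\Gammabold\cap\Tbold$.
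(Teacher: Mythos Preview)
Your argument is correct and takes a genuinely different route from the paper's. The paper verifies directly that the image of~$\rho$ normalizes~$\Sbold$, shows $\rho$ is injective by observing $\rho(\SLUpper)\subgroupeq C_{N_{\Uofp}\Gammabold}(\tau)$ and invoking Lemma~\ref{lemma: tau}, and then argues that the quotient $N_{\Uofp}(\Gammabold\subgroup\Sbold)/\Gammabold$ is exactly $\SLUpper$ (rather than all of $\SLtwoFp$) via maximality of $\SLUpper$ together with an index comparison between $N_{\Uofp}(\Gammabold)/\Gammabold$ and $N_{\Uofp}(\Sbold)/\Sbold$. By contrast, your identity $N_{\Uofp}(\Gammabold\subgroup\Sbold)=N_{\Uofp}(\Gammabold)\cap N_{\Uofp}(\Tbold)$ makes the upper bound on the image immediate and uniform in~$p$: preserving $\Gammabold\cap\Tbold=\langle S^1,A\rangle$ forces the induced element of $\SLtwoFp$ to fix the line~$\langle A\rangle$, hence to be upper triangular. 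This eliminates both the appeal to~$\tau$ and the maximality/index argument, and handles $p=2$ on the same footing. The paper's approach, on the other hand, ties more directly into the structure of $C_{N_{\Uofp}\Gammabold}(\tau)$ developed in Lemmas~\ref{lemma: tau} and~\ref{lemma: split centralizer}, which is what ultimately supplies the global splitting of~\eqref{eq: normalizer of Gamma}.
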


\begin{proof}
Let $p$ be odd. Given Lemma~\ref{lemma: upper triangular}
we have only to verify that the image of~$\rho$ 
normalizes~$\Sbold$. 
Because $\sigma_k$ is a (signed) permutation matrix, it normalizes $\Tbold$. Recall that 
$\Sbold=\Tbold\rtimes\langle B\rangle$ 
(Definition~\ref{definition: Sbold, Tbold, Gammabold}) 
and $\sigma_{k} B\sigma_{k}^{-1}=B^{{(k^{-1}\mmod p)}}$,
and therefore $\sigma_k$ normalizes~$\Sbold$. 
Further, because $D$ is a diagonal matrix, $D\in\Sbold$ and necessarily normalizes~$\Sbold$. 
We conclude that 
$\im(\rho)\subgroupeq N_{\Uofp}(\Gammabold\subgroup\Sbold)$.

We would like to know that $\Gammabold$ and the image of~$\rho$ generate all of $N_{\Uofp}(\Gammabold\subgroup\Sbold)$.
We observe that 
$\rho(\SLUpper)\subgroupeq C_{N_{\Uofp}\Gammabold}(\tau)$ 
(Lemma~\ref{lemma: C and D}) 
and $\rho(\SLUpper)\cap S^1=1$, 
so $\rho(\SLUpper)\cap \Gammabold=1$ 
by Lemma~\ref{lemma: tau}. 
Hence $\rho$ is a monomorphism. 

Because upper triangular matrices form a maximal proper subgroup of~$\SLtwoFp$, either 
$N_{\Uofp}(\Gammabold\subgroup\Sbold)/\Gammabold\cong\SLUpper$
 or  
$N_{\Uofp}(\Gammabold\subgroup\Sbold)/\Gammabold
     \cong\SLtwoFp$.
The latter would require 
$N_{\Uofp}(\Gammabold\subgroup\Sbold)=N_{\Uofp}(\Gammabold)$, that is, 
$N_{\Uofp}\Gammabold\subgroup N_{\Uofp}\Sbold$;
in this situation, we would have a homomorphism
\[
N_{\Uofp}(\Gammabold)/\Gammabold
\rightarrow 
N_{\Uofp}(\Sbold)/\Sbold. 
\]
where the domain has order $p(p^2-1)$, the codomain has order $p-1$, and the kernel is $N_\Sbold(\Gammabold)/\Gammabold$, which is \pdash toral
(\cite[Lemma~A.3]{JMO}). This is impossible, so 
$N_{\Uofp}(\Gammabold\subgroup\Sbold)/\Gammabold\cong\SLUpper$,
as required. 

For $p=2$, we have to adjust the computation. In this special case, $N_{\U(2)}\Sbold=\Sbold$, so 
$N_{\U(2)}(\Gammabold\subgroup\Sbold)=N_{\Sbold}\Gammabold$.
We have an inclusion 
\[
N_{\Sbold}(\Gammabold)/\Gammabold
   \subgroup N_{\U(2)}(\Gammabold)/\Gammabold
   \cong \SLtwoFtwo\cong\Sigma_3.
   \]
The $2$-toral group $N_{\Sbold}(\Gammabold)/\Gammabold$ is nontrivial because $\Gammabold\subgroupneq\Sbold$. Hence 
$N_{\Sbold}(\Gammabold)/\Gammabold\cong\integers/2
\cong\SLUppertwo$. 
However, the extension is not split, because preimages of the generator of~$\integers/2$ have order~$4$. 
\end{proof}

For the prime~$2$, we define some additional specific elements in the normalizer of~$\Gammabold$ and use them to identify the appropriate groups in this case. 

\begin{definition}     \label{definition: extra elements}
Let $F$ and $H$ denote the following elements of~$\U(2)$:
\[
F\definedas{\twobytwo{e^{\pi i/4}}{0}{0}{e^{-\pi i/4}}}
\ \mbox{ and  }\ 
H\definedas\twobytwo{1/\sqrt{2}}{-1/\sqrt{2}}{1/\sqrt{2}}{1/\sqrt{2}}.
\]
\end{definition}

\begin{lemma}   \label{lemma: p=2 normalizers}
Let $p=2$. 
\begin{enumerate}
\item $\Gammabold$ and $F$ generate $N_{\U(2)}(\Gammabold\subgroup\Sbold)$, an extension of $\integers/2$ by~$\Gammabold$.
\item $\Gammabold$, $F$, and~$H$ generate $N_{\U(2)}(\Gammabold)$, an extension of $\Sigma_3$ by~$\Gammabold$.
\end{enumerate}
\end{lemma}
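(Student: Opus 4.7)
The plan is to identify $F$ and $H$ as explicit lifts of specific elements in the quotients of two structural short exact sequences already established, and then use those sequences to count. For part~(1), Proposition~\ref{proposition: normalizer Gamma-S chain} supplies the extension $1 \to \Gammabold \to N_{\U(2)}(\Gammabold \subgroup \Sbold) \to \SLUppertwo \to 1$, whose quotient is $\integers/2$ at $p=2$. It therefore suffices to verify that $F$ normalizes both $\Gammabold$ and $\Sbold$, and that $F \notin \Gammabold$. Direct conjugation computations give $FAF^{-1}=A$ (both matrices are diagonal) and $FBF^{-1}=iAB \in \Gammabold$; since $F \in \Tbold \subgroup \Sbold$ and $iAB \in \Sbold$, the element $F$ also normalizes~$\Sbold$. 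A quick check that $F$ is neither of the form $\lambda I$ nor $\lambda A$ for $\lambda \in S^1$ (the only diagonal elements of $\Gammabold$) shows $F \notin \Gammabold$, so its image in the order-two quotient is the generator.

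For part~(2), the short exact sequence~\eqref{eq: normalizer of Gamma} reads $1 \to \Gammabold \to N_{\U(2)}\Gammabold \to \SL_2\field_2 \to 1$, with $\SL_2\field_2 \cong \Sigma_3$. Given part~(1), it is enough to show that $H \in N_{\U(2)}\Gammabold$ and that the images of $F$ and $H$ in $\SL_2\field_2$ generate it. A direct calculation gives $HAH^{-1}=B$ and $HBH^{-1}=-A$, so $H$ normalizes~$\Gammabold$. Viewing $\Gammabold/Z(\Gammabold) \cong \field_2\langle A\rangle \oplus \field_2\langle B\rangle$, the induced actions of $F$ and $H$ are represented respectively by $\twobytwo{1}{1}{0}{1}$ and $\twobytwo{0}{1}{1}{0}$; their product $\twobytwo{1}{1}{1}{0}$ has order $3$ in $\SL_2\field_2$, so these two involutions generate the full group.

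There is no substantive obstacle here: the lemma reduces to a handful of explicit $2 \times 2$ matrix conjugations, paired with the structural short exact sequences that are already in hand. The small point that needs attention is arranging the element $H$ so that its image in $\SL_2\field_2$, together with that of $F$, genuinely generates $\Sigma_3$; concretely, the two images must be noncommuting involutions, so that their product supplies the order-three cyclic subgroup.
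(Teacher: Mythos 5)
Your proof is correct and takes essentially the same route as the paper: both use the short exact sequence from Proposition~\ref{proposition: normalizer Gamma-S chain} for part~(1) and the one from~\eqref{eq: normalizer of Gamma} with $\SLtwoFtwo\cong\Sigma_3$ for part~(2), and both verify the same conjugation formulas $FAF^{-1}=A$, $FBF^{-1}=iAB$, $HAH^{-1}=B$, $HBH^{-1}=-A$. Your rendering of the generation step via the explicit matrices $\twobytwo{1}{1}{0}{1}$ and $\twobytwo{0}{1}{1}{0}$ in $\SLtwoFtwo$ is a slightly more explicit version of the paper's remark that $F$ and $H$ represent distinct transpositions in $\Sigma_3$.
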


\begin{proof}
From Proposition~\ref{proposition: normalizer Gamma-S chain}, we have a short exact sequence
\begin{equation*}
\xymatrix{
1\ar[r] & \Gammabold\ar[r]
        & N_{\U(2)}(\Gammabold\subgroup\Sbold)\ar[r]
        & \integers/2  \ar[r]
        & 1.
}
\end{equation*}
We can check by computation that 
$F$ acts on $\Gammabold$ via the automorphism
$FAF^{-1}=A$ and $FBF^{-1}=(iI)AB$.
Proposition~\ref{proposition: normalizer Gamma-S chain} 
establishes the value of $N_{\U(2)}(\Gammabold\subgroup\Sbold)/\Gammabold$ as $\integers/2$, and direct computation establishes that $F\notin\Gammabold$ (for example because its trace is not zero) and $F^2=(iI)A\in\Gammabold$. 

Likewise, by~\eqref{eq: normalizer of Gamma} (since $\SLtwoFtwo\cong\Sigma_3$), we have a short exact sequence 
\begin{equation*}
\begin{gathered}
\xymatrix{
1\ar[r] & \Gammabold\ar[r]
        & N_{\U(2)}(\Gammabold)\ar[r]
        & \Sigma_3  \ar[r]
        & 1.
}
\end{gathered}
\end{equation*}
We can check that  
$HAH^{-1}=B$ and $HBH^{-1}=-A$, so $H$ normalizes~$\Gammabold$. Since $F$ and $H$ represent different transpositions in~$\Sigma_3$, we have found a generating set. 
\end{proof}
  
In the final part of the section, we discuss the groups and normalizers for $\SUofp$ that correspond to Propositions 
\ref{proposition: splitting of symplectic}
and~\ref{proposition: normalizer Gamma-S chain}. 
Unlike our work with~$\Uofp$, we move into a discrete setting for this part of our discussion. The reason is that
because of special circumstances in the transporter system for~$SU(p)$, it turns out that
in Section~\ref{sec:U(p) SU(p)} we will be able to work directly in the discrete setting, rather than the compact (continuous) setting we have to use for~$\Uofp$. 

\begin{definition}      \label{definition: notation for SU(p)} 
Let $p$ be an odd prime. We define the following discrete \pdash toral subgroups of~$\SUofp$.
\begin{enumerate}
\item $T$ is the set of $p\times p$ diagonal matrices of \pdash power order and determinant~$1$, a discrete \pdash torus of rank~$p-1$. 
\item $S=T\rtimes \langle B\rangle\cong T\rtimes \integers/p$ (Definition~\ref{definition: dramatis personae}).
\item
$\GammaSUp=\SUofp\cap\Gammabold$.
\end{enumerate}
\end{definition}

For an expression of $\Gamma$ as a group extension, we replace
\eqref{eq: Gammabold extension} with a central extension
in~$\SUofp$:
\begin{equation}   \label{eq: representation of Gamma}
1\rightarrow \integers/p \rightarrow\GammaSUp
 \rightarrow\integers/p\times\integers/p
 \rightarrow 1.
\end{equation}
For $p$ odd, the matrices $A$ and $B$ of Definition~\ref{definition: dramatis personae} are in $\SUofp$ and still represent generators of the factors of the quotient. 
For $p=2$, we replace $A$ and $B$ by $A'=iA$ and $B'=iB$, respectively, which are both in~$\SUofp$. The commutator form remains the same. 
Note that $\GammaSUp$ is isomorphic to the (unique) extra-special \pdash group of order $p^3$ and exponent~$p$.

With regard to other ingredients in our calculations for~$\Uofp$, 
we replace $\tau$ (Definition~\ref{definition: tau}) with 
$\tau'\definedas(-I)\tau\in\SUofp$ when $p\equiv 1\pmod 4$. 
In Definition~\ref{definition: define D and C p-1}, 
we have already arranged to have $D\in\SUofp$
and $C_{p-1}\subgroup\SUofp$.

\begin{proposition}\label{proposition: Gamma SES} 
Let $T$, $S$, $\Gamma$, and $\BorelSUp$ be as defined above.  
\begin{enumerate}
\item \label{item: normalizer GammaSUp}
There is a short exact sequence
\begin{equation*}   
1\to \GammaSUp \to N_{\SUofp}\GammaSUp \to
\SLtwoFp \to 1,
\end{equation*} 
and the sequence is split for odd primes. 
For $p=2$, $N_{\SU(2)}\Gamma\cong O_{48}$, the binary octahedral group of order~$48$. 
\item \label{item: normalizer GammaSUp in S'}
There is a short exact sequence 
\begin{equation*}   
1\to \GammaSUp \to N_{\SUofp}(\GammaSUp\subgroup S) \to
\BorelSUp \to 1.
\end{equation*} 
For odd primes, the extension is split
by $\rho\colon\SLUpper\hookrightarrow\SUofp$,
where $\rho$ is defined in Corollary~\ref{corollary: rho is homomorphism}. 
For $p=2$, 
$N_{\SUofp}(\GammaSUp\subgroup S)\cong Q_{16}$, the generalized quaternion group of order~$16$. 
\end{enumerate}
\end{proposition}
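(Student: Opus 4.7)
The overall plan is to intersect the corresponding $\Uofp$-results with $\SUofp$: the sequence \eqref{eq: normalizer of Gamma} together with Proposition~\ref{proposition: splitting of symplectic} for part~(1), and Proposition~\ref{proposition: normalizer Gamma-S chain} for part~(2). In each case the three ingredients to verify are that intersecting yields the stated kernel $\Gamma$, that the quotient map remains surjective, and (for odd primes) that a splitting can be chosen to land in $\SUofp$.

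For part~(1), I would first observe $\Gammabold \cap \SUofp = \Gamma$: for odd $p$ the generators $A$ and $B$ already lie in $\SUofp$, and the central $S^1$ meets $\SUofp$ in the $p$-th roots of unity. Since $S^1$ is central in $\Uofp$, one has $N_{\SUofp}\Gamma = N_{\Uofp}\Gammabold \cap \SUofp$, and intersecting \eqref{eq: normalizer of Gamma} gives the stated sequence with kernel $\Gamma$. For $p \geq 5$ the group $\SLtwoFp$ is perfect, so any section $\phi$ from Proposition~\ref{proposition: splitting of symplectic} has $\det \circ\, \phi$ trivial and thus lands in $\SUofp$ automatically. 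For $p = 3$ the abelianization is $\integers/3$ and one cannot simply correct $\phi$ by a central character; the plan is then to reprove Proposition~\ref{proposition: splitting of symplectic} inside $\SUofp$, repeating Lemmas~\ref{lemma: tau} and~\ref{lemma: split centralizer} with $\tau'\in\SUofp$ (either $\tau$ or $(-I)\tau$, depending on $p\pmod 4$) and reusing the vanishing of $H_2(\SLtwoFp;\integers)$. For $p=2$, Lemma~\ref{lemma: p=2 normalizers} gives generators $F$ and $H$ of $N_{\U(2)}\Gammabold$ modulo $\Gammabold$; direct computation yields $\det F = \det H = 1$, so both lie in $\SU(2)$, and counting cosets gives $|N_{\SU(2)}\Gamma| = |\Gamma|\cdot|\Sigma_3| = 48$. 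The classification of finite subgroups of $\SU(2)$ then identifies this group as the binary octahedral group~$O_{48}$.

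For part~(2), the main technical step is the identity $N_{\SUofp}(\Gamma\subgroup S) = N_{\Uofp}(\Gammabold\subgroup\Sbold) \cap \SUofp$. The plan is to verify $\Gammabold\subgroup\Sbold$ (each element $\lambda A^i B^j$ is a diagonal matrix times a power of $B$) so that the chain $\Gamma\subgroup S$ sits inside $\Gammabold\subgroup\Sbold$, and then to show that $S$ is characteristic in $\Sbold\cap\SUofp$: using $\det t = 1$ for $t\in\Tbold\cap\SUofp$, a norm-type computation shows that every element of $\Sbold\cap\SUofp$ with nontrivial $B$-component already has order dividing $p$, so the $p$-power torsion subgroup of $\Sbold\cap\SUofp$ equals $T\rtimes\langle B\rangle = S$. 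Combined with $\Sbold = (\Sbold\cap\SUofp)\cdot S^1$ and centrality of $S^1$, this identifies the two normalizers. Intersecting Proposition~\ref{proposition: normalizer Gamma-S chain} with $\SUofp$ then gives the claim; for odd $p$ the splitting $\rho$ of Corollary~\ref{corollary: rho is homomorphism} already lies in $\SUofp$ and normalizes $S$ because $D\in T$ and each $\sigma_k$ is a signed permutation preserving both $T$ and $\langle B\rangle$. For $p=2$, the element $F$ from Definition~\ref{definition: extra elements} has $\det F = 1$, and via the quaternion identifications $\Gamma = Q_8$ and $F = (1+i)/\sqrt 2$, a direct calculation gives $F^2 = i$, $F^4 = -1$, $F^8 = 1$, and $jFj^{-1} = F^{-1}$, yielding the presentation of $Q_{16}$.

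The main obstacle is the $p=3$ case of part~(1): because $\SLtwoFp$ is not perfect at $p=3$, the splitting in $\SU(3)$ cannot be obtained formally from the $\Uofp$-result by adjusting the determinant, and one must instead trace through the arguments of Proposition~\ref{proposition: splitting of symplectic} inside $\SUofp$, taking care in the choice of involution playing the role of $\tau$.
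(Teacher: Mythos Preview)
Your approach to part~(1) is fine and in fact slightly slicker than the paper's: the paper reruns Proposition~\ref{proposition: splitting of symplectic} with $\tau'$ for \emph{all} odd primes, whereas your perfectness observation for $p\geq 5$ (forcing $\det\circ\phi\equiv 1$) avoids that rerun entirely and isolates $p=3$ as the only case needing real work. Your $p=2$ arguments in both parts are also correct and match the paper's.

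There is, however, a genuine error in your plan for part~(2). The claim that ``the $p$-power torsion subgroup of $\Sbold\cap\SUofp$ equals $S$'' is false. You correctly observe that every $tB^j$ with $t\in\TboldSUp$ and $j\not\equiv 0$ has order~$p$ (the norm computation gives $(tB^j)^p=\det(t)\cdot I=I$). But that means the set of $p$-power torsion elements is $T\cup\bigl(\SboldSUp\setminus\TboldSUp\bigr)$, which strictly contains $S$ and is not a subgroup: for generic $t_1,t_2\in\TboldSUp$ the product $(t_1B)(t_2B)^{-1}$ lands in $\TboldSUp\setminus T$. In particular $S$ is \emph{not} characteristic in $\SboldSUp$, so you cannot deduce $N_{\SUofp}(\Gamma\subgroup S)\supseteq N_{\Uofp}(\Gammabold\subgroup\Sbold)\cap\SUofp$ this way. (Concretely, any $t\in\TboldSUp\setminus T$ normalizes $\Sbold$ but sends $B$ to $tBt^{-1}\notin S$.)

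The fix is already implicit in your writeup: drop the characteristic argument and argue directly. One containment is easy via closures: $N_{\SUofp}(\Gamma\subgroup S)\subseteq\SUofp\cap N_{\Uofp}(\Gammabold\subgroup\Sbold)=\Gamma\cdot\im(\rho)$ by Proposition~\ref{proposition: normalizer Gamma-S chain}. For the reverse containment you have already checked that $D\in T\subgroup S$ and that each $\sigma_k$ normalizes both $T$ and $\langle B\rangle$, so $\Gamma\cdot\im(\rho)\subseteq N_{\SUofp}(\Gamma\subgroup S)$. This is exactly how the paper proceeds (``the proofs of Lemma~\ref{lemma: upper triangular} and Proposition~\ref{proposition: normalizer Gamma-S chain} apply as written''), bypassing the question of whether $S$ is characteristic altogether.
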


\begin{proof}
We first note that
$N_{\SUofp}(\GammaSUp)=\SUofp\cap N_{\Uofp}(\Gammabold)$, 
because $\Gammabold=\Gamma\cdot S^1$, and similarly
$N_{\SUofp}(S)=\SUofp\cap N_{\Uofp}(\Sbold)$.

For the proof of~\itemref{item: normalizer GammaSUp}, the replacement $\tau'$ for $\tau$ defined above allows the proof of Proposition~\ref{proposition: splitting of symplectic} (in particular, of Lemma~\ref{lemma: tau}) to go through as written. Therefore the splitting 
$\SLtwoFp\rightarrow N_{\Uofp}(\Gammabold)$ can be taken to land in~$\SUofp$, and since the image normalizes~$\Gammabold$, it also normalizes~$\Gamma$, the subgroup of elements of determinant~$1$. When $p=2$, it is well known that the elements of $\Gamma$ can be thought of as the corners of a unit cube in~$\complexes^2$, and the symmetry group is (by definition)~$O_{48}$. 

Continuing to \itemref{item: normalizer GammaSUp in S'}, the proofs of Lemma~\ref{lemma: upper triangular} and Proposition~\ref{proposition: normalizer Gamma-S chain} apply as written, with the adaptations that we identify
 $\Gammabold/Z(\Gammabold)$ with $\Gamma/Z(\Gamma)$ and 
 replace $\Tbold$ with~$T$, the elements of $\Tbold$ of $p$-power order and determinant~$1$. 
For $p=2$, Lemma~\ref{lemma: p=2 normalizers} likewise applies as written with the substitutions of $\SU(2)$, $\Gamma$, $S$, $iA$, and $iB$ for $\U(2)$, $\Gammabold$, $\Sbold$, $A$, and $B$, respectively. To see that 
$N_{\SUofp}(\GammaSUp\subgroup S)\cong Q_{16}$, 
recall that $F=\twobytwo{e^{\pi i/4}}{0}{0}{e^{-\pi i/4}}\in N_{\SUofp}(\GammaSUp\subgroup S)$ and $(iA)(iB)=\twobytwo{0}{-1}{1}{0}\in\Gamma$ are a well-known generating set for~$Q_{16}$. 
\end{proof}

\begin{remark}  \label{remark: p=2 recovery}
Proposition~\ref{proposition: Gamma SES} for $p=2$ 
will be used in Section~\ref{sec:U(p) SU(p)} to recover the homotopy pushout result of \cite[Thm.~4.1]{DMW1}.
\end{remark}


\section{Decompositions of $\U(p)$ and $\SU(p)$} 
\label{sec:U(p) SU(p)}

In this section we use Theorem~\ref{theorem: NormalizerDecompLie},
together with the results of Section~\ref{section: boring group theory} and \cite{WIT-normalizers}, to obtain
mod~$p$ normalizer decompositions of $\BU(p)$ and $\BSU(p)$,
which appear in Theorems \ref{theorem: U(p) decomposition} and~\ref{theorem: SU(p) decomposition}.
The results for $\BSU(p)$ for odd primes will be leveraged in Section~\ref{sec:AZ} to construct a decomposition of the \AZ exotic \pdash compact groups. 

Recall that we have chosen explicit representations of $\Tbold$, $\Sbold$, and their normalizers in~$\Uofp$ (Definitions \ref{definition: dramatis personae} and~\ref{definition: Sbold, Tbold, Gammabold}). 
The torus, $\Tbold$, is given by diagonal matrices, $\Sigma_p\subgroup\Uofp$ acts via permutation matrices, and 
$N_{\Uofp}(\Tbold)\cong\Tbold\rtimes\Sigma_p$. 
We chose $\Sbold\cong\Tbold\rtimes\integers/p$ to be generated by $\Tbold$ and the matrix $B$ representing the \pdash cycle 
$(1\ 2\ \ldots\ p)$. Further, 
$N_{\U(p)}\Sbold \cong \Tbold \rtimes N_{\Sigma_p}\Z/p$ 
\cite[Lemma~3]{Oliver-p-stubborn}, where 
$N_{\Sigma_p}\Z/p=\langle B\rangle \rtimes C_{p-1}$ 
(Definition~\ref{definition: define D and C p-1}). 

\begin{lemma}   \label{lemma: conjugacy classes in Up}
For $p\geq 5$, there are three conjugacy classes of \pdash centric and \pdash stubborn subgroups of $\Uofp$, which are represented by $\Sbold$, $\Tbold$, and $\Gammabold$. For $p=2,3$, there are only two such conjugacy classes, those of $\Sbold$ and~$\Gammabold$. 
\end{lemma}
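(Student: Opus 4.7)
The plan is to invoke Oliver's classification of \pdash stubborn subgroups of $\U(n)$ from \cite{Oliver-p-stubborn} and specialize to $n=p$ to obtain the candidate list, then check which candidates are also \pdash centric and which fail to be stubborn at the small primes. In Oliver's classification, the \pdash stubborn subgroups of $\U(n)$ up to conjugacy are built from iterated wreath products involving the extraspecial groups $\Gammabold_{p^k}^U$ and the maximal tori of smaller unitary factors; for $n=p$ the only pieces that can occur are the maximal torus $\Tbold$, the Sylow \pdash subgroup $\Sbold$ (which is $\Tbold\rtimes\integers/p$ since $p$ is prime), and the extraspecial extension $\Gammabold$ of Definition~\ref{definition: Sbold, Tbold, Gammabold}. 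Distinctness of the three conjugacy classes is immediate since the identity components are $\Tbold$, $\Tbold$, and $S^1$ respectively, and $\Tbold$ and $\Sbold$ have different component groups.

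Next I would verify \pdash centricity. For $\Tbold$, we have $C_{\Uofp}(\Tbold)=\Tbold$ since $\Tbold$ is a maximal torus of a connected compact Lie group, so it is trivially \pdash centric. For $\Sbold$, any element of $C_{\Uofp}(\Sbold)$ lies in $C_{\Uofp}(\Tbold)=\Tbold$ and is fixed by the cyclic permutation~$B$, hence is a scalar, so $C_{\Uofp}(\Sbold)=S^1\cdot I$; the \pdash primary part of this circle lies in $\Sbold$, so $\Sbold$ is \pdash centric. For $\Gammabold$, Schur's lemma applied to its irreducible representation on $\complexes^p$ gives $C_{\Uofp}(\Gammabold)=S^1\cdot I=Z(\Gammabold)$, which is again \pdash toral and inside $\Gammabold$.

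The final step is to identify when each group is \pdash stubborn, i.e.\ when $N_{\Uofp}(P)/P$ has no nontrivial normal \pdash subgroup. For $\Sbold$, the quotient is $C_{p-1}$, which has order prime to~$p$, so $\Sbold$ is always stubborn. For $\Gammabold$, the quotient is $\SLtwoFp$ by \eqref{eq: normalizer of Gamma}; for $p\ge5$ this follows from the simplicity of $\PSLtwoFp$ together with the fact that $\{\pm I\}$ is not a \pdash group, and for $p=2,3$ one checks directly that $\SLtwoFtwo\cong\Sigma_3$ and $\SLtwoFthree$ contain no nontrivial normal \pdash subgroup ($p=2$: $\Sigma_3$'s only nontrivial normal subgroup is $A_3$; $p=3$: the only nontrivial normal subgroups are $\{\pm I\}$ and $Q_8$, both $2$-groups). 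For $\Tbold$, the quotient is the symmetric group $\Sigma_p$; this has no normal \pdash subgroup precisely when $p\ge 5$, since $\Sigma_2$ is itself a \pdash group and $A_3\triangleleft\Sigma_3$ is a normal \pdash subgroup. This is the dichotomy in the statement.

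The main obstacle is confirming that Oliver's list contains no other candidates for $n=p$; this is essentially a bookkeeping exercise, since the wreath-product constructions in \cite[\S3]{Oliver-p-stubborn} force a factor of size divisible by a nontrivial \pdash power, which for the small ambient dimension $n=p$ collapses to one of the three groups above.
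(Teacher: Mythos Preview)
Your proposal is correct and follows essentially the same approach as the paper: invoke Oliver's classification \cite[Thm.~6]{Oliver-p-stubborn} to obtain the list of \pdash stubborn subgroups (including the $p=2,3$ exclusion of~$\Tbold$), then verify \pdash centricity via Schur's lemma for $\Sbold$ and $\Gammabold$ and the maximal-torus property for~$\Tbold$. The paper is more terse, leaving the stubbornness dichotomy entirely to the citation, whereas you helpfully spell out why $\Sigma_p$ has a normal \pdash subgroup exactly when $p\le 3$ and why $\SLtwoFp$ never does.
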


\begin{proof}
It follows from \cite[Thm.~6]{Oliver-p-stubborn} that the stated groups are exactly the \pdash stubborn subgroups of~$\Sbold$. 
We check that they are also \pdash centric. 
The groups $\Sbold$ and $\Gammabold$ act irreducibly so their centralizers are both~$S^1$ \cite[Prop.~4]{Oliver-p-stubborn}, which is contained both $\Sbold$ and~$\Gammabold$. Lastly, $C_{\Uofp}(\Tbold)=\Tbold$, so indeed all three groups are \pdash centric. 
\end{proof}

To apply Theorem~\ref{theorem: NormalizerDecompLie}, we need to know not only 
the conjugacy classes of appropriate subgroups, but also the conjugacy classes
of chains, which could theoretically be a finer distinction. 

\begin{proposition}     \label{proposition: what are the chains}
For $p\geq 5$, there are five $\Uofp$-conjugacy classes of chains of subgroups of \pdash centric and \pdash stubborn subgroups of~$\Sbold$, represented by $\Sbold$, $\Tbold$, $\Gammabold$, $\Tbold\subgroup\Sbold$, and $\Gammabold\subgroup\Sbold$. For $p=2,3$, there are three: $\Sbold$, $\Gammabold$, and $\Gammabold\subgroup\Sbold$. 
\end{proposition}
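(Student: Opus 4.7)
The plan is to enumerate $\Uofp$-conjugacy classes of proper chains drawn from the $p$-centric and $p$-stubborn subgroups of $\Sbold$ catalogued in Lemma~\ref{lemma: conjugacy classes in Up}. First, I would determine which inclusions among (conjugates of) the representatives $\{\Tbold, \Gammabold, \Sbold\}$ are realizable. The inclusions $\Tbold \subsetneq \Sbold$ and $\Gammabold \subsetneq \Sbold$ hold for the chosen representatives. The reversed inclusions and the length-two chain $\Tbold \subsetneq \Gammabold \subsetneq \Sbold$ are ruled out by structural arguments: $\Gammabold$ is non-abelian, so no $\Uofp$-conjugate of $\Gammabold$ embeds in the abelian torus $\Tbold$; and $\Tbold$ has rank $p \geq 5$, while the central extension $1 \to S^1 \to \Gammabold \to (\integers/p)^2 \to 1$ forces every abelian subgroup of $\Gammabold$ to have rank at most~$2$, so $\Tbold$ is not contained in any conjugate of $\Gammabold$. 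Hence the only candidate chains up to conjugacy are the three singletons and the two length-one chains $\Tbold \subsetneq \Sbold$ and $\Gammabold \subsetneq \Sbold$, accounting for the claimed total of five.

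Next, I would verify that each candidate chain constitutes exactly one $\Uofp$-conjugacy class. The singletons are already handled by Lemma~\ref{lemma: conjugacy classes in Up}. For $\Tbold \subsetneq \Sbold$, the torus $\Tbold$ is the identity component of $\Sbold$ and hence a characteristic subgroup, so the chain is recovered from $\Sbold$ alone and its conjugacy class is determined by the conjugacy class of~$\Sbold$. For $\Gammabold \subsetneq \Sbold$, I would apply the standard double-coset principle: $\Uofp$-conjugacy classes of chains of this shape are in bijection with $N_{\Uofp}(\Sbold)$-orbits on the set of subgroups $\Gammabold' \subseteq \Sbold$ that are $\Uofp$-conjugate to $\Gammabold$. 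Using the normalizer orders furnished by Proposition~\ref{proposition: normalizer Gamma-S chain} and~\eqref{eq: normalizer of Gamma} — in particular $|N_{\Uofp}(\Gammabold \subset \Sbold)/\Gammabold| = |\SLUpper| = p(p-1)$ and $|N_{\Uofp}(\Gammabold)/\Gammabold| = |\SLtwoFp| = p(p^2-1)$ — together with the known structure of $N_{\Uofp}(\Sbold)$, an orbit-counting argument should yield the single-orbit conclusion.

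The main obstacle is this last orbit-counting step for $\Gammabold \subsetneq \Sbold$: one must verify transitivity of the $N_{\Uofp}(\Sbold)$-action without detailed conjugacy data beyond the normalizer orders already in hand. A cleaner alternative is to work inside the saturated fusion system $\Fcal_S(\Uofp)$ (Proposition~\ref{proposition:out-finite}) and invoke Alperin's fusion theorem: any $\Uofp$-conjugation between two copies of $\Gammabold$ inside $\Sbold$ is composed of morphisms of $\Fcal$-centric $\Fcal$-radical subgroups — which, given the inclusion $\Gammabold \subsetneq \Sbold$ forces the relevant overgroup to be $\Sbold$ itself — so the conjugating element may always be chosen in $N_{\Uofp}(\Sbold)$. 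For $p = 2,3$, Lemma~\ref{lemma: conjugacy classes in Up} leaves only $\Sbold$ and $\Gammabold$ as representatives, and the same analysis collapses to the three chains $\Sbold$, $\Gammabold$, and $\Gammabold \subsetneq \Sbold$.
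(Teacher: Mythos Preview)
Your outline handles the easy parts correctly and in the same spirit as the paper: the singletons, the chain $\Tbold\subsetneq\Sbold$ (via $\Tbold$ being characteristic in~$\Sbold$), and the exclusion of longer chains. The gap is at the one nontrivial step, uniqueness of the class of $\Gammabold\subsetneq\Sbold$. Your orbit-counting setup is sound---the $N_{\Uofp}(\Sbold)$-orbits on $\{\Gammabold'\subset\Sbold:\Gammabold'\sim_{\Uofp}\Gammabold\}$ are the chain classes, with stabilizer $N_{\Uofp}(\Gammabold\subset\Sbold)$---but you never compute the size of that set, so the count cannot close (and the groups are not finite, so you would in any case have to pass to quotients by~$S^1$ to make a literal count). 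The Alperin alternative is not correct as stated: Alperin's theorem factors an $\Fcal$-isomorphism $\Gamma\to\Gamma'$ through automorphisms of \emph{various} centric-radical subgroups, which here include conjugates of $\Gamma$ and of~$T$, not only~$S$; nothing forces the factorization to live in $\Aut_\Fcal(S)$, and even if it did, a composite of $\Fcal$-automorphisms of $S$ is not a priori realized by a single element of $N_{\Uofp}(\Sbold)$.

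The paper supplies precisely the missing geometric input. Given $\Pbold\subset\Sbold$ with $\Pbold$ conjugate to~$\Gammabold$, one checks that $\Pbold\cap\Tbold$ has index~$p$ in~$\Pbold$ and that $C_{\Uofp}(\Pbold\cap\Tbold)=\Tbold$ (since $\Pbold\cap\Tbold\cong S^1\times\integers/p$ acts on $\complexes^p$ with $p$ distinct eigenvalues). If $u$ conjugates $\Pbold$ to~$\Gammabold$, then $c_u(\Pbold\cap\Tbold)$ is an index-$p$ subgroup of~$\Gammabold$; because $N_{\Uofp}(\Gammabold)/\Gammabold\cong\SLtwoFp$ acts transitively on the $p+1$ lines in $\Gammabold/S^1\cong\field_p^2$, some $n\in N_{\Uofp}(\Gammabold)$ gives $c_nc_u(\Pbold\cap\Tbold)=\Gammabold\cap\Tbold$. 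Then $c_nc_u$ carries $\Tbold=C_{\Uofp}(\Pbold\cap\Tbold)$ to $\Tbold=C_{\Uofp}(\Gammabold\cap\Tbold)$, hence stabilizes~$\Sbold$ (as $\Pbold$ and $\Tbold$ generate it) while sending $\Pbold$ to~$\Gammabold$. This transitivity of $N_{\Uofp}(\Gammabold)$ on index-$p$ subgroups is exactly the fact your orbit count would need in order to finish.
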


\begin{proof}
Lemma~\ref{lemma: conjugacy classes in Up} gives us the result for a chain with a single subgroup. A chain $\Pbold_0\subgroupneq\Pbold_1$ has the form
$\Pbold\subgroupneq\Sbold$,
with $\Pbold$ conjugate in $\Uofp$ to $\Tbold$ (if $p\geq 5$) or~$\Gammabold$ (for any prime). If $\Pbold$ is conjugate to~$\Tbold$, then $\Pbold=\Tbold$, so there is a unique chain of this type. We treat $\Pbold\cong\Gammabold$ below. Lastly, since no conjugate of $\Gammabold$ is contained in~$\Tbold$ and vice-versa, there are no chains 
$\Pbold_0\subgroupneq\Pbold_1\subgroupneq\Pbold_2$ for any prime.

If $\Pbold\subgroupneq\Sbold$ with $\Pbold$ conjugate to $\Gammabold$ in~$\Uofp$, 
then we must show that the chain $\Pbold \subgroupneq \Sbold$ is conjugate to $\Gammabold \subgroupneq \Sbold$; that is, that there is some element $u\in \Uofp$ that conjugates $\Pbold$ to $\Gamma$ and $\Sbold$ to $\Sbold$. 
Our strategy is to show that the subgroup $\Pbold\cap\Tbold$ 
determines~$\Tbold$ (as $C_{\Uofp}(\Pbold\cap\Tbold)$),
and then adjust $u$ by an element of $N_{\Uofp}\Gammabold$ so that $\Pbold$ is still mapped to~$\Gammabold$, but $\Sbold$ is preserved. 

First,
we assert that $\Pbold\cap\Tbold$ is a subgroup of $\Pbold$ of index~$p$.
Since $\Pbold/(\Pbold\cap\Tbold)\subgroupeq\Sbold/\Tbold\cong\integers/p$, 
either $[\Pbold:\Pbold\cap\Tbold]=1$ or $[\Pbold:\Pbold\cap\Tbold]=p$. 
Since $\Pbold$ is not abelian, we know that  $[\Pbold:\Pbold\cap\Tbold]\neq 1$, so
$[\Pbold:\Pbold\cap\Tbold]=p$, and for the same reasons 
we know that $[\Gammabold:\Gammabold\cap\Tbold]=p$.
(Indeed,  
$\Gammabold\cap\Tbold$ is the product of $S^1$ and the group of order $p$ generated by the matrix~$A$ of Definition~\ref{definition: dramatis personae}.) 

Now suppose that $\Pbold$ is conjugate to~$\Gammabold$ by
$u\in\Uofp$. We must prove that the chain $\Pbold\subgroup\Sbold$ is $\Uofp$-conjugate to the chain $\Gammabold\subgroup\Sbold$. 
First, observe that 
\[
[\Gammabold:\Gammabold\cap\Tbold]=p=[\Pbold:\Pbold\cap\Tbold]=[\Gammabold: c_u(\Pbold\cap\Tbold)],
\]
so both $\Gammabold\cap\Tbold$ and $c_u(\Pbold\cap\Tbold)$ are subgroups of $\Gammabold$ of index~$p$. However, 
$N_{\Uofp}\Gammabold$ acts transitively on the 
set of subgroups of $\Gammabold$ of index~$p$, 
because $N_{\Uofp}\Gammabold/\Gammabold\cong\SLtwoFp$. Let $n\in N_{\Uofp}\Gammabold$ conjugate $c_u(\Pbold\cap\Tbold)$ 
to $\Gammabold\cap\Tbold$, so that $c_n\circ c_u$ conjugates
$(\Pbold, \Pbold\cap\Tbold)$ to $(\Gammabold, \Gammabold\cap\Tbold)$. 

We claim that $c_n\circ c_u$ in fact conjugates $\Sbold$ to~$\Sbold$. 
First, we assert that $C_{\Uofp}(\Pbold\cap\Tbold)$ and $C_{\Uofp}(\Gammabold\cap\Tbold)$ are both maximal tori of~$\Sbold$. This is because both $\Pbold\cap\Tbold$ and $\Gammabold\cap\Tbold$ are isomorphic to $S^1\times\integers/p$, 
where the second factor acts on $\complexes^p$ with $p$ distinct eigenvalues. 

Since $\Tbold$ centralizes $\Pbold\cap\Tbold$ and $\Gammabold\cap\Tbold$, we have
$C_{\Uofp}(\Pbold\cap\Tbold)=\Tbold=C_{\Uofp}(\Gammabold\cap\Tbold)$. 
We know that $c_n\circ c_u$ maps $C_{\Uofp}(\Pbold\cap\Tbold)$ to $C_{\Uofp}(\Gammabold\cap\Tbold)$, so we conclude that $c_n\circ c_u$ maps 
$\Tbold$ to itself, while still mapping $\Pbold$ to~$\Gammabold$. 

Lastly, $\Gammabold$ and $\Tbold$ generate~$\Sbold$ (and the same for $\Pbold$ and~$\Tbold$). Hence $c_n\circ c_u$ conjugates $\Pbold\subgroup\Sbold$ to $\Gammabold\subgroup\Sbold$, which finishes the proof. 
\end{proof}

\begin{remark}     \label{remark: choose elements in SU(p)}
For purposes of the $\SUofp$ calculation later in the section, note that we can adjust $u$ and $n$ in the proof of 
Proposition~\ref{proposition: what are the chains}
to be elements of $\SUofp$ if we wish.
\end{remark}

As a result of Lemma~\ref{lemma: conjugacy classes in Up} and Proposition~\ref{proposition: what are the chains}, we find that for $p\geq 5$, the indexing poset of Theorem~\ref{theorem: NormalizerDecompLie}
for the normalizer decomposition of~$\Uofp$ is
\begin{equation}   \label{eq: W shape}
\begin{gathered}
\xymatrix{
& (\Gammabold\subgroup \Sbold) \ar[dr]\ar[dl]
   && (\Tbold\subgroup \Sbold)\ar[dr]\ar[dl]
\\
\Gammabold && \Sbold && \Tbold,
}
\end{gathered}
\end{equation}
while for $p=2,3$ the diagram has just the left three nodes of the diagram above. 
We make one more simplification based on Remark~\ref{remark: collapse trick}. 

\begin{lemma}\label{lem:pushout-simplification Up}
For all primes, the homotopy colimit of the following diagram is mod~$p$
equivalent to $BU(p)$: 
\begin{equation}    \label{diagram: for Up}
\begin{gathered}
\xymatrix@C=5pt{
BN_{\Uofp}(\Gammabold\subgroupeq\Sbold) 
  \ar[d]\ar[rr] 
 && BN_{\U(p)}(\Tbold)
\\
BN_{\U(p)}(\Gammabold).
}
\end{gathered}
\end{equation}
\end{lemma}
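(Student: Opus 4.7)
The plan is to deduce this lemma from Theorem~\ref{theorem: NormalizerDecompLie} applied to $G = \Uofp$. Since $\Uofp$ is connected, $\pi_0 G$ is trivially a \pdash group, so the theorem identifies the indexing category as $\sd\Rcal$, which Proposition~\ref{proposition: what are the chains} has already enumerated. The remaining work is purely diagrammatic: compute the functor values on each chain, apply the collapse trick of Remark~\ref{remark: collapse trick}, and simplify.

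For $p \geq 5$, Theorem~\ref{theorem: NormalizerDecompLie} expresses $\BU(p)$ up to mod~$p$ equivalence as a homotopy colimit of the functor on the ``W''-shape poset \eqref{eq: W shape} whose value at a chain $\Pbold_0 \subseteq \cdots \subseteq \Pbold_k$ is $B\bigl(\bigcap_i N_{\Uofp}(\Pbold_i)\bigr)$. The key computation here is that the value at the chain $(\Tbold \subseteq \Sbold)$ equals $BN_{\Uofp}(\Sbold)$: because $\Tbold$ is the identity component of $\Sbold$, hence characteristic in $\Sbold$, any element normalizing $\Sbold$ also normalizes $\Tbold$, so $N_{\Uofp}(\Sbold) \cap N_{\Uofp}(\Tbold) = N_{\Uofp}(\Sbold)$.

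Next, I will apply Remark~\ref{remark: collapse trick} with $T = \Tbold$, collapsing the edge $(\Tbold \subseteq \Sbold) \to \Sbold$ to obtain the zigzag
\[
BN_{\Uofp}(\Gammabold) \longleftarrow BN_{\Uofp}(\Gammabold \subseteq \Sbold) \longrightarrow BN_{\Uofp}(\Sbold) \longrightarrow BN_{\Uofp}(\Tbold).
\]
A short cofinality check --- verifying that the length-three subdiagram $\Gammabold \leftarrow (\Gammabold \subseteq \Sbold) \to \Tbold$ sits inside the zigzag as a homotopy-initial subposet --- identifies the homotopy colimit with the pushout of \eqref{diagram: for Up}, where the rightward map is taken to be the composite $BN_{\Uofp}(\Gammabold \subseteq \Sbold) \to BN_{\Uofp}(\Sbold) \to BN_{\Uofp}(\Tbold)$.

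For $p = 2, 3$ the story is simpler: Proposition~\ref{proposition: what are the chains} yields only three conjugacy classes of chains, so Theorem~\ref{theorem: NormalizerDecompLie} directly produces the pushout $BN_{\Uofp}(\Gammabold) \leftarrow BN_{\Uofp}(\Gammabold \subseteq \Sbold) \to BN_{\Uofp}(\Sbold)$. To match the statement of the lemma I will verify that $N_{\Uofp}(\Sbold) = N_{\Uofp}(\Tbold)$ in these small cases: one has $N_{\Uofp}(\Sbold) = \Tbold \rtimes N_{\Sigma_p}(\Z/p)$, and $N_{\Sigma_p}(\Z/p)$ is all of $\Sigma_p$ when $p \in \{2,3\}$, so both normalizers agree with $\Tbold \rtimes \Sigma_p$. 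The main obstacle is merely bookkeeping rather than any conceptually hard step; the serious group-theoretic content (enumeration of chains, and the $\Fcal$-radical nature of $\Tbold$ for $p \geq 5$) has already been established earlier in the section.
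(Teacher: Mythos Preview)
Your argument is correct and follows essentially the same route as the paper's proof: invoke Theorem~\ref{theorem: NormalizerDecompLie}, use $N_{\Uofp}(\Sbold)=N_{\Uofp}(\Tbold\subgroup\Sbold)$ to collapse one leg of the W-shape for $p\geq 5$, and for $p=2,3$ observe directly that $N_{\Uofp}(\Sbold)=N_{\Uofp}(\Tbold)$ since $N_{\Sigma_p}(\integers/p)=\Sigma_p$. You are simply more explicit than the paper about the intermediate four-term zigzag and the cofinality step needed to reduce it to a span; the paper compresses both moves into the phrase ``collapse that leg.'' One terminological quibble: for homotopy \emph{colimits} the relevant condition on the inclusion of the span into the zigzag is homotopy \emph{cofinality} (or finality), not initiality---the under-categories $z\downarrow V$ must be contractible, which they are here since the only extra node $\Sbold$ maps uniquely to~$\Tbold$.
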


\begin{proof}
We use Theorem~\ref{theorem: NormalizerDecompLie}. 
For $p\geq 5$, when the resulting diagram~\eqref{eq: W shape} has 
five nodes, we have $N_{\Uofp}(\Sbold)=N_{\Uofp}(\Tbold\subgroup\Sbold)$, which allows us to collapse that 
leg of the diagram. 

For $p=2,3$, diagram~\eqref{eq: W shape} has only three nodes, 
and the one labeled ``$\Sbold$" is assigned the classifying space of $N_{\Uofp}\Sbold$. 
However, when $p=2,3$ we have $N_{\Uofp}\Sbold=N_{\Uofp}\Tbold$, so we still obtain~\eqref{diagram: for Up}.
\end{proof}


The detailed group-theoretic calculations to identify the 
normalizers in
Lemma~\ref{lem:pushout-simplification Up} more specifically was done in Section~\ref{section: boring group theory}, and we draw on them for the following theorem.

\begin{theorem}    \label{theorem: U(p) decomposition} 
Let $\SLUpper$ denote the group upper triangular matrices in $\SLtwoFp$ and let $\Tbold$ denote the fixed maximal torus of~$\Uofp$.
\begin{enumerate}
\item Let $p$ be an odd prime. 
The homotopy pushout of the diagram below is mod~$p$ equivalent to~$BU(p)$:
\[
\xymatrix@C=5pt{
 B\left(\Gammabold\rtimes \UTriangular(\SLtwoFp)\right)
    \ar[d]\ar[rr] 
 &  & B\left(\strut\Tbold\rtimes\Sigma_p\right)
\\
B\left(\strut \Gammabold\rtimes \SLtwoFp \right) .
}
\]
\item Let $p=2$ and let $\Gammabold.\Sigma_2$ and $\Gammabold.\Sigma_3$ denote the extensions in
Lemma~\ref{lemma: p=2 normalizers}.
The homotopy pushout of the diagram below is mod~$2$ equivalent to~$BU(2)$: 
\[
\xymatrix@C=5pt{
B\left(\Gammabold.\Sigma_2\right)
      \ar[d]\ar[rr] 
   && B\left(\strut\Tbold\rtimes\Sigma_2\right)
\\
B\left(\strut \Gammabold.\Sigma_3 \right).
}
\]
\end{enumerate}
\end{theorem}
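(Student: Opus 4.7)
The plan is to assemble the theorem from ingredients already prepared in Sections~\ref{sec:Lie2}--\ref{section: boring group theory}. By Lemma~\ref{lem:pushout-simplification Up}, $BU(p)$ is mod~$p$ equivalent to the homotopy pushout with corners $BN_{\Uofp}(\Gammabold)$, $BN_{\Uofp}(\Gammabold\subgroupeq\Sbold)$, and $BN_{\Uofp}(\Tbold)$, so it suffices to identify these three normalizers as group extensions and recognize them as the ones in the theorem statement. The upper-right corner is the same for both parts: $N_{\Uofp}(\Tbold)\cong \Tbold\rtimes\Sigma_p$ is the classical Weyl group description recalled at the start of Section~\ref{sec:U(p) SU(p)}, contributing $B(\Tbold\rtimes\Sigma_p)$ in each case.

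For odd primes, I would invoke Proposition~\ref{proposition: splitting of symplectic} to split the extension $1\to \Gammabold\to N_{\Uofp}(\Gammabold)\to\SLtwoFp\to 1$, obtaining the lower-left corner $N_{\Uofp}(\Gammabold)\cong\Gammabold\rtimes\SLtwoFp$; then Proposition~\ref{proposition: normalizer Gamma-S chain} splits the extension $1\to\Gammabold\to N_{\Uofp}(\Gammabold\subgroupeq\Sbold)\to\SLUpper\to 1$ via the homomorphism $\rho$ of Corollary~\ref{corollary: rho is homomorphism}, giving the top-left corner $N_{\Uofp}(\Gammabold\subgroupeq\Sbold)\cong \Gammabold\rtimes\SLUpper$. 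Substituting these semidirect-product descriptions into the diagram of Lemma~\ref{lem:pushout-simplification Up} yields part~(1).

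For $p=2$, both relevant extensions fail to split, so I would instead cite Lemma~\ref{lemma: p=2 normalizers} directly, which explicitly identifies $N_{\U(2)}(\Gammabold)\cong\Gammabold.\Sigma_3$ and $N_{\U(2)}(\Gammabold\subgroupeq\Sbold)\cong\Gammabold.\Sigma_2$ as the extensions generated by $\Gammabold$ together with the matrices $F$ and~$H$ of Definition~\ref{definition: extra elements}. Substituting these into Lemma~\ref{lem:pushout-simplification Up} immediately yields part~(2).

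The main conceptual obstacle has already been addressed in Section~\ref{section: boring group theory}: at odd primes, the splitting in Proposition~\ref{proposition: splitting of symplectic} (which ultimately rests on the vanishing of $H^3(\SLtwoFp;\Z)$ and on the analysis of the centralizer of $\tau$) is the nontrivial input; at $p=2$, the delicate point is instead the explicit matrix construction of the non-split extensions. With those two results in hand, and with Lemma~\ref{lem:pushout-simplification Up} already reducing the problem to a three-node pushout, the proof of the theorem is just a matter of plugging in the identifications corner by corner and matching notation.
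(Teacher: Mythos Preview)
Your proposal is correct and follows essentially the same route as the paper: start from the three-node pushout of Lemma~\ref{lem:pushout-simplification Up}, identify $N_{\Uofp}(\Tbold)$ classically, and then plug in Propositions~\ref{proposition: splitting of symplectic} and~\ref{proposition: normalizer Gamma-S chain} for odd primes and Lemma~\ref{lemma: p=2 normalizers} for $p=2$. The paper adds only the brief remark that the explicit compatible chain representatives from Proposition~\ref{proposition: what are the chains} allow one to replace the abstract functor~$\delta$ by the concrete $\BAut_{\Lcal}(\Pbb)$ on the poset, a point you leave implicit.
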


\begin{proof}
Lemma~\ref{lem:pushout-simplification Up} gives us a diagram whose homotopy colimit is mod~$p$ equivalent to~$\BU(p)$ for all primes, and the value of $N_{\Uofp}\Tbold$ is classical. 
We have made explicit compatible choices of representatives for the conjugacy classes of chains
in Proposition~\ref{proposition: what are the chains}. With these choices we can consider a functor $\BAut_\Lcal(\Pbb)$ from the poset with the same homotopy colimit as~$\delta([\Pbb])$.
For odd primes, the value $B(\Gammabold\rtimes\Borel)$ is provided by
Proposition~\ref{proposition: normalizer Gamma-S chain} and the value 
$B\left(\strut \Gammabold\rtimes \SLtwoFp \right)$ is provided by 
Proposition~\ref{proposition: splitting of symplectic}.
For $p=2$, the values are provided by Lemma~\ref{lemma: p=2 normalizers}.
\end{proof}

\begin{remark}
We could write the entries at the middle and left nodes
of Theorem~\ref{theorem: U(p) decomposition}
as the classifying spaces of central extensions of the finite groups 
$\field_p^2\rtimes \Borel$ and $\field_p^2\rtimes \SLtwoFp$, respectively, by~$S^1$. 
\end{remark}


In the rest of the section, we make adjustments to our calculation for
$\Uofp$ to give a result analogous to Theorem~\ref{theorem: U(p) decomposition}
for~$\SU(p)$. 
In particular, we
make adjustments to describe the relevant part of the linking system directly in terms of discrete \pdash toral subgroups, something that is not possible for~$\Uofp$ because $Z(\Uofp)$ is not finite.

First we need to identify the chains of subgroups that form the indexing category. 
We begin with compact groups, and identify the conjugacy classes of subgroups of $\SU(p)$ that are \pdash stubborn and \pdash centric, so as to use Theorem~\ref{theorem: find chains}.
Let 
$\TboldSUp
   \definedas\SUofp\cap\Tbold$, 
let 
$\SboldSUp
   \definedas\SUofp\cap\Sbold$, 
and let 
$\GammaSUp
   \definedas\SUofp\cap\Gammabold$. 

\begin{lemma}
\label{lemma: conjugacy classes in SUp}
For $p\geq 5$, there are three conjugacy classes of \pdash centric and \pdash stubborn subgroups of $\SUofp$, which are represented by $\SboldSUp$, $\TboldSUp$, and $\GammaSUp$. For $p=2,3$, there are only two such conjugacy classes, those of $\SboldSUp$ and~$\GammaSUp$. 
\end{lemma}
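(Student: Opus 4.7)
The plan is to mirror the proof of Lemma~\ref{lemma: conjugacy classes in Up} with $\SUofp$ in place of~$\Uofp$. The argument splits into two tasks: listing representatives for the \pdash stubborn conjugacy classes inside $\SboldSUp$, and verifying \pdash centricity for each candidate.

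For the \pdash stubborn classification, I would appeal to the $\SUofp$-analogue of \cite[Thm.~6]{Oliver-p-stubborn}; alternatively, one can deduce the list from the $\Uofp$-case by intersecting with $\SUofp$. Under the correspondence $\Pbold \leftrightarrow \Pbold \cdot S^1$ between closed subgroups of $\SUofp$ containing $Z(\SUofp)$ and closed subgroups of $\Uofp$ containing $Z(\Uofp)$, the relevant normalizer quotients are preserved; for instance, $N_{\SUofp}(\TboldSUp)/\TboldSUp \cong N_{\Uofp}(\Tbold)/\Tbold \cong \Sigma_p$, with analogous isomorphisms for $\SboldSUp$ and $\GammaSUp$. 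The representatives are therefore $\SboldSUp$, $\TboldSUp$ (only when $p \geq 5$), and $\GammaSUp$; the torus drops out when $p = 2,3$ because $\Sigma_p$ then contains the nontrivial normal \pdash subgroups $\Sigma_2$ and $A_3$, respectively, violating \pdash stubbornness.

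For \pdash centricity in the sense of Definition~\ref{definition:pcentric-pradical}, I would check that each candidate is a maximal \pdash toral subgroup of its own centralizer in~$\SUofp$. The torus $\TboldSUp$ is self-centralizing as a maximal torus of $\SUofp$. The groups $\SboldSUp$ and $\GammaSUp$ act irreducibly on $\complexes^p$ (inherited from their $\Uofp$-counterparts via \cite[Prop.~4]{Oliver-p-stubborn}), so Schur's lemma identifies their $\Uofp$-centralizers with the scalar subgroup $S^1$. Intersecting with $\SUofp$ gives $Z(\SUofp) \cong \integers/p$, which already lies inside both $\SboldSUp$ and $\GammaSUp$ by construction. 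The main obstacle---really the only nontrivial point---is confirming that the correspondence with $\Uofp$ transfers \pdash stubbornness cleanly, which reduces to a determinant-adjustment argument: every element of $N_{\Uofp}(\Tbold)$ may be rescaled by a suitable diagonal element of $\Tbold$ to land in $\SUofp$, and analogously for the normalizers of $\Sbold$ and~$\Gammabold$.
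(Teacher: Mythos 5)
Your proof matches the paper's approach: the paper's proof simply cites Lemma~\ref{lemma: conjugacy classes in Up} together with \cite[Thm.~10]{Oliver-p-stubborn}, which is precisely the $\SU(n)$ analogue of Theorem~6 that you invoke, and the determinant-rescaling correspondence you sketch plus the Schur's-lemma centricity check spell out what that citation packages.
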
 

\begin{proof}
The lemma follows from Lemma~\ref{lemma: conjugacy classes in Up}
and \cite[Thm.~10]{Oliver-p-stubborn}. 
\end{proof} 

\begin{remark}\label{remark:Gamma-structure}
The group $\TboldSUp$ is represented by diagonal matrices with determinant~$1$. 
The group $\SboldSUp\cong\TboldSUp\rtimes\Sigma_p$, where $\Sigma_p$ acts through \emph{signed} permutation matrices.
The group $\GammaSUp$ is an extra-special \pdash group 
\[
1\rightarrow \integers/p\rightarrow\GammaSUp\rightarrow \integers/p\times\integers/p\rightarrow 1,
\]
corresponding to the symplectic form. Recall that $\Gammabold$ was generated by matrices $A$ and $B$, which have determinant~$1$ (see Definition \ref{definition: dramatis personae}), together with~$S^1$. Similarly every element of $\GammaSUp$ can be written in the form $(\zeta I)A^i B^j$ where $\zeta$ is a $p$-th root of unity (replace $A$ and $B$ with $A'=iA$ and $B'=iB$ for $p=2$---see discussion following Definition~\ref{definition: notation for SU(p)}).
\end{remark}

\begin{lemma}     \label{lemma: what are the chains in SUp}
For $p\geq 5$ there are five $\SUofp$-conjugacy classes of chains of subgroups of \pdash centric and \pdash stubborn subgroups of~$\SboldSUp$; they are represented by $\SboldSUp$, $\TboldSUp$, $\GammaSUp$, $(\TboldSUp\subgroup\SboldSUp)$, and $(\GammaSUp\subgroup\SboldSUp)$. For $p=2,3$, there are three: $\SboldSUp$, $\GammaSUp$, and $(\GammaSUp\subgroup\SboldSUp)$. 
\end{lemma}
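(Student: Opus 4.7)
The plan is to mirror the proof of Proposition~\ref{proposition: what are the chains}, adapting each step from $\Uofp$ to $\SUofp$, with the key additional input being Lemma~\ref{lemma: conjugacy classes in SUp} and the refinement noted in Remark~\ref{remark: choose elements in SU(p)}.

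First I would use Lemma~\ref{lemma: conjugacy classes in SUp} to handle the chains of length one: the length-one chains are exactly $\SboldSUp$, $\TboldSUp$, $\GammaSUp$ for $p \geq 5$, and $\SboldSUp$, $\GammaSUp$ for $p = 2, 3$. Next, for proper chains $\Pbold_0 \subgroupneq \Pbold_1$, the top element must be the unique maximal \pdash toral subgroup up to conjugacy, so after conjugation we may assume $\Pbold_1 = \SboldSUp$, reducing to the study of $\Pbold \subgroupneq \SboldSUp$ with $\Pbold$ being $\SUofp$-conjugate to $\TboldSUp$ (when $p \geq 5$) or to $\GammaSUp$ (for any prime). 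If $\Pbold$ is conjugate to $\TboldSUp$, then $\Pbold = \TboldSUp$, because $\TboldSUp$ is the unique maximal torus of $\SboldSUp$ (it is the identity component). This gives the single chain $\TboldSUp \subgroupneq \SboldSUp$.

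The case $\Pbold \cong \GammaSUp$ is the main content. I would run the same argument as in Proposition~\ref{proposition: what are the chains}: show $\Pbold \cap \TboldSUp$ has index $p$ in $\Pbold$ by nonabelianness and the fact that $\SboldSUp/\TboldSUp \cong \integers/p$; then observe that $N_{\SUofp}(\GammaSUp)/\GammaSUp \cong \SLtwoFp$ (Proposition~\ref{proposition: Gamma SES}\itemref{item: normalizer GammaSUp}) acts transitively on index-$p$ subgroups of $\GammaSUp$; and finally use that the common centralizer determines the torus, since $C_{\SUofp}(\GammaSUp \cap \TboldSUp) = \TboldSUp$. Crucially, by Remark~\ref{remark: choose elements in SU(p)}, the conjugating elements produced in the $\Uofp$ argument can be chosen in $\SUofp$, so $\Pbold \subgroupneq \SboldSUp$ is $\SUofp$-conjugate to $\GammaSUp \subgroupneq \SboldSUp$. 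This yields the single chain $\GammaSUp \subgroupneq \SboldSUp$.

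Finally, for chains of length three, one would need either $\TboldSUp$ or $\GammaSUp$ (up to conjugacy) to be contained in a conjugate of the other. This is impossible: $\TboldSUp$ is abelian while $\GammaSUp$ is not; and conversely $\TboldSUp$ is connected while $\GammaSUp$ has nontrivial component group, so neither contains a conjugate of the other. Hence there are no proper chains of length three, completing the enumeration. The main obstacle is the $\Pbold \cong \GammaSUp$ case; everything else is essentially bookkeeping, and the obstacle is neatly resolved by invoking Remark~\ref{remark: choose elements in SU(p)} together with the $\SUofp$-version of the Weyl-group-action statement in Proposition~\ref{proposition: Gamma SES}.
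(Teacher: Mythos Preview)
Your proposal is correct and follows precisely the paper's approach: the paper's proof is a one-line reference stating that the argument of Proposition~\ref{proposition: what are the chains} applies as written, invoking Remark~\ref{remark: choose elements in SU(p)}, and you have spelled out exactly that adaptation. Your treatment is more detailed than the paper's terse citation, but the strategy, key inputs (Lemma~\ref{lemma: conjugacy classes in SUp}, Proposition~\ref{proposition: Gamma SES}, Remark~\ref{remark: choose elements in SU(p)}), and structure of the argument are the same.
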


\begin{proof}
The proof of Proposition~\ref{proposition: what are the chains}
applies as written (see Remark~\ref{remark: choose elements in SU(p)}). 
\end{proof}

We have already defined maximal discrete \pdash toral subgroups $S$ and $T$ of $\SboldSUp$ and $\TboldSUp$, respectively, in Definition~\ref{definition: notation for SU(p)}, and $\Gamma$ is a finite \pdash group.

\begin{proposition}    \label{proposition: SUp discrete chains} 
The subgroup $S\subgroup\SUofp$ is a maximal discrete \pdash toral subgroup. 
The $\SUofp$-conjugacy classes of chains of 
$\Fcal_S(\SUofp)$-centric and $\Fcal_S(\SUofp)$-radical subgroups of $S$ are represented by~$S$, $\Gamma$, $(\Gamma\subgroup S)$, and for $p\geq 5$ also $T$ and $(T\subgroup S)$. 
\end{proposition}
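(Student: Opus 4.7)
The plan is to deduce the result from the compact-group classification in Lemma~\ref{lemma: what are the chains in SUp} by invoking Theorem~\ref{theorem: find chains}. First I would verify that $S$ is a maximal discrete \pdash toral subgroup of $\SUofp$. By Example~\ref{example: fusion system for G}, a maximal discrete \pdash toral subgroup of a compact Lie group $G$ is obtained by choosing a maximal \pdash toral subgroup $\Sbold$ of $G$ and then a maximal discrete \pdash toral subgroup of $\Sbold$. Since $\SboldSUp \cong \TboldSUp \rtimes \langle B\rangle$ is a maximal \pdash toral subgroup of $\SUofp$ (as the Weyl group of $\SUofp$ is $\Sigma_p$ with Sylow \pdash subgroup $\langle B \rangle \cong \integers/p$), and $T$ is by construction the maximal discrete \pdash toral subgroup of $\TboldSUp$, the group $S = T \rtimes \langle B\rangle$ is a maximal discrete \pdash toral subgroup of $\SboldSUp$, hence of $\SUofp$.

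Next, the group $\SUofp$ is connected, so $\pi_0 \SUofp$ is trivial, and in particular is a \pdash group. Theorem~\ref{theorem: find chains} therefore applies and yields a \emph{bijection} between $G$-conjugacy classes of chains of $\Fcal_S(\SUofp)$-centric and $\Fcal_S(\SUofp)$-radical subgroups of $S$ and $G$-conjugacy classes of chains of \pdash toral subgroups of $\SboldSUp$ that are \pdash centric and \pdash stubborn in $\SUofp$; this bijection is induced by the closure map $P \mapsto \Pbar$.

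It remains to identify the preimages under the closure map of the chains listed in Lemma~\ref{lemma: what are the chains in SUp}. By construction, the closure of $T$ in $\SUofp$ is $\TboldSUp$, and the closure of $S$ is $\SboldSUp$; since $\GammaSUp$ is a finite \pdash group it equals its own closure. The chains listed in Lemma~\ref{lemma: what are the chains in SUp} therefore correspond respectively to the chains $S$, $T$, $\Gamma$, $(T\subgroup S)$, $(\Gamma \subgroup S)$ for $p \geq 5$, and to $S$, $\Gamma$, $(\Gamma \subgroup S)$ for $p = 2, 3$, as claimed.

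The only step that requires non-formal input is the verification that $S$ is maximal discrete \pdash toral; everything else is a bookkeeping consequence of Theorem~\ref{theorem: find chains} applied to a connected Lie group, where the hypothesis on $\pi_0$ is automatic.
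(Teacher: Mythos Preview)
Your proposal is correct and follows essentially the same approach as the paper: both arguments reduce to the compact classification (Lemma~\ref{lemma: what are the chains in SUp}) via the bijection of Theorem~\ref{theorem: find chains}, using that $\SUofp$ is connected so the $\pi_0$ hypothesis is automatic. Your version is slightly more explicit about why $S$ is a maximal discrete \pdash toral subgroup and about identifying closures, while the paper phrases the key step as $S$, $T$, $\Gamma$ being maximal in their closures; these are the same content.
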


\begin{proof}
First, $S$, $T$, and $\Gamma$ are all maximal
in their closures, which are \pdash centric and \pdash stubborn. Because $\pi_0\SUofp$ is a \pdash group, the proof given of Theorem~\ref{theorem: find chains} in \cite{WIT-normalizers} establishes that $S$, $T$, and $\Gamma$ are 
$\Fcal_S(\SUofp)$-centric and $\Fcal_S(\SUofp)$-radical. The proposition then follows from Theorem~\ref{theorem: find chains}.
\end{proof}

With the conjugacy classes of chains of $\Fcal_S(G)$-centric and $\Fcal_S(G)$-radical subgroups of~$S$ in hand, we know that the indexing category for $\SUofp$ is exactly analogous 
to~\eqref{eq: W shape}, with $S$, $T$, and $\Gamma$ replacing $\Sbold$, $\Tbold$, and~$\Gammabold$, respectively. 
Next we need to know the associated automorphism groups in the linking system. 
Unlike the situation for~$\Uofp$, for $\SUofp$ the transporter system is isomorphic to the linking system for most of the automorphism groups we need to calculate. As a result, the next lemma explicitly identifies automorphism groups in 
$\Lcal_S(\SUofp)$.

\begin{lemma} \label{lemma: Auts in link SU(p)}
Let $\Lcal=\Lcal_S(\SUofp)$. 
\begin{enumerate}
\item $\Aut_{\Lcal}(\Gamma)\cong N_{\SUofp}\Gamma$.
\item $\Aut_{\Lcal}(\Gamma\subgroup S)
           \cong N_{\SUofp}(\GammaSUp\subgroup S)$.
\end{enumerate}
\end{lemma}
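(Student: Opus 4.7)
The plan is to apply Lemma~\ref{lemma: Linking=transporter}, which states that $\Hom_{\Lcal_S(G)}(P,Q)\cong\Hom_{\Tcal_S(G)}(P,Q)$ whenever $C_{G}(P)=Z(P)$. The strategy is to verify this hypothesis for $P=\Gamma$ and $P=S$ in $G=\SUofp$, which allows one to replace automorphism groups in $\Lcal$ with normalizers in the group $\SUofp$ directly.

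The first step is the centralizer computation. For $P=\Gamma$, the matrices $A$ and $B$ (replaced by $A'$ and $B'$ when $p=2$; see Remark~\ref{remark:Gamma-structure}) generate a subgroup of $\Uofp$ acting irreducibly on $\complexes^{p}$, so by Schur's lemma $C_{\Uofp}(\Gamma)=S^{1}$. Intersecting with $\SUofp$ restricts to scalars $\zeta I$ with $\zeta^{p}=1$, which is exactly $Z(\Gamma)$. For $P=S=T\rtimes\langle B\rangle$, any element of $C_{\SUofp}(S)$ must centralize $T$; since $T$ is dense in $\Tbold$, this forces the element to lie in $\Tbold\cap\SUofp=\TboldSUp$. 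It must further commute with $B$, and since conjugation by $B$ cyclically permutes the diagonal entries, only scalar matrices are fixed. The determinant condition then cuts this down to $Z(S)$.

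Part (1) now follows directly: $\Aut_{\Lcal}(\Gamma)=\Hom_{\Lcal}(\Gamma,\Gamma)\cong\Hom_{\Tcal}(\Gamma,\Gamma)=N_{\SUofp}(\Gamma)$. For Part~(2), Definition~\ref{definition: automorphisms of chains in L} describes $\Aut_{\Lcal}(\Gamma\subgroup S)$ as the set of pairs $(f_{0},f_{1})\in\Aut_{\Lcal}(\Gamma)\times\Aut_{\Lcal}(S)$ satisfying the ladder condition $f_{1}\circ\inclusion{\Gamma}{S}=\inclusion{\Gamma}{S}\circ f_{0}$. Applying Lemma~\ref{lemma: Linking=transporter} simultaneously to $\Hom_{\Lcal}(\Gamma,\Gamma)$, $\Hom_{\Lcal}(S,S)$, and $\Hom_{\Lcal}(\Gamma,S)$ (the hypothesis $C_{\SUofp}(\Gamma)=Z(\Gamma)$ suffices for the mixed morphism set) identifies each with the corresponding transporter data. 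Under these identifications, $\inclusion{\Gamma}{S}$ corresponds to the identity element $e\in\SUofp$, so the ladder condition becomes $g_{1}\cdot e=e\cdot g_{0}$, i.e., $g_{0}=g_{1}$. Thus the pair is determined by a single $g\in N_{\SUofp}(\Gamma)\cap N_{\SUofp}(S)=N_{\SUofp}(\Gamma\subgroup S)$.

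The one subtle point is justifying that $\inclusion{\Gamma}{S}$ corresponds to $e$ under the identification of Lemma~\ref{lemma: Linking=transporter}. This follows by tracing through the pullback construction of $\Lcal_S(G)$ in Definition~\ref{definition: Linkwiggle}: the distinguished inclusion in $\Lcal$ is characterized by Lemma~\ref{lemma: distinguished morphisms in L} as a lift of the subgroup inclusion morphism in $\Fcal$, and the splitting $s$ chosen in the construction sends this subgroup inclusion back to the class of $e\in\Tcal_S(G)$, so the compatible transporter element is $e$ itself. Everything else is a formal consequence.
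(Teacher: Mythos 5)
Your proposal is correct and follows essentially the same approach as the paper: invoke Lemma~\ref{lemma: Linking=transporter} after verifying $C_{\SUofp}(\Gamma)=Z(\Gamma)$ and $C_{\SUofp}(S)=Z(S)$ via irreducibility of the actions on $\complexes^p$. You supply somewhat more detail than the paper does — in particular the explicit tracking of the distinguished inclusion $\iota_\Gamma^S$ under the transporter identification for part~(2), and a direct density argument for $C_{\SUofp}(S)$ in place of citing \cite[Thm.~6]{Oliver-p-stubborn} — but the underlying argument is the same.
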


\begin{proof}
To use Lemma~\ref{lemma: Linking=transporter} to identify automorphism groups in $\Lcal$ with normalizers in~$\SUofp$, we need to check that for $P=\Gamma, S$, we have $C_{\SUofp}(P)=Z(P)$. The subgroups $\Gamma$ and $S$ both act irreducibly on~$\complexes^p$ (\cite[Thm.~6]{Oliver-p-stubborn} applies), and hence $C_{\SUofp}(\Gamma)=C_{\SUofp}(S)=Z(\SUofp)\cong\integers/p$, scalar matrices of order~$p$. And indeed, this group is exactly $Z(\Gamma)=Z(S)$. 

\end{proof}

In the next theorem, where we state the normalizer decomposition for~$\SUofp$, we observe that the upper right-hand corner is $N_{\SUofp}T$. Since $\TboldSUp$ is abelian, its maximal discrete \pdash toral subgroup is unique, and in fact $N_{\SUofp}T=N_{\SUofp}\TboldSUp$.

\begin{theorem}    \label{theorem: SU(p) decomposition} 
\SUDecompositionTheoremText
\end{theorem}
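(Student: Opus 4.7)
The plan is to apply Theorem~\ref{theorem: abstract decomposition theorem} to the \pdash local compact group associated to $\SUofp$, paralleling the treatment of $\Uofp$ in Theorem~\ref{theorem: U(p) decomposition}. By Proposition~\ref{proposition: SUp discrete chains}, the indexing poset $\sd\Fcenrad$ is a $W$-shape analogous to~\eqref{eq: W shape}, with nodes $\Gamma$, $S$, $T$, $(\Gamma\subgroup S)$, $(T\subgroup S)$ when $p\geq 5$, and reduces to the three left-hand nodes $\Gamma$, $(\Gamma\subgroup S)$, $S$ when $p=2,3$.

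Next I identify the functor values $\BAut_{\Lcal}(\Pbb)$. Because $S$ acts irreducibly on $\complexes^p$, we have $C_{\SUofp}(S)=Z(\SUofp)=Z(S)$, so Lemma~\ref{lemma: Linking=transporter} (extending Lemma~\ref{lemma: Auts in link SU(p)}) gives $\Aut_{\Lcal}(\Gamma)\cong N_{\SUofp}\Gamma$, $\Aut_{\Lcal}(\Gamma\subgroup S)\cong N_{\SUofp}(\Gamma\subgroup S)$, and $\Aut_{\Lcal}(S)\cong N_{\SUofp}S$. Proposition~\ref{proposition: Gamma SES} then identifies the first two as the split extensions $\Gamma\rtimes\SLtwoFp$ and $\Gamma\rtimes\SLUpper$ when $p$ is odd, and as $O_{48}$ and $Q_{16}$ when $p=2$. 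For the nodes $T$ and $(T\subgroup S)$ (present when $p\geq 5$), where $C_{\SUofp}(T)=\TboldSUp\neq T$ prevents direct application of Lemma~\ref{lemma: Linking=transporter}, I invoke Theorem~\ref{theorem: NormalizerDecompLie} (its $\pi_0$-hypothesis being trivially satisfied) to obtain $\BAut_{\Lcal}(T)\simeq_p B(\TboldSUp\rtimes\Sigma_p)\simeq_p B(T\rtimes\Sigma_p)$, the last equivalence holding because $\TboldSUp/T$ is a $p'$-torus.

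The final step collapses the $W$-shape to the claimed three-node pushout. For $p\geq 5$, because $T$ is characteristic in $S$ as its identity component, $N_{\SUofp}(T\subgroup S)=N_{\SUofp}S$, so Remark~\ref{remark: collapse trick} lets me merge the nodes $S$ and $(T\subgroup S)$; the resulting linear tail $(\Gamma\subgroup S)\to S\to T$ has its homotopy colimit computed by the composed map, yielding the pushout with upper-right corner $B(T\rtimes\Sigma_p)$ required in part~(1). For $p=2,3$, the indexing has only three nodes and no $T$-node to collapse; a direct computation using $\det=1$ shows that $N_{\SUofp}S\cap\TboldSUp=T$, giving $N_{\SUofp}S=T\rtimes N_{\Sigma_p}(\Z/p)=T\rtimes\Sigma_p$ (since $\Z/p$ is normal in $\Sigma_p$ for $p=2,3$). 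Combined with Proposition~\ref{proposition: Gamma SES}, this yields the pushouts in both part~(1) at $p=3$ and part~(2) at $p=2$, the latter recovering \cite[Thm.~4.1]{DMW1} as suggested by Remark~\ref{remark: p=2 recovery}.

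The main obstacle is the bookkeeping needed to ensure that the mod~$p$ replacements (e.g.\ $B(\TboldSUp\rtimes\Sigma_p)\simeq_p B(T\rtimes\Sigma_p)$) and the collapse-and-contract manipulation of the $W$-shape indexing diagram are performed compatibly with the natural transformations provided by Theorem~\ref{theorem: NormalizerDecompLie}, so that the resulting pushout has exactly the form claimed up to mod~$p$ equivalence.
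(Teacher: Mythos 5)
Your proposal is correct and follows the same overall architecture as the paper's proof: apply Theorem~\ref{theorem: abstract decomposition theorem}, identify the indexing poset via Proposition~\ref{proposition: SUp discrete chains}, use Lemma~\ref{lemma: Auts in link SU(p)} (built on Lemma~\ref{lemma: Linking=transporter}) to identify $\Aut_{\Lcal}(\Gamma)$ and $\Aut_{\Lcal}(\Gamma\subgroup S)$ with the normalizers computed in Proposition~\ref{proposition: Gamma SES}, and then collapse the $W$-shape to the three-node pushout. The one place where your route genuinely diverges from the paper is the $T$-node. The paper asserts $\Aut_{\Lcal}(T)\cong T\rtimes\Sigma_p$ as an \emph{isomorphism}, deduced from the short exact sequence \eqref{eq: ses Aut Out}; that step implicitly uses that the extension $T\to\Aut_{\Lcal}(T)\to\Out_{\Fcal}(T)\cong\Sigma_p$ splits, which is not automatic from \eqref{eq: ses Aut Out} alone (it ultimately relies on the splitting of the maximal-torus normalizer for $p$-compact groups at odd primes, or on the explicit signed-permutation lifting of $\Sigma_p$ into $\SUofp$). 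You instead invoke Theorem~\ref{theorem: NormalizerDecompLie} to get the mod~$p$ equivalence $\BAut_{\Lcal}(T)\simeq_p B(\TboldSUp\rtimes\Sigma_p)\simeq_p B(T\rtimes\Sigma_p)$, with the last step using that $\TboldSUp/T$ is uniquely $p$-divisible and hence has mod~$p$ acyclic classifying space. Your route gives only a mod~$p$ equivalence at that node rather than a group isomorphism, but that is all the theorem needs, and it sidesteps the splitting question. Everything else -- the cofinality argument collapsing the zigzag $(\Gamma\subgroup S)\to S\to T$ to the composite, the direct computation of $N_{\SUofp}S = T\rtimes\Sigma_p$ for $p=2,3$, and the appeal to Proposition~\ref{proposition: Gamma SES} -- matches the paper.
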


\begin{proof}
For $p\geq 5$, Theorem~\ref{theorem: abstract decomposition theorem} together with 
Proposition~\ref{proposition: SUp discrete chains} 
give the following diagram as the one whose pushout is mod~$p$ equivalent to~$B\SUofp$ (with the homeomorphism of the third leg provided by Remark~\ref{remark: collapse trick}):
{\small
\begin{equation*}   
\begin{gathered}
\xymatrix{
& \BAut_{\Lcal}(\Gamma\subgroup S) \ar[dr]\ar[dl]
   && \BAut_{\Lcal}(T\subgroup S)
       \ar[dr]\ar[dl]_-{\cong}
\\
\BAut_{\Lcal}\Gamma && \BAut_{\Lcal}S && \BAut_{\Lcal}T.
}
\end{gathered}
\end{equation*}
} 
We can collapse the homeomorphism to obtain the diagram 
\begin{equation}     \label{eq: V shape SU(p)}   
\begin{gathered}
\xymatrix{
& \BAut_{\Lcal}(\Gamma\subgroup S) \ar[dr]\ar[dl]
\\
\BAut_{\Lcal}\Gamma && \BAut_{\Lcal}T
}.
\end{gathered}
\end{equation}

For $p\geq 5$, it
follows from \eqref{eq: ses Aut Out}
that $\Aut_{\Lcal}(T)\cong T\rtimes\Sigma_p$. 
For the other two nodes, we have actual equivalences to $BN_{\SUofp}\Gamma$ and 
$BN_{\SUofp}(\Gamma\subgroup S)$ given by Lemma~\ref{lemma: Auts in link SU(p)}. The values of 
$N_{\SUofp}\Gamma$ and $N_{\SUofp}(\Gamma\subgroup S)$
are given in 
Proposition~\ref{proposition: Gamma SES}, which proves the result for $p\geq 5$. 

For $p=2,3$, $T$ is not $\Fcal_S(\SUofp)$-radical, so our indexing diagram only contains $\Gamma$, $(\Gamma\subgroup S)$, and~$S$. However, for $p=2,3$ we have $N_{\SUofp}S\cong T\rtimes N_{\Sigma_p}\integers/p\cong N_{\SUofp}T$ and 
\eqref{eq: V shape SU(p)} is still the correct diagram. 
\end{proof}


\section{\AZ \pdash compact groups}\label{sec:AZ}
For all of this section, we assume that $p$ is an odd prime. 

In Theorem~\ref{theorem: abstract decomposition theorem}, we gave a normalizer decomposition for classifying spaces of arbitrary \pdash local compact groups $(S,\Fcal,\Lcal)$, and our first examples, in Section~\ref{sec:U(p) SU(p)},
came from compact Lie groups. Another important class of examples of \pdash local compact groups are those that arise from \pdash compact groups (Definition~\ref{definition: p-compact}), a class of loop spaces introduced in~\cite{dwyer-wilkerson-fixed-point} that generalizes \pdash completed classifying spaces of connected compact Lie groups. Although there may be no underlying group, there are analogues of Sylow \pdash subgroups and maximal tori, which allow one to construct a fusion system \cite[\S10]{BLO-Discrete}.

The goal of this section is to compute the normalizer decomposition for the particular \pdash compact groups constructed in \cite{aguade-modular}, also called the \AZ \pdash compact groups. There are four such spaces---one at $p=3$, two at $p=5$, and one at $p=7$. They are among the exotic examples of \pdash compact groups in the classification of \cite{AGMV-pcompact}.
The \AZ \pdash compact groups are closely related to the special unitary groups at the corresponding primes: roughly speaking, they are obtained by enlarging the Weyl group of $\SU(p)$ to certain reflection groups. Our strategy is to exploit this connection, together with our calculations for $\SU(p)$ in Section~\ref{sec:U(p) SU(p)}. In particular, the fusion systems of the \AZ \pdash compact groups have the same objects as the fusion systems of the corresponding special unitary groups. The difference between the morphism sets can be described in terms of the action of Adams operations 
(Definition~\ref{definition: adams ops}).

We describe the spaces below, and our main result is the following normalizer decomposition of these exotic \pdash compact groups. Let $\GLUpper$ denote the subgroup of upper-triangular matrices in $\GLtwoFp$. 
(The group $\Gamma$ is defined in Definition~\ref{definition: notation for SU(p)}.)

\begin{theorem}   \label{theorem: pushout for AZ} 
\AZTheoremText
\end{theorem}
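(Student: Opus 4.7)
The plan is to apply Theorem~\ref{theorem: abstract decomposition theorem} to the \pdash local compact group $(S,\Fcal,\Lcal)$ associated with $X$ (which exists by \cite[Thm.~10.7]{BLO-Discrete}), and then reduce the resulting homotopy colimit to the claimed pushout by identifying the indexing poset $\sd\Fcenrad$ and the automorphism groups of chains. The key observation is that the \AZ \pdash compact groups are constructed by enlarging the Weyl group of~$\SUofp$ by a $p'$-index extension generated by Adams operations acting on the discrete maximal torus. Consequently, the underlying discrete \pdash toral group $S$ can be chosen as in Definition~\ref{definition: notation for SU(p)}, so that $T$, $\Gamma$, and $S$ are still subgroups of~$S$, and every $\Fcal$-conjugation in the \AZ fusion system restricts to either an $\Fcal_S(\SUofp)$-conjugation or is obtained from the additional Adams action.

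First I would show that the $\Fcal$-centric $\Fcal$-radical subgroups of~$S$, and the poset $\sd\Fcenrad$ of their proper chains up to $\Fcal$-conjugacy, agree with those computed for~$\SUofp$ in Proposition~\ref{proposition: SUp discrete chains}. Since enlarging the fusion system by a $p'$-order reflection group action cannot destroy $\Fcal$-centricity (centralizers only shrink) and cannot introduce a new nontrivial normal \pdash subgroup in $\Out_{\Fcal}(P)$ for $P\in\{T,\Gamma,S\}$ (the additional outer automorphisms have order prime to~$p$), the same three conjugacy classes $T$, $\Gamma$, $S$ and their five chain classes appear. As in Theorem~\ref{theorem: SU(p) decomposition}, Remark~\ref{remark: collapse trick} collapses the chain $(T\subgroupneq S)$ against the endpoint~$S$, leaving the V-shaped diagram with nodes $\Gamma$, $(\Gamma\subgroupneq S)$, and $T$.

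Next I would compute the three automorphism groups in~$\Lcal$. For the torus node, $\Aut_{\Lcal}(T)\cong T\rtimes G$ holds by definition of $G$ as the Weyl group of~$X$ acting on the maximal discrete \pdash torus~$T$, combined with the short exact sequence \eqref{eq: ses Aut Out}. For the $\Gamma$ node, using that $C_X(\Gamma)=Z(\Gamma)$ (so \eqref{eq: ses Aut Out} gives $\Aut_{\Lcal}(\Gamma)\cong \Gamma\rtimes\Out_{\Fcal}(\Gamma)$), one has $\Out_{\Fcal}(\Gamma)\cong\GLtwoFp$: this is the full symplectic-similitude group of the commutator form on $\Gamma/Z(\Gamma)\cong\field_p^2$, which coincides with $\GLtwoFp$ because the form is non-degenerate on a plane. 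The splitting is provided by Proposition~\ref{proposition: splitting of symplectic} together with lifts of the Adams operations, which act on $\Gamma$ by scalar multiplication on $\field_p^2$ and thus contribute precisely the missing scalar matrices in $\GLtwoFp/\SLtwoFp$. For the chain node, the analogue of Proposition~\ref{proposition: normalizer Gamma-S chain} holds: restricting automorphisms of~$\Gamma$ to those that also preserve the line $\Gamma\cap T\subgroup\Gamma$ (equivalently, those that normalize~$S$) yields exactly the upper triangular subgroup~$\GLUpper$, again split by the homomorphism $\rho$ of Corollary~\ref{corollary: rho is homomorphism} enlarged by the Adams scalars.

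The main obstacle will be the last step: making precise the claim that the Adams operations enlarge $\SLtwoFp$ to $\GLtwoFp$ and $\SLUpper$ to $\GLUpper$, and that the resulting extensions split compatibly. Concretely, one needs a splitting of $1\to \Gamma\to N_X(\Gamma\subgroup S)\to \GLUpper\to 1$ inside the linking system, which requires simultaneously extending the odd-prime splitting from Section~\ref{section: boring group theory} and the Adams-operation action; the vanishing of $H^2(\GLUpper;Z(\Gamma))$ (since $|\GLUpper|$ has a factor of $p$ only in the $\SLUpper$ part, and the extension is already split there) should yield the result. Once all three functor values are identified, Theorem~\ref{theorem: abstract decomposition theorem} gives the equivalence with $\realize{\Lcal}$, and mod~$p$ equivalence with $BX=\pcomplete{\realize{\Lcal}}$ follows from Definition~\ref{definition: p-local compact group}.
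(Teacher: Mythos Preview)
Your outline follows the paper's approach closely: apply Theorem~\ref{theorem: abstract decomposition theorem}, identify $\sd\Fcenrad$ with the poset already computed for $\SUofp$, collapse via Remark~\ref{remark: collapse trick}, and compute the three automorphism groups in~$\Lcal_X$. The splitting arguments you sketch for the $\Gamma$ and $(\Gamma\subgroup S)$ nodes are essentially the paper's: the extension classifying element in $H^2(\GLtwoFp;Z(\Gamma))$ (resp.\ $H^2(\GLUpper;Z(\Gamma))$) restricts to zero on the index-$(p-1)$ subgroup $\SLtwoFp$ (resp.\ $\SLUpper$), where the extension is already known to split from Section~\ref{section: boring group theory}, and injectivity of restriction to a subgroup of prime-to-$p$ index finishes it. (You wrote ``vanishing of $H^2$,'' but you mean vanishing of the classifying element; the cohomology group itself need not vanish.)

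Two places need real work. First, your argument that the $\Fcal_X$-centric $\Fcal_X$-radical subgroups coincide with those for $\SUofp$ is heuristic: the phrase ``centralizers only shrink'' is not the relevant mechanism ($S$-centralizers do not change at all), and you do not rule out \emph{new} centric-radical subgroups appearing. The paper instead imports the identification $C_{\Fcal_X}(Z(S))\cong\Fcal_{\SUofp}$ from \cite{cantarero-castellana}, which carries the explicit statement that the centric and radical collections agree. Second, for $\Aut_{\Lcal_X}(T)\cong T\rtimes G$, the sequence \eqref{eq: ses Aut Out} gives only an extension; the splitting is a genuine theorem about maximal-torus normalizers in connected \pdash compact groups, for which the paper cites \cite{andersen-splitting}. (Relatedly, your notation $C_X(\Gamma)=Z(\Gamma)$ is not meaningful since $X$ is a \pdash compact group rather than a group; the correct input is simply \eqref{eq: ses Aut Out}, and splitting must be argued separately.)
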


We begin our discussion by reviewing the relevant definitions regarding \pdash compact groups, following \cite[\S3]{dwyer-wilkerson-fixed-point}. 

\begin{definition}   \cite[Defn.~2.3]{dwyer-wilkerson-fixed-point}
\label{definition: p-compact}
A \pdash compact group is a loop space $(X,BX)$ such that
$H^*(X;\field_p)$ is a finite $\field_p$-vector space,
together with a pointed \pdash complete space~$BX$
and an equivalence $X\xrightarrow{\simeq} \Omega BX$.
\end{definition}

If $G$ is a connected compact Lie group, then 
$(\pcomplete{G}, \pcomplete{BG})$ is a
\pdash compact group for any prime~$p$, since 
$H^*(\Omega(\pcomplete{BG});\field_p)\isom H^*(\pcomplete{G};\field_p)$  is finite. Not all \pdash compact groups arise in this way, but they do possess analogous structures to those of compact Lie groups, and we discuss
these next. The classification of \pdash compact groups in terms of Weyl group data was achieved in \cite{AGMV-pcompact,Andersen-Grodal-2compact}.

To define a \pdash local compact group associated to a \pdash compact group, one needs a notion of discrete \pdash toral subgroup. 

\begin{definition}
\hfill
\begin{enumerate}
 \item A \defining{discrete \pdash toral subgroup} $(P,i)$
 of a \pdash compact group~$X$
is a discrete \pdash toral group $P$ with a map 
$Bi\colon \pcomplete{BP}\rightarrow BX$ whose  homotopy fiber has finite mod~$p$ homology. 
\item $(T,i)$ is a \defining{maximal discrete \pdash torus} for $X$ if $T$ is a discrete \pdash torus, $(T,i)$ is a subgroup of~$X$, 
and 
for any other discrete \pdash torus $(A,j)$ of~$X$, there is a group homomorphism $f\colon A\to T$ such that $Bi\circ Bf\simeq Bj$. 
\item $(S,i)$ is a \defining{maximal discrete \pdash toral subgroup} of~$X$ (or, a \defining{Sylow \pdash subgroup} of~$X$)
if for any other discrete \pdash toral subgroup $(Q,j)$ of $X$, there is a group homomorphism $f\colon Q\to S$ such that $Bi\circ Bf\simeq Bj$. 
\end{enumerate}
\end{definition}

\begin{theorem}[{\cite[Prop.~10.1(a)]{BLO-Discrete}}]
Any \pdash compact group has a maximal discrete \pdash torus and a maximal discrete \pdash toral subgroup. 
\end{theorem}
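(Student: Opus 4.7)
The plan is to build the maximal discrete $p$-torus and maximal discrete $p$-toral subgroup by passing through the continuous \pdash compact group structure established by Dwyer and Wilkerson, then taking a ``torsion approximation'' in the sense that $\Zpinfinity$ sits densely inside the Prüfer completion $\Zphat$.

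First I would invoke the Dwyer--Wilkerson theory of maximal tori for \pdash compact groups to produce a \pdash compact torus $\Tbold$ of rank $r$ together with a monomorphism $B\Tbold\to BX$ whose homotopy fiber has finite mod~$p$ cohomology; the associated Weyl group $W_X = \pi_0 \Aut_{BX}(B\Tbold)$ is a finite group acting faithfully on $\pi_2(B\Tbold)\otimes \Zphat \cong \Zphat^r$. To obtain the discrete version, I would define $T\definedas(\Zpinfinity)^r$ and use the standard fact that $\pcomplete{BT}\simeq B\Tbold$, which yields a canonical map $Bi\colon \pcomplete{BT}\to BX$ whose homotopy fiber agrees with that of $B\Tbold\to BX$; hence $(T,i)$ is a discrete \pdash toral subgroup of~$X$. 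Maximality among discrete \pdash tori follows from the maximality of $\Tbold$ among \pdash compact tori: given another discrete \pdash torus $(A,j)$, the composite $\pcomplete{BA}\to BX$ factors (up to homotopy) through $B\Tbold\cong\pcomplete{BT}$ by maximality of $\Tbold$, and any map $\pcomplete{BA}\to\pcomplete{BT}$ comes from a group homomorphism $A\to T$ because $A$ is a divisible torsion group and $T$ is the full \pdash torsion subgroup of $\Zphat^r$.

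Next I would construct the maximal discrete \pdash toral subgroup $S$. Choose a Sylow \pdash subgroup $W_p\subgroupeq W_X$, and let $\Sbold$ be the preimage of $W_p$ in the normalizer $\Nbold\definedas N_X(\Tbold)$, which is a \pdash toral group by the Dwyer--Wilkerson normalizer theorem. Form the extension
\[
1\longrightarrow T\longrightarrow S\longrightarrow W_p\longrightarrow 1
\]
pulled back from the extension $1\to\Tbold\to\Sbold\to W_p\to 1$ using the chosen action of $W_p$ on~$T$ (which is well defined because $W_p$ is finite and acts on $\Zphat^r$ preserving the characteristic dense subgroup $\Zpinfinity^r$). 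The map $BS\to B\Sbold\to BX$ \pdash completes to give the subgroup structure $(S,i)$, and standard fiber-sequence arguments show the homotopy fiber of $\pcomplete{BS}\to BX$ has finite mod~$p$ cohomology.

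For maximality of~$S$, given a discrete \pdash toral subgroup $(Q,j)$, the identity component $Q_0$ is a discrete \pdash torus and so embeds in $T$ up to homotopy by the first paragraph; using this and the fact that $Q/Q_0$ is a finite \pdash group acting on $Q_0\subgroupeq T$ by restriction of the $W_X$-action, one obtains (after $W_X$-conjugation of the embedding) a factorization of $Q/Q_0$ through the Sylow \pdash subgroup $W_p$. Assembling these pieces produces the required homomorphism $f\colon Q\to S$ with $Bi\circ Bf\simeq Bj$. The main obstacle in this strategy is the last step: promoting the homotopical factorizations (which live in the category of \pdash completed classifying spaces) to a genuine group homomorphism $Q\to S$, and verifying it on the nose rather than up to $W$-conjugacy. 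This requires careful use of the fact that maps $\pcomplete{BQ}\to\pcomplete{BS}$ between classifying spaces of discrete \pdash toral groups are (up to conjugacy) induced by group homomorphisms, a rigidity result that parallels the classical rigidity of maps between \pdash completed $BG$'s for finite \pdash groups and whose discrete \pdash toral generalization is where the real work of \cite[\S10]{BLO-Discrete} lies.
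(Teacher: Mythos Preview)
The paper does not prove this statement; it is simply cited from \cite[Prop.~10.1(a)]{BLO-Discrete}. So there is no ``paper's own proof'' to compare against. Your sketch is broadly in the spirit of the argument in that reference, which does proceed by invoking Dwyer--Wilkerson structure theory and then passing to a discrete approximation, and you rightly flag the rigidity statement (that maps $\pcomplete{BQ}\to\pcomplete{BS}$ for discrete \pdash toral $Q,S$ come from group homomorphisms up to conjugacy) as the place where the real content lies.

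Two points in your outline need correction. First, in the \pdash compact setting the objects $\Tbold$ and $\Sbold$ are loop spaces, not honest groups: the ``extension'' $1\to\Tbold\to\Sbold\to W_p\to 1$ exists only as a fibration sequence $B\Tbold\to B\Sbold\to BW_p$ of classifying spaces, so speaking of $\Sbold$ as ``a \pdash toral group'' and of a group map $BS\to B\Sbold$ is a category error. Second, even granting a group-level interpretation, one cannot ``pull back'' an extension along an inclusion of kernels; pullback changes the quotient, not the kernel. What is actually needed is to lift the class in $H^2(W_p;\,\cdot\,)$ controlling the fibration from coefficients in the \pdash compact torus to coefficients in its discrete approximation~$T$, and then to realize the resulting discrete extension group $S$ together with a compatible map $\pcomplete{BS}\to BX$. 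This is exactly the work carried out in \cite[\S9--10]{BLO-Discrete}, and your closing paragraph correctly anticipates that the promotion from homotopy factorization to genuine group homomorphism is where that work concentrates.
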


We fix a choice~$(S,\iota)$ of maximal discrete \pdash toral subgroup of~$X$. If $X$ were a Lie group $G$, then the maps in $\Fcal_S(G)$ would be restrictions
of conjugation maps $G\to G$, which 
induce a map $BG\to BG$ homotopic to the identity. This observation motivates the definition of  
morphisms in a fusion system associated to a \pdash compact group~$X$.

\begin{definition}[{\cite[Defn.~10.2]{BLO-Discrete}}]
\label{definition:F_X}
Let $(X,BX)$ be a \pdash compact group and let $(S,\iota)$ with $B\iota\colon BS\rightarrow BX$ 
be a choice
of maximal discrete
\pdash toral subgroup.
The associated fusion system $\Fcal_X$ on $S$ has the following morphism sets: for $P,Q\subgroupeq S$, an element of
$\Hom_{\Fcal_X}(P,Q)$ consists of a group homomorphism
$f\colon P\to Q$ such that the diagram
\[
\xymatrix{
BP\ar[d]_{Bf}\ar[r]^{B\iota_P} 
    & BX
\\
BQ\ar[ru]_{B\iota_Q}
} 
\]
commutes up to homotopy, where $B\iota_P$ denotes
the composition $BP\rightarrow BS\xrightarrow{B\iota} BX$. 
\end{definition} 

\begin{theorem}[{\cite[Prop.~10.5, Thm.~10.7]{BLO-Discrete}}] 
\label{thm:p-compact fusion}
Let $(X,BX)$ be a \pdash compact group and let $S\rightarrow X$ be a choice
of a maximal discrete \pdash toral subgroup.
Then the fusion system $\Fcal_X$ on $S$ is saturated, and 
there is a centric
linking system $\Lcal_{X}$ associated to $\Fcal_X$ with 
$
\pcomplete{\realize{\Lcal_X}}\hteq BX.
$
\end{theorem}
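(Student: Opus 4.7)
The plan is to apply Theorem~\ref{theorem: abstract decomposition theorem} to the \pdash local compact group $(S, \Fcal_X, \Lcal_X)$ associated to $X$ by Theorem~\ref{thm:p-compact fusion}, mirroring the computations of Section~\ref{sec:U(p) SU(p)} for $\SUofp$. Since $\pcomplete{\realize{\Lcal_X}}\simeq BX$, it suffices to show $\realize{\Lcal_X}$ is homotopy equivalent to the stated pushout. The key structural input is that each \AZ \pdash compact group $X$ is built from $\SU(p)$ by enlarging the Weyl group: the relevant discrete maximal \pdash torus $T$, Sylow $S$, and extraspecial subgroup $\Gamma$ are the same as for $\SUofp$, while the enlarged reflection group $G\supsetneq\Sigma_p$ induces an enlarged action on $\Gamma/Z(\Gamma)\cong\field_p^2$ that promotes $\SLtwoFp$ (the isometries of the commutator symplectic form) to the group of symplectic similitudes $\GLtwoFp$.

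First, I would identify the $\Fcal_X$-centric $\Fcal_X$-radical subgroups of $S$ and the $\Fcal_X$-conjugacy classes of proper chains among them. Because $\Fcal_X$ contains $\Fcal_S(\SUofp)$ as a subsystem sharing the same objects and differs only in morphisms coming from extra Weyl group elements, I expect the centric-radical subgroups to remain $\{S,\Gamma,T\}$ and the proper chains to be $(\Gamma\subgroup S)$ and $(T\subgroup S)$, matching Proposition~\ref{proposition: SUp discrete chains}. One must check that $\Gamma$ remains $\Fcal_X$-radical after adjoining the new morphisms (it does, as the new outer automorphism factor is cyclic and does not interact with the normal \pdash structure) and that no $\Fcal_X$-morphism fuses $\Gamma$ with $T$ (impossible, one being abelian and the other not).

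Next, I would compute $\Aut_{\Lcal_X}(\Pbb)$ for each chain representative. By the very definition of the Weyl group of a \pdash compact group, $\Aut_{\Lcal_X}(T)\cong T\rtimes G$. For $\Gamma$, since it acts irreducibly on $\complexes^p$ its centralizer collapses to $Z(\Gamma)\cong\integers/p$, and Lemma~\ref{lemma: AutL to AutF surjective} identifies $\Aut_{\Lcal_X}(\Gamma)$ as an extension of $\Out_{\Fcal_X}(\Gamma)$ by $\Gamma$. Because every outer automorphism of $\Gamma$ must preserve the commutator form on $\Gamma/Z(\Gamma)$ up to a scalar, $\Out_{\Fcal_X}(\Gamma)$ sits inside $\GLtwoFp$; the extra Weyl group elements coming from the enlargement supply precisely the scalars needed to realize all of $\GLtwoFp$. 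Splitness of $\Gamma\rtimes\GLtwoFp$ follows by an argument parallel to Proposition~\ref{proposition: splitting of symplectic}, since the relevant Schur multiplier vanishes at odd primes. Restricting to automorphisms that preserve the line in $\Gamma/Z(\Gamma)$ singled out by the flag $\Gamma\subgroup S$ cuts $\GLtwoFp$ down to $\GLUpper$, yielding $\Aut_{\Lcal_X}(\Gamma\subgroup S)\cong\Gamma\rtimes\GLUpper$.

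Finally, since $T$ is characteristic in $S$, Remark~\ref{remark: collapse trick} gives an equivalence $\BAut_{\Lcal_X}(T\subgroup S)\simeq\BAut_{\Lcal_X}(S)$; collapsing this equivalence in the five-node diagram produced by Theorem~\ref{theorem: abstract decomposition theorem} reduces the normalizer decomposition to the three-node span \AZtheoremdiagram, exactly as in the proof of Theorem~\ref{theorem: SU(p) decomposition}. The main obstacle will be the identification $\Aut_{\Lcal_X}(\Gamma)\cong\Gamma\rtimes\GLtwoFp$: one must verify uniformly across the four cases $X_{12}, X_{29}, X_{31}, X_{34}$ that the Adams operations introduced by the enlarged Weyl group act on $\Gamma$ via exactly the full similitude factor, neither more nor less, and without a nontrivial extension class. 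This is likely best handled by combining the explicit Weyl group tabulations of \cite{aguade-modular} with the uniform framework of the classification \cite{AGMV-pcompact}.
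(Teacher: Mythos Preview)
Your proposal does not address the stated theorem at all. Theorem~\ref{thm:p-compact fusion} is a quoted result from \cite{BLO-Discrete} asserting that the fusion system of a \pdash compact group is saturated and admits a linking system modeling~$BX$; the paper does not prove it, and indeed your first sentence \emph{invokes} it as an input. What you have actually sketched is a proof of Theorem~\ref{theorem: pushout for AZ}, the normalizer decomposition of the \AZ spaces.

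Viewed as an attempt at Theorem~\ref{theorem: pushout for AZ}, your outline follows the paper's overall strategy but diverges at two technical points. First, your claim that $\Aut_{\Lcal_X}(T)\cong T\rtimes G$ follows ``by the very definition of the Weyl group'' is incomplete: the Weyl group only gives $\Aut_{\Fcal_X}(T)=G$, and one still needs the extension $1\to T\to\Aut_{\Lcal_X}(T)\to G\to 1$ to split. The paper handles this in Lemma~\ref{lem:Aut_LX T} by appeal to Andersen's splitting theorem for the maximal torus normalizer in a \pdash compact group. Second, your argument for the splitting of $1\to\Gamma\to\Aut_{\Lcal_X}(\Gamma)\to\GLtwoFp\to 1$ via ``the relevant Schur multiplier'' is not quite the right tool: the extension class lives in $H^2(\GLtwoFp;Z(\Gamma))$ with a nontrivial action (through the determinant), not in the Schur multiplier. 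The paper instead bootstraps from the already-known splitting over $\SLtwoFp$ (Proposition~\ref{proposition: Gamma SES}) together with injectivity of restriction $H^2(\GLtwoFp;\integers/p)\to H^2(\SLtwoFp;\integers/p)$, which holds because the index is prime to~$p$. The paper also replaces your proposed case-by-case check of the four Weyl groups with the uniform input of Theorem~\ref{thm:X-SU(p)} from \cite{cantarero-castellana}, which identifies $C_{\Fcal_X}(Z(S))$ with $\Fcal_{\SU(p)}$ and forces $\Out_{\Fcal_X}(\Gamma)=\GLtwoFp$ by a counting argument against the abstract automorphism group of~$\Gamma$.
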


Our interest is in the \AZ \pdash compact groups, constructed at specific
primes~$p$ by modifying $\pcomplete{\BSU(p)}$ so that the Weyl group is enlarged.

\begin{definition}
Continuing the notation of Definition~\ref{definition:F_X}, 
the \defining{Weyl group} of $BX$ is $\Aut_{\Fcal_X}(T)$ where $T$ is the maximal torus in~$S$.
\end{definition}

The enlargement of the Weyl group of $\pcomplete{\BSU(p)}$
gives a new Weyl group for the \pdash compact group which contains Adams operations, well-known self maps of a discrete \pdash torus whose definition we review next.
Let $T = \left( \Zpinfinity \right)^r$ be a discrete \pdash torus, and recall that
$\Hom(\Zpinfinity, \Zpinfinity)  
    \cong \Zphat$, the \pdash completion of the integers.  
For any $\xi\in (\pcomplete{\integers})^\x$, the corresponding diagonal group isomorphism $T\to T$ induces a self homotopy equivalence 
$\pcomplete{BT}\to \pcomplete{BT}$, which induces multiplication by $\xi$ on $H^*(BT;\Zphat)$. 
Its restriction to the maximal discrete \pdash torus $T\subgroup S$ is the $\xi$-power automorphism
(see \cite[Def.~2.3]{JLL}).
\begin{definition}    \label{definition: adams ops}
Let $(X,BX)$ a \pdash compact group. For $\xi\in (\pcomplete{\integers})^\x$, an
\defining{Adams operation $\psi^\xi\colon BX\to BX$} is a self-homotopy equivalence 
whose restriction to the maximal torus $\pcomplete{BT}$ is
homotopic to the self homotopy equivalence induced by $\xi\in \Hom(\Zpinfinity, \Zpinfinity)$.
\end{definition}

For compact connected Lie groups, there is an existence result.

\begin{theorem}\cite[Cor.~3.5]{JMO-Selfhomotopy}
\label{thm:moller}
Let $G$ be a compact connected Lie group.
For all $\xi\in
(\pcomplete{\integers})^\x$, there is an unstable Adams operation map $\psi^\xi\colon \pcomplete{BG}\to \pcomplete{BG}$.
\end{theorem}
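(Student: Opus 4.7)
The plan is to construct $\psi^\xi$ by an obstruction-theoretic argument applied to a homotopy colimit decomposition of $\pcomplete{BG}$. Fix a maximal torus $T\subgroupeq G$ with Weyl group $W=N_G(T)/T$. The element $\xi\in\Zphat^{\times}$ determines an endomorphism of the maximal discrete $p$-toral subgroup $(\Zpinfinity)^r\subgroupeq T$ via the standard $\Zphat$-module structure on $\Zpinfinity$, which induces a self-equivalence $\psi^\xi_T\colon \pcomplete{BT}\to\pcomplete{BT}$. Because $\xi$ acts as a central scalar on the character lattice of~$T$, the map $\psi^\xi_T$ commutes with the $W$-action, and hence extends to a self-equivalence of $\pcomplete{BN_G(T)}$. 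This handles the torus-normalizer step and fixes the behavior we want on the maximal torus.

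Next, I would globalize via the Jackowski--McClure centralizer decomposition
$$\pcomplete{BG}\ \simeq\ \pcomplete{\Bigl(\hocolim_{V\in\Acal(G)^{\op}}\,BC_G(V)\Bigr)},$$
where $\Acal(G)$ is Quillen's category of nontrivial elementary abelian $p$-subgroups of~$G$. For each~$V$, the centralizer $C_G(V)$ is a compact Lie group whose identity component contains a maximal torus conjugate to~$T$, so by induction on the rank of~$V$ (with base case $V$ of maximal rank, where $C_G(V)$ is essentially a torus and the map is $\psi^\xi_T$ itself) one can produce a candidate self-map $\pcomplete{BC_G(V)}\to\pcomplete{BC_G(V)}$ whose restriction to $\pcomplete{BT}$ is $\psi^\xi_T$. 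Dwyer--Zabrodsky rigidity for maps out of $\pcomplete{BV}$ is what makes these candidate maps well-defined up to controlled ambiguity and compatible with morphisms in~$\Acal(G)$.

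The principal obstacle is assembling these local self-maps into an actual natural transformation of diagrams, and then into a map of homotopy colimits. The obstructions live in higher limits of the form $\lim^i_{\Acal(G)^{\op}}\,\pi_j\,\map\bigl(BC_G(V),\pcomplete{BC_G(V)}\bigr)$, where the mapping spaces are identified, again via Dwyer--Zabrodsky/Notbohm, with classifying spaces of centralizers in~$G$. The vanishing of these higher limits in the relevant range is precisely the sharpness input powering the Jackowski--McClure decomposition, so no essentially new cohomological computation is required; the hard part is packaging these vanishing statements correctly to get both existence and the essential uniqueness of the resulting assembled map. Once this is done, passing to the homotopy colimit yields $\psi^\xi\colon\pcomplete{BG}\to\pcomplete{BG}$, and the prescribed restriction to $\pcomplete{BT}$ holds by construction, so $\psi^\xi$ is an unstable Adams operation in the sense of Definition~\ref{definition: adams ops}.
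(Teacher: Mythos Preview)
The paper does not prove this theorem; it is quoted as \cite[Cor.~3.5]{JMO-Selfhomotopy} and used as a black box, so there is no ``paper's own proof'' to compare against.

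That said, your sketch is in the right spirit---the JMO argument does proceed by expressing $\pcomplete{BG}$ as a homotopy colimit and then analyzing obstructions in higher limits of mapping-space functors---but a couple of points deserve care. First, the actual JMO argument uses the subgroup decomposition over \pdash stubborn subgroups rather than the centralizer decomposition over $\Acal(G)$; the relevant vanishing of higher limits is established for that decomposition, and it is not automatic that the sharpness input you invoke for $\Acal(G)$ gives exactly what you need here. Second, your proposed induction on the rank of~$V$ is not quite well-founded as stated: centralizers $C_G(V)$ for larger~$V$ need not be ``simpler'' in a way that feeds a clean induction (they need not be tori even for maximal~$V$), and in any case the obstruction-theoretic assembly is global rather than built inductively along inclusions of elementary abelians. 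The substantive content of the JMO result is precisely the identification of the mapping spaces via Dwyer--Zabrodsky/Lannes and the vanishing of the relevant $\lim^i$, which you gesture at but do not supply; since the present paper treats this as an imported result, that is appropriate here too.
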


Likewise there is an existence and uniqueness result for \pdash local compact groups. 

\begin{theorem}\cite{AGMV-pcompact,Andersen-Grodal-2compact}
 For any connected \pdash compact group, there exists exactly one unstable Adams operation of degree $\xi$ for every \pdash adic unit $\xi \in (\Zphat)^\times$. 
\end{theorem}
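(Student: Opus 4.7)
The plan is to invoke the classification of connected \pdash compact groups due to Andersen--Grodal--M{\o}ller--Viruel \cite{AGMV-pcompact} (at odd primes) and Andersen--Grodal \cite{Andersen-Grodal-2compact} (at $p = 2$). This classification establishes that connected \pdash compact groups are in bijection with \Zphat-root data $(L, W)$, and, more strongly, identifies $\pi_0\Aut(BX)$ (the group of homotopy classes of self-equivalences of $BX$) with combinatorial data computed from $(L,W)$. Here $L \cong (\Zphat)^r$ is the character lattice of the maximal discrete \pdash torus $T$ of $X$ and $W \subgroupeq \GL(L)$ is the Weyl group associated to~$X$. With this dictionary, both existence and uniqueness of $\psi^\xi$ translate into statements about automorphisms of the pair $(L,W)$.

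For existence, note that scalar multiplication by $\xi \in \Zphat^\times$ is central in $\GL(L)$, so it normalizes~$W$ and hence defines a class in the quotient $N_{\GL(L)}(W)/W$. A key output of the classification is that every such class is realized by a self-homotopy equivalence of $BX$, which one obtains by inductively assembling compatible maps over a centralizer decomposition of $BX$. The realization corresponding to $\xi \cdot \id_L$ restricts on $\pcomplete{BT}$ to the standard $\xi$-power self map, and is therefore, by Definition~\ref{definition: adams ops}, an unstable Adams operation of degree~$\xi$.

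For uniqueness, one invokes the further assertion of the classification that the restriction map
$$
\pi_0\Aut(BX) \longrightarrow N_{\GL(L)}(W)/W
$$
is an \emph{isomorphism}. Consequently, any two self-equivalences of $BX$ whose restrictions to $\pcomplete{BT}$ are both homotopic to the $\xi$-power map represent the same class of $N_{\GL(L)}(W)/W$ (namely that of $\xi\cdot\id_L$), and hence agree up to homotopy. This yields the desired uniqueness of $\psi^\xi$.

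The main obstacle is proving that the restriction-to-the-torus map is an isomorphism, not merely a surjection; this is the rigidity half of the classification and is considerably deeper than realizability. It is established in \cite{AGMV-pcompact, Andersen-Grodal-2compact} via an obstruction-theoretic argument over a centralizer decomposition, analyzing higher homotopy groups of mapping spaces $\map(BV, BX)$ for elementary abelian \pdash subgroups $V \subgroupeq X$ and showing that the obstruction groups (essentially certain equivariant cohomology groups of $W$ with coefficients built from $T$) vanish in the relevant degrees. For the exotic families, including the \AZ examples, this requires case-by-case verification exploiting the specific structure of the \Zphat-reflection representation of~$W$ on~$L$.
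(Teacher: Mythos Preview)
The paper does not give a proof of this theorem; it is simply stated with attribution to \cite{AGMV-pcompact, Andersen-Grodal-2compact} and used as a black box. So there is no ``paper's own proof'' to compare your proposal against.

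That said, your sketch is a faithful outline of how the result is obtained in the cited references: the classification identifies $\pi_0\Aut(BX)$ with $N_{\GL(L)}(W)/W$, and both existence and uniqueness of $\psi^\xi$ follow immediately from the fact that $\xi\cdot\id_L$ is central in $\GL(L)$. Your account of where the real work lies (the rigidity half of the classification, proved via obstruction theory over a centralizer decomposition) is accurate. For the purposes of the present paper, though, none of this detail is needed---the theorem is invoked, not reproved.
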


In the context of \pdash local compact groups, unstable Adams operations have been studied in \cite{JLL} and \cite{LL-Adams}.

We turn to describing the structure of the \AZ \pdash compact groups, the four
\pdash compact groups whose normalizer decompositions we  compute in this section. 
Let $\omega$ denote the Teichm\"uller lift
$\omega\colon \integers/(p-1)\cong (\integers/p)^\x\to \Zphat$, which identifies $(p-1)$-st roots of unity in~$\Zphat$.

\begin{definition}[{\cite[\S10, p.37]{aguade-modular}}] \label{definition:AZ}
For $i=12,29,31,34$, let $G_i$ denote the $i$-th group in the list of Shephard-Todd~\cite{Shephard-Todd}, and let $p_{12} = 3$, $p_{29} = 5$, $p_{31} = 5$, and $p_{34}=7$.
Let $BX_i$ denote the homotopy colimit of the diagram
\begin{equation} \label{eq: hocolim diagram for BX}
\xymatrix{
\pcomplete{(BT_i)}\POS!L(0.7)\ar@(ul,dl)_{G_i} 
    \ar[rr]^-{G_i/\Sigma_{p_i}}
&& \pcomplete{B\SU(p_i)}\POS!R(.7) \ar@(ur,dr)^{Z(G_i)} 
},
\end{equation}
where $T_i$ is a maximal discrete \pdash torus of $\SU(p_i)$
and the arrow labeled $G_i/\Sigma_{p_i}$ indicates that the morphism set
between the two nodes is isomorphic, as a $G_i$-set via precomposition, to the quotient~$G_i/\Sigma_{p_i}$. The group $Z(G_i)\cong \integers/(p_i-1)$ acts on $\pcomplete{B\SU(p_i)}$ via unstable Adams operations $\psi^{\omega(k)}$, and $G_i$ acts on $\pcomplete{(BT_i)}$ via its representation as a \pdash adic reflection group. 
\end{definition}

To simplify notation, we fix the value of the index~$i$ 
in Definition~\ref{definition:AZ} 
and suppress subscripts, writing simply $X$ and $BX$ for the corresponding \AZ space and its classifying space, $p$~for the prime, and $G$~for 
the Weyl group~$G_i$. 

Automorphisms of $\pcomplete{BT}$ that correspond to conjugation 
by elements of $\SU(p)$ 
act trivially on the morphism set $G/\Sigma_{p}$, since such conjugations induce maps of $\BSU(p)$ 
that are homotopic to the identity. 
On the other hand, the Adams operations act on $\pcomplete{BSU(p)}$ by maps that 
are not homotopic to the identity  \cite[Thm.~1]{JMO}.
We summarize the properties of \AZ \pdash compact groups that we need. 

\begin{theorem} [\cite{aguade-modular}] \label{thm:X_i-facts}
Let $BX$ denote one of the \AZ \pdash compact groups (Definition~\ref{definition:AZ}).
Then $BX$ is a \pdash compact group with Weyl group $G$. Moreover:
\begin{enumerate}
\item The map $\pcomplete{\BSU(p)}\to BX$ of \pdash compact groups is a monomorphism; that is, the homotopy fiber has finite mod $p$ homology.
\item If $S$ is a Sylow \pdash subgroup of~$\SU(p)$, then the composition $BS\to \BSU(p) \to \pcomplete{\BSU(p)} \to BX$ is a  Sylow \pdash subgroup of~$X$.
\item The center $Z(G) = \integers/(p-1)\subgroup G$ acts on $\pcomplete{BT}$ via Adams operations.
\end{enumerate}
\end{theorem}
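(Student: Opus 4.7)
The plan is to verify the listed properties by unpacking the homotopy colimit construction of Definition~\ref{definition:AZ}, following Aguadé's original approach in~\cite{aguade-modular}. Four things must be checked: that $BX$ is a \pdash compact group, that its Weyl group is~$G$, and properties (1)--(3). The substantive work lies in the \pdash compact group assertion: one must show $BX$ is \pdash complete with finite mod~$p$ loop space cohomology. I would do this by computing $H^*(BX;\field_p)$ via the Bousfield--Kan spectral sequence for the homotopy colimit~\eqref{eq: hocolim diagram for BX}, identifying it with the $G$-invariant subring of $H^*(\pcomplete{BT};\field_p)$. Since $G$ is one of Shephard--Todd's \pdash adic reflection groups, this invariant ring is a polynomial algebra on generators of known degrees, and finiteness of $H^*(\Omega BX;\field_p)$ then follows by an Eilenberg--Moore argument. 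The Weyl group identification is built into the construction, since the maximal discrete \pdash torus factors through $\pcomplete{B\SU(p)}$ and $G$ acts on $\pcomplete{BT}$ by definition.

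For property~(3), I would give a structural argument: $G$ acts faithfully as a reflection group on the \pdash adic vector space $V = T \otimes \Zphat$, and its center $Z(G)\cong \integers/(p-1)$ acts on $V$ by scalar multiplication through the Teichmüller embedding $\omega \colon (\integers/p)^\x \to \Zphat^\x$. By Definition~\ref{definition: adams ops}, scalar multiplication by $\omega(k)$ on the torus is exactly the restriction to $\pcomplete{BT}$ of the Adams operation $\psi^{\omega(k)}$. Hence the $Z(G) \subset G$ action on $\pcomplete{BT}$ is by Adams operations. This compatibility is also what makes the diagram~\eqref{eq: hocolim diagram for BX} well-defined, since the $Z(G)$-action on $\pcomplete{B\SU(p)}$ restricts on the torus to the same action as $Z(G) \subset G$.

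For property~(1), I would analyze the canonical map $\pcomplete{\BSU(p)} \to BX$ arising from the inclusion of the right-hand vertex into~\eqref{eq: hocolim diagram for BX}. The essential point is that the $Z(G)$-action on $\pcomplete{B\SU(p)}$ has order $p-1$, prime to~$p$; consequently the homotopy orbit construction by $Z(G)$ is well-behaved mod~$p$, and the homotopy fiber of $\pcomplete{\BSU(p)} \to BX$ can be identified (up to mod~$p$ equivalence) with a finite object built from $Z(G)$. Property~(2) follows from~(1) by composition: $BS \to \BSU(p) \to \pcomplete{\BSU(p)} \to BX$ has homotopy fiber with finite mod~$p$ homology as a composition of maps with this property, and maximality of $S$ as a discrete \pdash toral subgroup of~$X$ follows by comparing ranks, since the rank of~$S$ equals the rank of the maximal discrete \pdash torus of~$X$, which is $p-1$ (the dimension of the reflection representation of~$G$).

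The main obstacle will be the cohomological computation that identifies $H^*(BX;\field_p)$ as a polynomial ring and establishes the \pdash compact group property; this is precisely the content of~\cite{aguade-modular}, which our argument essentially cites rather than reproduces. Once that input is in hand, the remaining three properties follow either structurally (for~(3)) or from standard arguments about fiber sequences for Borel-type constructions by prime-to-$p$ groups (for~(1) and~(2)).
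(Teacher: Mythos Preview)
The paper does not prove this theorem; it is stated with a citation to \cite{aguade-modular} and no proof is given. Your proposal is therefore not comparable to a proof in the paper, since there is none---the authors simply import the result.

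That said, as a sketch of Aguadé's argument your outline is broadly reasonable, and you correctly flag that the core computation (identifying $H^*(BX;\field_p)$ as a polynomial ring via the Bousfield--Kan spectral sequence and the Shephard--Todd invariant theory) is the substantive content of \cite{aguade-modular} rather than something you reproduce. Your argument for~(1) is somewhat vague---``a finite object built from $Z(G)$'' is not quite right, since the homotopy fiber of $\pcomplete{\BSU(p)}\to BX$ is closer to $G/\Sigma_p$ than to $Z(G)$---but the prime-to-$p$ observation is the right lever. For the purposes of this paper, however, the correct move is exactly what the authors do: cite the result and move on.
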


Once we fix a Sylow \pdash subgroup of~$\SU(p)$, we will fix the corresponding one for $X$ using Theorem~\ref{thm:X_i-facts}(2). We then simplify the notation by writing $\Fcal_{\SU(p)}$ and $\Fcal_X$ for the corresponding fusion systems $\Fcal_S({\SU(p)})$ and~$\Fcal_S(X)$. Likewise, we write $\Lcal_{\SUofp}$ and $\Lcal_X$ for the associated linking systems. 

Let $(S,\Fcal_X)$ denote the fusion system associated to $BX$ that is provided by Theorem~\ref{thm:p-compact fusion} and has the same Sylow \pdash subgroup as the fusion system $(S, \Fcal_{\SU(p)})$ for $\SU(p)$ studied in Section~\ref{sec:U(p) SU(p)}. Our next goal is to relate the two fusion systems; the crucial input is Theorem~\ref{thm:X-SU(p)}, which says that the difference between $\Fcal_{\SU(p)}$ and $\Fcal_X$ is entirely described by the automorphisms of~$Z(S)$. 
When applied to the subgroups of interest (Corollary~\ref{corollary: chains for X}), these automorphisms consist of the Adams operations.

We need a restriction of a fusion system for the purpose of comparing 
$\Fcal_{\SU(p)}$ and~$\Fcal_X$. The following definition is specialized 
from \cite[Defn.~2.1]{BLO-LoopSpaces}.

\begin{definition}     \label{definition: centralizer fusion system}
Given a saturated fusion system $\Fcal$ over a discrete \pdash toral group~$S$,
let $C_\Fcal(Z(S))$ denote the following fusion system:
\begin{itemize}
\item objects of $C_\Fcal(Z(S))$ are subgroups $P$ with $Z(S)\subgroupeq P\subgroupeq S$
\item morphisms of $C_\Fcal(Z(S))$ are morphisms in $\Fcal$ that restrict to the identity on $Z(S)$.
\end{itemize}
\end{definition}

\begin{proposition} \cite[Thm.~2.3]{BLO-LoopSpaces}
$C_\Fcal(Z(S))$ is a saturated fusion system over~$S$.  
\end{proposition}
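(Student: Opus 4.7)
Since the proposition is cited directly from \cite[Thm.~2.3]{BLO-LoopSpaces}, my plan is to sketch the argument from that reference, specialized to the subgroup $Q = Z(S)$. The general centralizer-fusion-system theorem applies whenever $Q$ is fully $\Fcal$-centralized, so the first step would be to observe that $Z(S)$ automatically satisfies this condition: because $Z(S)$ is central in $S$, we have $C_S(Z(S)) = S$, and hence no $\Fcal$-conjugate of $Z(S)$ inside $S$ can have an $S$-centralizer strictly larger than $S$. This places us in the hypothesis of the general construction.

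The bulk of the work is then to verify the saturation axioms of \cite[\S2]{BLO-Discrete} for $C_\Fcal(Z(S))$, using the saturation of $\Fcal$. For each $P$ with $Z(S)\subgroupeq P\subgroupeq S$, I would first show that its $C_\Fcal(Z(S))$-conjugacy class contains a fully normalized representative: pick one that is fully normalized in $\Fcal$, and note that any $\Fcal$-morphism taking it to $P$ can be precomposed with an automorphism of $Z(S)$ to arrange that it fixes $Z(S)$ pointwise, since $Z(S)$ is characteristic in $S$ and hence is preserved setwise by every $\Fcal$-automorphism of~$S$. For the extension axiom, given $\phi \in \Hom_{C_\Fcal(Z(S))}(P,S)$ with fully normalized domain, saturation of $\Fcal$ produces an extension of $\phi$ to a larger subgroup $N_\phi$; because $\phi$ already acts trivially on the central subgroup $Z(S)$, any such extension restricted to the appropriate subgroup still acts trivially on $Z(S)$.

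The main obstacle will be the chain axiom governing increasing unions of discrete \pdash toral subgroups, which is the new feature of the compact setting compared to the finite one. Here one must verify that the ``restricts to the identity on $Z(S)$'' condition is preserved under passage to unions of compatible morphisms along such chains; this becomes a diagram chase once the corresponding union property is in hand for~$\Fcal$. This step requires careful bookkeeping with the discrete \pdash toral structure but introduces no genuinely new ideas beyond those already present in the saturation proof for $\Fcal$ itself, and is exactly where \cite[Thm.~2.3]{BLO-LoopSpaces} does the serious work.
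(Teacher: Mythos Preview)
The paper does not give a proof of this proposition at all; it simply cites \cite[Thm.~2.3]{BLO-LoopSpaces} and uses the result as a black box. Your proposal, which sketches how the argument in that reference goes when specialized to $Q = Z(S)$, is therefore not really comparable to anything in the paper---you are reconstructing the cited proof rather than matching an argument the authors wrote down.

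That said, your outline is broadly on the right track for how the result in \cite{BLO-LoopSpaces} is established: one checks that $Z(S)$ is fully centralized (trivially, since $C_S(Z(S)) = S$) and then verifies the saturation axioms for the centralizer fusion subsystem using the saturation of~$\Fcal$. One small caution: your argument that fully normalized representatives can be chosen within the $C_\Fcal(Z(S))$-conjugacy class glosses over a subtlety---precomposing with an automorphism of $Z(S)$ does not obviously keep you in the right $\Fcal$-conjugacy class of the larger subgroup~$P$, and the actual argument in the reference works instead by analyzing the relationship between $N_S(P)$ and the normalizer in the centralizer system. But since the paper treats this as a citation, this level of detail is beyond what is required here.
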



Definition~\ref{definition: centralizer fusion system} gives us exactly 
the concept we need to identify $\Fcal_{\SUofp}$ inside~$\Fcal_X$.

\begin{theorem}\cite[\S5.2]{cantarero-castellana} \label{thm:X-SU(p)}
Let $X$ denote an \AZ \pdash compact group with fusion system
$(S, \Fcal_X)$. 
Then there is an isomorphism of fusion systems
$$ C_{\Fcal_X}(Z(S))\cong \Fcal_{\SU(p)}.$$
Moreover, the $\Fcal_X$-centric subgroups of $S$ coincide with the
$\Fcal_{\SU(p)}$-centric subgroups of~$S$, and likewise
the $\Fcal_X$-radical subgroups of $S$ coincide with the
$\Fcal_{\SU(p)}$-radical subgroups of~$S$.
\end{theorem}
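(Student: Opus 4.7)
The plan is to show that the larger fusion system $\Fcal_X$ is obtained from $\Fcal_{\SU(p)}$ by adjoining the restriction to~$S$ of the Adams operations $\psi^{\omega(k)}$ for $k\in Z(G)\cong\integers/(p-1)$, and then identify $C_{\Fcal_X}(Z(S))$ as exactly $\Fcal_{\SU(p)}$ by analyzing how these Adams operations act on the center $Z(S)$. The centric/radical comparison will then follow formally, because the extra automorphisms adjoined have order prime to~$p$.

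First I would check that the monomorphism $\pcomplete{\BSU(p)}\to BX$ of \pdash compact groups (Theorem~\ref{thm:X_i-facts}(1)), together with the compatible choice of Sylow \pdash subgroup~$S$ (Theorem~\ref{thm:X_i-facts}(2)), induces an inclusion of fusion systems $\Fcal_{\SU(p)}\hookrightarrow\Fcal_X$ over the common~$S$. The extra morphisms in $\Fcal_X$ come from the Weyl group enlargement in Definition~\ref{definition:AZ}, specifically from $Z(G)$ acting on $\pcomplete{\BSU(p)}$ by Adams operations. A Mayer--Vietoris analysis of $[BP,BX]$ applied to the homotopy colimit presentation of $BX$ then yields that every morphism in $\Hom_{\Fcal_X}(P,Q)$ can be represented as $\psi^{\omega(k)}|_S\circ g$ for some $g\in\Hom_{\Fcal_{\SU(p)}}(P,Q)$ and some $k\in\integers/(p-1)$.

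To identify $C_{\Fcal_X}(Z(S))$, direct computation in $S=T\rtimes\langle B\rangle$ (see Definition~\ref{definition: notation for SU(p)}) shows $Z(S)=Z(\SU(p))\cong\integers/p$, generated by a scalar matrix $\zeta I$ with $\zeta^p=1$. Any $g\in\Fcal_{\SU(p)}$ is induced by conjugation in $\SU(p)$ and hence fixes the scalar center pointwise, while $\psi^{\omega(k)}$ acts on $Z(S)\cong\integers/p$ as multiplication by $\omega(k)\bmod p=k$, which is the identity only when $k=1$ in~$\integers/(p-1)$. Hence a morphism $\psi^{\omega(k)}|_S\circ g$ fixes $Z(S)$ pointwise iff $k=1$, giving $C_{\Fcal_X}(Z(S))=\Fcal_{\SU(p)}$.

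For the coincidence of centric and radical subgroups, Adams operations are automorphisms of~$S$, so $C_S\!\left(\psi^{\omega(k)}(P)\right)=\psi^{\omega(k)}(C_S(P))$, and $\Fcal_X$-centricity is equivalent to $\Fcal_{\SU(p)}$-centricity. For radicality, the inclusion of fusion systems gives a short exact sequence
$$1\longrightarrow\Out_{\Fcal_{\SU(p)}}(P)\longrightarrow\Out_{\Fcal_X}(P)\longrightarrow H_P\longrightarrow 1,$$
where $H_P$ is a subquotient of $Z(G)$ and hence of order prime to~$p$. Consequently $O_p\Out_{\Fcal_X}(P)$ lies in $\Out_{\Fcal_{\SU(p)}}(P)$ (by projection to~$H_P$), and $O_p\Out_{\Fcal_{\SU(p)}}(P)$ is characteristic in the normal subgroup $\Out_{\Fcal_{\SU(p)}}(P)$ of $\Out_{\Fcal_X}(P)$, so the two $O_p$'s coincide. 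The main obstacle is the morphism decomposition in the second paragraph: reconciling the fact that $|G/\Sigma_p|$ can exceed $|Z(G)|$ (as for $X_{12}$, where these orders are $8$ and~$2$) by attributing the excess to inner automorphisms of $\pcomplete{\BSU(p)}$, which act trivially in~$\Fcal_X$, requires the careful hocolim analysis alluded to above.
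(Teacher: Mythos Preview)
The paper does not prove this theorem; it quotes it from \cite[\S5.2]{cantarero-castellana} and uses it as a black box. So there is no ``paper's own proof'' to compare against, and your sketch is really a proposed reconstruction of the cited argument.

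Your outline is along the right lines, and the centric/radical deduction from the short exact sequence of $\Out$-groups with prime-to-$p$ cokernel is correct and cleanly argued. The substantive content, however, lies exactly where you flag it: showing that every morphism in $\Fcal_X$ factors as an Adams operation composed with an $\Fcal_{\SU(p)}$-morphism. Invoking a ``Mayer--Vietoris analysis of $[BP,BX]$'' on the homotopy colimit~\eqref{eq: hocolim diagram for BX} is a gesture, not an argument: the diagram is not a pushout of spaces but a colimit over a category with nontrivial automorphism data, and extracting the description of $\Hom_{\Fcal_X}(P,Q)$ requires analyzing the mapping space $\map(BP,BX)$ via the associated homotopy-limit decomposition and then tracking how the $Z(G)$-action on $\pcomplete{\BSU(p)}$ interacts with the $\Sigma_p$-isotropy of the edge. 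The discrepancy you note between $|G/\Sigma_p|$ and $|Z(G)|$ (e.g.\ for $X_{12}$) is a symptom of this: resolving it requires identifying precisely which elements of $G$ act on $\pcomplete{\BSU(p)}$ through inner automorphisms, and that is the heart of the computation in the cited reference. One smaller point: Definition~\ref{definition: centralizer fusion system} as stated restricts to objects containing $Z(S)$, so to compare with the full $\Fcal_{\SU(p)}$ you should either observe that every centric subgroup contains $Z(S)$ (so the restriction is harmless for the second claim) or work with the unrestricted version from \cite[Defn.~2.1]{BLO-LoopSpaces}.
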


Our goal is to obtain a homotopy colimit decomposition of $BX$ using
Theorem~\ref{theorem: abstract decomposition theorem} with the same indexing
category as for $\SU(p)$ (see Lemma~\ref{lemma: what are the chains in SUp}).
For computational purposes, we use the explicit discrete \pdash toral representations from Section~\ref{sec:U(p) SU(p)}.

\begin{lemma}\label{lemma:Hcal_X}
Let $\Hcal_X$ denote the collection of $\Fcal_X$-centric, $\Fcal_X$-radical
subgroups of~$S$, and analogously for $\Hcal_{\SU(p)}$. Then
$\sd\Hcal_X = \sd\Hcal_{\SU(p)}$.
\end{lemma}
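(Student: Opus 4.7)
The plan is to establish the equality through the natural map $\sd\Hcal_{\SU(p)} \to \sd\Hcal_X$ induced by the inclusion $\Fcal_{\SU(p)} \subseteq \Fcal_X$ as subcategories with the same objects.

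First, by Theorem~\ref{thm:X-SU(p)}, the collections $\Hcal_X$ and $\Hcal_{\SU(p)}$ consist of the same subgroups of $S$, so the underlying set of proper chains in $\sd\Hcal_X$ agrees with that in $\sd\Hcal_{\SU(p)}$, and the subchain relation is determined by subgroup data alone and is independent of the fusion system. Since $\Fcal_{\SU(p)}$ has fewer morphisms than $\Fcal_X$, we obtain a well-defined surjective poset map $\sd\Hcal_{\SU(p)} \twoheadrightarrow \sd\Hcal_X$ that sends the $\Fcal_{\SU(p)}$-conjugacy class of a chain to its $\Fcal_X$-conjugacy class. The only possible difference between the two posets therefore lies in the equivalence relation of $\Fcal$-conjugacy on chains.

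To finish, I would show this surjection is bijective on objects, which forces each $\Fcal_X$-conjugacy class to equal the corresponding $\Fcal_{\SU(p)}$-class. By Proposition~\ref{proposition: SUp discrete chains}, the poset $\sd\Hcal_{\SU(p)}$ has five objects for $p\geq 5$, represented by $S$, $T$, $\Gamma$, $(T\subsetneq S)$, and $(\Gamma\subsetneq S)$---or three objects for $p=2,3$, omitting those involving~$T$. These representatives are pairwise non-isomorphic as filtered abstract groups: $T$ is abelian, $S$ is non-abelian and infinite, $\Gamma$ is non-abelian and finite of order $p^3$, and the two length-two chains are distinguished by whether their bottom term is $T$ or $\Gamma$. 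Since any $\Fcal_X$-conjugacy of chains is in particular a group isomorphism preserving the filtration, the representatives remain pairwise non-$\Fcal_X$-conjugate; combined with the surjection, this shows the map is bijective on objects. As the subchain relation is fusion-independent, the bijection upgrades to a poset isomorphism.

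The main obstacle is verifying pairwise non-$\Fcal_X$-conjugacy of the listed representatives, but as indicated this is immediate from their distinct abstract isomorphism types and needs no finer analysis of the extra $\Fcal_X$-morphisms coming from Adams operations---an essential simplification, since a direct comparison at the level of individual morphisms would require understanding how Adams operations move each subgroup within its $\Fcal_X$-conjugacy class.
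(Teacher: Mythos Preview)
Your proposal is correct and follows essentially the same approach as the paper: both use Theorem~\ref{thm:X-SU(p)} to identify the two collections, observe that the inclusion $\Fcal_{\SU(p)}\subseteq\Fcal_X$ gives a surjection on conjugacy classes of chains, and then rule out new conjugacies by noting that the representative chains from Proposition~\ref{proposition: SUp discrete chains} are pairwise group-theoretically distinct. Your write-up simply unpacks the paper's phrase ``group-theoretically distinct'' into the explicit isomorphism-type check.
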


\begin{proof}
By Theorem~\ref{thm:X-SU(p)}, a subgroup $P\subseteq S$ is $\Fcal_X$-centric
and $\Fcal_X$-radical if and only if it is $\Fcal_{\SU(p)}$-centric and
$\Fcal_{\SU(p)}$-radical. If two chains are conjugate in $\Fcal_{\SU(p)}$, then they are also 
conjugate in the larger fusion system~$\Fcal_X$.
No new conjugacies of chains are possible in~$\Fcal_X$ because the chains are all group-theoretically distinct. 
\end{proof}

\begin{corollary}    \label{corollary: chains for X}
The indexing category $\sd\Hcal_X$ has objects 
$S$, $\Gamma$, $(\Gamma\subgroup S)$, and for $p\geq 5$ also $T$ and $T\subgroup S$. 
\end{corollary}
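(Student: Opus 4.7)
The plan is to combine the two results that immediately precede the statement. Lemma~\ref{lemma:Hcal_X} asserts an equality $\sd\Hcal_X = \sd\Hcal_{\SU(p)}$ as posets, so reading off the objects of $\sd\Hcal_X$ reduces to listing the objects of $\sd\Hcal_{\SU(p)}$. That list is already supplied by Proposition~\ref{proposition: SUp discrete chains} (via the correspondence with the $\Fcal_{\SU(p)}$-conjugacy classes of proper chains in $\Fcenrad$ on the Sylow side): the representatives are $S$, $\Gamma$, $(\Gamma\subgroup S)$, together with $T$ and $(T\subgroup S)$ when $p\geq 5$. Substituting this list through the identification of Lemma~\ref{lemma:Hcal_X} yields the corollary.

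The only point that requires a moment of care is whether enlarging the fusion system from $\Fcal_{\SU(p)}$ to $\Fcal_X$ could fuse two objects on our list. This is precisely what is handled inside the proof of Lemma~\ref{lemma:Hcal_X}, invoking Theorem~\ref{thm:X-SU(p)}: the $\Fcal_X$-centric radical subgroups of $S$ coincide with their $\Fcal_{\SU(p)}$-counterparts, and the representatives on the list are distinguished by intrinsic group-theoretic invariants (for instance, $T$ is abelian while $\Gamma$ is extra-special, and chain length is preserved by any fusion isomorphism). Hence no further identification occurs when passing to~$\Fcal_X$. There is no real obstacle beyond this sanity check, which is why we can state the result as a corollary rather than a theorem.
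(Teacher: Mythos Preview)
Your proposal is correct and matches the paper's own proof, which simply cites Lemma~\ref{lemma:Hcal_X} and Proposition~\ref{proposition: SUp discrete chains}. Your added paragraph about possible extra fusion in~$\Fcal_X$ is a reasonable sanity check, but as you observe it is already absorbed into the proof of Lemma~\ref{lemma:Hcal_X}, so nothing further is needed here.
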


\begin{proof}
The corollary follows from Lemma~\ref{lemma:Hcal_X} and
Proposition~\ref{proposition: SUp discrete chains}.
\end{proof}

In order to use our abstract decomposition result, Theorem~\ref{theorem: abstract decomposition theorem}, we will compute
$\Aut_{\Lcal_X}(\Pbb)$ for chains $\Pbb \in \mathcal{H}_X$ by relating these groups to
$\Aut_{\Lcal_{\SU(p)}}(\Pbb)$. By a counting argument, we compute $\Aut(\Gamma)$, the full abstract automorphism group of~$\Gamma$.

\begin{lemma}     \label{lemma: abstract aut of Gamma}
The abstract automorphism group $\Aut(\Gamma)$ is isomorphic to $\Aff_2\field_p=(\field_p)^2\rtimes \GLtwoFp$. In particular, with the representation of~$\Gamma$~in \eqref{eq: representation of Gamma}, we can take the map $\Aut(\Gamma)\rightarrow\GLtwoFp$ to be the representation given by the action of $\Aut(\Gamma)$ on $\Gamma/Z(\Gamma)\cong (\field_p)^2\cong\langle A\rangle\times\langle B\rangle$; that is, $A$ and $B$ represent the basis of $(\field_p)^2$ on which $\GLtwoFp$ acts.   
\end{lemma}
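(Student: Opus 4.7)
Plan: The center $Z(\Gamma)\cong\field_p$ is characteristic in $\Gamma$, so any $\phi\in\Aut(\Gamma)$ descends to an automorphism of $\Gamma/Z(\Gamma)\cong\field_p^2$, yielding a homomorphism
\[
\pi\colon\Aut(\Gamma)\longrightarrow\GL_2\field_p.
\]
I plan to show that $\pi$ has kernel $\field_p^2$, is surjective, and admits a splitting.

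For the kernel, if $\phi\in\ker\pi$ then $\phi(g)=g\cdot\lambda(g)$ for some function $\lambda\colon\Gamma\to Z(\Gamma)$, and requiring $\phi$ to be a homomorphism forces $\lambda$ itself to be a homomorphism. Since $Z(\Gamma)=[\Gamma,\Gamma]$ is abelian, $\lambda$ factors through $\Gamma^{\mathrm{ab}}=\Gamma/Z(\Gamma)\cong\field_p^2$, giving $\ker\pi\cong\Hom(\field_p^2,\field_p)\cong\field_p^2$; in fact $\ker\pi=\Inn(\Gamma)$. For surjectivity, observe that any $\phi$ is determined by the pair $(\phi(A),\phi(B))$, which can be an arbitrary ordered pair of elements of $\Gamma\setminus Z(\Gamma)$ with nontrivial commutator (any such pair generates $\Gamma$). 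The count $(p^3-p)(p^3-p^2)=p^3(p-1)^2(p+1)=p^2\cdot|\GL_2\field_p|$ of automorphisms, combined with the kernel computation, forces $|\im\pi|=|\GL_2\field_p|$, so $\pi$ is surjective.

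To identify the extension as $\Aff_2\field_p$, I will construct a splitting. Proposition~\ref{proposition: Gamma SES}(1) provides an embedding $\SL_2\field_p\hookrightarrow N_{\SUofp}(\Gamma)\subseteq\Aut(\Gamma)$, and by Lemma~\ref{lemma: upper triangular} together with the description of the quotient in \eqref{eq: normalizer of Gamma} as the symplectic group acting on the commutator form, the image realizes the standard inclusion $\SL_2\field_p\hookrightarrow\GL_2\field_p$ acting on $\Gamma/Z(\Gamma)$ in the basis $(A,B)$. To cover the remaining cosets of $\GL_2\field_p/\SL_2\field_p\cong\field_p^\times$, for each $k\in\field_p^\times$ define $\phi_k\in\Aut(\Gamma)$ on generators by $\phi_k(A)=A^k$ and $\phi_k(B)=B$; well-definedness is a direct consequence of the defining relations of the exponent-$p$ extra-special group (Remark~\ref{remark:Gamma-structure}), and $\phi_k$ projects to $\operatorname{diag}(k,1)\in\GL_2\field_p$.

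The main technical point is verifying that the partial lifts $\rho\colon\SL_2\field_p\hookrightarrow\Aut(\Gamma)$ and $k\mapsto\phi_k$ combine into a single homomorphism $\GL_2\field_p\to\Aut(\Gamma)$. This reduces to the conjugation relation $\phi_k\,\rho(M)\,\phi_k^{-1}=\rho(\operatorname{diag}(k,1)\,M\,\operatorname{diag}(k,1)^{-1})$ for $M\in\SL_2\field_p$; both sides project to the same element of $\GL_2\field_p$, so they differ by an element of $\ker\pi=\field_p^2$, and exact equality (or, if needed, construction of an adjusted section by modifying the $\phi_k$ by inner automorphisms) follows from a direct computation on the generators $A,B$ using the commutator identity $[A^aB^c,A^bB^d]=[A,B]^{ad-bc}$ valid in the class-$2$ group $\Gamma$. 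The resulting section realizes $\Aut(\Gamma)\cong\field_p^2\rtimes\GL_2\field_p=\Aff_2\field_p$ with $\pi$ given by the stated action on $\Gamma/Z(\Gamma)$ in the basis $(A,B)$.
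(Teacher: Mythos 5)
Your identification of $\ker\pi \cong \Hom(\Gamma/Z(\Gamma),Z(\Gamma))\cong\field_p^2$ (coinciding with $\Inn(\Gamma)$) and your counting argument for surjectivity of $\pi$ are both correct, and give a more explicit alternative to the paper's route; the paper instead produces a section of $\pi$ directly by pairing each $M\in\GLtwoFp$ with the forced automorphism $z\mapsto z^{\det M}$ of $Z(\Gamma)$ compatible with the commutator form. However, your splitting argument has a genuine gap. The section $\rho\colon\SLtwoFp\hookrightarrow\Aut(\Gamma)$ that you pull out of Proposition~\ref{proposition: Gamma SES}(1) is \emph{not} given by an explicit formula: it comes from the cohomological vanishing in Lemma~\ref{lemma: split centralizer} (vanishing of the Schur multiplier of $\SLtwoFp$). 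So the proposed ``direct computation on the generators $A,B$'' verifying $\phi_k\,\rho(M)\,\phi_k^{-1}=\rho(d_k M d_k^{-1})$ cannot be carried out---you have no formula for $\rho(M)$ for general $M$. The parenthetical fallback, ``construction of an adjusted section by modifying the $\phi_k$ by inner automorphisms,'' is precisely the content that needs proof: a priori the discrepancy between the two sides is a function of both $k$ and $M$ valued in $\ker\pi\cong\field_p^2$, and you must show that it can be absorbed by a single correction of $\phi_k$; that is a nontrivial cocycle statement, not a computation you have set up.

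The clean way to close the gap---and it is exactly the argument the paper itself uses in the very next proof, Proposition~\ref{proposition: Aut_LX split SES}---is cohomological. The extension $1\to\field_p^2\to\Aut(\Gamma)\to\GLtwoFp\to1$ is classified by a class in $H^2(\GLtwoFp;\field_p^2)$ (coefficients in the standard module, which is how $\GLtwoFp$ acts on $\ker\pi\cong\Gamma/Z(\Gamma)$ by conjugation, so the split extension is indeed $\Aff_2\field_p$). Restriction to $\SLtwoFp$ is injective on $H^2$ because $[\GLtwoFp:\SLtwoFp]=p-1$ is prime to $p$ while the coefficient module is $p$-torsion, and the restricted class vanishes by Proposition~\ref{proposition: Gamma SES}(1). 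Hence the class vanishes and the extension splits. Replacing your ``direct computation'' with this transfer argument makes the proof correct; as written, the splitting step is not established.
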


\begin{proof}
As  preliminary, suppose that $f$ is an automorphism of~$\Gamma$ that fixes the center $Z(\Gamma)$ and passes to the identity 
on~$\Gamma/Z(\Gamma)\cong\langle A\rangle\times \langle B\rangle$. 
Then for some $i,j$, we have $f(A)=\zeta^i A$ and $f(B)=\zeta^j B$. 
Such automorphisms are realized for all $i$ and $j$ by inner automorphisms 
of~$\Gamma$ because $ABA^{-1}=\zeta^{-1} B$ and $BAB^{-1}=\zeta A$. Conversely, since $[A,B]\in Z(\Gamma)$, inner automorphisms of $\Gamma$ pass to the identity automorphism on~$\Gamma/Z(\Gamma)$. 

We assert that there is a short exact sequence
\begin{equation}     \label{eq: SES Aut Gamma}
\xymatrix{
1 \ar[r] & \Gamma  \ar[r]/(Z(\Gamma))
         & \Aut(\Gamma)     \ar[r]
         & \Aut(\Gamma/Z(\Gamma))  \ar[r]
         & 1.       
}
\end{equation}
The inclusion is via inner automorphisms, and 
$\Aut(\Gamma) \rightarrow \Aut(\Gamma/Z(\Gamma))$ is the natural induced map since any group automorphism must stabilize the center. 
The action of $\Gamma$ on itself by conjugation passes to the identity automorphism on $\Gamma/Z(\Gamma)$ as indicated above.

It remains to show that the second map in \eqref{eq: SES Aut Gamma} is a split epimorphism. 
Identifying $\Gamma/Z(\Gamma)\cong\field_p\times\field_p$ with 
$\langle A\rangle\times \langle B\rangle$ as above, 
we observe that for any element of
$\Aut(\Gamma/Z(\Gamma))\cong\GLtwoFp$, 
there is a unique automorphism of $Z(\Gamma)$ that causes the commutator on $\Gamma$ to be preserved, which is sufficient to give an automorphism of~$\Gamma$. Further, uniqueness guarantees that the preimages assemble into a subgroup of~$\Aut(\Gamma)$, i.e. that the short exact sequence is split by a group homomorphism. 
The lemma follows. 
\end{proof}

As in the case of~$\SUofp$, we only need to compute the automorphism groups in $\Lcal_X$ of a subset of the chains named in Corollary~\ref{corollary: chains for X}, namely $\Gamma$~and $(\Gamma\subgroup S)$, and we begin with the fusion system. 
Any automorphism of~$\Gamma$ (or of~$S$) 
restricts to an automorphism of the group's center,
$Z(\Gamma)=Z(S)=Z(\SUofp)\cong \Z/p$. 
It is helpful to call out what the fusion system automorphisms are doing to the center of these groups. 

\begin{definition}
Let $\rescen_{\Gamma}\colon \Aut_{\Fcal_X}(\Gamma)\rightarrow\Aut(\integers/p)$ by restriction to~$Z(\Gamma)$, and define  
$\rescen_{\Gamma\subgroup S}\colon \Aut_{\Fcal_X}(\Gamma\subgroup S)\rightarrow\Aut(\integers/p)$ similarly.
\end{definition}

\begin{lemma}      \label{lemma: Res chain is onto}
The homomorphisms $\rescen_{\Gamma\subgroup S}$ 
and $\rescen_\Gamma$ are surjective.
\end{lemma}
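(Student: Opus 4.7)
Since $\Aut_{\Fcal_X}(\Gamma\subgroupeq S)\hookrightarrow \Aut_{\Fcal_X}(\Gamma)$ commutes with restriction to $Z(\Gamma)=Z(S)$, surjectivity of $\rescen_{\Gamma\subgroupeq S}$ implies surjectivity of $\rescen_\Gamma$, so I would focus on the chain case. The target $\Aut(Z(\Gamma))=\Aut(\integers/p)\cong(\integers/p)^\times$ is cyclic of order $p-1$, and the strategy is to realize every element of it via Adams operations, which is exactly the extra structure that distinguishes $\Fcal_X$ from $\Fcal_{\SU(p)}$ (cf.\ Theorem~\ref{thm:X-SU(p)}, which says the $\Fcal_X$-morphisms not already in $\Fcal_{\SU(p)}$ are precisely those acting nontrivially on $Z(S)$).

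The key input is Theorem~\ref{thm:X_i-facts}(3) together with Definition~\ref{definition:AZ}: the subgroup $Z(G)\cong\integers/(p-1)$ acts on $\pcomplete{BT}$ through unstable Adams operations $\psi^{\omega(k)}$ for $k\in(\integers/p)^\times$, and by construction this action extends across the hocolim presentation of $BX$ to self-equivalences of~$BX$. Passing to the associated \pdash local compact group and invoking the theory of Adams operations on \pdash local compact groups developed in \cite{JLL,LL-Adams}, for each $k\in(\integers/p)^\times$ there is an element of $\Aut_{\Fcal_X}(S)$ whose restriction to $T$ is the $\omega(k)$-power map.

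For a concrete representative, I would take $\phi_k\colon S\to S$ defined on $S=T\rtimes\langle B\rangle$ by $\phi_k(t)\definedas t^{\omega(k)}$ for $t\in T$ and $\phi_k(B)\definedas B$. The compatibility $\phi_k(BtB^{-1})=B\phi_k(t)B^{-1}$ holds because $B$ acts on the abelian group $T$ by a cyclic permutation of diagonal entries, which commutes with every power map; hence $\phi_k$ is a well-defined group automorphism of~$S$. Since $A\in T$, $B\in S$, and $Z(S)\subgroupeq T$, we have $\phi_k(A)=A^{\omega(k)}\in\langle A\rangle$, $\phi_k(B)=B$, and $\phi_k(Z(S))=Z(S)$, so $\phi_k$ preserves $\Gamma=\langle A,B,Z(S)\rangle$. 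Thus $\phi_k\in\Aut_{\Fcal_X}(\Gamma\subgroupeq S)$.

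Finally, restricted to $Z(\Gamma)\cong\integers/p$, the map $\phi_k$ acts as the $\omega(k)$-power map, which agrees with the $k$-power map because $\omega(k)\equiv k\pmod p$ and $Z(\Gamma)$ has exponent~$p$. As $k$ ranges over $(\integers/p)^\times$, the images exhaust $\Aut(Z(\Gamma))$, giving the desired surjectivity. The main subtlety is verifying that the explicit group automorphism $\phi_k$ genuinely represents a morphism in~$\Fcal_X$ rather than merely being an abstract group automorphism of~$S$; this step is where the argument truly uses the fact that $X$ is an \AZ space and not~$\pcomplete{\SU(p)}$, and it requires either direct use of the hocolim description of $BX$ (where the $Z(G)$-action on $\pcomplete{BT}$ produces exactly this automorphism after restriction to~$S$) or the general Adams-operation machinery of \cite{JLL,LL-Adams}.
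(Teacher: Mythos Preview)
Your approach is essentially the same as the paper's: both use unstable Adams operations to realize the missing automorphisms of $Z(S)$, and both reduce to the chain case first. The paper likewise invokes \cite[Prop.~3.5]{JLL} to obtain $\psi^\xi\in\Aut_{\Fcal_X}(S)$ acting as the $\xi$-power map on~$T$.

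The one substantive difference is how stabilization of $\Gamma$ is handled. You write down the explicit automorphism $\phi_k$ with $\phi_k(B)=B$ and observe it preserves~$\Gamma$; but the Adams-operation machinery only guarantees an automorphism acting as $(-)^{\omega(k)}$ on $T$ and as the identity on $S/T$ (cf.\ \cite[Lemma~2.5]{JLL})---it does not pin down the image of $B$ inside the coset $TB$, so your specific $\phi_k$ is not automatically the restriction of~$\psi^\xi$, and the claim that it lies in $\Fcal_X$ is exactly the acknowledged subtlety. The paper sidesteps this by \emph{not} committing to a formula: it allows $\psi^\xi$ to send $(\Gamma\subgroup S)$ to some other chain $(\Gamma'\subgroup S)$, then post-composes with a conjugation $c_x$ (for $x\in\SU(p)$, using Proposition~\ref{proposition: SUp discrete chains}) to bring it back, noting that $c_x$ fixes $Z(S)=Z(\SU(p))$ so the action on the center is preserved. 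Your argument can be completed by the same correction step; alternatively you would need to show that any automorphism of $S$ trivial on both $T$ and $S/T$ already lies in $\Fcal_{\SU(p)}$, which is a separate computation.
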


\begin{proof}
We need only prove the result for $\rescen_{\Gamma\subgroup S}$. 
Let $\xi\in\integers/p^\times\cong\Aut(\integers/p)$ be a generator; we seek an element of $\Aut_{\F_X}(\Gamma\subgroup S)$ that has order $p-1$ and whose restriction to
$Z(\SU(p))\cong \Z/p$ is $\xi\in\Aut(\Z/p)$.

Let $\psi^\xi$ be an unstable Adams operation on $\pcomplete{\BSU(p)}$ (see
Theorem~\ref{definition: adams ops}) whose restriction to $T$ is $x\mapsto x^\xi$.
By \cite[Prop.~3.5]{JLL}, $\psi^\xi$~restricts to an automorphism of $\psi^\xi\colon S\to S$ which is in $\Fcal_X$ 
by Definition~\ref{definition:F_X}. 
Possibly $\psi^\xi$ does not stabilize~$\Gamma$, so suppose that $\psi^\xi$ takes $(\Gamma\subgroup S)$ to some 
$(\Gamma'\subgroup S)$. 
By Proposition~\ref{proposition: SUp discrete chains}, there exists $x\in\SU(p)$ such that 
$c_x(\Gamma'\subgroup S)=(\Gamma\subgroup S)$. 
Then one can consider the automorphism 
$\phi\definedas c_x\circ \psi^\xi\in \Fcal_X$, which still stabilizes $S$ and also restricts to an
automorphism of~$\Gamma$. We have $\rescen(\phi)=\rescen(\psi^\xi)=\xi$ (because conjugation by an element of $\SUofp$ fixes $\Z/p=Z(\SUofp)$), and 
$\phi\in\Aut_{\Fcal_X}(\Gamma\subgroup S)$, 
showing that $\rescen_{\Gamma\subgroup S}$ is surjective.   
\end{proof}

Lemma~\ref{lemma: Res chain is onto} shows that all of the Adams operations extend to automorphisms of the chains that are relevant for our decomposition and allows us to extract the full fusion system automorphism groups. 

\begin{proposition}\label{proposition: FX SES}
Let $\GLUpper\subgroup\GLtwoFp$ denote the subgroup of upper triangular matrices. 
\begin{enumerate}
\item $\Aut_{\Fcal_X}(\Gamma)
           \cong \Aff_2\field_p=(\field_p)^2\rtimes \GLtwoFp$.
\item $\Aut_{\Fcal_X}(\Gamma\subgroup S)
    \cong (\field_p)^2\rtimes \GLUpper$.
\end{enumerate}
\end{proposition}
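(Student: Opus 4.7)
The plan is to use the containment $C_{\Fcal_X}(Z(S)) \cong \Fcal_{\SUofp}$ from Theorem~\ref{thm:X-SU(p)} to bootstrap from the \pdash local compact group calculations already performed in Section~\ref{section: boring group theory} for~$\SUofp$, extending by the Adams-operation classes realized in Lemma~\ref{lemma: Res chain is onto}.

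For part~(1), I would first compute $\Aut_{\Fcal_{\SUofp}}(\Gamma) \cong (\field_p)^2 \rtimes \SLtwoFp$ by combining Lemma~\ref{lemma: Auts in link SU(p)}(1) and the split extension of Proposition~\ref{proposition: Gamma SES}(1), then quotienting by $Z(\Gamma)$ via Lemma~\ref{lemma: AutL to AutF surjective}. Theorem~\ref{thm:X-SU(p)} identifies this subgroup as the kernel of $\rescen_\Gamma$, which is surjective by Lemma~\ref{lemma: Res chain is onto}, giving a short exact sequence
\[
1 \longrightarrow (\field_p)^2 \rtimes \SLtwoFp \longrightarrow \Aut_{\Fcal_X}(\Gamma) \xrightarrow{\,\rescen_\Gamma\,} \integers/p^\times \longrightarrow 1.
\]
Next I would embed $\Aut_{\Fcal_X}(\Gamma) \hookrightarrow \Aut(\Gamma) \cong (\field_p)^2 \rtimes \GLtwoFp$ via Lemma~\ref{lemma: abstract aut of Gamma}. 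Because the commutator pairing $\Gamma/Z(\Gamma)\times\Gamma/Z(\Gamma)\to Z(\Gamma)$ is preserved by group automorphisms, the induced action on $Z(\Gamma)$ of an element with $\GLtwoFp$-component~$M$ is multiplication by~$\det(M)$. Consequently $\rescen_\Gamma$ factors as $\Aut_{\Fcal_X}(\Gamma) \to \GLtwoFp \xrightarrow{\det} \integers/p^\times$, and since $\Aut_{\Fcal_X}(\Gamma)$ contains the $\det$-kernel $(\field_p)^2 \rtimes \SLtwoFp$ and surjects onto the $\det$-cokernel $\integers/p^\times$, it fills out all of $(\field_p)^2 \rtimes \GLtwoFp$.

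For part~(2), the analogous bootstrap from Proposition~\ref{proposition: Gamma SES}(2), Lemma~\ref{lemma: Auts in link SU(p)}(2), Theorem~\ref{thm:X-SU(p)}, and Lemma~\ref{lemma: Res chain is onto} yields
\[
1 \longrightarrow (\field_p)^2 \rtimes \SLUpper \longrightarrow \Aut_{\Fcal_X}(\Gamma\subgroup S) \xrightarrow{\,\rescen_{\Gamma\subgroup S}\,} \integers/p^\times \longrightarrow 1.
\]
I then plan to embed $\Aut_{\Fcal_X}(\Gamma\subgroup S)$ into $\Aut_{\Fcal_X}(\Gamma) = (\field_p)^2 \rtimes \GLtwoFp$ by restriction to~$\Gamma$. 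Injectivity follows because any element of the kernel fixes $Z(S)=Z(\Gamma)$ pointwise, hence lies in $\Aut_{\Fcal_{\SUofp}}(\Gamma\subgroup S)$ by Theorem~\ref{thm:X-SU(p)}, where the inclusion $(\field_p)^2 \rtimes \SLUpper \hookrightarrow (\field_p)^2 \rtimes \SLtwoFp$ is visibly injective. The image lies in $(\field_p)^2 \rtimes \GLUpper$ because any chain automorphism must stabilize the characteristic identity component $T\subgroup S$, hence $T\cap\Gamma$, whose image in $\Gamma/Z(\Gamma)$ is the line $\langle A\rangle$; the matrices in $\GLtwoFp$ preserving this line in the basis $(A,B)$ are exactly $\GLUpper$. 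Repeating the $\det$-kernel/cokernel argument from part~(1) then forces the image to be all of $(\field_p)^2 \rtimes \GLUpper$.

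The chief technical point will be identifying the center-restriction $\rescen$ with the determinant of the action on $\Gamma/Z(\Gamma)$; once that is pinned down, the upper-triangular constraint for chains falls out of the characteristic nature of the identity component of~$S$, and what remains is a routine assembly of the two short exact sequences.
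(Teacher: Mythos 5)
Your proposal is correct and reaches both identifications by a genuinely different mechanism than the paper. For part~(1) the paper runs a counting argument: it computes $|\Aut_{\Fcal_{\SU(p)}}(\Gamma)|=p^3(p^2-1)$, multiplies by $p-1$ using the short exact sequence, and observes this equals $|\Aff_2\field_p|$, forcing the inclusion into $\Aut(\Gamma)$ to be surjective. You instead identify $\rescen_\Gamma$ with the determinant of the induced action on $\Gamma/Z(\Gamma)$: any automorphism scales the alternating commutator form on this two-dimensional $\field_p$-space by exactly the determinant of its matrix, so surjectivity of $\rescen_\Gamma$ together with containment of the $\det$-kernel yields all of $(\field_p)^2\rtimes\GLtwoFp$ without any order computation. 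For part~(2), the paper explicitly computes the matrix of the Adams-operation lift $\phi$ of Lemma~\ref{lemma: Res chain is onto} (using that $\phi$ acts as the $\xi$-th power on $T$ and trivially on $S/T$), finds it upper triangular of determinant~$\xi$, and checks that $\SLUpper$ together with this matrix generates $\GLUpper$; you instead note that any chain automorphism must preserve the characteristic identity component $T\subgroup S$, hence stabilize the line $\langle A\rangle\subgroup\Gamma/Z(\Gamma)$, giving the upper-triangular constraint directly, and then rerun the kernel/image argument. Your route is more conceptual and requires no explicit bookkeeping of the Adams operation, while the paper's route produces a concrete lift of the generator of $\Z/p^\times$. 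The one place that deserves an extra sentence is your injectivity argument for the restriction $\Aut_{\Fcal_X}(\Gamma\subgroup S)\to\Aut_{\Fcal_X}(\Gamma)$: you should state explicitly that under the identifications of Proposition~\ref{proposition: Gamma SES} and Lemma~\ref{lemma: Auts in link SU(p)} this restriction \emph{is} the inclusion $(\field_p)^2\rtimes\SLUpper\hookrightarrow(\field_p)^2\rtimes\SLtwoFp$ --- because both groups arise as $N_{\SUofp}(-)/Z(\Gamma)$ with the inclusion of normalizers inducing the map --- rather than merely citing the visible injectivity of an abstract subgroup inclusion.
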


\begin{proof}
For the first statement, we will use a counting argument. 
By Proposition~\ref{thm:X-SU(p)}, we have $\ker(\rescen_\Gamma)=\Aut_{\Fcal_{\SU(p)}}(\Gamma)$.
Since $\rescen_{\Gamma}$ is surjective by Lemma~\ref{lemma: Res chain is onto}, we have a short exact sequence
\begin{equation}       \label{F-SU(p)-X}
1\longrightarrow \Aut_{\Fcal_{\SU(p)}}(\Gamma)
 \longrightarrow \Aut_{\Fcal_X}(\Gamma)
 \xrightarrow{\ \rescen_\Gamma\ } \Aut(\integers/p) 
 \longrightarrow 1. 
\end{equation}
We know that $\Aut_{\Fcal_{\SUofp}}(\Gamma)$, which is given by $N_{\SUofp}(\Gamma)/C_{\SUofp}(\Gamma)$, has $p^3(p^2-1)$ elements (Proposition~\ref{proposition: Gamma SES} and $C_{\SUofp}(\Gamma)=Z(\SUofp)\cong\integers/p$). 
Hence $\Aut_{\Fcal_{X}}(\Gamma)$ has
$(p-1)\cdot p^3(p^2-1)$ elements. As a result, 
Lemma~\ref{lemma: abstract aut of Gamma} tells us that
$\Aut_{\F_X}(\Gamma)$ must be the full abstract automorphism group of~$\Gamma$, i.e. 
$\Aut_{\F_X}(\Gamma) = (\field_p)^2\rtimes \GLtwoFp$, completing the proof of~(1).

For $(\Gamma\subgroup S)$ we begin in a similar way. 
By Proposition~\ref{thm:X-SU(p)}, 
$\ker(\rescen_{\Gamma\subgroup S})=\Aut_{\Fcal_{\SU(p)}}(\Gamma\subgroup S)$. By Lemma~\ref{lemma: Res chain is onto}
we have a short exact sequence 

\begin{equation}          \label{eq: SES for F_X chain} 
1\longrightarrow \Aut_{\Fcal_{\SU(p)}}(\Gamma\subgroup S)
 \longrightarrow \Aut_{\Fcal_X}(\Gamma\subgroup S)
 \xrightarrow{\ \rescen_{\Gamma\subgroup S}} \Aut(\integers/p) 
 \longrightarrow 1. 
\end{equation}

By Proposition~\ref{proposition: Gamma SES} we have
\[
\Aut_{\Fcal_{\SU(p)}}(\Gamma\subgroup S) 
\isom 
(\field_p)^2\rtimes\SLUpper,
\]
where the matrices $A$ and $B$ represent the basis of $(\field_p)^2$ and $\SLUpper$ acts on this basis in the standard way. 

The quotient group in 
\eqref{eq: SES for F_X chain} is generated by $\phi\in\Aut_{\Fcal_X}(\Gamma\subgroup S)$ constructed in 
Lemma~\ref{lemma: Res chain is onto}. 
We know that $\phi$ 
acts as $(-)^\xi$ on~$T$ (by construction) and as the identity on $S/T$ (by~\cite[Lemma 2.5]{JLL}). Hence we have
$\phi(A) = A^\xi$ and $\phi(B) = \zeta^i A^j B$ for some~$j$. Thus~$\phi$ corresponds in $\Aut_{\Fcal_X}(\Gamma)/\Gamma$ to a matrix 
$\begin{pmatrix} 
\xi&j
\\0&1
\end{pmatrix}$,
an upper triangular matrix of determinant~$\xi$. 
Thus the matrices representing $\BorelSUp$ and $\phi$ generate $\GLUpper$.
\end{proof}

Proposition~\ref{proposition: FX SES} gives the relevant automorphism groups of chains in the fusion system~$\Fcal_X$. The next task is to lift them to the linking system~$\Lcal_X$. 

\begin{proposition} \label{proposition: Aut_LX split SES}
\hfill
\begin{enumerate}
\item $\Aut_{\Lcal_X}(\Gamma) \cong \Gamma\rtimes \GLtwoFp$
\item $\Aut_{\Lcal_X}(\Gamma\subgroup S)
           \cong \Gamma\rtimes\GLUpper$.
\end{enumerate}
\end{proposition}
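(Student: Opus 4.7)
The plan is to extract short exact sequences identifying the quotients as $\GLtwoFp$ and $\GLUpper$, and then establish splitting by a cohomological reduction to the $\SUofp$ case handled in Proposition~\ref{proposition: Gamma SES}.

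First, I would combine the central extension
\[
1 \to Z(\Gamma) \to \Aut_{\Lcal_X}(\Pbb) \to \Aut_{\Fcal_X}(\Pbb) \to 1
\]
from Lemma~\ref{lemma: AutL to AutF surjective}, applied to $\Pbb = \Gamma$ and $\Pbb = (\Gamma \subgroupeq S)$, with the descriptions of $\Aut_{\Fcal_X}$ from Proposition~\ref{proposition: FX SES}. Since $\delta_\Gamma$ embeds $\Gamma$ as a normal subgroup of $\Aut_{\Lcal_X}(\Pbb)$ whose image in $\Aut_{\Fcal_X}(\Gamma)$ is the inner automorphism subgroup $\Aut_\Gamma(\Gamma) = \Gamma/Z(\Gamma) \cong (\field_p)^2$, the argument of \eqref{eq: ses Aut Out} in Lemma~\ref{lemma: Aut_L} produces
\[
1 \to \Gamma \to \Aut_{\Lcal_X}(\Gamma) \to \GLtwoFp \to 1
\quad\text{and}\quad
1 \to \Gamma \to \Aut_{\Lcal_X}(\Gamma \subgroupeq S) \to \GLUpper \to 1.
\]
Axiom (C) of Definition~\ref{definition: linking} identifies the outer action of the quotient on $\Gamma$ with the standard inclusion $\GLtwoFp \into \Aut(\Gamma)$ from Lemma~\ref{lemma: abstract aut of Gamma}, so the semidirect products $\Gamma \rtimes \GLtwoFp$ and $\Gamma \rtimes \GLUpper$ appearing in the statement are bona fide extensions realizing this outer action.

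Second, since non-abelian extensions of a group $Q$ by $\Gamma$ with a prescribed outer action form a torsor under $H^2(Q; Z(\Gamma))$ whose zero class is the semidirect product, it suffices to show that the classes of our two extensions vanish in $H^2(Q; \Z/p)$, where $Z(\Gamma) \cong \Z/p$ is acted on through $\det \colon Q \to \field_p^\times$. I would apply the Lyndon--Hochschild--Serre spectral sequences of $1 \to \SLtwoFp \to \GLtwoFp \to \field_p^\times \to 1$ and $1 \to \SLUpper \to \GLUpper \to \field_p^\times \to 1$: since $|\field_p^\times| = p - 1$ is coprime to $p$, each spectral sequence collapses and its edge map yields a restriction isomorphism $H^2(\GLtwoFp; \Z/p) \cong H^2(\SLtwoFp; \Z/p)^{\field_p^\times}$ (and analogously for $\GLUpper$). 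Hence the splittings reduce to showing the restricted sub-extensions over $\SLtwoFp$ and $\SLUpper$ split.

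Third, I would identify these restricted sub-extensions with the $\SUofp$-automorphism data from Section~\ref{sec:U(p) SU(p)}. Theorem~\ref{thm:X-SU(p)} describes $\Aut_{\Fcal_{\SUofp}}(\Gamma) \subgroupeq \Aut_{\Fcal_X}(\Gamma)$ as precisely the subgroup acting trivially on $Z(\Gamma)$, which coincides with the preimage of $\SLtwoFp$ under $\Aut_{\Fcal_X}(\Gamma) \epi \GLtwoFp \xlongrightarrow{\det} \field_p^\times$; lifting through $Z(\Gamma)$ via Lemma~\ref{lemma: AutL to AutF surjective} identifies the preimage of $\SLtwoFp$ in $\Aut_{\Lcal_X}(\Gamma)$ with $\Aut_{\Lcal_{\SUofp}}(\Gamma)$. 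By Lemma~\ref{lemma: Auts in link SU(p)} this equals $N_{\SUofp}\Gamma$, and Proposition~\ref{proposition: Gamma SES}(1) gives the splitting $N_{\SUofp}\Gamma \cong \Gamma \rtimes \SLtwoFp$; identical reasoning applied to the chain uses Proposition~\ref{proposition: Gamma SES}(2) to produce $N_{\SUofp}(\Gamma \subgroupeq S) \cong \Gamma \rtimes \SLUpper$. The main obstacle I expect is precisely this third step: verifying that the preimage identification is correct and that the outer action underlying the cohomological classification of non-abelian extensions matches the one produced by Axiom~(C), so that zero extension class really does correspond to the semidirect products named in the statement.
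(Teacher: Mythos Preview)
Your argument for part~(1) is essentially identical to the paper's: both set up the extension $1\to\Gamma\to\Aut_{\Lcal_X}(\Gamma)\to\GLtwoFp\to 1$, classify it by an element of $H^2(\GLtwoFp;Z(\Gamma))$, and kill that element by observing that its restriction to $\SLtwoFp$ vanishes (via the splitting of $N_{\SUofp}\Gamma$ in Proposition~\ref{proposition: Gamma SES}) while the restriction map is injective because $[\GLtwoFp:\SLtwoFp]$ is prime to~$p$.

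For part~(2) you take a genuinely different route. You run the same cohomological argument uniformly, reducing the splitting over $\GLUpper$ to the known splitting over $\SLUpper$ from Proposition~\ref{proposition: Gamma SES}(2). The paper instead avoids re-running the extension argument: having established~(1), it observes that the square
\[
\xymatrix{
\Aut_{\Lcal_X}(\Gamma\subgroup S)\ar@{^(->}[r]\ar@{->>}[d] & \Aut_{\Lcal_X}(\Gamma)\ar@{->>}[d]\\
\Aut_{\Fcal_X}(\Gamma\subgroup S)\ar@{^(->}[r] & \Aut_{\Fcal_X}(\Gamma)
}
\]
embeds $\Aut_{\Lcal_X}(\Gamma\subgroup S)$ into the pullback $\Gamma\rtimes\GLUpper$ of the other three corners, and then a cardinality count forces equality. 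Your approach is more uniform and makes the parallel with~(1) transparent; the paper's approach for~(2) is quicker once~(1) is in hand and sidesteps the preimage identification you flagged as a concern (which is, in any case, straightforward from Theorem~\ref{thm:X-SU(p)} since $\SLtwoFp=\ker\det$). Both are correct.
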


\begin{proof}
By Definition~\ref{definition: linking}(C),
$\Gamma$ is normal in both $\Aut_{\Lcal_X}(\Gamma)$ and $\Aut_{\Lcal_{\SUofp}}(\Gamma)$, so we compare the two quotients in diagram \eqref{eq:Aut-mod-Gamma} below.
By Lemma~\ref{lemma: Auts in link SU(p)}
and Proposition~\ref{proposition: Gamma SES}, we have 
$\Aut_{\Lcal_\SUofp}(\Gamma)/\Gamma \cong \SLtwoFp$.
By Lemma~\ref{lemma: abstract aut of Gamma} and Proposition~\ref{proposition: FX SES}, 
\begin{align*}
\Aut_{\Lcal_X}(\Gamma)/\Gamma 
    &
    \cong
[\Aut_{\Lcal_X}(\Gamma)/Z(\Gamma)]/[\Gamma/Z(\Gamma)]  \\
&\cong \Aut_{\Fcal_X}\!\big(\Gamma)/(\Gamma/Z(\Gamma)\big)\\
     & \cong \GLtwoFp. 
\end{align*}
Hence we can compare the automorphism groups in $\Lcal_{\SUofp}$ and $\Lcal_X$ with a commutative ladder of short exact sequences, where the first row is split by 
Proposition~\ref{proposition: Gamma SES}:
\begin{equation}\label{eq:Aut-mod-Gamma} 
\begin{gathered}
\xymatrix{
1 \ar[r] & \Gamma\ar@{=}[d]\ar[r] 
         & \Aut_{\Lcal_\SUofp}(\Gamma) \ar[r]\ar[d] 
         & \strut
         \SLtwoFp\ar[d]\ar[r]\ar@/_1pc/[l]  
         & 1
 \\ 1\ar[r] & \Gamma\ar[r] & \Aut_{\Lcal_X}(\Gamma)\ar[r] & \GLtwoFp\ar[r] & 1.
} 
\end{gathered}
\end{equation}
We want to show that the second row of 
\eqref{eq:Aut-mod-Gamma} 
is also split. The extension is classified by an element of
$H^2(\GLtwoFp;Z(\Gamma))
    \cong H^2(\GLtwoFp;\integers/p)$.
Further, the classifying element is in the kernel of the restriction
$H^2(\GLtwoFp;\integers/p) \rightarrow H^2(\SLtwoFp;\integers/p)$ because the top extension is split. However, the restriction map is injective, because 
$\SLtwoFp\subgroup \GLtwoFp$ is a subgroup of index prime to~$p$.
Hence the lower short exact sequence is also split,
which establishes~(1). 

For~(2), let $Z\definedas Z(\Gamma)=Z(S)$. Note that $\Aut_{\Fcal_X}(\Gamma)$ is the quotient of $\Aut_{\Lcal_X}(\Gamma)$ by~$Z$ ($=Z(\Gamma)$), and likewise by 
Lemma~\ref{lemma: AutL to AutF surjective} $\Aut_{\Fcal_X}(\Gamma\subgroup S)$ is the quotient of $\Aut_{\Lcal_X}(\Gamma\subgroup S)$ by~$Z$ ($=Z(S)$).
This gives the following commutative square:
\begin{equation}     \label{diagram: not quite a pullback}
\begin{gathered}
\xymatrix{
\Aut_{\Lcal_X}(\Gamma\subgroup S)\ 
     \ar@{^(->}[r]
     \ar@{->>}[d]^-\pi
& \Aut_{\Lcal_X}(\Gamma)
     \ar@{->>}[d]^-\pi
\\
\Aut_{\Fcal_X}(\Gamma\subgroup S) 
     \ar@{^(->}[r]
& \Aut_{\Fcal_X}(\Gamma). 
}
\end{gathered}
\end{equation}
Here the top horizontal arrow is a monomorphism by Lemma~\ref{lemma: chain monos}, and 
the bottom horizontal arrow is a monomorphism as well.
While this is not quite enough to say that \eqref{diagram: not quite a pullback} is a pullback square, it is enough to guarantee a monomorphism from 
$\Aut_{\Lcal_X}(\Gamma\subgroup S)$ into the pullback of the other three corners. We collect the values of those groups from~(1) and from
Proposition~\ref{proposition: FX SES} and compute the pullback to obtain the diagram
\begin{equation}      \label{diagram: the actual groups}
\begin{gathered}
\xymatrix{
\Aut_{\Lcal_X}(\Gamma\subgroup S)\  
     \ar@{^(->}[r]
& \Gamma \rtimes\UTriangular(\GLtwoFp)\ 
     \ar@{^(->}[r]
     \ar@{->>}[d]^-\pi
& \Gamma \rtimes\GLtwoFp
     \ar@{->>}[d]^-\pi
\\
&(\field_p)^2\rtimes\UTriangular(\GLtwoFp)
     \ar[r]
&(\field_p)^2\rtimes\GLtwoFp.
}
\end{gathered}
\end{equation}
The result now follows from a counting argument, because 
$\Aut_{\Lcal_X}(\Gamma\subgroup S)$ and 
$(\field_p)^2\rtimes\UTriangular(\GLtwoFp)$ each have $p$ times as many elements as $(\field_p)^2\rtimes\UTriangular(\GLtwoFp)$, by observation and by \eqref{diagram: not quite a pullback}, respectively. 
\end{proof}

The arguments in Propositions \ref{proposition: FX SES} and~\ref{proposition: Aut_LX split SES} cannot be generalized to compute $\Aut_{\Lcal_X}(T)$, since they used the fact that $Z(\Gamma)=Z(S)$. However, this remaining case follows easily from results in the literature.

\begin{lemma}\label{lem:Aut_LX T}
There is an isomorphism $\Aut_{\Lcal_X}(T) \cong T\rtimes G$.
\end{lemma}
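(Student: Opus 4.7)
The starting point is Lemma~\ref{lemma: AutL to AutF surjective}, applied to the subgroup~$T$. The torus $T$ is $\Fcal_X$-centric because Theorem~\ref{thm:X-SU(p)} identifies the $\Fcal_X$-centric subgroups with the $\Fcal_{\SU(p)}$-centric ones, and $T$ is $\Fcal_{\SU(p)}$-centric by Proposition~\ref{proposition: SUp discrete chains}. Since $T$ is abelian we have $Z(T)=T$, and by definition of the Weyl group $\Aut_{\Fcal_X}(T)=G$, so the lemma yields
\[
1 \longrightarrow T \longrightarrow \Aut_{\Lcal_X}(T) \longrightarrow G \longrightarrow 1.
\]
The task reduces to exhibiting a group-theoretic section $G\to\Aut_{\Lcal_X}(T)$ of the projection.

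My plan is to assemble this section from two partial lifts that together generate~$G$. The subgroup $\Sigma_p\subgroup G$ is the Weyl group of $\SU(p)$, and the argument in the proof of Theorem~\ref{theorem: SU(p) decomposition} shows via~\eqref{eq: ses Aut Out} that the analogous short exact sequence for $\SU(p)$ splits as $\Aut_{\Lcal_{\SU(p)}}(T)\cong T\rtimes\Sigma_p$. Because the centric collections of $\Fcal_{\SU(p)}$ and $\Fcal_X$ coincide (Theorem~\ref{thm:X-SU(p)}), the inclusion of fusion systems lifts on the centric collection to a functor $\Lcal_{\SU(p)}\to\Lcal_X$ restricting to $\Aut_{\Lcal_{\SU(p)}}(T)\hookrightarrow\Aut_{\Lcal_X}(T)$; the image of the splitting then provides a lift $\Sigma_p\hookrightarrow\Aut_{\Lcal_X}(T)$ over the Weyl-subgroup $\Sigma_p\subgroup G$. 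The central cyclic subgroup $Z(G)\cong\Z/(p-1)$ acts on $\pcomplete{BT}$ by Adams operations $\psi^{\omega(k)}$ (Definition~\ref{definition:AZ}), and by the existence theory for unstable Adams operations on linking systems~\cite[Prop.~3.5]{JLL} each such operation lifts to a self-equivalence of $\Lcal_X$ whose restriction to~$T$ is an element of $\Aut_{\Lcal_X}(T)$ projecting to the corresponding element of $Z(G)\subgroup G$.

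The step I expect to be the main obstacle is verifying that these two partial sections assemble into a single group homomorphism $G\to\Aut_{\Lcal_X}(T)$. Because $Z(G)$ is central in~$G$ and $G$ is generated by $\Sigma_p$ together with $Z(G)$, the only relations to check are that the Adams-operation lifts commute with the $\Sigma_p$-lifts inside $\Aut_{\Lcal_X}(T)$---not merely modulo~$T$. Each such commutator projects to the identity of~$G$ and so lies in $T\subgroup\Aut_{\Lcal_X}(T)$; I plan to show it is trivial by observing that Adams operations act on~$T$ as diagonal $p$-adic scalar multiplication, which commutes with permutations of coordinates both at the group level and at the level of the distinguished inclusions of Lemma~\ref{lemma: distinguished morphisms in L} entering the axioms of a centric linking system (Definition~\ref{definition: linking}). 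Once the section $G\to\Aut_{\Lcal_X}(T)$ is in hand, combining it with the canonical embedding $T\hookrightarrow\Aut_{\Lcal_X}(T)$ from Definition~\ref{definition: linking}(C) yields the claimed isomorphism $\Aut_{\Lcal_X}(T)\cong T\rtimes G$.
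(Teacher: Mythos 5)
Your reduction to the short exact sequence
\[
1 \longrightarrow T \longrightarrow \Aut_{\Lcal_X}(T) \longrightarrow G \longrightarrow 1
\]
is fine and matches the paper's starting point (the paper uses \eqref{eq: ses Aut Out} rather than Lemma~\ref{lemma: AutL to AutF surjective}, but the result is the same since $T$ is abelian). However, your plan for producing a section relies on the claim that \emph{$G$ is generated by $\Sigma_p$ together with $Z(G)$}, and this is false. For $G_{12}$ at $p=3$ we have $|G|=48$ while $|\Sigma_3|\cdot|Z(G)| = 6\cdot 2 = 12$; for $G_{29}$ at $p=5$, $|G|=7680$ while $|\Sigma_5|\cdot|Z(G)|=480$; and the failure is even more pronounced for $G_{31}$ and $G_{34}$. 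The Shephard--Todd reflection groups in question are far larger than the subgroup generated by the symmetric group and the center. So even granting both of your partial lifts and the commutation check you outline, you would only have lifted a proper subgroup of $G$ and the argument stops there.

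The paper does something genuinely different and much shorter: after writing down the short exact sequence and identifying the quotient with the Weyl group~$G$, it observes that this is precisely the extension describing the normalizer of the maximal torus in the \pdash compact group~$X$, and invokes Andersen's theorem that for a \emph{connected} \pdash compact group this normalizer extension always splits. That theorem is doing all the real work and covers the ``extra'' part of~$G$ that your two explicit lifts cannot reach. If you wanted a hands-on construction instead, you would need lifts of an actual generating set of~$G$ --- but then checking all the relations of a large reflection group inside $\Aut_{\Lcal_X}(T)$ is substantially harder than citing the general splitting result, which is why the paper does not take that route.
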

\begin{proof}
From equation \eqref{eq: ses Aut Out}, we have a short exact sequence $$T\to
\Aut_{\Lcal_X}(T)\to \Out_{\Fcal_X}(T)$$ where $\Out_{\Fcal_X}(T) =
\Aut_{\Fcal_X}(T) = G$ by Theorem~\ref{thm:X_i-facts}. This exact sequence describes the normalizer of the maximal torus in the \pdash compact group, which by \cite[Theorem 1.2]{andersen-splitting} splits.
\end{proof}

\begin{proof}[Proof of Theorem~\ref{theorem: pushout for AZ}]
Combining Theorem~\ref{theorem: abstract decomposition theorem} and Theorem~\ref{thm:p-compact fusion}, we have an equivalence $BX \simeq \pcomplete{(\colim_{\sd\Hcal_X} \delta)}$, with a natural equivalence $\BAut_\Lcal(\Pbb)\to \delta([\Pbb])$. We fix a choice for conjugacy classes of chains 
(see Proposition~\ref{proposition: SUp discrete chains}).
By Lemma~\ref{lemma:Hcal_X} and Proposition~\ref{proposition: SUp discrete chains}, the indexing category $\sd\Hcal_X$ is 
\begin{equation*}
\begin{gathered}
\xymatrix{
& (\Gamma\subgroup S) \ar[dr]\ar[dl]
   && (T\subgroup S)\ar[dr]\ar[dl]
\\
\Gamma && S && T,
}
\end{gathered}
\end{equation*}
but \eqref{eq: shorten chain with torus} says that
the colimit of $\delta$  over $\sd\Hcal_X$ simplifies to a pushout 
\[
\BAut_{\L_X}(\Gamma)\longleftarrow \BAut_{\L_X}(\Gamma\subgroup S) \longrightarrow \BAut_{\L_X}(T). 
\]
The theorem now follows from the identifications of these terms in Proposition~\ref{proposition: Aut_LX split SES} and 
Lemma~\ref{lem:Aut_LX T}.
\end{proof}

\vspace{50pt}

\bibliographystyle{amsalpha}
\bibliography{WIT-fusion-bibliography}

\end{document}